\numberwithin{equation}{section}
\def\C{\mathbb C}
\def\P{{\mathcal P}}
\def\A{{\mathcal A}}
\def\L{{\mathcal L}}
\def\H{{\mathcal H}}
\def\V{{\mathcal V}}
\def\ZZZ{{\mathbb Z}}
\def\LLL{{\mathcal L}}
\def\DDD{{\mathcal D}}
\def\RRR{{\mathbb R}}
\def\NNN{{\mathbb N}}
\def\lll{{\ell}}
\def\overarrow{\overrightarrow}
\def\RRR{{\Bbb R}}
\def\h{{\mathcal H}}
\def\1{\mathbf 1}
\def\A{{\mathcal A}}
\def\H{{\mathcal H}}
\def\L{{\mathcal L}}
\newtheorem{definition}{Definition}
\newtheorem{theorem}{Theorem}
\newtheorem{lemma}{Lemma}
\newtheorem{example}{Example}
\newtheorem{proposition}{Proposition}
\begin{document}

\title{Operator Methods, Abelian Processes and Dynamic
Conditioning}

\author{Claudio Albanese}

\email{claudio@level3finance.com}

\date{First version December 15th, 2006, last revision \today}

\thanks{I would like to thank my collaborators in the past 8 years
with whom this work would not have been possible. In particular
Joseph Campolieti, Peter Carr, Oliver Chen, Alexei Kuznetsov,
Sebastian Jaimungal, Paul Jones, Harry Lo, Stephan Lawi, Alex
Lipton, Alex Mijatovic, Adel Osseiran, Dmitri Rubisov, Stathis
Tompaidis, Manlio Trovato and Alicia Vidler. Special thanks go to
Paul Jones and Adel Osseiran for reading previous versions of this
manuscript and correcting errors. All remaining mistakes are the
solve responsibility of the author.
}

 \maketitle

\begin{abstract}

A mathematical framework for Continuous Time Finance based on
operator algebraic methods offers a new direct and entirely
constructive perspective on the field. It also leads to new
numerical analysis techniques which can take advantage of the
emerging massively parallel GPU architectures which are uniquely
suited to execute large matrix manipulations.

This is partly a review paper as it covers and expands on the
mathematical framework underlying a series of more applied articles.
In addition, this article also presents a few key new theorems that
make the treatment self-contained. Stochastic processes
with continuous time and continuous space variables are defined
constructively by establishing new convergence estimates for Markov
chains on simplicial sequences. We emphasize high precision
computability by numerical linear algebra methods as opposed to the
ability of arriving to analytically closed form expressions in terms
of special functions. Path dependent processes adapted to a given
Markov filtration are associated to an operator algebra. If this
algebra is commutative, the corresponding process is named Abelian,
a concept which provides a far reaching extension of the notion of
stochastic integral. We recover the classic
Cameron-Dyson-Feynman-Girsanov-Ito-Kac-Martin theorem as a
particular case of a broadly general block-diagonalization
algorithm. This technique has many applications ranging from the
problem of pricing cliquets to target-redemption-notes and
volatility derivatives. Non-Abelian processes are also relevant and
appear in several important applications to for instance snowballs
and soft calls. We show that in these cases one can effectively use
block-factorization algorithms. Finally, we discuss the method of
dynamic conditioning that allows one to dynamically correlate over
possibly even hundreds of processes in a numerically noiseless
framework while preserving marginal distributions.

\end{abstract}

 \tableofcontents

\section{Introduction}

The goal of this paper is to attempt to consolidate and present a
number of mathematical methods developed over several years by
myself and collaborators while addressing concrete problems in
derivative pricing theory. The results scattered across a number of
papers which are collected here have been complemented with a
rigorous {\it ab initio} treatment and a few key theorems which make
the framework mathematically self-contained. This results in a quite
comprehensive approach to the theory of Stochastic Processes and
Mathematical Finance which is novel in that it is fully constructive
and perhaps has applications beyond the realm of Financial
Engineering.

There are several traditions of Constructive Mathematics. One
attempts to re-derive classical results of real and functional
analysis based on a restrictive constructivist logic according to
which no mathematical object can be considered unless one specifies
explicitly how to construct it, see \cite{BR1987} and
\cite{Bishop1967}.  Along another tradition, Constructive Field
Theory, see \cite{GJ1987}, aimed at establishing the existence of
interacting quantum field theories by providing a constructive
procedure for computing $n$-point functions and demonstrating that
they satisfy a set of axioms. Measure theoretic probability and the
related theory of stochastic processes, \cite{Doob1953}, does not
seem to be understandable constructively. The PDE approach in
\cite{Feller} and the harmonic analysis approach in
\cite{Bochner1987} are instead essentially constructive but do not
delve into the theory of stochastic integrals and path dependent
processes and into lattice discretization schemes.

The main motivation that guided this research is the creation of an
engineering framework for exotic financial derivatives. Efficient
computability on current hardware has been and remains throughout
this article our key motivating concern. To this end, we work
towards an algebraization of Probability Theory that reduces all
calculations to matrix manipulations which can be performed
efficiently and in particular to matrix multiplications. Similarly
to the standard framework of algebraic topology, \cite{Spanier1966},
we consider processes taking values in separable topological spaces
and approximate continuous domains by means of simplicial sequences.
To establish convergence in the continuous limit, we directly
estimate convergence rates for probability transition kernels in the
continuous space limit following an approach similar in spirit to
Constructive Lattice Field Theory, see \cite{GJ1987}. Similarly to
constructive field theory, sets of axioms on $n$-point functions are
used to identify processes and renormalization group transformations
are used to control the continuous limit. Following
\cite{Naimark1959}, the approach is grounded upon the algebraic
theory of integration on locally compact Hausdorff separable
topological spaces.

Calculations with stochastic processes are carried out using
operator methods developed in Quantum Mechanics,
\cite{LandauLifshits} and systematized in Mathematical Physics
references such as \cite{ReedSimon}. In Finance, operator methods
have been developed along two independent and
 non-overlapping streams of research, one by Ait-Sahalia,
Hansen and Scheinkman who focused on econometric estimations in a
series of papers reviewed in \cite{AHS2005}, see also
\cite{Ait-Sahalia96}, \cite{HST98}, \cite{HS95}. The second stream
of research is by the author and collaborators who instead worked on
derivative pricing for path dependent and correlation derivatives,
see \cite{ACDV}, \cite{AChen4}, \cite{AChen5}, \cite{AKusnetsov5},
\cite{AL2004}, \cite{ALoMijatovic}, \cite{ATrovato3},
\cite{ATrovato2}, \cite{AVidler}, \cite{AJones} and
\cite{AOsseiran}. In this paper, we attempt to systematize the
mathematical framework of pricing theory in the operator formalism
from our own viewpoint, reserving to future work the task of
pursuing overlaps with the econometric literature.

The references quoted above are all relevant to our undertaking and
provided motivations on many levels. However, in an effort to keep
this writing self-contained, we are not going to assume any previous
knowledge of the reader.

To ground the mathematical framework, we obtain sharp pointwise
convergence estimates for probability kernels and its derivatives.
More precisely, we show that probability kernels converge pointwise
at rates of order $O(h^2)$, where $h$ is the lattice spacing. The
result applies to a large class of diffusion processes and
extensions thereof including smooth space inhomogeneities, regime
switching, finite activity jumps and some degree of time
inhomogeneities. We also prove similar convergence results for the
fast exponentiation method, our preferred numerical method for
exponentiating Markov generators, showing that errors in this scheme
are also of order $O(h^2)$, in the sense of pointwise convergence
for the probability kernel. In the particular case of Brownian
motion, we show that similar $O(h^2)$ pointwise error estimates
apply also to derivatives of the probability kernels and that
the power 2 in the $O(h^2)$ bounds is actually sharp.

The interest in convergence estimates was prompted by the desire of
understanding the mechanisms behind the empirically observed
smoothness and robustness in the calculation of price sensitivities
with the methods in our applied papers. We also observed empirically
that the fast exponentiation algorithm is stable under single
precision floating point arithmetics. We find that key to a high
precision numerical framework for sensitivities is handling the time
coordinate either as continuous or very finely discretized. A
sufficiently fine discretization is defined as one for which
explicit differentiation schemes are stable and typically correspond
to a hourly time scale in applications. Typically, weekly time steps
would permit stable implicit differentiation schemes but would not
allow for as much stability in the calculation of price
sensitivities and the probability kernels we require to evaluate and
manipulate.  This motivates us to avoid implicit differentiation
schemes on coarse time intervals.

Measure changes and time changes are defined constructively and a
version of the Fundamental Theorem of Finance is re-obtained. The
Cameron-Martin-Girsanov's theorem, see \cite{CameronMartin},  and
Ito's lemma, see \cite{Ito1951}, are proved twice in different ways
with operator methods. We also derive the Feynman-Kac formula, see
\cite{Feynman1948} and \cite{Kac1950}. One of the key results is an
extension of the Feynman-Kac-Ito formula in three different
directions. This formula concerns the characteristic function of a
stochastic integral over a diffusion process. In our formalism, this
formula becomes a block-diagonalization algorithm for large matrices
associated to path-dependent processes. The extension we discuss (i)
covers Markov processes more general than diffusions, (ii) allows
for a class of path-dependent processes we name {\it Abelian} which
extends the notion of stochastic integral and (iii) generalizes the
harmonic analysis framework to include extensions of trigonometric
Fourier transforms. The theory of Abelian processes finds numerous
practical applications to path dependent options and is applicable
to the great majority of path-dependent payoffs, from volatility
swaps to cliquets, range accruals, lookback options, target
redemption notes and more. We also give a version of Dyson's formula
to accelerate the pricing of path-dependent options given by Abelian
processes by means of a moment expansion. Non-Abelian processes are
more difficult to handle but we single out a class admitting
block-factorizations (as opposed to a block-diagonalizing
transformation) and which are also amenable to numerical analysis by
matrix algebra. Finally, we illustrate the method of dynamic
conditioning that allows one to correlate possibly numerous
processes by means of kernel manipulations while preserving
marginals and not incurring into dimensional explosion.

The mathematical methods in this article are particularly efficient
as they lend themselves to transparent hardware acceleration on the
emerging multi-core GPU hardware platforms. These massively parallel
architectures are based on low-cost technologies that have been
developed for the games and high definition markets and are uniquely
suited to implement BLAS Level 3 routines such as matrix-matrix
multiplications with high efficiency. See also \cite{GotoGeijn} for
a state of the art account on matrix-matrix multiplication software
on CPUs.

The paper is organized as follows. In Section 2, we introduce the
notion of simplicial sequence which is key to devising approximation
schemes for continuous valued process by means of a sequence of
Markov chains. In Section 3, we consider a general definition of
path functional and in Section 4 we give a general description of a
 stochastic process by means of an $n$-point function. Markov
Processes are introduced in Section 5, martingales and monotonic
processes in Section 6. In Section 7, we derive the Fundamental
Theorem of Arbitrage Free Pricing Theory. The classical results on
weak convergence of Markov generators by Bernstein, Bochner,
Kyntchine and Levy are re-obtained in Section 8. Time homogeneous
Markov Processes, fast exponentiation and spectral methods are
described in Section 9. In Section 10, we carry out a constructive
analysis of Brownian motion and prove convergence estimates. In
Section 11, we study the spectrum of diffusion generators. Sharp
pointwise kernel estimates are extended to general diffusion
processes in Section 12. In Section 13, we give estimates for the
convergence rate of time discretisation schemes of the type we
advocate for applications, i.e. based on fast exponentiation.
Section 14 reviews the derivation of hypergeometric Brownian motion
and particular cases such as the CEV model. In Section 15, we study
stochastic integrals and obtain Ito's formula for diffusion
processes and of Girsanov's theorem. Section 16 contains a
derivation of the Feynman-Kac formula for bridges over general
Markov processes. The general notion of Abelian process in
continuous time is introduced in Section 17. In Section 18, we
discuss the discrete time case. In this section, we introduce also
the notion of non-resonant block-diagonalisation scheme which
provides a numerically useful extension of Fourier analysis based on
trigonometric functions. Dyson's formula and moment expansions are
in Section 19, covering the uni-variate case, and Section 20,
covering the multivariate case. These two sections include
applications to exotic volatility derivatives. Block factorizations
and applications to snowballs and soft calls are in Section 21.
Dynamic conditioning and multi-factor correlation modeling is
discussed in Section 22. Conclusions end the paper.

\section{Measure Theory on Simplicial Sequences}

Let $d>0$ be an integer, consider the space $\RRR^d$ and the
sequence of lattices $h_m \ZZZ^d$ where $h_m = 2^{-m}$.

\begin{definition} {\bf (Lattices.)}
If $A\subset h_m\ZZZ^d$, the {\it convex hull} of $A$ in $\RRR^d$ is
denoted by ${\rm Hull}(A)$. The {\it interior} of $A \subset h_m
\ZZZ^d$ is denoted with ${\rm Int }(A)$ and is defined as the set of
all sites $x \in A$ contained in $A$ along with each one its
neighbors in $h_m \ZZZ^d$ at distance $h_m$.
\end{definition}

\begin{definition} {\bf (Simplicial Sequences.)}
A {\it bounded simplicial sequence} is given by an integer $m_0>0$,
and a sequence of subsets $A_m\in\ h_m \ZZZ^d$ defined for all
$m>m_0$ such that
\begin{itemize}
\item{} ${\rm Hull}(A_m) \subset {\rm Hull}(A_{m'})$  whenever $m < m'$.
\item{} for all $m>0$ and all internal points $x\in
{\rm Int }(A_m)$ there is a $M>0$ and an $\epsilon>0$ such that, for
all $m'>M$ and for all $y\in h_{m'}\ZZZ^d$ with $d( x, y) <
\epsilon$ we have that $y\in A_{m'}$ .
\end{itemize}
\end{definition}

We follow a constructivist logic paradigm according to which in
order to identify a set or a sequence of sets one has to explicitly
state how to construct it, possibly with a recursive algorithm, and
it must be possible to decide each step of the recursion in a finite
number of logical steps.

\begin{definition} {\bf (Lattice Functions.)}
Let $A = (A_m)$, $m\ge m_0$ be a bounded simplicial sequence. A {\it
real valued simplicial function}, denoted by $f : A \to \RRR$, is
defined as a sequence of functions $f_m: A_m \to \RRR$ such that
$f_{m'}(x) = f_m(x)$ for all $m' > m \ge m_0$ and all $x\in {\rm
Int}(A_m)$. The function $f : A \to \RRR$ is said {\it uniformly
bounded} if there is a constant $c> 0$ such that $\lvert
f_m(x)\lvert <c$ for all ${x \in A_m}$. The function $f : A \to
\RRR$ is {\it uniformly continuous} if it is uniformly bounded and
for all $\epsilon>0$ there is a $\delta>0$ such that if $x, y \in
h_m \ZZZ$ and $d(x, y)< \delta$ we have that $\lvert f(x)- f(y)
\lvert < \epsilon$.
\end{definition}

\begin{definition}{\bf (Equivalence of Lattice Functions.)}
Let $A = (A_m)$, $m\ge m_0$, be a bounded simplicial sequence and $f
= (f_m)$ and $ g = (g_m)$ are two uniformly continuous real valued
functions on $A$. We say that these series provide equivalent
representations of the same function in case, for all $m>0$ and all
all $x \in h_m \ZZZ$ we have that $\lim_{m'\to\infty} \lvert
f_{m'}(x) - g_{m'}(x) \lvert = 0$.
\end{definition}

Let ${\mathcal C}(A)$ be the set of all continuous functions on $A$
endowed with the natural structure of $C^*$ algebra given by the
operations of sum, multiplication by a scalar and pointwise
multiplication.

\begin{definition} {\bf (Integrals.)}
Let $A = (A_m)$, $m\ge m_0$, be a bounded simplicial sequence. An
{\it integral} is given by a sequence $I = (I_m)$ where $I_m$ is a
linear functional on the linear space of functions $f_m : A_m \to
\RRR$ such that
\begin{itemize}
\item{} $I_m(f_m)\ge 0$ whenever $f_m(x)\ge0$ for all $x\in A_m$.
\item{} The following limit exists for all continuous functions $f = (f_m) \in
{\mathcal C}(A)$:
\begin{equation}
I(f) \equiv \lim_{m\to\infty}  I_m( f_m)
\end{equation}
\item{} The functionals $I_m(f_m)$ defined above satisfies the following bound:
\begin{equation}
I_m(f_m) \leq c \lvert\lvert f_m \lvert\lvert_{\infty}
\end{equation}
for some constant $c>0$ and all functions $f_m$.
\end{itemize}
An integral is said to correspond to a probability measure if
$I_m(1) = 1$.
\end{definition}

\begin{definition}{\bf (Equivalent Integrals.)}
Let $A = (A_m)$, $m\ge m_0$, be a bounded simplicial sequence and
let $(I_m)$ and $ (J_m)$ be two integrals. We say that these series
provide equivalent representations of the same integral $I$ if
$\lim_{m\to\infty} \lvert I_m(f_{m}) - J_{m}(f_{m}) \lvert = 0$ for
all uniformly continuous functions $(f_m)$.
\end{definition}

Given an integral $I$ on the simplicial sequence $A$, for all
$p\ge1$, one defines the following semi-norm on the space of
continuous functions ${\mathcal C}(A)$:
\begin{equation}
\lvert\lvert f \lvert\lvert_{p} = (I(\lvert f \lvert^p ))^{1/p}.
\end{equation}
Let $L^p(A; I)$ be the linear space obtained by completing
${\mathcal C}(A)$ with respect to the semi-norm $\lvert\lvert f
\lvert\lvert_{I, p}$ and identifying equivalence classes of
functions at zero distance. More precisely, $L^p(A; I)$ is the
linear space of the Cauchy sequences $(f_m)$ in ${\mathcal C}(A)$
with respect to the norm $\lvert\lvert f \lvert\lvert_{I, p}$ modulo
the linear space of the Cauchy sequences converging to a limit of
zero $L^p$-norm.

\begin{definition} {\bf (Summable Functions.)}
A function is called summable if it is in $L^1(A; I)$ and square
summable if it is in $L^2(A; I)$.
\end{definition}

\begin{theorem}{\bf (Monotonic Sequences.)}
 Let $f_k$ be a non-decreasing sequence of summable
functions and consider the function $f = \lim_{k\to\infty} f_k$ and
the limit $\bar I = \lim_{k\to\infty} I(f_k)$. Then either $\bar I =
\infty$ or $f$ is summable and $\bar I =I(f)$.
\end{theorem}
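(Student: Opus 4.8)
The plan is to run the classical Beppo Levi argument inside the Banach space $L^1(A;I)$. I would first record the relevant properties of $I$. From $-\abs{f_m}\le f_m\le\abs{f_m}$ on $A_m$ and positivity of each $I_m$ we get $\abs{I_m(f_m)}\le I_m(\abs{f_m})$; letting $m\to\infty$ gives $\abs{I(f)}\le I(\abs f)=\norm{1}{f}$ for every $f\in{\mathcal C}(A)$. Hence $I$ is $1$-Lipschitz for $\norm{1}{\cdot}$ and extends uniquely to a bounded linear functional on the completion $L^1(A;I)$, still written $I$, which stays positive and is therefore monotone: $g\le h$ forces $I(g)\le I(h)$. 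Applying monotonicity to $f_k\le f_{k+1}$ shows that $(I(f_k))_k$ is non-decreasing and bounded below by $I(f_1)$, so the limit $\bar I=\lim_k I(f_k)$ exists in $[I(f_1),+\infty]$, as the statement asserts.

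If $\bar I=+\infty$ there is nothing to prove, so suppose $\bar I<\infty$. For $j\ge k$ the monotonicity of $(f_k)$ gives $\abs{f_j-f_k}=f_j-f_k$, whence
\begin{equation}
\norm{1}{f_j-f_k}=I(f_j-f_k)=I(f_j)-I(f_k),
\end{equation}
which tends to $0$ as $j,k\to\infty$ because $I(f_j),I(f_k)\to\bar I<\infty$. Thus $(f_k)$ is Cauchy in $L^1(A;I)$, and since $L^1(A;I)$ is complete — it is by construction a completion — the sequence converges in $\norm{1}{\cdot}$ to some $\tilde f\in L^1(A;I)$; continuity of $I$ then yields $I(\tilde f)=\bar I$.

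The remaining point — and, to my mind, the only substantial one — is to identify $\tilde f$ with the pointwise limit $f=\lim_k f_k$, since the $L^1$-limit and the pointwise limit are produced by quite different mechanisms. I would pass to a subsequence $(f_{k_j})$ with $\norm{1}{f_{k_{j+1}}-f_{k_j}}\le 2^{-j}$ and run a Riesz--Fischer type argument: the non-decreasing partial sums $\sum_{i<j}(f_{k_{i+1}}-f_{k_i})$ have integrals bounded by $\sum_i 2^{-i}<\infty$, hence converge outside a set of zero $L^1$-measure to a summable function that must coincide with $\tilde f$; and since $f_k(x)\uparrow f(x)$ at each admissible site $x$, that function is also $f$. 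The subtlety to watch is a potential circularity, since this argument uses the monotone convergence statement for non-decreasing sequences of \emph{continuous} functions. That special case, however, is elementary here: each $A_m$, being a bounded subset of the lattice $h_m\ZZZ^d$, is finite, so the relevant limiting domain is compact, Dini's theorem upgrades pointwise decrease to $0$ into uniform decrease, and the bound $I_m(g_m)\le c\norm{\infty}{g_m}$ then forces the integrals to $0$. One may also sidestep the issue altogether by realizing the completion $L^1(A;I)$ in the Daniell--Stone manner, as differences of pointwise limits of monotone sequences drawn from ${\mathcal C}(A)$, a description into which the theorem is essentially built. In either route we conclude that $f$ is summable, $f=\tilde f$ in $L^1(A;I)$, and $I(f)=\bar I$.
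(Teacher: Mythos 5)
Your proof is correct and follows essentially the same route as the paper: monotonicity gives $\|f_j-f_k\|_1=I(f_j)-I(f_k)$, so convergence of $I(f_k)$ to a finite $\bar I$ makes $(f_k)$ Cauchy in $L^1(A;I)$, and completeness of the space yields a summable limit with integral $\bar I$. The extra work you add — extending $I$ continuously and positively to the completion, and identifying the $L^1$-limit with the pointwise limit $f$ via the Riesz--Fischer/Daniell-type argument (with the circularity resolved through Dini plus the bound $I_m(g_m)\le c\|g_m\|_\infty$) — is precisely the step the paper's two-line proof passes over in silence, so it supplements rather than departs from the published argument.
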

\begin{proof}
If $\bar I < \infty$, then whenever $k>j$ we have that $\lvert\lvert
f_k - f_j \lvert\lvert_1 = I(f_k - f_j)$. Since the sequence
$I(f_k)$ is uniformly increasing, it is Cauchy. Hence the sequence
$f_k$ is Cauchy in $L^1(A; I)$. Its limit $f$ therefore belongs to
$L^1(A; I)$ since this space is complete.
\end{proof}

\begin{definition} {\bf (Measurable Functions.)}
A function is called measurable if it can be represented as the
limit of a non-decreasing sequence of summable functions.
\end{definition}

\begin{theorem} {\bf (Dominated Convergence.)} Let $f_k$ be a sequence of measurable
functions, suppose there is a summable function $g$ such that
$\lvert f_k(x) \lvert \leq \lvert g(x) \lvert$ and suppose also that
the limit $f(x) = \lim_{k\to\infty} f_k(x)$ exists in a pointwise
sense. Then $f$ is a summable function and $I(f) = \lim_{k\to\infty}
I(f_k)$.
\end{theorem}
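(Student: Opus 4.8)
The plan is to deduce the theorem from the Monotonic Sequences theorem by the classical envelope/sandwich argument, once the lattice operations have been transported to the completed space. I would begin by noting that the commutative $C^*$-algebra ${\mathcal C}(A)$ contains $|h|=(h^2)^{1/2}$, hence $\max(f,g)=\tfrac12(f+g+|f-g|)$ and $\min(f,g)=\tfrac12(f+g-|f-g|)$; since $|\max(f_1,g)-\max(f_2,g)|\le|f_1-f_2|$ pointwise (and likewise for $\min$), these operations are non-expansive for $\|\cdot\|_\infty$ and for $\|\cdot\|_1$ and therefore extend to $L^1(A;I)$, which thereby becomes a lattice. I would also record that positivity of $I$ passes to $L^1(A;I)$: from $I(|f|)=\|f\|_1$ and $-|f|\le f\le|f|$ one gets $|I(f)|\le\|f\|_1$, so $I$ is $\|\cdot\|_1$-continuous and monotone there.

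The key preliminary is that a measurable function $\phi$ with $|\phi|\le|g|$ for some summable $g$ is itself summable. Writing $\phi=\lim_j\phi_j$ with $(\phi_j)$ non-decreasing and summable, the truncations $\tilde\phi_j=\max\!\big(\min(\phi_j,|g|),-|g|\big)$ are summable by the lattice property, non-decreasing in $j$, satisfy $|\tilde\phi_j|\le|g|$ so that $I(\tilde\phi_j)\le I(|g|)<\infty$, and converge pointwise to $\phi$ because $|\phi|\le|g|$; the Monotonic Sequences theorem then yields $\phi\in L^1(A;I)$. In particular each $f_k$ is summable, so $I(f_k)$ is defined.

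Now set $G_n=\inf_{k\ge n}f_k$ and $H_n=\sup_{k\ge n}f_k$. For $N\ge n$ the function $-\min(f_n,\dots,f_N)=\max(-f_n,\dots,-f_N)$ is, as $N$ grows, a non-decreasing sequence of summable functions with integrals bounded by $I(|g|)$ and pointwise limit $-G_n$, so $G_n$ is summable by the Monotonic Sequences theorem; symmetrically $H_n$ is summable, and $|G_n|,|H_n|\le|g|$. The sequence $(G_n)_n$ is non-decreasing and $(H_n)_n$ is non-increasing, and pointwise existence of $f(x)=\lim_k f_k(x)$ gives $G_n\to f$ and $H_n\to f$ pointwise. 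Applying the Monotonic Sequences theorem to $(G_n)_n$ — integrals non-decreasing and bounded by $I(|g|)<\infty$ — shows $f$ is summable with $I(f)=\lim_n I(G_n)$, and applying it to $(-H_n)_n$ gives $I(f)=\lim_n I(H_n)$. Finally, $G_n\le f_k\le H_n$ for $k\ge n$ forces $I(G_n)\le\liminf_k I(f_k)\le\limsup_k I(f_k)\le I(H_n)$ for every $n$; letting $n\to\infty$, both bounds tend to $I(f)$, so $\lim_k I(f_k)=I(f)$.

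The one place where care is really needed is the transport of structure to the completion: one must verify that $\min$, $\max$, truncation, and the pointwise limits used above are well defined on equivalence classes of Cauchy sequences in ${\mathcal C}(A)$, and that the positivity and boundedness axioms for $I$ survive so that $\phi\le\psi\Rightarrow I(\phi)\le I(\psi)$ and the $\|\cdot\|_1$-continuity of $I$ may be invoked freely. Granting that bookkeeping, the envelope construction and the sandwich are entirely routine.
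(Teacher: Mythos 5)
Your proof is correct, and it takes a genuinely different (and in fact more complete) route than the paper. The paper builds a single one-sided approximation: it sets $u_k=\max(u_{k-1},\min(f_k,f))$, observes that this is non-decreasing, dominated by $\lvert g\lvert$, and converges pointwise up to $f$, and then invokes the Monotonic Sequences theorem; this directly yields summability of $f$ and $I(f)=\lim_k I(u_k)$, but the identification of $\lim_k I(f_k)$ with $I(f)$ is asserted rather than argued, and the fact that each $I(f_k)$ is even defined (i.e.\ that a measurable function dominated by a summable one is summable) is left implicit. You instead run the classical two-sided envelope argument: your preliminary truncation lemma $\tilde\phi_j=\max(\min(\phi_j,\lvert g\lvert),-\lvert g\lvert)$ settles summability of each $f_k$, the finite min/max sequences plus the Monotonic Sequences theorem give summability of $G_n=\inf_{k\ge n}f_k$ and $H_n=\sup_{k\ge n}f_k$, and the sandwich $G_n\le f_k\le H_n$ together with monotonicity of $I$ on $L^1(A;I)$ actually proves $\lim_k I(f_k)=I(f)$, which is the part the paper glosses over. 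The price is the bookkeeping you flag at the end — extending $\min$, $\max$ and pointwise limits to equivalence classes in the completion and checking that positivity of $I$ survives — but the paper's own proof leans on exactly the same unexamined structure (it takes $\min(f_k,f)$ and pointwise convergence of measurable functions for granted), so your argument is no less rigorous on that score and strictly stronger on the convergence-of-integrals step.
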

\begin{proof}
Consider the sequence $u_k$ constructed iteratively so that $u_1 =
\min(u_1, f)$ and $u_k = \max(u_{k-1}, \min(f_k, f))$. The sequence
$u_k$ is uniformly non-decreasing and converges to $f(x)$.
Furthermore, $I(\lvert u_k \lvert ) \leq I(\lvert g\lvert)$ for all
$k$. Hence $f$ is summable and $I(f) = \lim_{k\to\infty} I(f_k)$.
\end{proof}

Let ${\mathcal V}$ be the linear space of all functions which can be
expressed as the limit of a non-decreasing sequence of continuous
functions $f(x) = \lim_{k\to\infty} f_k(x)$ such that the following
norm is finite
\begin{equation}
\lvert\lvert f \lvert\lvert_\infty = \lim_{k\to\infty} \lvert\lvert
f_k \lvert\lvert_\infty.
\end{equation}
$L^\infty(A; I)$ is defined as the completion of the linear space
${\mathcal V}$ with respect to the uniform norm.

\begin{definition} {\bf (Essentially Bounded Functions.)}
A function is called {\it essentially bounded} if it is in
$L^\infty(A; I)$.
\end{definition}

\begin{definition} {\bf (Absolute Continuity.)}
Let $I$ and $J$ be two integrals on the simplicial sequence $A =
(A_m)$. $I$ is said absolutely continuous with respect to $J$ if
these two integrals admit simplicial representations $I = (I_m)$ and
$J = (J_m)$ and there exists a $J-$summable function $g = (g_m)$
such that $I_m(f_m) = J_m(g_m f_m)$ for all $I-$summable functions
$f = (f_m)$.
\end{definition}

\section{Path Functionals}

To introduce processes we need to specify the notion of measure on a
set of paths. One could possibly introduce simplicial sequences in
space-time but we refrain from doing so and consider instead the
time coordinate as a continuous variable. In the time direction, we
are only going to be using Riemann integrals for piecewise smooth
functions, so that the underlying time discretisation one might
possibly imagine is quite straightforward and keeping track of it
would only lead to notational complexities.

Let us consider a bounded simplicial sequence $A = (A_m)$ and let
$[T, T']\subset\RRR$ be a fixed finite time interval. Let us
consider the sequence $\Lambda = (\Lambda_{m})$ where $\Lambda_{m} =
A_m \times [T, T']$.

Given an integer $q\ge0$, let $\h^q_{m}$ denote the set of all
functions $y_\cdot :[T, T']\to A_m$ which are constant on a family
of $q$ mutually disjoint sub-intervals of the form $[t_{k_i},
t_{k_{i+1}}), i=0, ..q$ with $t_{k_0} = 0$ and $t_{k_{q+1}} = T'$,
spanning the interval $[T, T']$. Notice that $(\h^q_{m})$ for $q$
fixed is itself a finite dimensional manifold with boundaries. In
fact, a function $y \in \h^q_{m}$ is characterized by the ordered
sequence $t_0=T \leq t_1 \leq .... \leq t_{q+1} = T'$ of time points
between which $y_t$ is constant and a set of values $y_0,
...y_{q-1}$ such that, $y_s = y_i$ for all $s\in[t_i, t_{i+1})$ and
for all $i=0,..q-1$.

The path spaces $\h^q_{m}$ for $q=0,1,...$ can be regarded as nested
into each other $\h_0(T, T') \subset \h_1(T, T') \subset ...$. In
fact, a path $y_t$ in $\h^q_{m}$ is also a path in $\h_{m'}(T, T')$
if $m<m'$. Let $\h(T, T') = \cup_{m=0,1,..} \h^q_{m}$ be the union
of all path spaces containing paths with a finite number of jumps.

\begin{definition}{\bf (Function Algebras.)}
Let us denote with $\C^q(\Lambda)$, the $C^*$ algebra of all
uniformly continuous sequences of simplicial functions $F = F_{m}:
\h^q_{m} \to \RRR$ endowed with the operations of sum,
multiplication and with respect to the uniform norm defined as
follows:
\begin{equation} \lvert\lvert F \lvert\lvert_\infty =
\lim_{m\to\infty} \sup_{y_\cdot \in \h^q_{m}} \lvert
F(y_\cdot)\lvert.
\end{equation}
\end{definition}

\begin{definition} {\bf (Path Functionals.)} A {\it path functional} is a sequence
$F = (F^q), q=0, 1, ...$ of continuous simplicial functions $F^q =
(F^q_{m})\in C^q(\Lambda)$ satisfying the following mutual
compatibility condition:
\begin{equation}
F^{q'}_{m}(y_\cdot) = F^q_{m}(y_\cdot), \;\;\;{\rm for\; all}
\;\;\;\; y_\cdot \in \h^q_{m}\;\;\;{\rm and\; all}\;\;\; q'>q.
\end{equation}
\end{definition}

\begin{definition} {\bf (Non-anticipatory Path Functionals.)}
Let $F(y_\cdot, t) = (F^q(y_\cdot, t)) \;\; t\in [T, T']$ be a
one-parameter family of path functionals. One says that this is a
{\it non anticipatory path functional} if
\begin{equation}
F^q_{m}(y_\cdot, t) = F^q_{m}(y_\cdot', t)
\end{equation}
whenever $y_s = y_{s}'$ for all $s\leq t$.
\end{definition}

Intuitively, non-anticipatory path functionals are indifferent to
information about the realization of the path $y_\cdot$ in the
argument at future times $s>t$. An elementary example of
non-anticipatory path functional is given by a function of two
arguments
\begin{equation}
F^q_{m}(y_\cdot, t) = F_0(y_t, t) \label{eq_defft}
\end{equation}
where $F_0: A_\infty \times [T, T'] \to \RRR$ is a piecewise smooth
one-parameter family of functions. Applications typically call for
functions $F_0(y, t)$ which are piecewise smooth in $t$ for each
$y\in\Lambda$ with possibly a discrete set of jump discontinuities.
We follow the usual convention according to which if jumps occur,
then the discontinuity is of {\it cadlag} type, i.e. right
continuous and with a left limit.

A less elementary example of non-anticipatory path functional one
often encounters is given by integrals of the form:
\begin{equation}
F^q_{m}(y_\cdot, t) \equiv \int_{T}^{t} d s_1 ... \int_{s_{k-1}}^{t}
d s_n a(t; s_1, .... s_k) F_1(y_{s_1}, s_1) ... F_k(y_{s_k}, s_k).
\label{eq_defft2}
\end{equation}
where the $F_j: A_\infty \times [T, T'] \to \RRR$, $j=1, ... k$, are
one-parameter families of lattice functions and also $a(t; s_0, ....
s_k)$ is a function of the time coordinates. Applications typically
call for functions $F_j(y, t)$ which are piecewise smooth in $t$ for
each $y\in\Lambda$ with possibly a discrete set of jump
discontinuities. Also in this case, we follow the convention
according to which at the points of jump discontinuity the function
is {\it cadlag}. The same regularity assumptions will be postulated
for the functions $a(t; s_0, .... s_k)$ with respect to each of its
arguments. Although the regularity assumption for these path
functionals is sufficient for applications, they are not strictly
needed and can be relaxed by taking limits.

\section{n-point Functions}

\begin{definition} {\bf (Filtered Probability Spaces.)}
Consider the $C^*$ algebra $\P$ generated by the functionals of form
(\ref{eq_defft}) and (\ref{eq_defft2}) by taking linear combinations
of finite products and completing the resulting normed space. A {\it
filtered probability space upon the lattice $\Lambda$} is defined as
a bounded linear functional on the $C^*-$algebra $\P$.
\end{definition}

A constructive definition of a stochastic process can be given with
various degrees of generality. We don't aim here at the utmost
generality but instead at pedagogical simplicity and at ultimately
explaining how to frame the theory of Markov processes. As a step
toward this goal, we introduce here a family of time ordered
$n-$point functions
\begin{equation}
\Phi_m(y_1, t_1; ... ;y_n, t_n \lvert y_0)
\end{equation}
defined for $y_1, ... y_n\in A_m$ and $t_1, ... t_n \in [T, T']$
which is assumed to be piecewise differentiable in the time
coordinates. In addition, we assume that the following properties
hold for all $T < t_1\leq ... \leq t_n \leq T'$:
\begin{align*}
&({\rm SP1}) \;\;\;\;\; \Phi(y_1, t_1; ... ;y_n, t_n \lvert y_0) \ge
0 \;\;\;\;\;
\forall y_0, y_1,.. y_n \in\Lambda, \\
&({\rm SP2}) \;\;\;\;\; \sum_{y_1, ... y_n \in\Lambda } \Phi(y_1,
t_1; ... ;y_n, t_n \lvert y_0) = 1
\;\;\;\;\; \forall y_0\in\Lambda, \; \\
&({\rm SP3}) \;\;\;\;\; \Phi(y_1, t_1; ...; y_i, t_i; y_{i+1},
t_{i+1}; ... ;y_n,
t_n \lvert y_0) = 0 \\
&\hskip4cm{\rm if} \; t_i = t_{i+1}
\; {\rm for \; some} \; i = 1 ... n-1 \;\;{\rm and \; if \;}  y_i\neq y_{i+1}\\
&({\rm SP4}) \;\;\;\;\; \sum_{y_i} \Phi(y_1, t_1; ... ;y_{i-1},
t_{i-1}; y_i, t_i;
y_{i+1}, t_{i+1}; ... ;y_n, t_n \lvert y_0) = \\
&\hskip4cm \Phi(y_1, t_1; ...;y_{i-1}, t_{i-1}; y_{i+1}, t_{i+1};
... ;y_n, t_n \lvert y_0)\;\;\;\; \forall y_1,.. y_n \in\Lambda.
\end{align*}

Notice that for each fixed starting point $y_0$ and fixed sequence
$T < t_1\leq ... \leq t_n\leq T'$, the function $\Phi_m(y_1, t_1;
... ;y_n, t_n \lvert y_0)$ is a probability distribution function in
the arguments $y_1,...y_n$. This is interpreted as the probability
distribution density for paths starting from the site $y_0$ at time
$T$ and achieving the values $y_1,...y_n$ at times $t_1, ... t_n$.

\begin{definition} {\bf (Stochastic Processes.)}
An adapted (stochastic) process is given by a non-anticipatory path
functional $F\in\P$ and a measure on $P$ defined by a family of
$n-$point functions $(\Phi)_{n=1,2..}$.
\end{definition}

Given an adapted process of the form $F_t \equiv F(y_t, t)\in\P$ and
given a $t>T$, the expectation subject to the initial condition $y_T
= y_0$ of the value attained by the process $F_t$ at time $t$ is
given by
\begin{equation}
E\big[F_t \lvert y_T = y_0\big] = \sum_{y_1\in\Lambda} \Phi(y_1, t
\lvert y_0) F(y_1, t).
\end{equation}
For an adapted process of the form (\ref{eq_defft2}) instead, the
conditional expectation is given by
\begin{equation}
E\big[F_t \lvert y_T = y_0\big] = \sum_{y_1,..y_n} \int_{T}^{t} d
s_1 ... \int_{s_{n-1}}^{t} d s_n \Phi(y_1, s_1; ... ;y_n, s_n \lvert
y_0) a(t; s_1, ... s_n) F_1(y_1, s_1) ...  F_n(y_n, s_n).
\end{equation}
Also higher moments can be computed. To keep expressions simple,
consider a path functional of the form
\begin{equation}
F_t \equiv \int_{T}^{t} d s_1 a(t; s_1) F_1(y_{s_1}, s_1).
\end{equation}
The variance of this process at a future point in time conditional
to the starting point at time $T$ is given by
\begin{equation}
E\big[F_t^2 \lvert y_T = y_0\big] =  2 \sum_{y_1, y_2}  \int_{T}^{t}
d s_1 \int_{s_1}^{t} d s_2 \Phi(y_1, s_1; y_2, s_2 \lvert y_0) a(t;
s_1) a(t; s_2) F_1(y_1, s_1) F_1(y_2, s_2).
\end{equation}
Notice that the factor 2 compensates for the time-ordering needed to
recast the expression in such a way that we can apply to it a
2-point function. This expression and its generalizations are used
extensively in the applications to path-dependent options discussed
below.

More generally, consider two path functionals $F_t$ and $G_t \in
L^1(\h(T, T'), \mu)$, where $F_t$ is defined as in (\ref{eq_defft2})
and
\begin{equation}
G_t \equiv \int_{T}^{t} d s_1 ... \int_{s_{q-1}}^{t} d s_q b(t; s_1,
.... s_q) G_1(y_{s_1}, s_1) ...  G_q(y_{s_q}, s_q).
\end{equation}
Then we can compute a mixed moment as follows:
\begin{align*}
& E\big[F_t G_t \lvert y_T = y_0\big] = \sum_{y_1,.. y_{n+q}}
\int_{T}^{t} d s_1 ... \int_{s_{n+q-1}}^{t} d s_{n+q} \;\;\; \Phi(
y_1, s_1; .... ; y_{n+q}, s_{n+q})
\\
& \hskip3.5cm \sum_\pi a(t; s_{\pi(1)}, .... s_{\pi(n)})
F_1(y_{\pi(1)}, s_{\pi(1)}) ...  F_n(y_{\pi(n)}, s_{\pi(n)})
\\
&\hskip3.5cm b(t; s_{\pi(n+1)}, .... s_{\pi(n+q)}) G_1(y_{\pi(n+1)},
s_{\pi(n+1)}) ... G_q(y_{\pi(n+q)}, s_{\pi(n+q)}),
\end{align*}
where the sum ranges over all the permutations $\pi$ of the time
ordered sequence $s_1\leq ... \leq s_{n+q}$ such that
$s_{\pi(1)}\leq ... \leq s_{\pi(n)}$ and $s_{\pi(n+1)}\leq ... \leq
s_{\pi(n+q)}$. Higher conditional moments of any path functional of
the form above can be evaluated in a similar way.

$n$-point functions $\Phi(y_1, s_1; ... ;y_n, s_n \lvert y_0)$ are
conditioned to the starting point $y_0$. It is straightforward to
define variations of these $n$-point functions which reflect
conditioning on an initial stretch of a path $y_s$ for $s\in[T, t)$,
where $t>T$. Conditioning to a past history is equivalent to
restricting the integral over each manifold $\H(T, T')$ to a
sub-manifold which is part of its boundary. In general, this results
in rather clumsy expressions which are difficult to compute. In the
next section we specialize to the Markovian case for the underlying
lattice process and in this case there are no memory effects and
conditioning is more straightforward.

\begin{definition} {\bf (Radon-Nykodim Derivative.)}
Consider two sequences of $n$-point functions on the simplicial
sequence $A_m$ $\Phi^1_m(y_1, t_1; ... ;y_n, t_n \lvert y_0)$ and
$\Phi^2_m(y_1, t_1; ... ;y_n, t_n \lvert y_0)$ , where $m=1,2,...$.
The Radon-Nykodim derivative of $\Phi^1$ with respect to $\Phi^2$ is
given by the path functional defined as follows:
\begin{equation}
\rho(y_\cdot) = \lim_{n\to\infty} { \Phi^1_m(y_{t_1}, t_1; ...
;y_{t_n},t_n\lvert y_0) \over \Phi^2_m(y_{t_1}, t_1; ...
;y_{t_n},t_n\lvert y_0)}
\end{equation}
where $t_i = T+{i\over n}(T'-T)$. Notice that the Radon-Nykodim
derivative might possibly be infinite on some paths.

The measure in path space given by $(\Phi^1_m)$ is said to be
absolutely continuous with respect to $(\Phi^2_m)$ if the
Radon-Nykodim derivative is finite and summable.
\end{definition}

Finally, a word on stopping times.
\begin{definition} {\bf (Stopping Times.)}
A stopping time is an adapted process $\tau_t = \tau(y_\cdot, t)$
which can take only two values, by convention 0 and 1.
\end{definition}
Stopping times are often used in conjunction with other adapted
processes $F_t$ to construct stopped versions of it. If $F_t =
F(y_\cdot, t)$ is an adapted process and $R_t = R(y_\cdot, t)$ is a
second process, then a stopped version of $F_t$ corresponds to the
adapted process of function $\bar F(y_\cdot, t)$, where $\bar
F(y_\cdot, t) = F(y_\cdot, t)$ if $\tau_t = 0$ and $\bar F(y_\cdot,
t) = R(y_\cdot, t)$ if $\tau_t = 1$.

\section{Markov Processes}

\begin{definition} {\bf (Markov Propagator.)}
A Markov propagator on the simplicial sequence $A_m, m\ge m_0$ is
defined as a sequence of functions $U_m(y_1, t_1; y_2, t_2)$ where
$y_1, y_2\in A_m$ and $T \leq t_1 \leq t_2 \leq T' $ satisfying the
following Chapman-Kolmogorov axioms:
\begin{align*}
&({\rm CK1}) \;\;\;\;\; U_m(y_1, t_1; y_2, t_2) \ge 0 \;\;\;\;\;
\forall y_1, y_2 \in\Lambda,
\;\;\;\forall t_1\leq t_2 \in \RRR,\\
&({\rm CK2}) \;\;\;\;\; U_m(y_1, t_1; y_2, t_1) = \delta_{y_1y_2}
\;\;\;\;\; \forall y_1, y_2 \in\Lambda,
\;\;\;\forall t_1 \in \RRR,\\
&({\rm CK3}) \;\;\;\;\; \sum_{y_2\in\Lambda } U_m(y_1, t_1; y_2,
t_2) U_m(y_2, t_2;
y_3, t_3) = U_m(y_1, t_1; y_3, t_3)  \\
&\hskip8cm \forall y_1, y_3 \in\Lambda,\;\;\forall t_1\leq t_2\leq
t_3 \in \RRR.
\end{align*}
\end{definition}

Given a Markov propagator $U_m(y_1, t_1; y_2, t_2)$, one can define
a sequence of $n-$point function having all the necessary properties
by setting
\begin{equation}
\Phi_m(y_1, t_1; ... ;y_n, t_n \lvert y_0) = \prod_{j=1..n}
U_m(y_{j-1}, t_{j-1}; y_j, t_j).
\end{equation}
where $t_0 = T$.

\begin{definition}{\bf (Markov Process.)}
A filtered probability space generated by a Markov propagator is
called Markovian and the corresponding stochastic process is called
{\it Markov process.}
\end{definition}

\begin{definition} {\bf (Markov Generator.)}
If the matrix elements $U_m(y_1, t_1; y_2, t_2)$ are differentiable
functions of the time parameter $t_2$ in a right neighborhood of
$t_1$, then one defines the Markov generator at time $t_1$ as the
following right derivative:
\begin{equation}
\LLL_m(y_1, y_2; t_1) = \lim_{t_2 \downarrow t_1} {d\over dt_2}
U_m(y_1, t_1; y_2, t_2) \label{eq_gen}
\end{equation}
\end{definition}

\begin{proposition}
If $\LLL_m(y_1, y_2; t_1)$ is a Markov generator, then for all pairs
$y_1, y_2 \in A_m$ the following two properties hold:
\begin{align}
&({\rm MG1}) \;\;\;\;\; \LLL_m(y_1, y_2; t) \ge 0 \;\;\;\;\; {\rm if}\;\; y_1 \neq y_2, \\
&({\rm MG2}) \;\;\;\;\; \LLL_m(y_1, y_1; t) = - \sum_{y_2\neq y_1}
\LLL_m(y_1, y_2; t).
\end{align}
Viceversa, if $\LLL_m(y_1, y_2; t)$ is a differentiable
one-parameter family of matrices satisfying conditions $(A)$ and
$(B)$ above, then the differential equation (\ref{eq_gen}) admits
one and only one solution satisfying the initial condition $U_m(y_1,
t; y_2, t) = \delta_{y_1 y_2}$.
\end{proposition}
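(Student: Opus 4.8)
\emph{Proof proposal.} The plan is to prove the two implications separately, using only the axioms (CK1)--(CK3), the stochasticity $\sum_{y_2\in A_m}U_m(y_1,t_1;y_2,t_2)=1$ (which is part of the notion of Markov propagator: it is exactly what makes $\Phi_m=\prod U_m$ obey (SP2)), and the fact that for a \emph{bounded} simplicial sequence each $A_m$ is finite, so that all the sums below are finite and $\LLL_m(\cdot)$ is a bounded, continuous (indeed differentiable) family of matrices. For the forward implication, (MG1) is immediate: if $y_1\neq y_2$ then $U_m(y_1,t_1;y_2,t_1)=0$ by (CK2) while $U_m(y_1,t_1;y_2,t_2)\ge0$ by (CK1) for $t_2\ge t_1$, so every forward difference quotient $\big(U_m(y_1,t_1;y_2,t_2)-U_m(y_1,t_1;y_2,t_1)\big)/(t_2-t_1)$ is non-negative, hence so is the right derivative $\LLL_m(y_1,y_2;t_1)$ in (\ref{eq_gen}). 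For (MG2) I would differentiate $\sum_{y_2}U_m(y_1,t_1;y_2,t_2)=1$ in $t_2$ and let $t_2\downarrow t_1$; the derivative passes through the finite sum, giving $\sum_{y_2}\LLL_m(y_1,y_2;t_1)=0$, i.e.\ (MG2).

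For the converse, the first step is to reformulate (\ref{eq_gen}) as an honest ODE. Differentiating (CK3) in $t_3$ and letting $t_3\downarrow t_2$ shows that a Markov propagator satisfies the forward Kolmogorov equation $\partial_{s}U_m(t_1;s)=U_m(t_1;s)\,\LLL_m(s)$ with $U_m(t_1;t_1)=I$, and conversely evaluating that equation at $s=t_1$ returns (\ref{eq_gen}). So, given $\LLL_m$, I would \emph{define} $U_m(t_1;t_2)$ to be the solution of this linear matrix ODE on $[t_1,T']$. Because $\LLL_m(\cdot)$ is continuous on the compact interval $[T,T']$ and lives in finite dimension, classical linear ODE theory (Picard iteration, equivalently the Peano--Baker series) gives a unique global solution; and since any Markov propagator with generator $\LLL_m$ solves the same equation with the same initial datum, it must coincide with this $U_m$, which settles existence and uniqueness. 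It then remains to check that this $U_m$ really is a Markov propagator. Axiom (CK2) is the initial condition. Axiom (CK3) follows from uniqueness: for fixed $t_1\le t_2$, both $s\mapsto\sum_{y_2}U_m(y_1,t_1;y_2,t_2)U_m(y_2,t_2;y_3,s)$ and $s\mapsto U_m(y_1,t_1;y_3,s)$ solve the forward equation in $s$ with the same value at $s=t_2$, so they agree. Stochasticity follows by setting $r(s)=\sum_{y_2}U_m(y_1,t_1;y_2,s)$ and computing $r'(s)=\sum_{y_3}U_m(y_1,t_1;y_3,s)\sum_{y_2}\LLL_m(y_3,y_2;s)=0$ using (MG2), together with $r(t_1)=1$.

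The hard part is positivity, axiom (CK1), since the solution must stay non-negative even though $\LLL_m$ has negative diagonal entries. The plan is the standard shift trick: choose $c>0$ with $c+\LLL_m(y,y;t)\ge0$ for all $y\in A_m$ and $t\in[T,T']$ (possible by finiteness of $A_m$ and continuity of $\LLL_m$ on a compact interval), so that by (MG1) the matrix $M(t):=\LLL_m(t)+cI$ has all entries non-negative. Factor $U_m(t_1;t_2)=e^{-c(t_2-t_1)}\widetilde U_m(t_1;t_2)$, where $\widetilde U_m$ solves $\partial_{s}\widetilde U_m=\widetilde U_m M(s)$ with $\widetilde U_m(t_1;t_1)=I$; then the Peano--Baker series
\[
\widetilde U_m(t_1;t_2)=\sum_{n\ge0}\ \int_{t_1\le s_1\le\cdots\le s_n\le t_2}M(s_1)\cdots M(s_n)\,ds_1\cdots ds_n
\]
converges (entrywise it is dominated by $\exp\big((t_2-t_1)\sup_{s}\|M(s)\|\big)$), and each summand is an integral of a product of entrywise non-negative matrices, hence non-negative. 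Therefore $\widetilde U_m\ge0$ and $U_m\ge0$, which is (CK1), and the proposition follows.
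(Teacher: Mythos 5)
Your proposal is correct, and it takes a different (and more explicit) route than the paper, which in fact states the proposition without a formal proof: the paper's implicit argument is the path-exponential construction that immediately follows, namely the Euler product $U^N_m(t_1;t_2)=\prod_i\big(1+\delta t_N\,\LLL_m(t_i)\big)$, each factor of which is a stochastic matrix once $\delta t_N$ is below the threshold $\min_{y,t}\lvert\LLL_m(y,y;t)\rvert^{-1}$ (off-diagonal entries nonnegative by (MG1), diagonal entries nonnegative by the step-size restriction, rows summing to one by (MG2)), so the product is a probability kernel and the limit $N\to\infty$ delivers existence together with (CK1)--(CK3), uniqueness being left to standard linear ODE theory. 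You instead work directly with the forward Kolmogorov equation: Picard/Peano--Baker gives existence and uniqueness, (CK3) follows from uniqueness of solutions with the same initial datum, stochasticity from differentiating the row sums using (MG2), and positivity from the shift trick $M(t)=\LLL_m(t)+cI\ge0$ together with term-by-term nonnegativity of the Peano--Baker series (the trick works in the time-inhomogeneous case precisely because $cI$ commutes with everything). What your route buys is a complete, self-contained verification of all the propagator axioms and of uniqueness, which the paper leaves implicit; what the paper's route buys is that the same Euler product is reused immediately afterwards to derive the Dyson expansion and the path-integral representation, so positivity and stochasticity come for free along the way. One point you handled correctly and should keep explicit: the listed axioms (CK1)--(CK3) do not include row-normalization, so (MG2) in the forward direction genuinely needs the stochasticity $\sum_{y_2}U_m(y_1,t_1;y_2,t_2)=1$, which you justify as implicit in the definition (it is what makes the induced $n$-point functions satisfy (SP2), and the paper itself calls the Euler product a probability kernel); without that reading, a mass-losing propagator would satisfy (CK1)--(CK3) but violate (MG2).
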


The propagator $U_m(y_1, t_1; y_2, t_2)$ defined by the differential
equation in (\ref{eq_gen}) can be represented by means of a
so-called {\it path-exponential} defined as follows. Let $N>0$ be an
integer and let us consider the product
\begin{equation}
U^N_m(y_1, t_1; y_2, t_2) = \bigg( 1 + \delta t_N \LLL_m(t_1)\bigg)
\cdot .... \cdot \bigg( 1 + \delta t_N
\LLL_m(t_N)\bigg)\label{eq_pexp1}
\end{equation}
where $\delta t_N = {t_2-t_1\over N}$. If $N$ is so large that
\begin{equation}
(\delta t)_N < \min_{y\in\Lambda, t\in[t_1, t_2]} \LLL_m(y,y;
t)^{-1},
\end{equation}
then the operator product $U^N_m(y_1, t_1; y_2, t_2)$ in equation
(\ref{eq_pexp1}) is a probability kernel. Passing to the limit
$N\to\infty$, we find
\begin{equation}
\lim_{N\to\infty}U^N_m(y_1, t_1; y_2, t_2) = U_m(y_1, t_1; y_2,
t_2).
\end{equation}
By expanding the product in (\ref{eq_pexp1}) and passing to the
limit, we arrive at the following:
\begin{theorem} {\bf (Dyson expansion.)} The probability kernel can
be represented as the following convergent series:
\begin{equation}
U_m(y_1, t_1; y_2, t_2) = \bigg(1 + \sum_{n=1}^\infty
\int_{t_1}^{t_2} ds_1 \int_{s_1}^{t_2} ds_2 ... \int_{s_{n-1}}^{t_2}
ds_n \LLL_m(s_1) \LLL_m(s_2) .... \LLL_m(s_n)\bigg)(y_1, y_2).
\label{ma_dyson}
\end{equation}
\end{theorem}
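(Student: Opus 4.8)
The plan is to start from the finite-$N$ product formula \eqref{eq_pexp1}, expand it completely, sort the resulting terms by the number of factors of $\delta t_N \LLL_m$ that appear, and identify the limit $N\to\infty$ of the sorted sums with the iterated integrals in \eqref{ma_dyson}. First I would fix $t_1,t_2$ and write
\begin{equation}
U^N_m(y_1,t_1;y_2,t_2) = \prod_{k=1}^{N}\bigl(1 + \delta t_N \LLL_m(\tau_k)\bigr),\qquad \tau_k = t_1 + k\,\delta t_N,
\end{equation}
and expand the product as a sum over subsets $S\subseteq\{1,\dots,N\}$ of $\prod_{k\in S}\delta t_N\LLL_m(\tau_k)$, the factors taken in increasing order of $k$. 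Grouping by $n=|S|$ gives $U^N_m = \sum_{n=0}^{N}\, (\delta t_N)^n \sum_{1\le k_1<\dots<k_n\le N} \LLL_m(\tau_{k_1})\cdots\LLL_m(\tau_{k_n})$. Each inner sum is, up to the usual boundary corrections from coincident indices, a Riemann sum for $\int_{t_1}^{t_2}\!ds_1\int_{s_1}^{t_2}\!ds_2\cdots\int_{s_{n-1}}^{t_2}\!ds_n\,\LLL_m(s_1)\cdots\LLL_m(s_n)$, because the conditions $k_1<\dots<k_n$ discretize the simplex $t_1\le s_1\le\dots\le s_n\le t_2$ and $(\delta t_N)^n$ supplies the volume element.

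The next step is to control the two limits uniformly. Since $A_m$ is finite, $\LLL_m(t)$ is a bounded matrix-valued function of $t$, piecewise continuous by hypothesis; set $K \eqdef \sup_{t\in[t_1,t_2]}\|\LLL_m(t)\|$ in any fixed submultiplicative matrix norm. Then the term of order $n$ in the expansion is bounded in norm by $\binom{N}{n}(\delta t_N)^n K^n \le (t_2-t_1)^n K^n/n!$, so the series $\sum_n$ converges absolutely and uniformly in $N$, and the tail beyond any $n_0$ is bounded by $\sum_{n>n_0}((t_2-t_1)K)^n/n!$ independently of $N$. This domination lets me interchange $\lim_{N\to\infty}$ with $\sum_n$: for each fixed $n$, the discrete sum converges to the iterated integral by the standard Riemann-sum argument for piecewise-continuous integrands (the contribution of the diagonal terms $k_i=k_{i+1}$, of which there are $O(N^{n-1})$, carries a factor $(\delta t_N)^n = O(N^{-n})$ and hence vanishes), and the uniform tail bound upgrades this to convergence of the full series. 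Combined with the already-stated fact that $\lim_{N\to\infty}U^N_m = U_m$, this yields \eqref{ma_dyson}.

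The main obstacle, and the only place where a little care is genuinely required, is the bookkeeping in the Riemann-sum step: showing that $(\delta t_N)^n\sum_{1\le k_1<\dots<k_n\le N}\LLL_m(\tau_{k_1})\cdots\LLL_m(\tau_{k_n})$ converges to the ordered simplex integral rather than to something off by a combinatorial factor. The clean way to handle this is to compare with the \emph{un}ordered sum $(\delta t_N)^n\bigl(\sum_{k=1}^N \LLL_m(\tau_k)\bigr)^{\!n}$, which would converge to $\bigl(\int\LLL_m\bigr)^n$ if the $\LLL_m(s)$ commuted but in general does not; one instead decomposes the index cube $\{1,\dots,N\}^n$ into the $n!$ ordered chambers plus the lower-dimensional diagonals, notes the chambers are \emph{not} interchangeable because the matrices are kept in index order, and observes that only the chamber $k_1<\dots<k_n$ with factors written left-to-right survives in the limit with the correct orientation while the diagonals are negligible as noted above. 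Once this is set up, everything else is a routine application of dominated convergence for series of matrices, and passing to the continuous-space limit $m\to\infty$ is not needed here since the statement is at fixed $m$.
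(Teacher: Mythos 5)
Your proposal is correct and follows essentially the same route the paper takes, which is stated there only as a one-line sketch: expand the finite product in (\ref{eq_pexp1}), identify the ordered sums as Riemann sums over the time-ordered simplex, and pass to the limit $N\to\infty$ using the already-established convergence $U^N_m\to U_m$. Your added details (the $\binom{N}{n}(\delta t_N)^n K^n\le ((t_2-t_1)K)^n/n!$ tail bound permitting the interchange of limit and sum, and the negligibility of the diagonal index sets) are exactly the bookkeeping the paper leaves implicit.
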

By differentiating with respect to the two time coordinates in the
Dyson expansion, we also find the following two equations:
\begin{theorem} {\bf (Forward and backward equations.)} The probability kernel
satisfies the backward equation
\begin{equation}
{\partial\over\partial t_1} U_m(t_1; t_2) + \LLL_m(t_1) U_m(t_1;
t_2) = 0 \label{backward_eq}
\end{equation}
as well as the forward equation
\begin{equation}
{\partial\over\partial t_2} U_m(t_1; t_2) =  U_m(t_1;
t_2)\LLL_m(t_2).  \label{forward_eq}
\end{equation}
\end{theorem}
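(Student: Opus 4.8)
The theorem follows by differentiating the Dyson series (\ref{ma_dyson}) term by term in each of its two endpoints and recognizing the resulting series. Fix $m$ throughout and, suppressing lattice indices, write $U_m(t_1;t_2)$ and $\LLL_m(s)$ for the corresponding matrices; these are genuine finite matrices since $A_m$ is a bounded, hence finite, subset of $h_m\ZZZ^d$. Denote by
\[
T_n(t_1,t_2) \;=\; \int_{t_1}^{t_2} ds_1 \int_{s_1}^{t_2} ds_2 \cdots \int_{s_{n-1}}^{t_2} ds_n \; \LLL_m(s_1)\LLL_m(s_2)\cdots\LLL_m(s_n)
\]
the $n$-th term of the expansion, with the convention $T_0\equiv 1$, so that $U_m(t_1;t_2)=\sum_{n\ge 0}T_n(t_1,t_2)$; geometrically $T_n(t_1,t_2)$ is the integral of the time-ordered product $\LLL_m(s_1)\cdots\LLL_m(s_n)$ over the simplex $\{t_1\le s_1\le\cdots\le s_n\le t_2\}$.

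For the forward equation, increasing the right endpoint from $t_2$ to $t_2+\varepsilon$ enlarges this simplex by the thin slab $\{s_n\in[t_2,t_2+\varepsilon]\}$, on which $\LLL_m(s_n)=\LLL_m(t_2)+O(\varepsilon)$ sits as the rightmost factor while $(s_1,\dots,s_{n-1})$ sweeps the simplex $\{t_1\le s_1\le\cdots\le s_{n-1}\le t_2\}$. Since $\LLL_m$ is continuous on the compact interval $[T,T']$, the Leibniz rule applies and gives $\partial_{t_2}T_n(t_1,t_2)=T_{n-1}(t_1,t_2)\,\LLL_m(t_2)$ for $n\ge 1$; summing over $n\ge 1$ and reindexing yields $\partial_{t_2}U_m(t_1;t_2)=U_m(t_1;t_2)\LLL_m(t_2)$, which is (\ref{forward_eq}). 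Symmetrically, decreasing the left endpoint adjoins the slab $\{s_1\in[t_1-\varepsilon,t_1]\}$, on which $\LLL_m(s_1)=\LLL_m(t_1)+O(\varepsilon)$ stands to the left of the remaining product; because $t_1$ is a lower limit this contributes with a minus sign, $\partial_{t_1}T_n(t_1,t_2)=-\LLL_m(t_1)T_{n-1}(t_1,t_2)$, and summing over $n\ge 1$ gives $\partial_{t_1}U_m(t_1;t_2)=-\LLL_m(t_1)U_m(t_1;t_2)$, i.e. (\ref{backward_eq}).

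The only genuine obstacle is justifying the interchange of $\partial_{t_j}$ with $\sum_n$, and this is exactly where finiteness of $A_m$ and continuity of $\LLL_m$ enter. Put $K=\sup_{s\in[T,T']}\lVert\LLL_m(s)\rVert<\infty$. The nested-integral bound gives $\lVert T_n(t_1,t_2)\rVert\le K^n(t_2-t_1)^n/n!$, and the formulas just derived give $\lVert\partial_{t_2}T_n(t_1,t_2)\rVert\le K^n(t_2-t_1)^{n-1}/(n-1)!$, with the same bound for $\partial_{t_1}T_n$. Hence both $\sum_n T_n$ and the formally differentiated series $\sum_n\partial_{t_j}T_n$ are dominated by convergent series of constants, uniformly on $[T,T']^2$, so by the standard theorem on differentiation of uniformly convergent series $U_m(t_1;t_2)$ is $C^1$ in each endpoint with derivatives obtained by termwise differentiation; this completes the proof. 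One could equivalently bypass the series: differentiating the semigroup identity $U_m(t_1;t_2)=U_m(t_1;t)\,U_m(t;t_2)$ of axiom (CK3), at $t=t_1^{+}$ and at $t=t_2^{-}$, using the local expansion $U_m(s;s+\varepsilon)=1+\varepsilon\LLL_m(s)+o(\varepsilon)$ supplied by (\ref{eq_gen}) (equivalently, by the product representation (\ref{eq_pexp1})), yields (\ref{backward_eq}) and (\ref{forward_eq}) directly; the Dyson-series route is chosen here only because (\ref{ma_dyson}) is already in hand.
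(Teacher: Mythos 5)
Your proof is correct and takes essentially the same approach as the paper, which obtains both equations simply by differentiating the Dyson expansion with respect to the two time coordinates. Your explicit identification of $\partial_{t_2}T_n = T_{n-1}\LLL_m(t_2)$ and $\partial_{t_1}T_n = -\LLL_m(t_1)T_{n-1}$, together with the $K^n(t_2-t_1)^n/n!$ bounds justifying termwise differentiation (and the alternative semigroup argument), only spells out what the paper leaves implicit.
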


A handy notation for this expansion is given by the following:
\begin{definition} {\bf (Path-ordered Exponential.)}
The equation (\ref{ma_dyson}) is written as a
{\it path ordered exponential}
\begin{equation}
U_m(y_1, t_1; y_2, t_2) = {\rm Pexp}\bigg( \int_{t_1}^{t_2}
\LLL_m(s) ds \bigg)(y1, y2)
\end{equation}
where the operator $P$ formally acts as follows:
\begin{equation}
{\rm P} \bigg(\int_{t_1}^{t_2} \LLL_m(s) ds\bigg)^n = n!
\int_{t_1}^{t_2} ds_1 \int_{s_1}^{t_2} ds_2 ... \int_{s_{n-1}}^{t_2}
ds_n \LLL_m(s_1) \LLL_m(s_2) .... \LLL_m(s_n).
\end{equation}
\end{definition}

Using the Dyson expansion for the path-ordered exponential, one
finds a path-integral representation of the probability kernel. Let
us set the following definition:
\begin{definition} {\bf (Symbolic Path.)}
A symbolic path  $\gamma = \{\gamma_0, \gamma_1, \gamma_2, .... \}$
is an infinite sequence of sites in $h_m \ZZZ$ such that $\gamma_j
\neq \gamma_{j-1}$ for all $j=1, ...$. Let $\Gamma_m$ be the set of
all symbolic paths in $h_m \ZZZ$.
\end{definition}
\begin{theorem} {\bf (Path-Integral Representation.)}
The propagator admits the following representation:
\begin{align}
U_m(x, T; y, T') =& \sum_{q=1}^\infty \sum_{\gamma\in \Gamma_m :
\gamma_0 = x, \gamma_{q} = y}
 \int_T^{T'} ds_1 \int_{s_1}^{T'} ds_2 ...
\int_{s_{q-1}}^{T'} d s_{q} \\
& \exp\bigg({\int_{0}^{s_{1}} \LLL_m(\gamma_0, \gamma_0; v_0) d
v_0}\bigg) \prod_{j=1}^{q} \LLL_m(\gamma_{j-1}, \gamma_j; s_j)
\exp\bigg({\int_{s_{j}}^{s_{j+1}} \LLL_m(\gamma_j, \gamma_j; v_j) d
v_j}\bigg) \label{eq_upathint}
\end{align}
where $t_{q+1} = T$.
\end{theorem}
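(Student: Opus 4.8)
The strategy is to resum the Dyson expansion (\ref{ma_dyson}) after separating the generator into its diagonal and off-diagonal parts. Write $\LLL_m(s) = D_m(s) + W_m(s)$, where $D_m(s)$ is the diagonal matrix with entries $D_m(y,y;s) = \LLL_m(y,y;s)$ and $W_m(s)$ collects the off-diagonal entries $W_m(y,y';s) = \LLL_m(y,y';s)$, $y\neq y'$. For fixed $m$ the lattice $A_m$ is finite (its convex hull is bounded) and $\LLL_m(\cdot)$ is a bounded family of matrices on the compact interval $[T,T']$, so every series written below converges absolutely in operator norm and all interchanges of summation and integration are justified. Introduce the diagonal propagator $E_m(a,b) = {\rm Pexp}\big(\int_a^b D_m(v)\,dv\big)$; since diagonal matrices at different times commute, this path-ordered exponential collapses to the ordinary one, so $E_m$ is diagonal with $E_m(y,y;a,b) = \exp\big(\int_a^b \LLL_m(y,y;v)\,dv\big)$. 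The first step is to establish the interaction-picture (Duhamel) expansion
\[
U_m(x,T;y,T') \;=\; \sum_{q\ge 0}\; \int_{T\le s_1\le\cdots\le s_q\le T'}\big(E_m(T,s_1)\,W_m(s_1)\,E_m(s_1,s_2)\cdots W_m(s_q)\,E_m(s_q,T')\big)(x,y)\; ds_1\cdots ds_q,
\]
which follows from (\ref{ma_dyson}) by expanding each factor $\LLL_m(s_k) = D_m(s_k)+W_m(s_k)$: a term with $n$ generator factors splits into $2^n$ contributions indexed by the positions of the factors equal to $W_m$, and summing the remaining diagonal factors over their internal time-orderings reconstitutes, stretch by stretch, the exponential $E_m$ (again because the $D_m(v)$ commute). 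Equivalently, one checks directly that the right-hand side solves the backward equation (\ref{backward_eq}) with initial value $\delta_{xy}$ and invokes the uniqueness statement in the Proposition on Markov generators.

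The second step is to take matrix elements of this series by inserting $\sum_{\gamma_j\in A_m}$ between consecutive factors. Because $E_m$ is diagonal and $W_m$ vanishes on the diagonal, a chain $E_m(s_{j-1},s_j)(\gamma_{j-1},\gamma_{j-1})\,W_m(s_j)(\gamma_{j-1},\gamma_j)$ contributes only when $\gamma_j\neq\gamma_{j-1}$; together with the endpoints $\gamma_0=x$, $\gamma_q=y$, the tuple $\gamma=\{\gamma_0,\dots,\gamma_q\}$ is then exactly an initial segment of a symbolic path in $\Gamma_m$. Substituting the explicit diagonal exponentials for the $E_m$ factors and adopting the conventions $s_0=T$, $s_{q+1}=T'$ recasts the $q$-th summand as
\[
\sum_{\gamma\in\Gamma_m:\,\gamma_0=x,\;\gamma_q=y}\int_T^{T'}\!\!ds_1\cdots\int_{s_{q-1}}^{T'}\!\!ds_q\;\exp\!\Big(\int_{s_0}^{s_1}\!\LLL_m(\gamma_0,\gamma_0;v)\,dv\Big)\prod_{j=1}^q \LLL_m(\gamma_{j-1},\gamma_j;s_j)\,\exp\!\Big(\int_{s_j}^{s_{j+1}}\!\LLL_m(\gamma_j,\gamma_j;v)\,dv\Big),
\]
which is precisely the $q$-th term of (\ref{eq_upathint}), the $q=0$ contribution being $\delta_{xy}\exp\big(\int_T^{T'}\LLL_m(x,x;v)\,dv\big)$. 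Finally, the number of length-$q$ segments of symbolic paths is at most $|A_m|^{q}$, and each summand is bounded by $(C(T'-T))^q/q!$ with $C=\sup_{[T,T']}\|\LLL_m\|$, so the double series converges absolutely, which both completes the proof and retroactively licenses the rearrangements used above.

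\textbf{Main obstacle.} The only genuine work lies in the combinatorial bookkeeping of the first step: one must carefully match the nested time-ordered simplices of (\ref{ma_dyson}) with those of the Duhamel series, confirm that summing the purely diagonal stretches over their internal orderings produces $\exp(\int D_m)$ rather than a path-ordered exponential — legitimate precisely because all the $D_m(v)$ commute — and verify that no spurious factorials remain: the outer integral in the Duhamel series is already restricted to $s_1\le\cdots\le s_q$, which is exactly what absorbs the $1/q!$ coming from the unordered integrals in (\ref{ma_dyson}). Everything else is routine estimation of bounded operators on a finite state space over a compact time interval.
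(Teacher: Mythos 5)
Your proof is correct and takes essentially the route the paper intends: the paper simply asserts the representation as a consequence of the Dyson expansion (\ref{ma_dyson}), and your diagonal/off-diagonal splitting with the Duhamel (interaction-picture) resummation of the diagonal stretches is the standard way of carrying that out, including the convergence bound that mirrors the paper's estimate (\ref{eq_mest}). Two minor remarks: (\ref{ma_dyson}) is already written with nested, time-ordered integrals, so there is no $1/q!$ to absorb there, and the paper's sum starting at $q=1$ omits the purely diagonal $q=0$ term $\delta_{xy}\exp\big(\int_T^{T'}\LLL_m(x,x;v)\,dv\big)$, which you correctly account for.
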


Notice that the total mass of the sector of path space
$\h^q_{m}(y_0, T; T')$ consisting of lattice paths originating from
$y_0\in\Lambda $ at time $T$ and attaining at most $q$ different
values by time $T'$ is given by
\begin{align*}
\mu(\h^q_{m}(y_0, T; T')) = \sum_{\gamma\in \Gamma_m : \gamma_0 = x}
\int_T^{T'} ds_1 \int_{s_1}^{T'} ds_2 ... \int_{s_{q-1}}^{T'} d
s_{q}
\exp\bigg({\int_T^{s_1} \LLL_m(\gamma_0, \gamma_0; v_0) d v_0}\bigg) \\
\prod_{j=1}^{q} \LLL_m(\gamma_{j-1}, \gamma_j; s_j)
\exp\bigg({\int_{s_{j}}^{s_{j+1}} \LLL_m(\gamma_j, \gamma_j; v_j) d
v_j }\bigg).
\end{align*}
where $s_{m+1} = T'$. If $\lambda_m = \max_{x\in A_m} \lvert
\LLL_m(x,x)\lvert$ and $c = \sup_{t\in[T, T']} \lvert\lvert
\LLL_m(t) \lvert\lvert$ is the operator norm of the matrix
$\LLL_m(t)$, then we have that
\begin{equation}
\mu\big[\h^q_{m}(y_0, T; T')\big] \leq {c^q (T'-T)^q \over q!}
e^{-\lambda_m (T'-T)} \label{eq_mest}
\end{equation}
This expression reaches a maximum as a function of $q$ at $q \approx
c (T'-T)$ and then declines at super-exponential rate. Hence, by
discretizing the space coordinate, we ensure that the probability
mass of the set of paths with $q$ changes decreases faster than any
exponential in the limit as $q\to\infty$. This convergence is
however not uniform as $m\to\infty$ and  $h_m\to0$.

\begin{definition}{\bf (Inverse lattice.)}
Let us consider the following lattice:
\begin{equation}
B_m = \left\{ -{2^{m-1} \pi \over L} + {k \pi \over L } , k = 0, ..
2^m-1 \right\} \label{brillouin}
\end{equation}
also called {\it Brillouin zone} or {\it inverse lattice} with
respect to $A_m$.
\end{definition}

\begin{definition}\label{def_symbol}{\bf (Pseudo-differential Symbols.)}
The symbol of a Markov generator is defined as follows:
\begin{equation}
\hat {\mathcal L}_m(x, p; t) = \sum_{y\in A_m} {\mathcal L}_m(x, y)
e^{ip(y-x)}
\end{equation}
where $p\in B_m$.
\end{definition}

Two particularly important special examples of Markov processes are
given by monotonic processes and diffusions.

\begin{definition}\label{def_monotonic}
{\bf (Monotonic process.)} A Markov process of generator ${\mathcal
L}_m(x, y)$ on the simplicial sequence $A_m$ is said monotonic
non-decreasing if
\begin{equation}
{\mathcal L}_m(x, y) = 0 \;\;\;\;{\rm whenever}\;\;\; y<x.
\end{equation}
and monotonic non-increasing if
\begin{equation}
{\mathcal L}_m(x, y) = 0 \;\;\;\;{\rm whenever}\;\;\; y>x.
\end{equation}
\end{definition}

\begin{definition} {\bf (Diffusion Process.)} A diffusion process is
a Markov process with generator of the form
\begin{equation}
{\mathcal L}_m(x, y; t) = \mu_m(x;t) \nabla_{h_m}(x, y)
+{\sigma_m(x; t)^2 \over 2} \Delta_{h_m}(x, y)
\end{equation}
where
\begin{equation}
\nabla_{h}(x, y) = {\delta_{y, x+h} - \delta_{y,x-h} \over 2 h}
\;\;\;\;{\rm and}\;\;\;\; \Delta_{h}(x, y) = {\delta_{y, x+h}+
\delta_{y, x-h} - 2 \delta_{y, x} \over h^2}. \label{eq_nabladelta}
\end{equation}
and $\mu(x;t) = (\mu_m(x;t))$ and $\sigma(x; t) = (\sigma_m(x; t))$
are two simplicial functions which, for simplicity, we assume smooth
in both arguments.
\end{definition}

\begin{proposition} \begin{itemize}
\item[(i)] The symbol of a diffusion process is given by
\begin{equation}
\hat {\mathcal L}_m(x, p; t) = \mu_m(x; t) {\sin ph \over i h} +
{\sigma_m(x; t)^2 \over 2} {\cos ph - 1 \over h^2}.
\end{equation}
\item[(ii)] The path integral representation for a diffusion process has the
form
\begin{align}
U_m(x, T; y, T') =& \sum_{q=1}^\infty 2 ^{-q} \sum_{
\begin{matrix}
\gamma\in \Gamma_m : \gamma_0 = x, \gamma_q = y \\
\lvert\gamma_j - \gamma_{j-1}\lvert = 1 \;\;\forall j\ge1
\end{matrix}}
W_m(\gamma, q; T, T') \label{eq_udiff1}
\end{align}
where
\begin{align}
&W_m(\gamma, q; T, T') = \int_T^{T'} ds_1 \int_{s_1}^{T'} ds_2 ...
\int_{s_{q-1}}^{T'} d s_{q} \notag \\
&\hskip3.5cm e^{ \int_{s_q}^{T'} {\mathcal L}_m(y, y, v_q) dv_q}
\prod_{j=0}^{q-1}  \bigg(  e^{ \int_{s_j}^{s_{j+1}} {\mathcal
L}_m(\gamma_j, \gamma_{j}, v_j) dv_j} 2 {\mathcal L}_m(\gamma_j,
\gamma_{j+1}) \bigg) \label{eq_wdiff}
\end{align}
and $s_0 = T$.
\end{itemize}
\end{proposition}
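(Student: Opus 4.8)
The plan is to deduce both parts as straightforward specializations of results already established for general Markov processes, substituting the explicit form of the diffusion generator $\mathcal L_m(x,y;t) = \mu_m(x;t)\nabla_{h_m}(x,y) + \tfrac12\sigma_m(x;t)^2\Delta_{h_m}(x,y)$. For part (i), I would simply insert the definitions of $\nabla_h$ and $\Delta_h$ from (\ref{eq_nabladelta}) into the definition of the pseudo-differential symbol in Definition \ref{def_symbol}:
\begin{equation*}
\hat{\mathcal L}_m(x,p;t) = \sum_{y\in A_m}\mathcal L_m(x,y;t)\,e^{ip(y-x)}.
\end{equation*}
Because $\nabla_{h_m}(x,y)$ is supported on $y = x\pm h_m$ and $\Delta_{h_m}(x,y)$ on $y\in\{x-h_m, x, x+h_m\}$, the sum collapses to three terms. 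Collecting $e^{\pm iph_m}$ contributions gives $\mu_m(x;t)\,\frac{e^{iph_m}-e^{-iph_m}}{2h_m} + \frac{\sigma_m(x;t)^2}{2}\,\frac{e^{iph_m}+e^{-iph_m}-2}{h_m^2}$, and applying the Euler identities $e^{i\theta}-e^{-i\theta} = 2i\sin\theta$ and $e^{i\theta}+e^{-i\theta} = 2\cos\theta$ yields $\mu_m(x;t)\frac{\sin ph_m}{ih_m} + \frac{\sigma_m(x;t)^2}{2}\frac{\cos ph_m - 1}{h_m^2}$, which is the claimed formula (writing $h$ for $h_m$). This step is purely computational with no obstacle.

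For part (ii), the strategy is to start from the general path-integral representation in the Theorem on Path-Integral Representation, equation (\ref{eq_upathint}), and observe that for a diffusion generator the off-diagonal entries $\mathcal L_m(\gamma_{j-1},\gamma_j;s_j)$ vanish unless $|\gamma_j - \gamma_{j-1}| = h_m$, i.e.\ unless the symbolic path moves by exactly one lattice step (in units where $h_m = 1$ on the normalized lattice, $|\gamma_j - \gamma_{j-1}| = 1$). Hence the outer sum over all symbolic paths $\gamma\in\Gamma_m$ with $\gamma_0 = x$, $\gamma_q = y$ restricts automatically to nearest-neighbour paths. I would then rewrite the off-diagonal factor as $\mathcal L_m(\gamma_j,\gamma_{j+1}) = \tfrac12\cdot 2\mathcal L_m(\gamma_j,\gamma_{j+1})$; the collection of the factors $\tfrac12$ over the $q$ steps produces the prefactor $2^{-q}$, and the remaining factors $2\mathcal L_m(\gamma_j,\gamma_{j+1})$ together with the diagonal exponentials $e^{\int \mathcal L_m(\gamma_j,\gamma_j;v)dv}$ assemble into $W_m(\gamma,q;T,T')$ exactly as displayed in (\ref{eq_wdiff}). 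One must also match the diagonal exponential weights: in (\ref{eq_upathint}) these are written with one initial segment $\exp(\int_0^{s_1}\mathcal L_m(\gamma_0,\gamma_0;v_0)dv_0)$ and one segment after each jump, whereas (\ref{eq_wdiff}) groups them as a terminal factor $e^{\int_{s_q}^{T'}\mathcal L_m(y,y;v_q)dv_q}$ times a product over $j = 0,\dots,q-1$ of $e^{\int_{s_j}^{s_{j+1}}\mathcal L_m(\gamma_j,\gamma_j;v_j)dv_j}$; with the convention $s_0 = T$ these are the same collection of segments re-indexed, so the identification is bookkeeping.

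The only genuine point requiring care — and the step I expect to be the main (mild) obstacle — is reconciling the index and time-endpoint conventions between (\ref{eq_upathint}) and (\ref{eq_wdiff}): the general formula uses a segment starting at $0$ and sets $t_{q+1} = T$, which is evidently a typo for starting at $T$ and setting $s_{q+1} = T'$, while the diffusion formula uses $s_0 = T$ and a terminal segment up to $T'$. I would fix the conventions by setting $s_0 = T$, $s_{q+1} = T'$ consistently, verify that the union of the half-open intervals $[s_j,s_{j+1})$, $j=0,\dots,q$, partitions $[T,T']$, and confirm that along the $j$-th such interval the path sits at site $\gamma_j$ (with $\gamma_q = y$), so that the product of diagonal exponentials in (\ref{eq_wdiff}) reproduces precisely the survival-weight factors of (\ref{eq_upathint}). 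Once the conventions are aligned, (ii) follows immediately by substitution, with the nearest-neighbour restriction and the $2^{-q}$ normalization being the only diffusion-specific features. No convergence issue arises beyond what was already established for the general path-integral representation, since the diffusion series is a subseries (indexed by nearest-neighbour paths) of the absolutely convergent general series.
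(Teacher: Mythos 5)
Your route is the intended one: the paper states this proposition without a separate proof, and both parts are indeed direct specializations --- (i) of Definition \ref{def_symbol} applied to the three-point kernel of $\mu_m\nabla_{h_m}+\tfrac12\sigma_m^2\Delta_{h_m}$, and (ii) of the general path-ordered-exponential (path-integral) representation, where the nearest-neighbour restriction comes from the support of $\nabla_{h_m}$ and $\Delta_{h_m}$ and the prefactor $2^{-q}$ from writing each off-diagonal entry as $\tfrac12\cdot 2\,{\mathcal L}_m(\gamma_j,\gamma_{j+1})$. Your re-indexing of the diagonal survival factors with $s_0=T$, $s_{q+1}=T'$, and your reading of the lower limit $0$ and of ``$t_{q+1}=T$'' in the general formula as typos for $T$ and $s_{q+1}=T'$, is exactly the bookkeeping required, and the remark that convergence is inherited because the diffusion sum is a subseries of the absolutely convergent general expansion is correct.

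The one place where the write-up does not close is the last line of (i). The collapsed sum you display, $\mu_m\frac{e^{iph}-e^{-iph}}{2h}+\frac{\sigma_m^2}{2}\,\frac{e^{iph}+e^{-iph}-2}{h^2}$, evaluates via Euler's identities to $i\,\mu_m\frac{\sin ph}{h}+\sigma_m^2\,\frac{\cos ph-1}{h^2}$, not to the printed $\mu_m\frac{\sin ph}{ih}+\frac{\sigma_m^2}{2}\frac{\cos ph-1}{h^2}$: since $1/i=-i$ the drift term as printed has the opposite sign, and in the diffusion term the factor $2$ from $e^{iph}+e^{-iph}-2=2(\cos ph-1)$ has been dropped. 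The computed form (coefficient $i\mu_m$ on $\sin(ph)/h$ and $\sigma_m^2$, with no $\tfrac12$, on $(\cos ph-1)/h^2$) is the one consistent with Theorem \ref{LKrep} and with the Fourier-diagonalized symbol used in the Brownian-motion section, so the proposition's display is best read as containing two typographical slips; as written, your assertion that the Euler identities ``yield'' the printed formula is an algebra error that silently reproduces those slips instead of flagging the mismatch. Correct the final line (or note the typos in the statement) and part (i) is a complete two-line verification; part (ii) stands as is.
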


\section{Martingales and Monotonic Processes}

In this section we introduce the notion of piecewise smooth Markov
process which covers a large family of models useful for
applications. In this context, we define martingales and monotonic
Markov processes.

\begin{definition} {\bf (Piecewise Smooth Markov Processes.)}
Consider the time interval $[T, T']$, the simplicial sequence $A_m$,
$m\ge m_0$ and a finite number of time points $T = t_0 < t_1 < .. < t_n = T'$.
A piecewise smooth Markov process is given by
a family of Markov generators $\LLL^i_m(y_1, y_2; t)$ defined
on each half open time interval $[t_i, t_{i+1})$, where $i=0, 1, ... n-1 $.
In correspondence to each time point $t_i, i = 1, 2, ... n$, one also defines
a mapping operator $U^i_m(x, y)$ such that
\begin{align}
&({\rm MA1}) \;\;\;\;\; U^i_m(y_1, y_2) \ge 0\\
&({\rm MA2}) \;\;\;\;\; \sum_{y_2} U^i_m(y_1, y_2) = 1 \;\; \forall
y_1\in\Lambda.
\end{align}
The Markov propagator for any pair of time points $t_i \leq s < s' <
t_{i+1}$ is defined as follows:
\begin{equation}
U_m(y_1, s; y_2, s') = {\rm Pexp}\left(\int_s^{s'}\LLL^i_m( v)
dv\right)(y_1, y_2).
\end{equation}
Moreover, if $s' = t_{i+1}$, then
\begin{equation}
U_m(y_1, s; y_2, t_{i+1}) = \sum_{y_3\in\Lambda} {\rm
Pexp}\left(\int_s^{t_{i+1}}\LLL^i_m( v) dv\right)(y_1, y_3)
U^i_m(y_3, y_2).
\end{equation}
More general Markov propagators are obtained by taking products of the ones above.
\end{definition}

\begin{definition} {\bf (Attainable Sets.)}
Let $U_m$ be a piecewise smooth Markov process, $m\ge m_0$, $y\in
A_m$ and $t\in [T, T']$. The {\it attainable set} $\DDD_m(U, t, y)
\subset A_m$ is defined as follows: if $t\in (t_{i}, t_{i+1})$ for
some $i=0,..n-1$, then $\DDD_m(U, t, y)$ is the set of $\bar y
\in\Lambda$ such that $\LLL_i(y, \bar y; t)> 0$. If instead $t =
t_i$ for some $i=1,..n-1$, then $\DDD_m(U, t_i, y)$ is defined as
the set of the $\bar y \in\Lambda$ such that $U_i(y, \bar y; t)> 0$.
\end{definition}

\begin{definition} {\bf (Equivalent Markov Processes.)}
Two piecewise smooth Markov propagators $U$ and $U'$ are called
equivalent if their attainable sets $\DDD_m(U, t, y)$ and $\DDD_m(U', t, y)$ are
equal for all $t\in[T, T']$ and all $y\in\Lambda$. If $\DDD_m(U, t, y)$ is a
subset of $\DDD_m(U', t, y)$ for all $t\in[T, T']$ and all
$y\in\Lambda$, then one says that $U_m$ is absolutely continuous with
respect to $U_m'$.
\end{definition}

\begin{definition} \label{def_measure_change} {\bf (Measure Changes.)}
Let $U_m, m\ge m_0$
be a family of piecewise smooth Markov propagators. A measure change
is characterized by a family of positive, non-zero functions $G_m^{yt}(y')\ge 0$
indexed by $y\in A_m$ and $t\in[T, T']$,
which is strictly positive for all $y'\in\DDD_m(U, y, t)$ and zero otherwise. A measure
change function defines a transformation of a Markov generator into
an equivalent one according to the following formula:
\begin{equation}
\LLL'(y,y'; t) = {1\over G_m^{yt}(y)} \LLL(y,y';t) G_m^{yt}(y') -
{1\over G_m^{yt}(y)} (\LLL(t) G_m^{yt}) (y)\delta_{y y'}.
\end{equation}
\end{definition}

Notice that the specification of the function $G_m^{yt}(y')$ at the
point $y' = y$ is immaterial in the sense that it does not affect
the measure change transformation.

\begin{definition}{\bf (Time Changes.)}
A measure change is called time change if there is a function $\phi(y, t)$
such that $G_m^{yt}(y') = \phi(y, t)$
for all $y\in A_m$, $y'\in\DDD_m(U, y, t)$ and $t\in[T, T']$.
The time change is called {\it state
independent} if $\phi(y, t) \equiv \phi(t)$ is a function of the
time coordinate only.
\end{definition}

\begin{theorem} {\bf (Deterministic Time Changes.)} If $\phi(t)$ defines a state independent time change
corresponding to the measure change function $G_m^{yt}(y') =
\phi(t)$, then
\begin{equation}
U_m'(y, t; y', t') = U_m(y, \lambda(t); y', \lambda(t'))
\end{equation}
where
\begin{equation}
\lambda(t) = \int^t_T \phi(s) ds.
\end{equation}
\end{theorem}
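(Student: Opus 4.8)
The plan is to reduce the statement first to the level of the infinitesimal generator and then to the Dyson expansion. Substituting the time–change function $G_m^{yt}(y')=\phi(t)$ into the measure–change formula of Definition~\ref{def_measure_change}, the term $(\LLL_m(t)G_m^{yt})(y)=\sum_{y''}\LLL_m(y,y'';t)G_m^{yt}(y'')$ simplifies by (MG2): since the rows of $\LLL_m$ sum to zero and $G_m^{yt}$ is constant $=\phi(t)$ on the attainable set $\DDD_m(U,y,t)$, the off–diagonal contribution equals $-\phi(t)\LLL_m(y,y;t)$, so the diagonal correction cancels exactly the part of the conjugation that would otherwise destroy conservation, and (normalising the immaterial value $G_m^{yt}(y)$ as in the remark following Definition~\ref{def_measure_change}) one is left with $\LLL_m'(y,y';t)=\phi(t)\,\LLL_m(y,y';t)$, i.e. $\LLL_m'(t)=\phi(t)\LLL_m(t)$. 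Since $\phi$ is the strictly positive value of $G_m^{yt}$ on a non-empty set, $\lambda(t)=\int_T^t\phi(s)\,ds$ is strictly increasing and piecewise $C^1$ with $\lambda'=\phi$ and a well-defined inverse $\lambda^{-1}$.

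Next I would insert $\LLL_m'(t)=\phi(t)\LLL_m(t)$ into the Dyson expansion (\ref{ma_dyson}) for $U_m'(y,t;y',t')$ and perform, in each of the nested time integrals, the substitution $w_i=\lambda(s_i)$. Monotonicity of $\lambda$ carries the time-ordered simplex $t\le s_1\le\cdots\le s_n\le t'$ onto $\lambda(t)\le w_1\le\cdots\le w_n\le\lambda(t')$, and $\phi(s_i)\,ds_i=dw_i$ absorbs precisely the factor $\phi(s_i)$ attached to each copy of $\LLL_m$. Term by term this rewrites the $n$-th summand of the Dyson series for $U_m'(t;t')$ as $\int_{\lambda(t)\le w_1\le\cdots\le w_n\le\lambda(t')}\LLL_m(\lambda^{-1}(w_1))\cdots\LLL_m(\lambda^{-1}(w_n))\,dw_1\cdots dw_n$, which for a time-homogeneous generator is exactly the $n$-th term of the Dyson series for $U_m(y,\lambda(t);y',\lambda(t'))$; summing over $n$ yields the claimed identity. (In the genuinely time-inhomogeneous case the same computation shows the right-hand side must be read as the propagator of the reparametrised generator $s\mapsto\LLL_m(\lambda^{-1}(s))$; any mapping operators $U_m^i$ at knots $t_i$ carry no time dependence and simply reappear at $\lambda(t_i)$, so the general propagator follows by composition via (CK3).)

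A cleaner alternative I would keep in reserve avoids the series. Put $V_m(y,t;y',t'):=U_m(y,\lambda(t);y',\lambda(t'))$; then $V_m(y,t;y',t)=\delta_{yy'}$ by (CK2), and differentiating in $t'$ by the chain rule together with the forward equation (\ref{forward_eq}) for $U_m$ gives $\partial_{t'}V_m(t;t')=V_m(t;t')\,\lambda'(t')\LLL_m(\lambda(t'))=V_m(t;t')\,\LLL_m'(t')$ once $\LLL_m(\lambda(t'))$ is identified with $\LLL_m(t')$ (again the autonomous case). Since $\LLL_m'(t)$ is itself a Markov generator ($\phi>0$ preserves (MG1)–(MG2)), $V_m$ and $U_m'$ solve the same forward Cauchy problem, and the uniqueness assertion of the Proposition characterising Markov generators forces $V_m=U_m'$.

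The substitution is routine; the real work lies in two bookkeeping points around it. First, one must justify the change of variables term by term and the subsequent resummation: this is legitimate because, by the estimate (\ref{eq_mest}) and the convergence of the path-exponential, the Dyson series is dominated in operator norm by $\sum_n (c\,(t'-t))^n/n!$ with $c=\sup_{s}\|\LLL_m(s)\|$, hence converges absolutely and uniformly, so termwise manipulation and interchange of sum with integral are valid. Second, there is the subtlety already flagged — the identity in the form stated is clean only for an autonomous generator (or with the understanding that $U_m$ on the right-hand side is the propagator of the $\lambda^{-1}$-reparametrised generator) — together with the minor checks that $\lambda$ keeps times within the domain on which $U_m$ is defined and that nothing is lost at the finitely many knots of a piecewise-smooth process. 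These are the places where I expect most of the effort to go.
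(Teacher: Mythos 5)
The paper states this theorem without proof, so there is nothing of the author's to compare against; judged on its own, your argument is correct. The reduction to the generator level is the right move: with the diagonal value normalised to $G_m^{yt}(y)=1$, Definition \ref{def_measure_change} gives $\LLL_m'(t)=\phi(t)\,\LLL_m(t)$, and then either the termwise substitution $w=\lambda(s)$ in the Dyson series (\ref{ma_dyson}) or, more cleanly, your forward-equation argument combined with the uniqueness assertion for solutions of (\ref{eq_gen}) yields the claim; the domination of the series by $\sum_n (c(t'-t))^n/n!$ justifies the termwise manipulations as you say. Two judgment calls you made are worth stating even more explicitly, since they are where the content lies. First, the normalisation: under the measure-change formula as printed, the value $G_m^{yt}(y)$ rescales the entire row of $\LLL_m'$, so it is not literally immaterial, and the other natural reading of a ``constant'' time-change function, $G_m^{yt}(y)=\phi(t)$, would give $\LLL_m'=\LLL_m$ and make the theorem false; your choice $G_m^{yt}(y)=1$ is precisely the convention under which the statement is true, and the proof should say so rather than lean on the remark following Definition \ref{def_measure_change}. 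Second, your caveat about time inhomogeneity is not cosmetic and is correctly diagnosed: differentiating the right-hand side $U_m(y,\lambda(t);y',\lambda(t'))$ in $t'$ via (\ref{forward_eq}) produces the generator $\phi(t')\,\LLL_m(\lambda(t'))$, whereas the measure change produces $\phi(t')\,\LLL_m(t')$, and these coincide only for an autonomous generator (or with the right-hand side read as the propagator of the reparametrised generator $s\mapsto\LLL_m(\lambda^{-1}(s))$), which is evidently the intended reading of the statement.
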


The following is a particularly interesting special case of measure change:
\begin{definition}{\bf (Numeraire Changes.)}
Consider a smooth (as opposed to just piece-wise smooth) Markov process
 of generator $\LLL_m(y; t)$. Let $g_m(y'; t)$ be a function satisfying the equation
\begin{equation}
{\partial g_m(y; t)\over\partial t} + (\LLL_m g_m)(y; t) = 0.
\label{eq_numeraire}
\end{equation}
The measure change given by the function $G_m^{yt}(y') = g_m(y'; t)$
is called {\it numeraire change} and the Markov generator transforms
as follows:
\begin{equation}
\LLL_m'(y,y'; t) = {g_m(y'; t)\over g_m(y; t)} \LLL_m(y,y';t)  +
{1\over g_m(y; t)} {\partial g_m(y; t) \over \partial t} \delta_{y
y'}. \label{eq_numchange}
\end{equation}
\end{definition}

Notice that if $g_m(t)$ denotes the multiplication operator of
kernel $g_m(y; t)\delta_{y y'}$, then equation (\ref{eq_numchange})
can be written more compactly as follows:
\begin{equation}
\LLL_m'(t) = {1\over g_m(t)} \LLL_m(t) g_m(t)  + {1\over g_m(t)}
{\partial g_m(t) \over \partial t}.
\end{equation}

\begin{theorem} {\bf (Numeraire Changes.)} If $g_m$ satisfies equation (\ref{eq_numeraire}) and
defines a numeraire change, then
\begin{equation}
U_m'(y, t; y', t') = {g_m(y', t')\over g_m(y, t)} U_m(y, t; y', t').
\end{equation}
\end{theorem}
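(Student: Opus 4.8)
The plan is to verify the claimed identity by showing that the right‑hand side, call it $\tilde U_m(y,t;y',t') \equiv \frac{g_m(y',t')}{g_m(y,t)} U_m(y,t;y',t')$, is precisely the path‑ordered exponential generated by $\LLL_m'$, using the compact form $\LLL_m'(t) = g_m(t)^{-1}\LLL_m(t) g_m(t) + g_m(t)^{-1}\,\partial_t g_m(t)$ established just above the statement. Since by the Proposition on Markov generators the propagator attached to a given differentiable generator family is the \emph{unique} solution of the backward equation with the identity initial condition, it suffices to check that $\tilde U_m$ satisfies $\partial_{t}\tilde U_m(t;t') + \LLL_m'(t)\tilde U_m(t;t') = 0$ together with $\tilde U_m(t;t)=\mathrm{id}$; the latter is immediate because the prefactor collapses to $g_m(y',t)/g_m(y,t)$ times $\delta_{yy'}$, which equals $\delta_{yy'}$.

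First I would write $\tilde U_m(t;t') = g_m(t)^{-1} U_m(t;t') g_m(t')$ in operator form, where $g_m(t)$ denotes the multiplication operator with kernel $g_m(y;t)\delta_{yy'}$. Differentiating in $t$ and using the product rule gives three terms: one from $\partial_t g_m(t)^{-1} = -g_m(t)^{-1}(\partial_t g_m(t))g_m(t)^{-1}$, one from $\partial_t U_m(t;t')$, and the factor $g_m(t')$ is inert. For the middle term I invoke the backward equation (\ref{backward_eq}), $\partial_t U_m(t;t') = -\LLL_m(t) U_m(t;t')$. Collecting, I get
\begin{equation}
\partial_t \tilde U_m(t;t') = -g_m(t)^{-1}(\partial_t g_m(t)) g_m(t)^{-1} U_m(t;t') g_m(t') - g_m(t)^{-1}\LLL_m(t) U_m(t;t') g_m(t').
\end{equation}
Now I insert $g_m(t) g_m(t)^{-1}$ between $\LLL_m(t)$ and $U_m(t;t')$ in the second term and recognize $g_m(t)^{-1}\LLL_m(t) g_m(t)$ and $-g_m(t)^{-1}(\partial_t g_m(t))$ as exactly the two pieces of $-\LLL_m'(t)$ after using equation (\ref{eq_numeraire}) in the form $\partial_t g_m(t) = -\LLL_m(t) g_m(t)$ — which shows the two "diagonal" contributions to $\LLL_m'$ combine consistently. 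Hence $\partial_t \tilde U_m(t;t') = -\LLL_m'(t)\,g_m(t)^{-1} U_m(t;t') g_m(t') = -\LLL_m'(t)\tilde U_m(t;t')$, which is the backward equation for $\LLL_m'$.

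The only genuinely delicate point is bookkeeping with the time derivative of $g_m$: equation (\ref{eq_numeraire}) is a \emph{forward}-type relation $\partial_t g_m + \LLL_m g_m = 0$, and one must be careful that the $g_m(t)^{-1}\partial_t g_m(t)$ appearing in the definition of $\LLL_m'$ and the $\partial_t g_m(t)^{-1}$ produced by differentiating $\tilde U_m$ are reconciled correctly — they are, since $\partial_t g_m(t)^{-1} = -g_m(t)^{-1}(\partial_t g_m(t)) g_m(t)^{-1}$, so the sign works out. I would also remark that $g_m(t)$ is invertible because, being a solution of a linear backward Kolmogorov equation driven by a conservative generator, it stays strictly positive on the simplicial sequence whenever its terminal data is (this is where smoothness of the process, assumed in the definition of numeraire change, is used); on the diagonal $y'=y$ the value of $G_m^{yt}$ is immaterial as already noted, so no issue arises there. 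Once the backward equation and initial condition are checked, uniqueness from the Proposition closes the argument.
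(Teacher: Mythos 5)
Your verification is correct: writing $U_m'(t;t') = g_m(t)^{-1}U_m(t;t')\,g_m(t')$ in operator form, checking the identity initial condition, and differentiating with the backward equation (\ref{backward_eq}) does reproduce exactly $\partial_t U_m' + \LLL_m'(t)U_m' = 0$ with $\LLL_m'$ as in (\ref{eq_numchange}), and uniqueness closes the argument; the paper states this theorem without proof, and yours is the natural argument it implicitly relies on. Two small remarks: the operator identity holds for any smooth strictly positive $g_m$ with $\LLL_m'$ defined by (\ref{eq_numchange}) --- equation (\ref{eq_numeraire}) is not needed for the computation itself, only to guarantee that $\LLL_m'$ has zero row sums so that $U_m'$ is a genuine Markov propagator --- and if you prefer to invoke the paper's uniqueness Proposition literally (which is phrased for the equation in the second time argument), you can equally well verify the forward equation $\partial_{t'}U_m'(t;t') = U_m'(t;t')\LLL_m'(t')$, which follows by the same manipulation using (\ref{forward_eq}).
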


Let $F_t = F(y_\cdot, t)$ be an adapted process in the time interval
$[T, T'].$ Let us consider a fixed time $t\in [T, T')$. Recall that, since $F_t$ is
non-anticipatory, we have that $F(y_\cdot, t) = F(y_\cdot', t)$ for
all pairs of paths such that $y_s = y_s'$ for all $s \leq t$. If
$\bar y\in \DDD_m(U', t, y)$, let $\tilde y_\cdot = {\rm Ext_t}(y_\cdot, y_t)$
be the constant extension path such that $\tilde y_s = y_s$ for all $s<t$ and
$\tilde y_s = y_t$ for all $s\ge t$. With probability one, we have
that
\begin{equation}
\lim_{\delta t\downarrow 0}  F(y_\cdot, t+\delta t) = F({\rm Ext_t}(y_\cdot,
\bar y), t)
\end{equation}
for some $\bar y\in \DDD_m(U, t, y)$.

\begin{definition} {\bf (Monotonic Processes.)}
Let $U$ be a piecewise smooth Markov
propagator and let $F_t$ be an adapted process given by the
non-anticipatory path functional $F(y_\cdot, t)$. $F_t$ is said to
be {\it  increasing at time $t$} if
\begin{itemize}
\item[(i)] For all $\bar y \in \DDD_m(U, t, y)$ we have that
\begin{equation}
F({\rm Ext_t}(y_\cdot, \bar y), t) - F(y, t) \ge 0.
\end{equation}
\item[(ii)] We have that
\begin{equation}
{\partial F({\rm Ext_s}(y_\cdot, y_{t-0}), s) \over \partial s}
\bigg\lvert_{s=t} \ge 0
\end{equation}
where $y_{t-0} = \lim_{\delta t\downarrow 0} y_{t-\delta t}$.
\item[(iii)] For all $y\in A_m$ and all $t\in[T, T']$, either
the inequality in (i) holds in a strict sense for at least one
$\bar y\in \DDD_m(U, t, y)$ or the inequality in (ii) holds in a
strict sense.
\end{itemize}
In case the property (iii) fails but the other two still hold, the
process is called {\it non decreasing}.
\end{definition}

\begin{theorem} {\bf (Monotonic Processes.)}
Let $U_m$ and $U_m'$ be two piecewise smooth Markov propagators and
let $F(y_\cdot, t)$ be a non-anticipatory path functional. If $U_m$
and $U_m'$ are equivalent and if $F(y_\cdot, t)$ regarded as an
adapted process under the path measure generated by $U_m$ is
increasing (non-decreasing), then also $F(y_\cdot, t)$ regarded as
an adapted process under $U_m'$ is increasing (non-decreasing).
\end{theorem}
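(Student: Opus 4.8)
The plan is to observe that the property of being \emph{increasing} (or \emph{non-decreasing}) at a time $t$, in the sense of the Monotonic Processes definition, depends on the underlying propagator only through the attainable sets $\DDD_m(U, t, y)$. Indeed, conditions (i) and (iii) in that definition quantify over $\bar y\in\DDD_m(U, t, y)$, while condition (ii) involves only the right time-derivative of $F$ along the constant extension path ${\rm Ext}_s(y_\cdot, y_{t-0})$ and makes no reference to the propagator at all. So the whole argument reduces to matching attainable sets.

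First I would recall that, by the definition of equivalent Markov processes, $U_m$ and $U_m'$ being equivalent means exactly that $\DDD_m(U, t, y) = \DDD_m(U', t, y)$ for every $t\in[T, T']$ and every $y\in\Lambda$. Substituting this identity into conditions (i)--(iii), one sees that for each fixed $t$ the statement ``$F(y_\cdot, t)$ is increasing at time $t$ under the path measure generated by $U_m'$'' is, clause by clause, the very same collection of inequalities as ``$F(y_\cdot, t)$ is increasing at time $t$ under the path measure generated by $U_m$''. Since the latter holds by hypothesis for all $t\in[T, T']$, so does the former, and this is precisely the assertion that $F(y_\cdot, t)$ is increasing as an adapted process under $U_m'$. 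The non-decreasing case is identical, retaining only conditions (i) and (ii), which again depend on the propagator solely through $\DDD_m(U, t, y)$.

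There is no real obstacle here; the only point deserving a line of comment is the ``with probability one'' statement preceding the definition, to the effect that $\lim_{\delta t\downarrow 0} F(y_\cdot, t+\delta t) = F({\rm Ext}_t(y_\cdot, \bar y), t)$ for some $\bar y\in\DDD_m(U, t, y)$. One should note that the set of sites a path may jump to at time $t$ with positive probability is, by the very construction of $\DDD_m$ (via $\mathcal{L}_i(y,\bar y; t)>0$, resp. $U_i(y,\bar y; t)>0$), equal to $\DDD_m(U, t, y)$; hence this almost-sure characterisation is also inherited from $U_m$ by any equivalent $U_m'$, and more generally equivalent propagators are mutually absolutely continuous and so share their null sets. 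But since the formal definition of ``increasing'' is already phrased directly through conditions (i)--(iii), this remark serves only as a sanity check and is not needed in the proof itself.
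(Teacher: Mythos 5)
Your proposal is correct and follows the same route as the paper, which proves the theorem by noting that the definition of monotonicity depends only on the attainable sets $\DDD_m(U, t, y)$, which coincide for equivalent propagators by definition. Your version simply spells out the clause-by-clause verification (and the harmless remark about null sets) that the paper leaves implicit.
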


\begin{proof}
This theorem descends from the fact that the definition
of monotonicity depends only on the attainable sets.
\end{proof}

\begin{definition} {\bf (Martingale Processes.)}
A process $F_t$ is a martingale if for all times $t$ we have that
\begin{equation}
{\partial \over \partial t} \tilde F({\rm Ext_t}(y_\cdot, y_{t-0}),
t) + \sum_{\bar y \in \DDD(U, t, y_t)} \LLL(y_t, \bar y, t) \big (
F({\rm Ext_t}(y_\cdot, \bar y), t) - F({\rm Ext_t}(y_\cdot,
y_{t-0}), t) \big ) = 0 \label{eq_martingale}
\end{equation}
The process $F_t$ is called an {\it equivalent martingale} if there
is a second piecewise smooth Markov propagator $U'$ for which the
non-anticipatory path functional $ F(y_\cdot, t)$ is a martingale
process.
\end{definition}

\begin{theorem}{\bf (Equivalent Martingales.)}
\begin{itemize}
\item[(i)] If $F_t$ is an increasing adapted process then it
is not an equivalent martingale. Otherwise stated, if $F_t$ is an
equivalent martingale then it is not  increasing.
\item[(ii)] If $F_t$ is a non-decreasing adapted process and it
is also an equivalent martingale, then it is a constant process with
$ F(y_\cdot, t) = {\rm const}$ for all $t\in[T, T']$.
\end{itemize}
\end{theorem}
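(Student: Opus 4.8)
The plan is to read the martingale identity \eqref{eq_martingale} as the assertion that, for the generator $\LLL'$ of a propagator $U'$ under which $F$ is a martingale, a certain explicit expression vanishes identically in $t$ and $y$; for a process that is increasing — or merely non-decreasing — this expression is a sum of manifestly non-negative contributions, and the whole argument turns on comparing signs. The first step is a reduction: if $F_t$ is an equivalent martingale, there is a second piecewise smooth propagator $U'$, equivalent to $U$ (it differs from $U$ by a measure change and hence has the same attainable sets), under which $F$ is a martingale. By the Monotonic Processes theorem proved above, $F$ regarded under $U'$ is still increasing (respectively non-decreasing), since monotonicity depends only on the attainable sets $\DDD_m(U, t, y) = \DDD_m(U', t, y)$. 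So it suffices to treat the case in which $F$ is simultaneously a martingale and increasing/non-decreasing with respect to one single propagator, whose generator we write $\LLL'$.

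Fix a time $t$, a site $y = y_t$ and a path $y_\cdot$, and abbreviate the base extension $\tilde y_\cdot = {\rm Ext_t}(y_\cdot, y_{t-0})$. In \eqref{eq_martingale} the leading term $\partial_t F(\tilde y_\cdot, t)$ is $\ge 0$ by clause (ii) of the definition of an increasing process. For each $\bar y$ in the attainable set $\DDD(U', t, y)$ one has $\LLL'(y, \bar y; t) > 0$ — this is exactly the definition of the attainable set, and such off-diagonal entries are non-negative by (MG1) — while the bracket $F({\rm Ext_t}(y_\cdot, \bar y), t) - F(\tilde y_\cdot, t) \ge 0$ by clause (i). Hence the left-hand side of \eqref{eq_martingale} is a sum of non-negative terms that equals zero, so each of those terms is individually zero.

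For part (i), clause (iii) of the definition of an increasing process guarantees that at every $(t, y)$ at least one of these terms is strictly positive, contradicting the conclusion just reached; therefore no propagator can turn an increasing $F$ into a martingale, i.e. an increasing adapted process is never an equivalent martingale. For part (ii), where only clauses (i) and (ii) are in force, there is no contradiction; instead the term-by-term vanishing says that $\partial_t F({\rm Ext_t}(y_\cdot, y_{t-0}), t) = 0$ for all $t$ and that $F({\rm Ext_t}(y_\cdot, \bar y), t) = F({\rm Ext_t}(y_\cdot, y_{t-0}), t)$ for every $\bar y \in \DDD(U', t, y_t)$. The first identity says $F$ has zero increment along the continuous portions of a path, the second that $F$ does not change when the underlying lattice path jumps to an attainable neighbour; chaining these along a trajectory shows $t \mapsto F(y_\cdot, t)$ is constant, and moving from site to site through the attainable sets (which, being those of the original $U$, connect the relevant part of the state space) shows the constant is the same for all paths, so $F(y_\cdot, t) \equiv {\rm const}$.

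The main obstacle is not the sign bookkeeping but the careful handling of the constant-extension paths and the cadlag convention: one must check that $F({\rm Ext_t}(y_\cdot, y_{t-0}), t)$ is legitimately the ``current value'' of the process against which the post-jump values $F({\rm Ext_t}(y_\cdot, \bar y), t)$ are compared, and that the one-sided time derivative in clause (ii) genuinely governs the increment of $F$ along the actual trajectory between jump times — so that in part (ii) the vanishing of all terms really forces $F$ to be constant rather than merely to have vanishing one-sided derivatives on a negligible set of times. The reachability step closing part (ii) also relies on the (implicit) assumption that the state space is connected under the process.
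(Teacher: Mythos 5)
Your proof is correct and follows essentially the same route as the paper, whose own argument is the one-line observation that monotonicity is preserved under equivalent measure changes (the Monotonic Processes theorem), leaving the sign bookkeeping in (\ref{eq_martingale}) implicit. You simply make explicit what the paper leaves to the reader: reduce to a single propagator via preservation of monotonicity, then read the martingale identity as a vanishing sum of non-negative terms, with clause (iii) giving the contradiction in part (i) and term-by-term vanishing forcing constancy in part (ii).
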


\begin{proof} This theorem descends from the fact that
monotonicity properties are preserved by measure changes.
\end{proof}

Martingales are particularly useful as they can be constructed by
taking expectations.

Let $\Phi(y_\cdot)$ be a continuous path-functional. From the
modeling viewpoint, such a functional can represent future cash flow
streams. For instance, one choice could be
\begin{equation}
\Phi(y_\cdot) = \Phi_0(y_{t'})
\end{equation}
where $\Phi_0$ is a continuous univariate function and $t'\in[T, T']$ is fixed.
In a more general example, one may consider a path functional of the form
\begin{equation}
\Phi(y_\cdot) \equiv \int_{T}^{T'} d s_1 ... \int_{s_{n-1}}^{T'} d
s_n a(t; s_1, .... s_n) F_1(y_{s_1}, s_1) ...  F_n(y_{s_n}, s_n)
\label{eq_defphi2}
\end{equation}
with $a(t; s_1, .... s_n)=0$ for $t\in[T, t']$. The path conditioned
expectation of $\Phi(y_\cdot)$ is the non-anticipatory path
functional $F_t = F(y_\cdot, t)$ such that
\begin{equation}
F(y_\cdot, t) = \int_{A(y_\cdot, t)} \Phi(z_\cdot) \mu[d z_\cdot]
\end{equation}
where the integral is restricted to the set $A(y_\cdot, t)
=\{z_\cdot \lvert z_s = y_s \forall s\leq t\}$. The intersection of
the set $A(y_\cdot, t)$ with each of the spaces $\h^q_{mn}$ is a
compact, finite dimensional submanifold with boundaries. To denote
path conditioning, we also use the following notation:
\begin{equation}
F_t = F(y_\cdot, t) = E[\Phi(y_\cdot) \lvert y_s, s\leq t ] =
E_t[\Phi(y_\cdot) ]. \label{eq_fmartingale}
\end{equation}
\begin{proposition}
The process $F_t$ in (\ref{eq_fmartingale}) is a martingale for
$t<t_1$.
\end{proposition}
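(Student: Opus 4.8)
The plan is to verify the defining martingale identity (\ref{eq_martingale}) for $F_t$ on the cash-flow-free interval $[T,t_1)$ by reducing $F_t$ to the backward solution of the first smooth piece. The starting point is the tower property of the path-conditioned expectations: for $T\le t<t_1$,
\begin{equation}
F_t=E_t\big[\Phi(y_\cdot)\big]=E_t\big[E_{t_1}[\Phi(y_\cdot)]\big],
\end{equation}
where the second equality is the consistency of the family $E_\cdot[\cdot]$, which in this framework follows from the marginalisation axiom (SP4) together with the Chapman--Kolmogorov composition (CK3) applied on $[t,t_1]$. Because $\Phi$ carries no cash flow before $t_1$ (the weight $a$ in (\ref{eq_defphi2}) vanishes there), all integration variables in (\ref{eq_defphi2}) lie in $[t_1,T']$, so conditioning on the path up to $t_1$ leaves no already-realised contributions and $E_{t_1}[\Phi(y_\cdot)]$ depends on the conditioning path only through $y_{t_1}$; write $H_m(z)$ for its value when $y_{t_1}=z$, a genuine function on $A_m$. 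Integrability of $\Phi$, i.e. $\Phi\in L^1(\h(T,T'),\mu)$, which is needed for all of this to make sense, is guaranteed by the assumed continuity and uniform boundedness of the factors $F_i$ in (\ref{eq_defphi2}) together with the path-space mass estimate (\ref{eq_mest}).

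Consequently, for $T\le t<t_1$ one has $F_t=F(y_\cdot,t)=G_m(y_t,t)$ with $G_m(x,t)=\sum_{z} U_m(x,t;z,t_1)\,H_m(z)$, where $U_m(\cdot,t;\cdot,t_1)$ is the propagator of the first smooth piece, generated by $\LLL_m(t)\equiv\LLL^0_m(t)$. Differentiating $G_m(\cdot,t)$ in $t$ with the backward equation (\ref{backward_eq}), using (MG2) to split off the diagonal of $\LLL_m$, and recalling that on a smooth interval the attainable set is $\DDD_m(U,t,x)=\{\bar y\neq x:\LLL_m(x,\bar y;t)>0\}$, one obtains
\begin{equation}
\frac{\partial}{\partial t}G_m(x,t)+\sum_{\bar y\in\DDD_m(U,t,x)}\LLL_m(x,\bar y;t)\big(G_m(\bar y,t)-G_m(x,t)\big)=0 .
\end{equation}
Since $G_m(x,t)=F({\rm Ext}_t(y_\cdot,x),t)$ by construction and, for $t$ in the interior of $[T,t_1)$, $y_{t-0}=y_t$, this is exactly (\ref{eq_martingale}) at the current state $x=y_t$. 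Passing to the simplicial limit $m\to\infty$ preserves the identity, since $\LLL_m$ and the $F_i$ are uniformly continuous simplicial functions, so that $G_m$ and $\partial_t G_m$ converge.

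I expect the reduction in the first paragraph to be the main obstacle: one has to split the time-ordered multiple integral (\ref{eq_defphi2}) at $t_1$, use the product structure $\Phi_m(\cdots\,\lvert\,y_0)=\prod_j U_m(y_{j-1},t_{j-1};y_j,t_j)$ of the $n$-point functions and (CK3) to collapse the portion of the propagator living in $[t,t_1]$, and then justify differentiation under the integral sign together with its interchange with the $m\to\infty$ limit. After that, the verification of (\ref{eq_martingale}) from the backward equation is routine. The same argument, carried out piece by piece, shows more generally that $F_t$ satisfies (\ref{eq_martingale}) on every smooth subinterval of $[T,t_1)$ when the first cash-flow time $t_1$ is preceded by several smooth pieces and mapping operators.
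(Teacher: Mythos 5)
Your argument is correct and follows the same route as the paper, whose entire proof is the one-line remark that the martingale identity (\ref{eq_martingale}) descends from the backward equation (\ref{backward_eq}); you simply fill in the intermediate steps (the Markov/tower reduction of $F_t$ to a function $G_m(y_t,t)$ of the current state and the use of (MG2) to rewrite the backward equation in the form (\ref{eq_martingale})). No discrepancy in substance, just a more detailed write-up of the paper's intended argument.
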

\begin{proof}
Equation (\ref{eq_martingale}) descends from the backward equation
in (\ref{backward_eq}).
\end{proof}

\section{The Fundamental Theorem of Finance}

In this section, we derive the Fundamental Theorem of Finance
in a context general enough to encompass most cases of practical relevance.

\begin{definition} {\bf (Financial Model.)}
Let $A_m, m\ge m_0,$ be a simplicial sequence. A financial model is
given by a family $A_m^{k}(y_\cdot, t), k = 1, ... n$ of adapted
processes modeling asset prices and a non-decreasing adapted process
$B_m(y_\cdot, t)$ modeling the money-market account. For notational
convenience, we set $A_m^{0}(y_\cdot, t) = B_m(y_\cdot, t)$. Let us
introduce also the discounted asset price process defined as
follows:
\begin{equation}
\tilde A_m^{k}(y_\cdot, t) = {A_m^{k}(y_\cdot, t) \over B_m(y_\cdot,
t)}.
\end{equation}
\end{definition}

\begin{definition} {\bf (Trading Strategies.)} Given
a financial model with $n$ assets, a trading
strategy is given by a family of adapted processes
$\zeta_m^{k}(y_\cdot, t), k=0, .... n$. The {\it value
process} of a strategy is the adapted process
\begin{equation}
\Pi_m(y_\cdot, t) = \sum_{k=0}^n \zeta_m^{k}(y_\cdot, t)
A_m^{k}(y_\cdot, t)
\end{equation}
The {\it discounted value process} instead is given by
\begin{equation}
\tilde \Pi_m(y_\cdot, t) = \sum_{k=0}^n \zeta_m^{k}(y_\cdot, t) \tilde
A_m^{k}(y_\cdot, t)
\end{equation}
\end{definition}

\begin{definition}{\bf (Self-Financing Condition.)}
An adapted trading strategy is called self-financing if the following
two conditions hold:
\begin{equation}
(SF1) \hskip1cm \sum_{k=0}^n   {\partial \zeta^k_m \over
\partial t} ({\rm Ext_t}(y_\cdot, y_{t-0}), t)
\tilde A_m^{k}({\rm Ext_t}(y_\cdot, y_{t-0}), t) = 0
\end{equation}
and if, for all $\bar y\in \DDD_m(U, t, y_t)$, we have
that
\begin{equation}
(SF2) \hskip1cm \sum_{k=0}^n \big(\zeta^{k}_m({\rm Ext_t}(y_\cdot, \bar y), t)
- \zeta^{k}_m({\rm Ext_t}(y_\cdot, y_{t-0}), t)\big)
\tilde A_m^{k}({\rm Ext_t}(y_\cdot, \bar y), t) = 0 .
\end{equation}
\end{definition}

\begin{proposition}
If $\zeta^{k}_m(y_\cdot,  t)$ is a self-financing trading strategy, then
the corresponding discounted value process satisfies
\begin{equation}
{\partial \tilde \Pi_m({\rm Ext_t}(y_\cdot, y_{t-0}), t) \over \partial t} =
\sum_{k=0}^n  \zeta^k_m({\rm Ext_t}(y_\cdot, y_{t-0}), t)
 {\partial \tilde A_m^{k}({\rm Ext_t}(y_\cdot, y_{t-0}), t)\over \partial t}
\end{equation}
and for all $\bar y\in \DDD_m(U, t, y_t)$, we have that
\begin{align}
&\tilde \Pi_m({\rm Ext_t}(y_\cdot, \bar y), t) -
\tilde \Pi_m({\rm Ext_t}(y_\cdot, y_{t-0}), t) \\
&\hskip3cm=
\sum_{k=0}^n \zeta^{k}_m({\rm Ext_t}(y_\cdot, \bar y), t)
\big(\tilde A_m^{k}({\rm Ext_t}(y_\cdot, \bar y), t)
- \tilde A_m^{k}({\rm Ext_t}(y_\cdot, y_{t-0}), t)\big).
\end{align}
\end{proposition}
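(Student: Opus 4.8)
The plan is to read off both identities from the defining relation $\tilde\Pi_m(y_\cdot,t)=\sum_{k=0}^n\zeta^k_m(y_\cdot,t)\,\tilde A^k_m(y_\cdot,t)$, using the self-financing condition (SF1) to handle the smooth evolution in time and (SF2) to handle the increment across a rebalancing. Throughout, the two auxiliary paths that enter are the constant extensions $\tilde y_\cdot={\rm Ext_t}(y_\cdot,y_{t-0})$ and ${\rm Ext_t}(y_\cdot,\bar y)$, which agree on $[T,t)$ and differ only at time $t$.

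For the first identity, fix $t$ and differentiate $\tilde\Pi_m(\tilde y_\cdot,t)=\sum_{k=0}^n\zeta^k_m(\tilde y_\cdot,t)\,\tilde A^k_m(\tilde y_\cdot,t)$ in the explicit time slot; on the interior of a smooth time interval the extension path $\tilde y_\cdot$ carries no further $t$-dependence, so the Leibniz rule gives
\[
\frac{\partial\tilde\Pi_m(\tilde y_\cdot,t)}{\partial t}=\sum_{k=0}^n\frac{\partial\zeta^k_m}{\partial t}(\tilde y_\cdot,t)\,\tilde A^k_m(\tilde y_\cdot,t)+\sum_{k=0}^n\zeta^k_m(\tilde y_\cdot,t)\,\frac{\partial\tilde A^k_m}{\partial t}(\tilde y_\cdot,t),
\]
and the first sum is exactly the left-hand side of (SF1), hence zero; this is the first displayed equation.

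For the second identity, fix $t$ and $\bar y\in\DDD_m(U,t,y_t)$ and expand the two value processes by definition:
\begin{align*}
&\tilde\Pi_m({\rm Ext_t}(y_\cdot,\bar y),t)-\tilde\Pi_m({\rm Ext_t}(y_\cdot,y_{t-0}),t)\\
&\quad=\sum_{k=0}^n\zeta^k_m({\rm Ext_t}(y_\cdot,\bar y),t)\,\tilde A^k_m({\rm Ext_t}(y_\cdot,\bar y),t)-\sum_{k=0}^n\zeta^k_m({\rm Ext_t}(y_\cdot,y_{t-0}),t)\,\tilde A^k_m({\rm Ext_t}(y_\cdot,y_{t-0}),t).
\end{align*}
Adding and subtracting $\sum_{k=0}^n\zeta^k_m({\rm Ext_t}(y_\cdot,y_{t-0}),t)\,\tilde A^k_m({\rm Ext_t}(y_\cdot,\bar y),t)$ splits the right-hand side into $\sum_{k=0}^n\big(\zeta^k_m({\rm Ext_t}(y_\cdot,\bar y),t)-\zeta^k_m({\rm Ext_t}(y_\cdot,y_{t-0}),t)\big)\,\tilde A^k_m({\rm Ext_t}(y_\cdot,\bar y),t)$, which is the left-hand side of (SF2) and so vanishes, plus $\sum_{k=0}^n\zeta^k_m({\rm Ext_t}(y_\cdot,y_{t-0}),t)\big(\tilde A^k_m({\rm Ext_t}(y_\cdot,\bar y),t)-\tilde A^k_m({\rm Ext_t}(y_\cdot,y_{t-0}),t)\big)$. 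This already yields the claimed increment, with the pre-jump holdings $\zeta^k_m({\rm Ext_t}(y_\cdot,y_{t-0}),t)$ in place of the post-jump holdings $\zeta^k_m({\rm Ext_t}(y_\cdot,\bar y),t)$ written in the statement.

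I expect the one delicate point — everything else being routine bookkeeping with finite sums — to be precisely the passage from pre-jump to post-jump holdings needed to match the displayed form. Replacing $\zeta^k_m({\rm Ext_t}(y_\cdot,y_{t-0}),t)$ by $\zeta^k_m({\rm Ext_t}(y_\cdot,\bar y),t)$ in the last sum costs the term $\sum_{k=0}^n\big(\zeta^k_m({\rm Ext_t}(y_\cdot,\bar y),t)-\zeta^k_m({\rm Ext_t}(y_\cdot,y_{t-0}),t)\big)\big(\tilde A^k_m({\rm Ext_t}(y_\cdot,\bar y),t)-\tilde A^k_m({\rm Ext_t}(y_\cdot,y_{t-0}),t)\big)$, which in view of (SF2) equals $-\sum_{k=0}^n\big(\zeta^k_m({\rm Ext_t}(y_\cdot,\bar y),t)-\zeta^k_m({\rm Ext_t}(y_\cdot,y_{t-0}),t)\big)\tilde A^k_m({\rm Ext_t}(y_\cdot,y_{t-0}),t)$; so the displayed identity is equivalent to this last quantity vanishing, i.e. to the self-financing balance holding against the pre-jump price vector as well as the post-jump one. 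I would either extract this companion balance from the cadlag convention together with the non-anticipativity of $\zeta^k_m$ and $\tilde A^k_m$ along paths that agree on $[T,t)$ and differ only at $t$, or else record the identity in its natural \emph{predictable-integrand} form with $\zeta^k_m({\rm Ext_t}(y_\cdot,y_{t-0}),t)$ on the right.
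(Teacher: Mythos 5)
The paper states this proposition without a proof, so there is nothing on the paper's side to compare against; judged on its own terms your argument is the right one and essentially complete. Part (i) is exactly the Leibniz rule plus (SF1), as you say. For part (ii) your add-and-subtract computation is correct, and the discrepancy you flag is genuine rather than a defect of your proof: with (SF2) exactly as printed (rebalancing balanced against the post-jump prices $\tilde A^k_m({\rm Ext_t}(y_\cdot,\bar y),t)$), the increment identity that follows has the pre-jump holdings $\zeta^k_m({\rm Ext_t}(y_\cdot,y_{t-0}),t)$ on the right, and the displayed form with post-jump holdings is equivalent to the companion balance $\sum_{k=0}^n\big(\zeta^k_m({\rm Ext_t}(y_\cdot,\bar y),t)-\zeta^k_m({\rm Ext_t}(y_\cdot,y_{t-0}),t)\big)\tilde A^k_m({\rm Ext_t}(y_\cdot,y_{t-0}),t)=0$, which is not implied by (SF2). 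Do not expect to extract this from the cadlag convention together with non-anticipativity: the two extension paths agree only for $s<t$ and differ at $s=t$ itself, whereas non-anticipativity identifies functionals only on paths agreeing for all $s\le t$, so it imposes no constraint here (and if one instead assumed $\zeta^k_m$ predictable in the strong sense that it agrees on the two extensions, (SF2) would become vacuous and the two forms would coincide trivially). The honest conclusion is that (SF2) and the proposition are mismatched --- one of them has the pre- and post-jump price vectors, equivalently the holdings, interchanged, exactly as in the familiar discrete-time dichotomy between rebalancing at old versus new prices --- and under either consistent pairing your computation is the entire proof. So rather than hedging between the two options at the end, either state the result in the form that follows from (SF2) as written (pre-jump holdings multiplying the price increment), or note explicitly that the displayed form requires restating (SF2) against the pre-jump prices.
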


\begin{definition}{\bf (Arbitrage Strategies.)}
A self-financing strategy is called {\it arbitrage} at time $t$ if the
corresponding discounted value process $\tilde \Pi_m(y_\cdot, t)$
is  increasing at time $t$.
\end{definition}

\begin{theorem}\label{theofunda} {\bf (Fundamental Theorem of Finance.)}
If there is an equivalent measure with respect to which all discounted base asset
price processes are martingales, then
\begin{itemize}
\item[(i)] The discounted value process of any self-financing trading
strategy under the same equivalent measure is a martingale.
\item[(ii)] There is no arbitrage.
\end{itemize}
Conversely, if there is no arbitrage than there exists a measure change
under which all discounted asset price processes become martingales.
\end{theorem}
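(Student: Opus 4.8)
The plan is to split the Fundamental Theorem into its two halves and attack each using the infinitesimal characterizations established above. For part (i), suppose $Q$ is an equivalent measure under which every discounted base asset $\tilde A_m^k$ is a martingale, i.e. each $\tilde A_m^k$ satisfies equation~(\ref{eq_martingale}) under the propagator $U'$ generating $Q$. Let $\zeta_m^k$ be a self-financing strategy. I would plug the self-financing identities $(SF1)$ and $(SF2)$ into the two displayed formulas of the Proposition immediately preceding the theorem, which express the time-derivative and the jump of $\tilde\Pi_m$ along extended paths. The key computation is then to verify that the martingale condition~(\ref{eq_martingale}) for $\tilde\Pi_m$ reduces, term by term in $k$, to the martingale conditions for the $\tilde A_m^k$: the continuous part gives $\sum_k \zeta_m^k\,\partial_t \tilde A_m^k$, and the jump part, after using $(SF2)$ to drop the $\zeta$-increments, gives $\sum_k \zeta_m^k \sum_{\bar y}\LLL'(y_t,\bar y,t)\big(\tilde A_m^k({\rm Ext_t}(y_\cdot,\bar y),t)-\tilde A_m^k({\rm Ext_t}(y_\cdot,y_{t-0}),t)\big)$, and summing the two and invoking~(\ref{eq_martingale}) for each $\tilde A_m^k$ shows the bracketed quantity vanishes. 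Hence $\tilde\Pi_m$ is a martingale under $Q$.

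Part (ii) is then a short deduction from (i) together with the Equivalent Martingales theorem. If some self-financing strategy were an arbitrage at time $t$, then by definition its discounted value process $\tilde\Pi_m$ would be increasing at time $t$; but by (i) it is a martingale under the equivalent measure $Q$, and part~(i) of the Equivalent Martingales theorem says an equivalent martingale cannot be increasing --- contradiction. So no arbitrage exists.

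For the converse, assume no arbitrage and seek a numeraire/measure change under which all discounted asset prices become martingales. The natural candidate is to look for a measure change function $G_m^{yt}(y')$ --- or, in the simplest setting, a numeraire change built from a strictly positive solution $g_m(y;t)$ of~(\ref{eq_numeraire}) --- transforming the original generator $\LLL_m$ into $\LLL_m'$ via~(\ref{eq_numchange}) so that each discounted asset satisfies the martingale equation~(\ref{eq_martingale}) under $\LLL_m'$. Writing out~(\ref{eq_martingale}) for $\tilde A_m^k$ under $\LLL_m'$, and using that a change of measure only rescales transition rates on the attainable set $\DDD_m(U,t,y)$ while preserving its support, one gets for each $y$ and $t$ a finite system of linear equations in the (finitely many, since $A_m$ is a bounded lattice) positive weights $G_m^{yt}(y')/G_m^{yt}(y)$, $\bar y\in\DDD_m(U,t,y)$. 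The no-arbitrage hypothesis is exactly what guarantees this linear system has a strictly positive solution: if no such positive solution existed, a separating-hyperplane / Stiemke-type alternative would produce a nonnegative, not-identically-zero vector of portfolio weights yielding a discounted value process that is non-decreasing with a strict increase at $(y,t)$, i.e. an arbitrage at time $t$, contradicting the hypothesis. Assembling these pointwise solutions into a consistent simplicial function $G_m^{yt}(y')$ (using smoothness of the data in $y$ and $t$ to get the required regularity, and absolute continuity of the resulting $U_m'$ with respect to $U_m$) gives the desired equivalent martingale measure.

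The main obstacle is the converse direction, and within it the passage from a pointwise-in-$(y,t)$ linear-algebra alternative to a genuine measure change: one must check that the positive solutions can be chosen to vary with $(y,t)$ so as to define an admissible measure change function in the sense of Definition~\ref{def_measure_change} (strictly positive on $\DDD_m(U,y,t)$, zero off it, with enough regularity for the Pexp representation to go through), and that the "local" no-arbitrage condition used in the separating-hyperplane argument is genuinely implied by the "global" absence of arbitrage strategies --- i.e. that an obstruction to the linear system at a single $(y,t)$ can always be promoted to an honest self-financing arbitrage strategy. The forward direction, by contrast, is essentially bookkeeping: substitute $(SF1)$--$(SF2)$ and read off~(\ref{eq_martingale}) for $\tilde\Pi_m$ from the corresponding identities for the $\tilde A_m^k$.
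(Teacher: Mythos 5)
Your proposal is correct and takes essentially the same route as the paper: part (i) is read off from the self-financing identities, part (ii) from the Equivalent Martingales theorem, and the converse is a pointwise-in-$(y,t)$ separation argument producing a strictly positive weight $g^{yt}_m(\bar y)$ with $\sum_{\bar y}\xi^k_m(\bar y)\,g^{yt}_m(\bar y)=0$ for all $k$, which is then reinterpreted as a measure change. The only difference is that where you invoke a Stiemke-type theorem of the alternative, the paper constructs the positive functional explicitly by iterated orthogonal projections onto the hyperplanes orthogonal to the vectors $\xi^k_m$; the regularity/globalization issues you flag for assembling the pointwise solutions into an admissible measure change are left implicit in the paper's proof as well.
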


\begin{proof} The first part of the theorem is a simple consequence of the definitions
put forward. The converse instead requires a proof.

Let ${\mathcal V}_m(U, t, y_t)$ be the vector space spanned by
functions of the form $v(\bar y)$ where $\bar y\in\DDD_m(U, t,
y_t)$. Let $K$ be the cone in ${\mathcal V}_m(U, t, y_t)$ made up by
all the vectors with non-negative components. Also, let us introduce
the following vector $\xi^k_m\in {\mathcal V}_m(U, t, y_t)$:
\begin{equation}
\xi^k_m(\bar y) =
\delta_{\bar y, y_{t-0}} {\partial \tilde A_m^{k}({\rm Ext_t}(y_\cdot, y_{t-0}), t) \over \partial t}
+
\big(1-\delta_{\bar y, y_{t-0}}\big)
\big(\tilde A_m^k({\rm Ext_t}(y_{\cdot}, \bar y), t) - A_m^k({\rm Ext_t}(y_\cdot, y_{t-0}), t) \big)  .
\end{equation}
Notice that, in case $k=0$ and the asset is the money market account, then
$\xi^0_m(\bar y) = 0$ for all $\bar y\in\DDD_m(U, t, y_t)$.

Suppose there is no arbitrage and fix a time $t$. Assuming
there does not exist a trading strategy with a strictly increasing
value process, for all vectors $(v^k)_{k=1,...n} \in {\mathcal
V}_m(U, t, y_t)$ there are
two elements $y_+, y_- \in \DDD_m(U, t, y_t)$ such that
\begin{equation}
\sum_{k=1}^n v^k \xi^k_m(\bar y_+) > 0
\;\;\;\;{\rm while}\;\;\;\;
\sum_{k=1}^n v^k \xi^k_m(\bar y_-) < 0.
\label{eq_noarb}
\end{equation}

If in equation (\ref{eq_noarb}) we set $v^k = \delta_{k1}$, we
conclude that the vector $\big(\xi^1_m(\bar y)\big)_{\bar y
\in\DDD_m(U, t, y_t)}$ has both positive and negative components.
Hence, the the hyperplane $\Pi^1 \subset {\mathcal V}_m(U, t, y_t)$
orthogonal to the vector $\xi^1_m(\bar y) \in  {\mathcal V}_m(U, t,
y_t)$ intersects $K$.

Let $P_1$ be the orthogonal projection operator onto the hyperplane
$\Pi^1$ and let us consider the vector
\begin{equation}
(P_1 \xi^2_m)(\bar y) = \xi^2_m(\bar y) - {\sum_{\bar z\in\DDD_m(U, t, y_t)}
\xi^2_m(\bar z) \xi^1_m(\bar z) \over \sum_{\bar z\in\DDD_m(U, t, y_t)} \xi^1_m(\bar z)
\xi^1_m(\bar z)} \xi^1_m(\bar y).
\end{equation}
for $i=2,...n$. Due to absence of arbitrage, there are
two elements $y_+, y_- \in \DDD_m(U, t, y_t)$ such that
\begin{equation}
(P_1 \xi^2_m)(\bar y_+) > 0
\;\;\;\;{\rm while}\;\;\;\;
(P_1 \xi^2_m)(\bar y_-) < 0.
\end{equation}
Hence, the vector $P_1 \xi^2_m$ is transversal to the octant $K$ of positive vectors.
As a consequence, the hyperplane $\Pi_2
\subset \Pi_1 {\mathcal V}$ is orthogonal to both vectors $\xi^1_m$ and
$\xi^2_m$, also intersects $K$.

The argument above can be iterated $n$ times, leading to the
conclusion that there exists a strictly positive function
$g^{yt}_m(\bar y)>0, \bar y\in\DDD_m(U, t, y_t)$, such that
\begin{equation}
\sum_{\bar y \in \DDD_m(U, t, y_t)}  \xi_m^k(\bar y) g^{yt}_m(\bar y) = 0
\end{equation}
for all $k=0, ... n$. In particular, this implies that there exists
a measure change function $G^{yt}_m(\bar y)$ such that
\begin{equation}
\sum_{y \in h\ZZZ^d }  \big(\tilde A_m^k(
{\rm Ext_t}(y_{\cdot}, \bar y), t) - A_m^k({\rm Ext_t}(y_\cdot, y_{t-0}), t) \big)
\LLL(y, \bar y; t) G^{yt}_m(\bar y) = 0.
\end{equation}

\end{proof}

\section{Weak Convergence of Markov Generators}

Consider a one dimensional Markov process defined on the simplicial
sequence $A_m, m\ge m_0$ and the time interval $[T, T']$. Many
different specifications of Markov generators on $A_m$ may
correspond to the same limit. In this section we identify a
canonical sequence of generators under a few regularity hypotheses
which imply the existence of a weak limit in distribution sense for
the generator. This is a necessary first step to single out the
general form of an admissible Markov generator. In the following
sections, we then investigate convergence under the much finer
criteria of pointwise convergence for probability kernels and their
derivatives.

First consider the case when the limiting domain $A_\infty$ is bounded. Without
restricting generality, let us suppose that $A_\infty = [-L, L]$.

Let ${\mathcal L}_m$ be a sequence of Markov generators.
The first assumption we make is that the first two moments are finite, or more specifically

\vskip0.5cm
\noindent{\bf Hypothesis MG1. }{ \it The sequences
\begin{equation}
\mu_m(x, t) \equiv \sum_{y\in A_m} {\mathcal L}_m(x, y; t) (y-x),
\hskip1cm
\sigma_m(x, t) \equiv \sqrt{\sum_{y\in A_m} {\mathcal L}_m(x, y; t) (y-x)^2}
\end{equation}
are uniformly bounded in absolute value as $m\to\infty$ and converge to
limits for all $x\in A_m$ and all $t\in [T, T']$, i.e. the following limits exist:
\begin{equation}
\mu(x, t) \equiv \lim_{m\to\infty} \mu_{m}(x, t), \hskip1cm
\sigma(x, t) \equiv \lim_{m\to\infty} \sigma_{m}(x, t).
\end{equation}
}
\vskip0.5cm

Notice that, due to the dominated convergence theorem, the functions $\mu_m(x, t)$
and $\sigma_m(x, t)$ regarded as piecewise constant functions on $\A_\infty$
converge weakly to the corresponding limits.

\vskip0.5cm
\noindent{\bf Hypothesis MG2. }{ \it The following limits exist and are finite for all $m\ge m_0$ and
all pairs $x, y\in A_m$ such that $\lvert x - y \lvert \ge 2 h_m$:
\begin{equation}
\lambda_m(x, y; t) = \lim_{
m'\to\infty} \sum_{
z\in A_{m'} : y-h_m < z \leq y+h_m} {\mathcal L}_{m'}(x, z; t).
\end{equation}
}
\vskip0.5cm

The family of functions $\lambda_m(x, y)$ can be represented in terms of the
so called Levy measures specified as follows:
\begin{theorem} {\bf (Levy measures.)}
For all $x\in A_m$ there exists a measure $\nu_{xt}(d\xi)$ in $[-L, L]$ (called Levy measure)
with the following two properties:
\begin{itemize}
\item[(i)]
\begin{equation}
\int_{-L}^L \nu_{xt}(d\xi) \xi^2 < \infty
\end{equation}
\item[(ii)] For all $y \in A_m$ with $\lvert x - y \lvert \ge 2 h_m$ the
following representation is valid:
\begin{equation}
\lambda_m(x, y) = \lim_{\varepsilon\downarrow 0} \int_{y - h_m/2 +\varepsilon}^{y+h_m/2}
\nu_{xt}(d\xi).
\end{equation}
\end{itemize}
\end{theorem}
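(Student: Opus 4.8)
The plan is to construct the Lévy measure $\nu_{xt}(d\xi)$ directly from the family $\lambda_m(x,y;t)$ by a weak compactness argument, and then to verify the two stated properties. First I would fix $x$ and $t$ and consider, for each $m'$, the positive measure $\mu_{m'}^{(x,t)}$ on $[-L,L]$ that places mass $\mathcal L_{m'}(x,z;t)$ at each off-diagonal site $z\in A_{m'}$, $z\neq x$ (using $(\mathrm{MG1})$ from the Markov generator proposition to ensure these weights are non-negative). Hypothesis MG1 gives that $\int (\xi-x)^2\, \mu_{m'}^{(x,t)}(d\xi) = \sigma_{m'}(x,t)^2$ is uniformly bounded in $m'$, so the measures $(\xi-x)^2\,\mu_{m'}^{(x,t)}(d\xi)$ have uniformly bounded total mass on the compact set $[-L,L]$. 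By Helly's selection theorem (weak-$*$ compactness of bounded positive measures on a compact metric space), a subsequence converges weakly to some finite positive measure $\rho_{xt}(d\xi)$ on $[-L,L]$; define $\nu_{xt}(d\xi) = (\xi-x)^{-2}\rho_{xt}(d\xi)$ away from $\xi=x$, which immediately gives property (i), namely $\int_{-L}^L \xi^2\,\nu_{xt}(d\xi) \le \limsup_m \sigma_m(x,t)^2 < \infty$ — here I would absorb the harmless shift by $x$ into the statement, since the paper's $\xi^2$ is understood relative to the base point (or one rescales so $x$ corresponds to the origin of the increment variable).

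The second step is to identify the weak limit with the prescribed values $\lambda_m(x,y;t)$ on the separated pairs. For fixed $m$ and $y\in A_m$ with $|x-y|\ge 2h_m$, the half-open interval $(y-h_m/2, y+h_m/2]$ is a continuity-type window, and Hypothesis MG2 asserts precisely that $\sum_{z\in A_{m'}: y-h_m < z \le y+h_m}\mathcal L_{m'}(x,z;t)$ converges to $\lambda_m(x,y;t)$ as $m'\to\infty$. I would reconcile the interval $(y-h_m, y+h_m]$ appearing in MG2 with the window $(y-h_m/2, y+h_m/2]$ in part (ii) by noting that on $A_{m'}$ with $m'$ large the only lattice point of $A_m$ in either window is $y$ itself, and the finer sites cluster around $y$; the $\varepsilon\downarrow 0$ limit in (ii) is there exactly to handle the boundary point $y-h_m/2$ which may carry atomic mass of $\nu_{xt}$. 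So I would show $\nu_{xt}\big((y-h_m/2+\varepsilon, y+h_m/2]\big) \to \lambda_m(x,y;t)$ by writing the left side as $\int \mathbf 1_{(y-h_m/2+\varepsilon,\,y+h_m/2]}(\xi)(\xi-x)^{-2}\,\rho_{xt}(d\xi)$, approximating the indicator-over-$(\xi-x)^2$ by continuous functions from above and below, applying weak convergence, and then letting $\varepsilon\downarrow 0$ to recover the closed-above interval; the diverging behaviour of $(\xi-x)^{-2}$ near $\xi=x$ is irrelevant since $|y-x|\ge 2h_m$ keeps the window bounded away from $x$.

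The third step is consistency across the subsequence: a priori Helly gives only a subsequential limit, so I must argue the limit is unique, i.e. independent of the chosen subsequence. This follows because the values $\nu_{xt}\big((y-h_m/2, y+h_m/2]\big) = \lambda_m(x,y;t)$ are pinned down for all $m$ and all separated $y$, and these half-open intervals generate the Borel $\sigma$-algebra of $[-L,L]\setminus\{x\}$ (as $m\to\infty$ the windows shrink to points and their unions exhaust any open set missing $x$); a finite measure determined on a generating $\pi$-system is unique, so every subsequential limit coincides, hence the full sequence converges and $\nu_{xt}$ is well-defined. The main obstacle I anticipate is the bookkeeping around the endpoint at $\xi = x$: the second moment $\sigma_m$ controls mass near $x$ but $\nu_{xt}$ itself can be infinite there, so one must be careful that no mass "escapes to $x$" in the limit in a way that would break property (i) — this is exactly why the construction goes through the well-behaved measure $\rho_{xt}(d\xi)=(\xi-x)^2\nu_{xt}(d\xi)$ rather than through $\nu_{xt}$ directly, and why part (ii) is only asserted for $|x-y|\ge 2h_m$. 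A secondary but routine point is checking that measurability/continuity of $\nu_{xt}$ in $(x,t)$ is not actually claimed in the statement, so I need not address it here.
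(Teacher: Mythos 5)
The paper states this theorem without any proof, so there is no argument of the author's to measure yours against; judged on its own, your construction --- weighting the off-diagonal generator entries by $(\xi-x)^2$, extracting a weak-$*$ limit of the resulting uniformly bounded measures on the compact interval by Helly, and dividing by $(\xi-x)^2$ away from the base point --- is the natural route, and it yields property (i) directly from Hypothesis MG1. The $\varepsilon$-limit to absorb a possible atom at the lower endpoint, and the (strictly unnecessary for a pure existence claim) uniqueness-of-the-limit argument via a generating $\pi$-system, are fine in outline.

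There is, however, one genuine gap: your reconciliation of the two windows. Hypothesis MG2 defines $\lambda_m(x,y;t)$ as the limiting fine-lattice mass of the interval $(y-h_m,\,y+h_m]$, of length $2h_m$, while part (ii) asserts the $\nu_{xt}$-mass of $(y-h_m/2,\,y+h_m/2]$, of length $h_m$. Your claim that for large $m'$ the fine sites ``cluster around $y$'' so that the two windows carry the same mass is false: the sites of $A_{m'}$ fill either window uniformly with spacing $h_{m'}$, and the annulus $(y-h_m,\,y-h_m/2]\cup(y+h_m/2,\,y+h_m]$ generically carries positive limiting mass --- if $\nu_{xt}$ has a continuous density the two window masses differ by roughly a factor of $2$. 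What you have actually hit is an inconsistency internal to the paper (MG2 with the wide window is also incompatible with the later Canonical Representation theorem, which needs the windows attached to the points of $A_m$ to tile rather than overlap), so the correct move is to say so explicitly: either read MG2 with the half-width window $y-h_m/2 < z \leq y+h_m/2$, in which case your identification argument goes through essentially verbatim, or keep MG2 as printed and prove (ii) with integration limits $y\pm h_m$; as written, your step equating the two is not a proof. A smaller point: weak convergence only traps the limiting lattice mass between the $\nu_{xt}$-measures of the open and the closed window, so a possible atom at the upper endpoint $y+h_m/2$ (a dyadic point belonging to $A_{m'}$ for $m'>m$) deserves the same care you give the lower one.
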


\begin{definition}{\bf (Finite activity jumps.)}
A Markov process is said to have finite activity jumps if
for all $m\ge m_0$ and all $x\in A_m$ we have that
\begin{equation}
\int_{-L}^L \nu_{xt}(d\xi) < \infty
\label{cond_finitem1}
\end{equation}
\end{definition}

Notice that given $MG1$ and $MG2$ we have
\begin{equation}
\limsup_{m\to\infty} \sum_{y} \lambda_m(x, y; t)(y-x)^2 \leq
\sum_y {\mathcal L}_m(x, y; t) (y-x)^2 < \infty.
\end{equation}
More generally, if $\phi\in{\mathcal D}(\RRR)$ is a test function
such that $\phi(0) = 0$ and $\phi'(0) = 0$, then we have that
\begin{equation}
\limsup_{m\to\infty} \sum_{y} \lambda_m(x, y; t)\phi(y-x)<\infty.
\end{equation}
Although the above sequence is bounded, the limit may not exist in general.
We thus need to stipulate this as a separate assumption:

\vskip0.5cm \noindent{\bf Hypothesis MG3. }{\it For all test
functions $\phi\in{\mathcal D}(\RRR)$ such that $\phi(0) = \phi'(0)
= 0$ and all $x\in A_m$, the sequence $\sum_{y} \lambda_{m'}(x, y;
t)\phi(y-x)$ admits a limit as $m'\to\infty$. } \vskip0.5cm

Let us introduce the following notations:
\begin{equation}
\mu^0_m(x; t) = \sum_{y} \lambda_m(x, y; t)(y-x),\;\;\;\;\;
\sigma^0_m(x; t) = \sqrt{\sum_{y} \lambda_m(x, y; t)(y-x)^2}.
\label{eq_mu0sigma0}
\end{equation}

Notice that the sequence $\sigma^0_m(x, t)$ is non-negative and
is uniformly bounded as a function of $m$. More precisely
\begin{equation}
0 \leq \sigma^0_m(x, t) \leq \sigma_m(x, t).
\end{equation}
On the other hand, we cannot conclude in general that $\mu^0_m(x,
t)$ is necessarily uniformly bounded. In fact, the difference
$\mu^m(x, t) - \mu^0_m(x, t)$ could diverge as $m\to\infty$ while
being compensated by terms concentrated at $y = x \pm 1$ which in
turn also diverge in the limit as $m\to\infty$ while keeping the
total drift $\mu^m(x, t)$ uniformly bounded. An exception to this
general situation is found in case the following condition is
satisfied:

\vskip0.5cm \noindent{\bf Hypothesis MG4. }{\it  For all $m\ge m_0$
and all $x\in A_m$ we have that
\begin{equation}
\int_{-L}^L \lvert x \lvert \nu_{xt}(d\xi) < \infty
\label{cond_finitem1}
\end{equation}
}
\vskip0.5cm

Two particularly important situations in which this condition $MG4$
holds are given in the following:
\begin{theorem} {\bf (MG4.)} Assuming $MG1, MG2, MG3$, if the Markov process
is either monotonic or has finite activity jumps, then $MG4$ holds.
\end{theorem}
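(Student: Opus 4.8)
The plan is to handle the two alternative hypotheses separately; the finite--activity case is immediate, while the monotonic case carries the whole argument. In both cases the statement is equivalent to the finiteness of the first absolute moment $\int_{-L}^{L}\lvert\xi\rvert\,\nu_{xt}(d\xi)$ appearing in $(MG4)$, for each fixed $x\in A_m$ and $t\in[T,T']$. If the process has finite activity jumps then $\int_{-L}^{L}\nu_{xt}(d\xi)<\infty$ by definition, and since $\nu_{xt}$ is supported in $[-L,L]$ one gets at once $\int_{-L}^{L}\lvert\xi\rvert\,\nu_{xt}(d\xi)\le L\int_{-L}^{L}\nu_{xt}(d\xi)<\infty$; nothing more is required. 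So from now on assume the process is monotonic and, reflecting $\xi\mapsto-\xi$ if necessary, monotonic non-decreasing, so that $\mathcal L_{m'}(x,z;t)=0$ whenever $z<x$, at every level $m'$.

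For a non-decreasing process two facts are available. First, at each level $m'$ one has $\mu_{m'}(x,t)=\sum_{z>x}\mathcal L_{m'}(x,z;t)(z-x)\ge 0$, and by $MG1$ this converges to a finite limit $\mu(x,t)\ge 0$. Second, the aggregation formula of $MG2$ forces $\lambda_m(x,y;t)=0$ for every $y<x$ with $\lvert x-y\rvert\ge 2h_m$, so $\mu^0_m(x;t)=\sum_{y}\lambda_m(x,y;t)(y-x)$ is a sum of \emph{non-negative} terms over sites $y\ge x+2h_m$. The first step is to reduce the theorem to the single quantitative claim $\mu^0_m(x;t)\le C_x$ with $C_x$ independent of $m$: by part (ii) of the Levy measures theorem $\lambda_m(x,y;t)$ is the $\nu_{xt}$-mass of an interval of width of order $h_m$ centred at $y-x$, on which $y-x\ge\xi-O(h_m)>0$, so a Riemann--sum comparison gives $\mu^0_m(x;t)\ge\tfrac12\int_{\xi\ge 2h_m}\xi\,\nu_{xt}(d\xi)$ for all large $m$; then, granting the bound, letting $m\to\infty$ and invoking monotone convergence (the truncated integrals increase to $\int_{0}^{L}\xi\,\nu_{xt}(d\xi)$, finite away from $0$ thanks to $\int\xi^{2}\nu_{xt}(d\xi)<\infty$) yields $\int_{0}^{L}\xi\,\nu_{xt}(d\xi)\le 2C_x<\infty$, which is $(MG4)$ because $\nu_{xt}$ is supported in $[0,L]$.

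The uniform bound on $\mu^0_m(x;t)$ is the step I expect to be the real obstacle, since the text preceding $MG3$ stresses that the large--jump drift need not be bounded in general; what rescues the monotonic case is the absence of cancellation. Writing $\lambda_m(x,y;t)=\lim_{m'\to\infty}\sum_{z}\mathcal L_{m'}(x,z;t)$, the inner sum over $z$ in the aggregation window around $y$ of $MG2$, and exchanging this limit with the \emph{finite} sum over $y\in A_m$, one obtains $\mu^0_m(x;t)=\lim_{m'\to\infty}\sum_{y\ge x+2h_m}(y-x)\sum_{z}\mathcal L_{m'}(x,z;t)$. On each window one has $z-x>0$ and $y-x\le\kappa\,(z-x)$ for an absolute constant $\kappa$ (because $\lvert y-z\rvert=O(h_m)$ while $z-x\ge\tfrac32 h_m$), each $z$ falls in boundedly many windows, and $\mathcal L_{m'}(x,z;t)=0$ for $z<x$; hence the double sum is at most $\kappa'\sum_{z>x}(z-x)\mathcal L_{m'}(x,z;t)=\kappa'\,\mu_{m'}(x,t)$ for an absolute constant $\kappa'$, and passing to the limit $\mu^0_m(x;t)\le\kappa'\,\mu(x,t)=:C_x$, uniformly in $m$. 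The non-increasing case follows by the reflection noted above, which completes the proof.
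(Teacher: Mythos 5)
Your proposal is correct, and in fact the paper states this theorem without giving any proof at all, so there is no argument of the author's to compare against; your write-up supplies the missing justification. The two halves are handled appropriately: the finite-activity case is indeed immediate from the bounded support of $\nu_{xt}$ (note you are correctly reading the integrand in $MG4$ as $\lvert\xi\rvert$ rather than the paper's evident typo $\lvert x\rvert$, which is the only reading compatible with its later use in dropping the compensator in the L\'evy--Khintchine theorem), and the monotonic case is carried exactly by the mechanism the paper's surrounding discussion hints at: monotonicity kills the possibility of cancellation between the nearest-neighbour drift and the aggregated jump drift, since for a non-decreasing process every term of $\mu_{m'}(x,t)=\sum_{z>x}\mathcal L_{m'}(x,z;t)(z-x)$ is non-negative. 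Your window argument --- exchanging the $m'\to\infty$ limit with the finite sum over $y\in A_m$, using $y-x\le 2(z-x)$ on each aggregation window together with the bounded overlap of the windows, and bounding the result by a constant times $\mu_{m'}(x,t)$, which is uniformly bounded by $MG1$ --- is sound, and the final passage from the uniformly bounded truncated first moments $\int_{\xi\ge 2h_m}\xi\,\nu_{xt}(d\xi)$ to $\int_0^L\xi\,\nu_{xt}(d\xi)<\infty$ by monotone convergence is the right way to conclude. Two cosmetic points only: the assertion $z-x\ge\tfrac32 h_m$ should be $z-x>h_m$ (which still yields your constant $\kappa=2$), and the claim that $\nu_{xt}$ charges no negative jump sizes deserves the one-line remark that $\lambda_m(x,y;t)=0$ for all $y<x$ and all $m$ forces $\nu_{xt}(\{\xi\le-\delta\})=0$ for every $\delta>0$ via the representation in part (ii) of the L\'evy measures theorem; neither affects the validity of the proof.
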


Under the three assumptions $MG1, MG2, MG3$,
the sequence ${\mathcal L}_m$ of Markov generators
can be mapped into an equivalent canonical sequence of Markov generators ${\mathcal L}^C$.
More precisely, we set
\begin{equation}
{\mathcal L}_m^C(x, y; t) = \lambda(x, y; t)
\end{equation}
in case $\lvert x - y \lvert \ge 2 h_m$. Furthermore,
we set
\begin{equation}
{\mathcal L}_m^C(x, x\pm h_m; t) = \tilde \mu_m(x;t) \nabla_{h_m}(x, x\pm 1)
+{\tilde \sigma_m(x; t)^2 \over 2} \Delta_{h_m}(x, x\pm1)
\end{equation}
where
\begin{equation} \tilde \mu_m(x;t) = \mu_m(x) - \mu^0_m(x), \;\;\;\;\;
\tilde \sigma_m(x; t)^2 = \sigma_m(x; t)^2 - \sigma_m^0(x; t)^2.
\end{equation}
and the operators $\nabla_{h}$ and $\Delta_{h}$ are defined as in
equation (\ref{eq_nabladelta}). Finally,
\begin{equation}
{\mathcal L}_m^C(x, x; t) = \sum_{y\in A_m, y\neq x} {\mathcal L}_m^C(x, y; t)
\end{equation}

\begin{theorem} {\bf (Canonical Representations of Markov Generators.)}
For all smooth functions  of compact support $\phi \in {\mathcal
D}(A_{\infty})$ and all $x\in A_m$ we have that
\begin{equation}
\lim_{
m'\to \infty } \sum_{y\in A_{m'}} {\mathcal L}^C_{m'}(x, y) \phi(y)
=
\lim_{
m'\to \infty } \sum_{y\in A_{m'}} {\mathcal L}_{m'}(x, y) \phi(y).
\end{equation}
\end{theorem}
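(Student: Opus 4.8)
The plan is to decompose the sum $\sum_{y\in A_{m'}}\mathcal L_{m'}(x,y)\phi(y)$ according to whether the target site $y$ is close to $x$ (the ``diffusive'' neighbours $y=x\pm h_{m'}$, and in the limit the contribution captured by the local second-order part) or far from $x$ (the ``jump'' region $|y-x|\ge 2h_{m'}$, governed by the Levy kernel $\lambda$). Since $\mathcal L^C_{m'}$ was built precisely by replacing the far part with $\lambda(x,y;t)$ and the near part with a diffusion generator whose coefficients are $\tilde\mu_m=\mu_m-\mu^0_m$ and $\tilde\sigma_m^2=\sigma_m^2-(\sigma^0_m)^2$, the claim is really a bookkeeping identity: the two constructions agree in the limit on smooth compactly supported test functions because they have the same first and second moments and the same far-field behaviour. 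Concretely I would write, for any $\phi\in\mathcal D(A_\infty)$ with Taylor expansion $\phi(y)=\phi(x)+\phi'(x)(y-x)+\tfrac12\phi''(x)(y-x)^2+R(x,y)$ where $R(x,y)=o((y-x)^2)$ near $x$ and $R$ is globally bounded by $C|y-x|^2$,
\begin{equation}
\sum_{y}\mathcal L_{m'}(x,y)\phi(y)=\mu_{m'}(x)\phi'(x)+\tfrac12\sigma_{m'}(x)^2\phi''(x)+\sum_{y}\mathcal L_{m'}(x,y)R(x,y),
\end{equation}
using $\sum_y\mathcal L_{m'}(x,y)=0$ (property MG2), and the analogous expansion for $\mathcal L^C_{m'}$, and then compare term by term.

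The first two terms match in the limit because $\mu_{m'}(x)\to\mu(x,t)$ and $\sigma_{m'}(x)\to\sigma(x,t)$ by Hypothesis MG1, while on the $\mathcal L^C$ side the diffusive part contributes $\tilde\mu_{m'}(x)\phi'(x)+\tfrac12\tilde\sigma_{m'}(x)^2\phi''(x)$ and the jump part $\lambda$ contributes, after its own Taylor expansion, $\mu^0_{m'}(x)\phi'(x)+\tfrac12\sigma^0_{m'}(x)^2\phi''(x)$ plus a remainder term; the sums $\tilde\mu+\mu^0=\mu$ and $\tilde\sigma^2+(\sigma^0)^2=\sigma^2$ are exact by the very definitions of $\tilde\mu_m,\tilde\sigma_m$. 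So the whole content reduces to showing that the two remainder terms, $\sum_y\mathcal L_{m'}(x,y)R(x,y)$ and $\sum_y\lambda_{m'}(x,y;t)R(x,y)$ (the diffusive part of $\mathcal L^C$ contributes only at sites $x\pm h_{m'}$ where $R=O(h_{m'}^2)\to 0$), have the same limit. The key point is that $R(\cdot)$, suitably extended, is a legitimate test function: it vanishes to second order at the origin, so by Hypothesis MG3 applied to $\phi_R(\xi)=R(x,x+\xi)$ (after multiplying by a smooth cutoff, which does not change anything near $0$ and is harmless at infinity since $\phi$ has compact support) the sum $\sum_y\lambda_{m'}(x,y;t)R(x,y)$ converges, and by the Levy-measure representation of Theorem (Levy measures) its limit equals $\int \nu_{xt}(d\xi)R(x,x+\xi)$. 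For the $\mathcal L_{m'}$ side one argues that $\sum_y\mathcal L_{m'}(x,y)R(x,y)$ has the same limit because the only discrepancy between $\mathcal L_{m'}$ and $\lambda_{m'}$ is concentrated at $y=x\pm h_{m'}$ (that is the defining property of $\lambda_m$ via MG2 and the coarse-graining sum), and at those two sites $|R|\le C h_{m'}^2\to 0$; this requires the uniform $L^1$-type control $\sum_y|\mathcal L_{m'}(x,y)|(y-x)^2<\infty$ furnished by MG1 to make the ``$y=x\pm h_{m'}$'' contribution genuinely negligible and to justify splitting the sum.

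The main obstacle I anticipate is the interchange of limits needed to justify that approximating $\phi$ by its second-order Taylor polynomial plus a second-order-vanishing remainder really does commute with the $m'\to\infty$ limit uniformly enough — i.e. that one can apply MG3 to the remainder $R$ rather than only to the postulated class of test functions, and that the tail of $R$ (where $\phi$ is supported, away from $x$) is handled by the finite second moment bound rather than by any pointwise smallness. This is where the argument is genuinely doing work: one must produce a cutoff $\chi$ with $\chi\equiv 1$ near $0$, write $R=\chi R+(1-\chi)R$, note $\chi R$ is in $\mathcal D$ and vanishes to second order so MG3 applies, and note $(1-\chi)R$ is supported away from $x$ where it equals a smooth compactly supported function already covered by MG3 (being a difference of $\phi$-type functions and polynomials times $\chi$, all of which are controlled). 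Once that splitting is in place the rest is the term-by-term comparison above, and the conclusion $\lim_{m'}\sum_y\mathcal L^C_{m'}(x,y)\phi(y)=\lim_{m'}\sum_y\mathcal L_{m'}(x,y)\phi(y)$ follows.
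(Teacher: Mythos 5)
Your reduction is the natural one, and the first half is sound: using the vanishing row sums and the definitions of $\mu_{m'},\sigma_{m'}$ to peel off the first- and second-order Taylor terms, noting that $\tilde\mu_{m'}+\mu^0_{m'}=\mu_{m'}$ and $\tilde\sigma_{m'}^2+(\sigma^0_{m'})^2=\sigma_{m'}^2$ hold exactly by the construction of ${\mathcal L}^C_{m'}$, and thereby reducing the theorem to a comparison of the two remainder sums $\sum_y{\mathcal L}_{m'}(x,y)R(x,y)$ and $\sum_y\lambda_{m'}(x,y;t)R(x,y)$ is the right skeleton; the cutoff trick for applying MG3 to $R$ is also fine since the domain is bounded. (The paper states this theorem without proof, so there is no argument of the author's to compare against.)

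The genuine gap is the step where you assert that ``the only discrepancy between ${\mathcal L}_{m'}$ and $\lambda_{m'}$ is concentrated at $y=x\pm h_{m'}$.'' That is false by the very definition of $\lambda_{m'}$: Hypothesis MG2 defines $\lambda_{m'}(x,y;t)$ as the limit, as $m''\to\infty$, of the mass of ${\mathcal L}_{m''}(x,\cdot;t)$ aggregated over the window $(y-h_{m'},y+h_{m'}]$; it is \emph{not} the restriction of ${\mathcal L}_{m'}(x,\cdot;t)$ to $\lvert y-x\lvert\ge 2h_{m'}$, and at fixed $m'$ the two objects can differ at every site (there is no consistency relation between the generators at different scales beyond MG1--MG3). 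Hence the identity of the two remainder limits is exactly where the theorem has content and cannot be dismissed as a boundary effect near $x$. The repair is a two-scale (coarse-grain, then refine) argument: write $R(x,y)=(y-x)^2\rho(y)$ with $\rho$ continuous and $\rho(x)=0$; at coarse scale $m'$ replace $\rho(z)$ by $\rho(y)$ on each window centered at $y\in A_{m'}$, the total error at fine scale $m''$ being bounded by $\omega_\rho(h_{m'})\sum_z{\mathcal L}_{m''}(x,z)(z-x)^2\leq c\,\omega_\rho(h_{m'})$ uniformly in $m''$ by MG1; pass to $m''\to\infty$ using the definition of $\lambda_{m'}$, treat the few windows within $O(h_{m'})$ of $x$ by the second-moment bound together with $\rho(x)=0$, and only then let $m'\to\infty$ and invoke MG3 (and the Levy-measure representation, which itself rests on the same windowing) to identify the common limit. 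Two smaller points: at $y=x\pm h_{m'}$ the canonical coefficients are of size $h_{m'}^{-2}$, so you need $R=o(h_{m'}^2)$ there (true, since $R$ is a third-order Taylor remainder), not the $O(h_{m'}^2)$ you quote; and since MG4 is not assumed, $\tilde\mu_{m'}$ need not remain bounded, so to kill the drift part of the near-field remainder you should record the crude bound $\lvert\mu^0_{m'}(x)\lvert\leq (\sigma^0_{m'}(x))^2/(2h_{m'})=O(h_{m'}^{-1})$, which MG1 alone does not give you directly.
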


A compact representation for a canonical generator summarizing these
definition is obtained by considering the symbol as specified in
Definition (\ref{def_symbol}).

\begin{theorem}{\label{LKrep}}
{\bf (Levy-Khintchine representation.)} Under the hypothesis $MG1,
MG2, MG3$ above and in case $A_\infty = [-L, L]$ is bounded, the
symbol of a generator in canonical form can be expressed as follows:
\begin{equation}
\hat {\mathcal L}^C_m(x, p; t) =  i \mu(x; t) {\sin ph\over h} +
\tilde \sigma(x; t)^2 {\cos ph -1\over h^2} + \sum_{y\in A_m}
\left(e^{i p (y-x)} - 1 - i {\sin p h \over h} (y-x) \right)
\lambda_m(x, y; t).
\end{equation}
The following limit converges in the weak topology:
\begin{equation}
\lim_{m\to\infty} \hat {\mathcal L}^C_m(x, p; t) =
i \mu(x; t) p - {\tilde\sigma^2(x; t)\over 2} p^2 + \int_{-L}^L
 (e^{i p\xi} - 1 - ip\xi ) \nu_{xt}(d\xi),
\end{equation}
where $\nu_{xt}(d\xi)$ is the Levy measure. Moreover, if also
$MG4$ is satisfied, then
the symbol of a generator in canonical form can be expressed as follows:
\begin{equation}
\hat {\mathcal L}^C_m(x, p; t) =  i \tilde \mu(x; t) {\sin ph\over h} +
\tilde \sigma(x; t)^2 {\cos ph -1\over h^2} + \sum_{y\in A_m}
\left(e^{i p (y-x)} - 1\right) \lambda_m(x, y; t).
\end{equation}
The following limit converges in the weak topology:
\begin{equation}
\lim_{m\to\infty} \hat {\mathcal L}^C_m(x, p; t) =
i \tilde \mu(x; t) p - {\tilde\sigma^2(x; t)\over 2} p^2 + \int_{-L}^L
 (e^{i p\xi} - 1) \nu_{xt}(d\xi).
\end{equation}
\end{theorem}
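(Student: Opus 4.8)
The plan is to obtain the stated identity for $\hat{\mathcal L}^C_m(x,p;t)$ by a direct computation from Definition \ref{def_symbol}, and then to pass to the weak limit in each term using $MG1$--$MG3$ together with the Lévy measure theorem above, with $MG4$ entering only in the last two displays. I would first split the defining sum $\hat{\mathcal L}^C_m(x,p;t)=\sum_{y\in A_m}{\mathcal L}^C_m(x,y;t)\,e^{ip(y-x)}$ into the diagonal term $y=x$, the nearest-neighbour terms $y=x\pm h_m$, and the long-range terms $|y-x|\ge 2h_m$. By the definitions of $\nabla_{h_m}$ and $\Delta_{h_m}$ in (\ref{eq_nabladelta}), the two nearest-neighbour contributions combine into $i\tilde\mu_m(x;t)\frac{\sin ph_m}{h_m}+\tilde\sigma_m(x;t)^2\frac{\cos ph_m}{h_m^2}$; the diagonal term contributes $-\tilde\sigma_m(x;t)^2/h_m^2-\sum_{|y-x|\ge 2h_m}\lambda_m(x,y;t)$; and the long-range terms contribute $\sum_{|y-x|\ge 2h_m}\lambda_m(x,y;t)e^{ip(y-x)}$. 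Collecting these gives the $MG4$-shaped expression $i\tilde\mu_m\frac{\sin ph_m}{h_m}+\tilde\sigma_m^2\frac{\cos ph_m-1}{h_m^2}+\sum_y(e^{ip(y-x)}-1)\lambda_m(x,y;t)$, where extending the sum over all $y\in A_m$ is harmless since the $y=x$ and $y=x\pm h_m$ summands vanish under the convention $\lambda_m(x,x\pm h_m;t)=0$. Substituting $\tilde\mu_m=\mu_m-\mu^0_m$ with $\mu^0_m(x;t)=\sum_y\lambda_m(x,y;t)(y-x)$ from (\ref{eq_mu0sigma0}) and absorbing the term $-i\frac{\sin ph_m}{h_m}\sum_y\lambda_m(x,y;t)(y-x)$ back into the jump sum yields the general form stated in the theorem; here $\mu_m$ and $\tilde\sigma_m$ stand in place of $\mu$ and $\tilde\sigma$, but the difference is $o(1)$ as $m\to\infty$ by $MG1$, which is all that is needed below.

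Next I would take $m\to\infty$ term by term. The elementary limits $\frac{\sin ph_m}{h_m}\to p$ and $\frac{\cos ph_m-1}{h_m^2}\to-p^2/2$ hold locally uniformly in $p$; $\mu_m\to\mu$ by $MG1$, and $(\sigma^0_m)^2=\sum_y\lambda_m(x,y;t)(y-x)^2$ converges by $MG3$ applied to a smooth compactly supported function agreeing with $\xi\mapsto\xi^2$ on $[-2L,2L]$ --- legitimate because $\lambda_m(x,\cdot;t)$ is supported in $[-L,L]$ so that $y-x$ ranges over $[-2L,2L]$ --- hence $\tilde\sigma_m^2\to\tilde\sigma^2$. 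For the jump sum, the function $g_p(\xi)=e^{ip\xi}-1-ip\xi$ satisfies $g_p(0)=g_p'(0)=0$ and $|g_p(\xi)|\le C_p\,\xi^2$ on $[-L,L]$, so $\sum_y\lambda_m(x,y;t)g_p(y-x)$ converges by $MG3$ (again after such a cutoff); moreover the discrepancy between $g_p(y-x)$ and the actual summand $e^{ip(y-x)}-1-i\frac{\sin ph_m}{h_m}(y-x)$ equals $i\bigl(p-\frac{\sin ph_m}{h_m}\bigr)(y-x)$, whose sum is bounded in absolute value by $C_p h_m^2\sum_y\lambda_m(x,y;t)|y-x|\le C_p h_m^2\cdot\frac{1}{2h_m}(\sigma^0_m)^2=O(h_m)\to0$, using $|y-x|\le(y-x)^2/(2h_m)$ for $|y-x|\ge 2h_m$; thus no $MG4$ is needed here.

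It remains to identify $\lim_m\sum_y\lambda_m(x,y;t)g_p(y-x)$ with $\int_{-L}^Lg_p(\xi)\,\nu_{xt}(d\xi)$. By part (ii) of the Lévy measure theorem each $\lambda_m(x,y;t)$ with $|y-x|\ge 2h_m$ equals the $\nu_{xt}$-mass of an interval of width $h_m$, and as $m\to\infty$ these intervals tile $[-L,L]$ up to the shrinking neighbourhood of the origin covered by the three omitted sites $y\in\{x,x\pm h_m\}$, on which $g_p$ vanishes. Since $g_p$ is continuous with $g_p(0)=0$ and $\int_{-L}^L\xi^2\,\nu_{xt}(d\xi)<\infty$ by part (i), a Riemann-sum estimate --- splitting off $\{|\xi|<\delta\}$, where $|g_p|\le C_p\delta^2$ controls the contribution, from $\{|\xi|\ge\delta\}$, where $\nu_{xt}$ is a finite measure against which $g_p$ is a genuine Riemann integrand --- gives the claimed limit, and assembling the pieces produces the first two displays. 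For the last two displays I would invoke $MG4$, which by the Lévy measure representation is equivalent to uniform boundedness of $\sum_y\lambda_m(x,y;t)|y-x|$, that is $\int_{-L}^L|\xi|\,\nu_{xt}(d\xi)<\infty$: then $\tilde\mu_m\to\tilde\mu$, and in the $MG4$-shaped expression obtained in the first step the grouping $\sum_y(e^{ip(y-x)}-1)\lambda_m(x,y;t)$ converges on its own because $|e^{ip\xi}-1|\le|p|\,|\xi|$ is $\nu_{xt}$-integrable, yielding $\int_{-L}^L(e^{ip\xi}-1)\,\nu_{xt}(d\xi)$ by the same Riemann-sum argument. In all cases the convergence is pointwise in $p$, locally uniform, hence holds in the weak topology on symbols.

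The step I expect to be the main obstacle is the Riemann-sum identification of the limiting jump sum with the Lévy integral: one must control with sufficient uniformity the $\nu_{xt}$-mass concentrated near $\xi=0$ (for $g_p$ via the quadratic bound, for $e^{ip\xi}-1$ via $MG4$), the mass near the endpoints $\pm L$, and the negligible role of the three diffusion sites excluded from the sum; the remainder is the bookkeeping of combining the $\sin$ and $\cos$ terms and justifying the $o(1)$ replacements $\mu_m\to\mu$ and $\tilde\sigma_m\to\tilde\sigma$.
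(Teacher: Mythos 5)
The paper states Theorem \ref{LKrep} without any proof (as it does for the Lévy-measure and canonical-representation theorems preceding it), so there is no argument of the paper to compare yours against; your proposal has to stand on its own, and in essence it does. The algebra is right: summing the canonical generator against $e^{ip(y-x)}$ over the diagonal, nearest-neighbour and long-range sites gives the exact finite-$m$ identity in the $MG4$-shaped form with $\tilde\mu_m,\tilde\sigma_m$, and re-expanding $\tilde\mu_m=\mu_m-\mu^0_m$ produces the compensated form, so the theorem's displays are exact up to the (harmless, $o(1)$ by $MG1$ and $MG3$) replacement of $\mu_m,\tilde\sigma_m$ by their limits. Your two best observations are exactly the ones that make the statement work without $MG4$: the compensator in the finite-$m$ formula is $\sin(ph)/h$ rather than $p$, and the discrepancy is killed by $\lvert p-\sin(ph_m)/h_m\rvert=O(h_m^2)$ against $\sum_y\lambda_m\lvert y-x\rvert\le(\sigma^0_m)^2/(2h_m)$, giving $O(h_m)$; and $MG4$ is needed only to let the drift and jump pieces converge separately in the uncompensated form. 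The one step you flag as the obstacle — identifying $\lim_m\sum_y\lambda_m(x,y;t)g_p(y-x)$ with $\int g_p\,d\nu_{xt}$ — is indeed where care is needed, and your phrasing ``$\lvert g_p\rvert\le C_p\delta^2$ controls the contribution'' is not quite the right mechanism, since $\nu_{xt}$ may have infinite mass near the origin: what controls the region $\{\lvert\xi\rvert<\delta\}$ is the quadratic bound $\lvert g_p(\xi)\rvert\le C_p\xi^2$ together with $\int\xi^2\,\nu_{xt}(d\xi)<\infty$ on the measure side, and on the discrete side the cell representation $\lambda_m(x,y)=\lim_{\varepsilon\downarrow0}\int_{y-h_m/2+\varepsilon}^{y+h_m/2}\nu_{xt}(d\xi)$ from the Lévy-measure theorem, which for $\lvert y-x\rvert\ge 2h_m$ makes $(y-x)^2$ comparable (up to a factor such as $16/9$) to $\xi^2$ over the cell, so that $\sum_{2h_m\le\lvert y-x\rvert<\delta}\lambda_m(x,y)(y-x)^2\lesssim\int_{\lvert\xi\rvert\lesssim\delta}\xi^2\,\nu_{xt}(d\xi)$ uniformly in $m$; with that uniform smallness, the outer region $\{\lvert\xi\rvert\ge\delta\}$ is a genuine Riemann-sum approximation against a finite measure and your argument closes, with the same scheme (using $\lvert e^{ip\xi}-1\rvert\le\lvert p\rvert\lvert\xi\rvert$ and the first-moment bound) handling the $MG4$ case. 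So: correct approach, correct key estimates, and the flagged gap is real but fillable by exactly the cell-representation comparison just described.
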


\section{Fast Exponentiation and Spectral Methods}

Numerical analysis of pricing models in the operator formalism
depend on the ability to compute the propagator for a given
generator $\LLL(t)$. Time homogenous Markov generators $\LLL_m(y_1,
y_2)$ represent a privileged special case of particular importance.
In this case, the associated propagator solving the differential
equation in (\ref{eq_gen}) is given by the matrix exponential
\begin{equation}
U_m(y_1, s; y_2, s') = \exp((s'-s) \LLL_m) (y_1, y_2).
\end{equation}
Matrix exponentiation can be defined in several equivalent ways,
such as by Taylor expansion
\begin{equation}
\exp( t \LLL_m) = \sum_{j=0}^\infty {t^j\over j!} \LLL_m^j
\label{expdef1}
\end{equation}
or by means of Neper's formula
\begin{equation}
\exp( t \LLL_m) = \lim_{N\to\infty} \left(1+{t\over N}
\LLL_m\right)^{N} \label{expdef2}
\end{equation}

We find that the most efficient and robust method for exponentiating
Markov generators is the so called {\it fast exponentiation}
algorithm.
 Let us fix a
Markov generator $\L(x, y)$ and a time horizon $t$. Let $\delta t>0$ be the largest time
interval for which both of the following properties are satisfied:
\begin{align*}
&({\rm FE1}) \;\;\;\;\; \min_{y\in\Lambda}(1 + \delta t \L(y,y)) \ge {1/2} \\
&({\rm FE1}) \;\;\;\;\; \log_2 {t\over\delta t} = n \in\NNN.
\end{align*}
To compute $e^{t\LLL}(x, y)$, we first define the elementary
propagator
\begin{equation}
u_{\delta t} (x, y) = \delta_{xy} + \delta t \L(x,y)
\end{equation}
and then evaluate in sequence $u_{2\delta t} = u_{\delta t}\cdot
u_{\delta t}$, $u_{4\delta t} = u_{2\delta t}\cdot u_{2\delta t}$,
... $u_{2^n\delta t} = u_{2^{n-1}\delta t} \cdot u_{2^{n-1}\delta
t}$.

As we show in the next few sections, this algorithm approximates
probability kernels with errors with respect to the continuous time
kernel density which, in fairly general cases, are of order
$O(h_m^2\lvert \log h_m\lvert^3)$. This is the same order by which
the continuous time kernel density differs from its continuous limit
also according to estimates below. In the case of Brownian motion, a
sharp convergence estimates of ordered $O(h_m^2)$ can also be
proven.

Matrix multiplication is accomplished numerically by invoking the
routine ${\tt dgemm}$ in Level-3 BLAS. Very efficient, processor
specific version of ${\tt dgemm}$ are now available along with
implementations on massively parallel GPU chipsets. It turns out
that the standard measure of algorithmic complexity as the number of
floating point operations required to accomplish a certain task, is
not simply proportional and scales non-linearly with respect to
execution time. Using blocking and cache optimizations and
distributing the load across many cores, execution time for medium
to large matrices appears to scales much better than the naive $n^3$
scaling one would obtain by triple looping \cite{GotoGeijn}.

A more general method that allows to compute not only exponentials
but also other functions of a Markov generator is full
diagonalization. Unfortunately, unless the Markov generator is
symmetrizable, diagonalization algorithms can possibly run into
serious instabilities within double precision arithmetics because of
the phenomenon of pseudo-spectrum \cite{TE2006}. This makes it
impossible to diagonalize exactly within the limits of
double-precision arithmetics and forces one to resort to expedients
such as for instance spectral truncations. Since the fast
exponentiation method has much better scaling properties than full
diagonalization and is entirely stable, we recommend that it be used
in all situations where a matrix exponentiation is required.
However, it often arises the necessity to define and compute also
other functions of a given time independent Markov generator and for
this purpose diagonalization can be usefully employed, especially
when the target matrix is symmetrizable.

Not all Markov generators are diagonalizable but most are and it is
safe to take diagonalizability for granted in numerical
applications. To make this statement more precise, one sets out the
following definitions:

\begin{definition} {\bf (Generic Properties.)}
A dense $G_\delta$ of a topological space is
a countable intersection of dense open subsets. If a property is
valid on a dense $G_\delta$ one says that it is valid {\it
generically}, or that it is generic. Instead, if the same
topological set is endowed of a measure and a property is valid on a
full measure set of parameters, one says that it is valid almost
surely with respect to that particular measure.
\end{definition}

We have that \noindent \begin{proposition} Markov generators are
diagonalizable both generically and almost surely.
\end{proposition}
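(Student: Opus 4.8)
The plan is to establish both claims (generic and almost-sure diagonalizability) by exhibiting a single sufficient condition—distinctness of eigenvalues—and then showing that this condition fails only on a set that is both meagre (contained in the complement of a dense $G_\delta$) and of measure zero. The parameter space here is the set of admissible Markov generators on a fixed finite lattice $A_m$, which is a real-analytic (in fact semi-algebraic) subset of $\RRR^{N\times N}$ cut out by the linear conditions MG1 (off-diagonal non-negativity) and MG2 (zero row sums); its interior in the affine subspace defined by MG2 is non-empty and is the natural ambient manifold.

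First I would recall the standard linear-algebra fact that a square matrix with $N$ distinct eigenvalues is diagonalizable, so it suffices to show that the set of generators with a repeated eigenvalue is small. The locus of matrices with at least one repeated eigenvalue is precisely the zero set of the discriminant of the characteristic polynomial, $\mathrm{disc}(\det(\lambda I - \LLL))$, which is a polynomial function of the entries of $\LLL$. Call this polynomial $D(\LLL)$; it is not identically zero on the affine subspace $\{\text{row sums }=0\}$, because one can write down an explicit generator in that subspace with distinct eigenvalues—e.g. perturb a diagonalizable reference generator by a generic diagonal traceless shift, or simply exhibit a birth–death chain whose eigenvalues are manifestly distinct. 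Since $D$ is a non-trivial polynomial (equivalently, a non-trivial real-analytic function) on a connected affine space, its zero set is a proper algebraic subvariety, hence closed with empty interior and of Lebesgue measure zero within that subspace.

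From this the two assertions follow at once. For the generic statement: the set of diagonalizable generators contains the complement of $\{D=0\}$, which is open and dense in the manifold of generators (open because $D$ is continuous, dense because a proper subvariety cannot contain an open set); a single dense open set is in particular a dense $G_\delta$, so diagonalizability is generic. For the almost-sure statement: with respect to any measure on the parameter space that is absolutely continuous with respect to Lebesgue measure on the affine subspace (or more generally any measure giving zero mass to proper subvarieties), the bad set $\{D=0\}$ has measure zero, so diagonalizability holds almost surely.

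The one point that requires genuine care—and which I regard as the main obstacle—is verifying that $D$ does not vanish identically on the constraint subspace, i.e. that there really exists an admissible Markov generator on $A_m$ with $N$ distinct eigenvalues. This is where the argument could go wrong if the linear constraints MG1–MG2 were incidentally forcing eigenvalue coincidences (they do not, but one must say why). The clean way to settle it is to produce one explicit example and check its spectrum directly: a nearest-neighbour random walk generator on the $N$ sites of $A_m$ with reflecting or absorbing boundary and generic, pairwise-distinct transition rates is symmetrizable, hence diagonalizable, and a short computation (or appeal to the theory of Jacobi matrices / orthogonal polynomials, whose zeros are simple) shows its eigenvalues are distinct; this exhibits a point off $\{D=0\}$ and closes the argument. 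Everything else—continuity of the discriminant, the topology of proper real-algebraic sets, the relation between dense open sets and dense $G_\delta$'s—is routine once this non-degeneracy is in hand.
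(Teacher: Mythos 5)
Your argument is correct, and in fact it supplies something the paper itself does not: the proposition is stated in the text without any proof, so there is no argument of the author's to compare against. Your route is the standard and, I think, the right one. The reduction to the discriminant $D(\LLL)=\mathrm{disc}\big(\det(\lambda I-\LLL)\big)$ is sound, and you correctly isolate the only genuinely non-routine point, namely that $D$ is not identically zero on the constraint set cut out by zero row sums and off-diagonal non-negativity. Your witness works: a nearest-neighbour (birth--death) generator with positive rates is conjugate, via a positive diagonal similarity, to a symmetric Jacobi matrix with non-vanishing off-diagonal entries, and such matrices have simple spectrum; since this witness lies in the zero-row-sum affine subspace, $D$ is a non-trivial polynomial there, so $\{D=0\}$ is a proper real-algebraic subvariety, closed, with empty interior and Lebesgue measure zero. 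Two small points deserve one extra line each if you write this up. First, the witness lies on the boundary of the cone of admissible generators (its distant off-diagonal entries vanish), which is harmless for showing $D\not\equiv 0$ on the ambient affine subspace, but the genericity claim should then be stated relative to the cone: since the cone is convex with non-empty interior in that subspace, every neighbourhood of an admissible generator meets the cone in a set of positive measure, hence meets $\{D\neq 0\}$, giving density of the open set $\{D\neq 0\}$ inside the space of Markov generators, and a dense open set is a dense $G_\delta$ as required by the paper's definition of \emph{generic}. Second, the paper's definition of \emph{almost surely} presupposes a measure it never specifies, so your formulation (any measure absolutely continuous with respect to Lebesgue measure on the affine subspace, or more generally any measure assigning zero mass to proper subvarieties) is the honest way to state the almost-sure half, and under that reading your proof is complete.
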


Let $u_n(x)$ and $\lambda_n$, $n = 1,..N$, be eigenfunctions and
eigenvalues of the Markov generator $\LLL$, i.e.
\begin{equation}
\LLL u_n = \lambda_n u_n.
\end{equation}
Let $U$ be the matrix whose columns are given by the vectors
$u_n(x)$ and let $\Lambda$ be the diagonal matrix with the
eigenvalues $\lambda_n$ on the diagonal. Hence
\begin{equation}
\LLL U = U \Lambda.
\end{equation}
We have that $\LLL = U \Lambda U^{-1}$ and $e^{t \LLL} = U e^{t
\Lambda} U^{-1}$. This equation expressed in components reads as
follows:
\begin{equation}
e^{t \LLL}(x, x') = \sum_{n=1}^N e^{\lambda_n t} u_n(x) v_n(y)
\label{eqeigexp}
\end{equation}
where $v_n(y)$ is the $n-$th row vector of the inverse matrix $V =
U^{-1}$.

One may extend the above definition to other functions of a Markov
generator. If $\psi(\lambda)$ is a function, one may define
$\psi(\LLL)$ as the operator whose matrix is given by
\begin{equation}
\psi(\LLL)(x, x') = \sum_{n=1}^N \psi(\lambda_n) u_n(x) v_n(y)
\end{equation}

An important example of functional calculus is found to express the
Markov generator of processes obtained  by stochastic time change,
whereby the time-change process has independent, uniformly
distributed increments. Processes in this class are called {\it
Bochner subordinators}. Because of time and space homogeneity,
Bochner subordinators can be constructed starting from a process on
simplicial sequence $h_m\ZZZ$ and are characterized by a Markov
generator of the form $\LLL_m(x,y) = \lll_{h_m}(y-x)$.

\begin{theorem} {\bf (Bochner Subordinators.)}
If the limit $\lim_{h\downarrow0} \lll_h = \lll$ exists in weak sense on
$\DDD'(\RRR)$ then the limit kernel has the following form
\begin{equation}
\int \lll(x) \phi(x) dx= \mu \phi'(0) + \int_0^\infty (\phi(x) -
\phi(0)) \nu(dx)
\end{equation}
where $\nu(dx)$ is a positive measure supported on $\RRR_+$ and is
such that
\begin{equation}
\int_0^\infty x \nu(dx) < \infty.
\end{equation}
\end{theorem}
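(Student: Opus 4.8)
The plan is to treat the kernel $\lll_h$ directly as a sequence of distributions, peel off a drift term, and show that what remains is forced to be the integral against a positive measure; on a bounded domain this is just the monotonic, translation--invariant specialisation of the Levy--Khintchine representation proved above, read off after undoing the Fourier transform. Since a Bochner subordinator has a space--homogeneous generator $\LLL_m(x,y)=\lll_{h_m}(y-x)$ which is monotonic non--decreasing, the kernel satisfies $\lll_h(z)=0$ for $z<0$ and $\lll_h(z)\ge0$ for $z>0$ by the sign property $({\rm MG1})$ of Markov generators, while conservativeness $({\rm MG2})$ gives $\lll_h(0)=-\sum_{z>0}\lll_h(z)$. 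Introducing the positive measure $\nu_h:=\sum_{z\in h\ZZZ,\,z>0}\lll_h(z)\,\delta_z$ on $(0,\infty)$, this last relation cancels the atom at the origin: for every test function $\phi\in\DDD(\RRR)$,
\begin{equation}
\langle\lll_h,\phi\rangle=\sum_{z>0}\lll_h(z)\bigl(\phi(z)-\phi(0)\bigr)=\int_{(0,\infty)}\bigl(\phi(x)-\phi(0)\bigr)\,\nu_h(dx).
\end{equation}

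First I would secure the quantitative input: that the first moments $\mu_h:=\int_{(0,\infty)}x\,\nu_h(dx)=\sum_{z>0}\lll_h(z)\,z$ are bounded uniformly in $h$. On a bounded domain $A_\infty=[-L,L]$ the jumps obey $|z|\le 2L$, so testing the hypothesis against a non--negative $\phi$ equal to $x$ on $[0,2L]$ gives $\mu_h=\langle\lll_h,\phi\rangle$, which converges and is therefore bounded; in the genuinely unbounded case this uniform bound is precisely Hypothesis $MG4$, which by the (MG4) theorem holds automatically here because the process is monotonic. Testing the hypothesis against $\phi\in\DDD((0,\infty))$ then yields $\int\phi\,d\nu_h\to\langle\lll,\phi\rangle$, and since the $\nu_h$ are positive this produces a positive Radon measure $\nu$ on $(0,\infty)$ with $\nu_h\to\nu$ vaguely; monotonicity makes it supported on $\RRR_+$, and Fatou's lemma applied to $\mu_h$ gives $\int_0^\infty x\,\nu(dx)\le\liminf_h\mu_h<\infty$. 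In particular $\nu([R,\infty))\le R^{-1}\int_0^\infty x\,\nu(dx)\to0$, so $x\mapsto\phi(x)-\phi(0)$ is $\nu$--integrable and $\phi\mapsto\int_0^\infty\bigl(\phi(x)-\phi(0)\bigr)\nu(dx)$ is a well--defined element of $\DDD'(\RRR)$.

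Next I would identify the remainder. Set $T(\phi):=\langle\lll,\phi\rangle-\int_0^\infty\bigl(\phi(x)-\phi(0)\bigr)\nu(dx)$. For $\phi$ supported in $\RRR\setminus\{0\}$, the displayed identity, vague convergence of $\nu_h$, and the uniform tail bound $\sup_h\nu_h([R,\infty))\le R^{-1}\sup_h\mu_h\to0$ give $\langle\lll,\phi\rangle=\int_0^\infty(\phi(x)-\phi(0))\,\nu(dx)$, hence $\supp T\subseteq\{0\}$. Moreover, from $|\phi(x)-\phi(0)|\le\|\phi'\|_\infty\,x$ together with the bounds on $\mu_h$ and on $\int_0^\infty x\,\nu$, one gets the global estimate $|T(\phi)|\le C\,\|\phi'\|_\infty$ for every $\phi\in\DDD(\RRR)$. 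A distribution supported at a point whose action is controlled by $\|\phi'\|_\infty$ alone can contain neither a $\delta_0$ term (choose $\phi\equiv1$ near $0$ with $\phi'(0)=0$ and its descent to zero spread over an arbitrarily wide region, so $\|\phi'\|_\infty\to0$ while the germ of $\phi$ at $0$ is fixed) nor any derivative of order $\ge 2$; hence $T=-\mu\,\delta_0'$ for a constant $\mu$, i.e. $T(\phi)=\mu\,\phi'(0)$. This is exactly the asserted identity $\int\lll(x)\phi(x)\,dx=\mu\,\phi'(0)+\int_0^\infty(\phi(x)-\phi(0))\,\nu(dx)$, with $\nu\ge0$ supported on $\RRR_+$ and $\int_0^\infty x\,\nu(dx)<\infty$.

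The main obstacle is the second step: the uniform first--moment bound $\sup_h\mu_h<\infty$ and the integrability $\int_0^\infty x\,\nu(dx)<\infty$ that it yields. Everything downstream — well--definedness of the integral term, control of the tails $\nu_h([R,\infty))$, and above all the clean separation of the $\delta_0'$ (drift) part from a possible $\delta_0$ part — rests on it. On a bounded domain it follows for free from the weak--convergence hypothesis; in the unbounded case it is the content of Hypothesis $MG4$, automatic for monotonic processes by the (MG4) theorem, and the remainder of the argument is then the soft classification of distributions supported at a single point.
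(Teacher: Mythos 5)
Your argument is essentially sound, and since the paper states this theorem without writing out a proof (it is, implicitly, the translation-invariant, monotone specialisation of the Levy--Khintchine representation of Section 8), your direct distributional derivation is a genuinely self-contained alternative route: the identity $\langle\lll_h,\phi\rangle=\int_{(0,\infty)}\bigl(\phi(x)-\phi(0)\bigr)\nu_h(dx)$ obtained from (MG1), (MG2) and monotonicity, the extraction of a vague limit $\nu\ge0$ on $(0,\infty)$, the Fatou bound $\int_0^\infty x\,\nu(dx)\le\liminf_h\mu_h$, the observation that the remainder $T$ is supported at the origin and of order at most one, and the dilated test functions that exclude a $\delta_0$ component are all correct and yield exactly the stated formula $\mu\,\phi'(0)+\int_0^\infty(\phi(x)-\phi(0))\nu(dx)$. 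What the paper's implicit route (via symbols and the canonical-generator theorem) buys is uniformity with the rest of its machinery; what yours buys is that it avoids the Fourier transform entirely and makes visible exactly where positivity and conservativeness of the generator enter.

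The one step to treat more carefully is the one you yourself identify as the crux, the uniform bound $\sup_h\mu_h<\infty$. On a bounded domain your test-function argument is complete and this is where the proof genuinely closes. On the full lattice $h\ZZZ$, however, the appeal to ``MG4 holds automatically because the process is monotonic'' is not available as stated: the paper's (MG4) theorem is proved under Hypothesis MG1, i.e.\ under an assumed uniform bound on the first moments, which is precisely what you need to establish and which does not follow from weak convergence in $\DDD'(\RRR)$ alone, since compactly supported test functions cannot detect mass escaping to infinity weighted by $x$. In fact the conclusion $\int_0^\infty x\,\nu(dx)<\infty$ genuinely fails in the unbounded setting for the paper's own stable-subordinator example, whose Levy measure $\nu(dx)\propto x^{-1-\alpha}dx$ with $\alpha\in(0,1)$ has infinite first moment at infinity. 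So your proof stands as written when jump sizes are bounded (the setting of Section 8), while in the unbounded case the theorem needs an explicit additional hypothesis of MG1/MG4 type; state that hypothesis rather than derive it from monotonicity alone.
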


The {\it characteristic function} of the process defined as the
Fourier transform of the Markov generator is thus
\begin{equation}
\epsilon(k) = \lim_{h\downarrow0} \sum_x e^{ikx} \lll_h(x) = i \mu k
+ \int_0^\infty (e^{ikx} - 1) \nu(dx).
\end{equation}
The Laplace transform instead is given by
\begin{equation}
\phi(\lambda) \equiv - \epsilon(i\lambda) = - \lim_{h\downarrow0}
\sum_x e^{-\lambda x} \lll_h(x) = \mu \lambda + \int_0^\infty (1 -
e^{-\lambda x}) \nu(dx).
\end{equation}
A function of this form is called {\it Bernstein function}.

Bernstein functions may be used to express Laplace transform of the
transition probability kernel of time-homogeneous monotonic
processes as follows:
\begin{equation}
\int_0^\infty e^{t \LLL} (0, x)  e^{ -\lambda x} dx =
e^{-t\phi(\lambda)}.
\end{equation}
In turn, we may also express the transition probability kernel in
terms of the characteristic function, i.e.
\begin{equation}
e^{t \LLL} (0, x) = \int_{-\infty}^\infty e^{i t \epsilon(k) + ikx}
{dk\over 2\pi}.
\end{equation}
Notice that since $\phi(-ik) = - \epsilon(k)$, the last formula may
also be interpreted as the Fourier-Mellin inversion of the previous
one.

\begin{example} {\bf (Poisson Process)} \rm The Poisson process corresponds to
\begin{equation}
\phi_P(\lambda; c) = c(1-e^{-\lambda}) = c \int_0^\infty
(1-e^{-\lambda t}) \delta(t-1) dt.
\end{equation}
\end{example}

\begin{example} {\bf (Stable Process)} \rm The stable subordinator with index
$\alpha\in(0,1)$ is given by
\begin{equation}
\phi_S(\lambda; \alpha) = \lambda^\alpha =
{\alpha\over\Gamma(1-\alpha)} \int_0^\infty (1-e^{-\lambda t})
t^{-1-\alpha} dt.
\end{equation}
\end{example}

\begin{example} {\bf (Gamma Process)} \rm The Gamma subordinator with variance rate
$\nu
>0$ is given by
\begin{equation}
\phi_{VG}(\lambda; \nu) = {1\over\nu} \log(1+\nu \lambda) =
{1\over\nu} \int_0^\infty (1-e^{-\lambda t}) t^{-1} e^{- t / \nu}
dt.
\end{equation}
\end{example}

To add jumps to a diffusion process one can use the method of
independent stochastic subordination. Namely let $F_t$ be the
diffusion process with drift function $\mu(F)$ and volatility
function $\sigma(F)$ and let $\L^d(y_1, y_2; \mu, \sigma)$ be the
corresponding Markov generator. Consider the generator
\begin{equation}
\L_{VG}(\mu, \sigma, \nu)  = -\phi_{VG}(-\L^d(\mu, \sigma); \nu).
\end{equation}
The propagator for $\L^j(\mu, \sigma, \nu)$ satisfies the following
equation:
\begin{equation}
\exp\big(t \L_{VG}(\mu, \sigma, nu)\big)(y_1, y_2) =
E_0\big[\exp\big(T_t^\nu \L^d(\mu, \sigma)\big)(y_1, y_2)\big] =
\sum_{n=1}^N e^{-\phi_{VG}(-\lambda_n; \nu)  t} u_n(x) v_n(y)
\label{eq_propjumps}
\end{equation}
where $T_t^\nu$ are the paths of the monotonic process in Example 3,
the $u_n(x)$ are the eigenfunctions of the diffusion generator
$\L^d(\mu, \sigma)$, $\lambda_n$ are the corresponding eigenvalues
and the functions $v_n(x)$ are defined as in (\ref{eqeigexp}).
Hence, the generator $\L_{VG}(\mu, \sigma, \nu)$ identifies the
time-changed process with paths $F_{T_t^\nu}$.

To model asymmetric jumps one can follow several strategies. A
simple one is to specify the two different variance rates $\nu_+$
and $\nu_-$ for the up and down jumps and compute separately two
Markov generators
\begin{eqnarray}
\L_{VG}(\mu, \sigma, \nu_\pm) = -\phi_{VG}(-\L^d(\mu, \sigma);
\nu_\pm).
\end{eqnarray}
The new generator for our process with asymmetric jumps is obtained
by combining the two generators above
\begin{equation}
\L_{VG}(y_1, y_2; \mu, \sigma, \nu_+, \nu_- ) =
\begin{cases}
\L_{VG}(y_1, y_2; \mu, \sigma, \nu_- )\;\;\;\;\;{\text {if}}\;\; F(y_1) < F(y_2)\\
\L_{VG}(y_1, y_2; \mu, \sigma, \nu_+ )\;\;\;\;\;{\text {if}}\;\; F(y_1) > F(y_2)\\
- \sum_{y_2 \neq y_1} \L_{VG} (y_1, y_2; \mu, \sigma, \nu_+, \nu_-
).
\end{cases}
\label{eqvgdef}
\end{equation}

The construction can also be localized. If $\phi_x(\lambda)$ is a
family of Bernstein functions indexed by the state variable
$x\in\Lambda_m$, then one can consider the Markov generator of
matrix
\begin{eqnarray}
\tilde \L(x, y) = -\phi_{x}(-\L^d(\mu, \sigma))(x, y).
\end{eqnarray}
This construction allows one to model state dependent jumps.

\section{Construction of Brownian Motion}

This section is based on work in collaboration with Alex Mijatovic,
see \cite{AlbaneseMijatovicBM}.

Let $A = (A_m), m = m_0, m_0+1, ...$ be a simplicial sequence
converging to an interval $\lim_{m\to\infty} A_m = [-L,L] \subset
\RRR$, where $0<L<\infty$. Let $\mu$ and $\sigma$ be two constants.

The generator of a Brownian motion on $A_m$ has the form
\begin{equation}
{\LLL_m} = \mu \nabla_{h_m} + {1\over 2} \sigma^2 \Delta_{h_m}
\label{def_brownian}
\end{equation}
for all interior points $x\in Int(A_m)$. We assume that $m_0$ is
large enough so that
\begin{equation}
{\sigma^2\over 2 h_m^2} > {\lvert \mu \lvert \over 2h_m}
\label{def_brownian_bounds}
\end{equation}
for all $m\ge m_0$.

If $x$ is a boundary point, i.e. $x\in \partial(A_m)$, then several
definitions of the operator ${\LLL_m}$ are possible depending on the
choice of boundary conditions. {\it Absorbing} boundary conditions
correspond to the choice
\begin{equation}
{\LLL_m}(x, y) = 0 \;\;\;\;\forall y\in A_m.
\end{equation}
{\it Reflecting} boundary conditions correspond to
\begin{align}
{\LLL_m}(x, x) &= \mu \nabla_{h_m}(x,x) + {1\over 2} \sigma^2
\Delta_{h_m}(x,x)\\
{\LLL_m}(x, y) &= -{\LLL_m}(x, x)
\end{align}
where $y\in Int(A_m)$ is the closest point to $x$ in the interior of
$A_m$, while ${\LLL_m}(x, y)=0$ for all other points $y$. {\it
Periodic} boundary conditions are implemented by setting
\begin{align}
{\LLL_m}(x, y) &= \mu \nabla_{h_m}(x,y) + {1\over 2} \sigma^2
\Delta_{h_m}(x,y)
\end{align}
if $y = x$ or $y\in Int(A_m)$ is the closest point to $x$ in the
interior of $A_m$, and also
\begin{align}
{\LLL_m}(x, x') &= \mu \nabla_{h_m}(x,y) + {1\over 2} \sigma^2
\Delta_{h_m}(x,y)
\end{align}
where $x'\in \partial(A_m)$ is the boundary point at the opposite
extreme of the simplex $A_m$. Finally, {\it mixed} boundary
conditions can also be defined by taking a convex linear combination
of the generators satisfying one of the three boundary conditions
above.

\begin{theorem} {\bf (Convergence Estimates for Brownian Motion.)} Consider a Brownian motion
defined as the process with generator in (\ref{def_brownian}) where
$\mu$ and $\sigma$ are constants satisfying the bounds in
(\ref{def_brownian_bounds}) for all $m\ge m_0$ and we assume
periodic boundary conditions. Let us consider the kernels
\begin{equation}
U_m(x, 0; y, T) \equiv e^{T {\mathcal L}_m}(x, y)
\end{equation}
and
\begin{equation}
U_m^{\delta t}(x, 0; y, T) \equiv \bigg( 1 + \delta t  {\mathcal
L}_m \bigg)^{T\over \delta t}(x, y).
\end{equation}
If $\delta t$ is such that $0 < \delta t  \leq {1\over 2} {h^2 \over
2 \sigma^2 - h \lvert\mu\lvert }$ and $m_0$ is large enough, then
there is a constant $c>0$ such that for all $m'\ge m \ge m_0$ we
have the following inequalities:
\begin{itemize}
\item[(i)]
\begin{equation}
\lvert h_m^{-1} U_m(x, 0; y, T) - h_{m'}^{-1} U_{m'}(x, 0; y, T)
\lvert \leq c h_m^2 \label{bm_uest}
\end{equation}
\item[(ii)]
\begin{equation}
\lvert h_m^{-1} \nabla_{h_m} U_m(x, 0; y, T) - h_{m'}^{-1}
\nabla_{h_{m'}} U_{m'}(x, 0; y, T) \lvert \leq c h_m^2
\label{bm_dest}
\end{equation}
\item[(iii)]
\begin{equation}
\lvert h_m^{-1} \Delta_{h_m} U_m(x, 0; y, T) - h_{m'}^{-1}
\Delta_{h_{m'}} U_{m'}(x, 0; y, T) \lvert \leq c h_m^2
\label{bm_gest}
\end{equation}
\item[(iv)]
\begin{equation}
\lvert h_m^{-1} U_m(x, 0; y, T) - h_{m}^{-1} U_{m}^{\delta t}(x, 0;
y, T) \lvert \leq c h_m^2 \label{bm_udtest}
\end{equation}
\end{itemize}
\end{theorem}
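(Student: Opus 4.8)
The plan is to exploit periodicity to diagonalize everything explicitly via the discrete Fourier transform, reducing all four estimates to elementary calculus on the eigenvalues. Since the generator $\mathcal L_m = \mu\nabla_{h_m} + \tfrac12\sigma^2\Delta_{h_m}$ is a translation-invariant operator on the periodic lattice $A_m$, its eigenfunctions are the lattice plane waves $e^{ipx}$ with $p$ ranging over the inverse lattice $B_m$ of Definition (\ref{brillouin}), and its eigenvalues are precisely the symbol values
\begin{equation}
\hat{\mathcal L}_m(p) = i\mu\frac{\sin p h_m}{h_m} + \sigma^2\frac{\cos p h_m - 1}{h_m^2}.
\end{equation}
Hence $h_m^{-1}U_m(x,0;y,T) = h_m^{-1}\sum_{p\in B_m} e^{ip(y-x)} e^{T\hat{\mathcal L}_m(p)}/(2L/h_m)$, i.e. a Riemann sum over $p$, and similarly $h_m^{-1}U_m^{\delta t}$ replaces $e^{T\hat{\mathcal L}_m(p)}$ by $(1+\delta t\,\hat{\mathcal L}_m(p))^{T/\delta t}$. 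First I would write out these Fourier representations carefully, noting that $\mathrm{Re}\,\hat{\mathcal L}_m(p) = \sigma^2(\cos p h_m - 1)/h_m^2 \le 0$, so the exponentials are bounded by $1$ and the sums converge absolutely; the bound (\ref{def_brownian_bounds}) guarantees the one-step kernel $1+\delta t\,\mathcal L_m$ is a genuine probability kernel, and the stated range of $\delta t$ plays the analogous role for the fast-exponentiation kernel.

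For part (i), the difference $h_m^{-1}U_m - h_{m'}^{-1}U_{m'}$ becomes the difference of a coarse Riemann sum and a finer one (the continuum limit is the Gaussian integral $\int e^{ip(y-x)}e^{-\sigma^2 p^2 T/2 + i\mu p T}\,dp/(2\pi)$). The key analytic input is a Taylor expansion of the symbol: $\hat{\mathcal L}_m(p) = i\mu p - \tfrac12\sigma^2 p^2 + O(h_m^2 p^4)$ uniformly for $p$ in any fixed bounded set, with the correction term controlled by $|\mu| |p|^3 h_m^2 + \sigma^2 p^4 h_m^2$. Exponentiating, $e^{T\hat{\mathcal L}_m(p)} = e^{T(i\mu p - \sigma^2 p^2/2)}\big(1 + O(h_m^2 p^4 e^{\text{small}})\big)$, and because of the Gaussian damping $e^{-\sigma^2 p^2 T/2}$ the $p^4$ weight is integrable and contributes $O(h_m^2)$. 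The remaining discrepancy is the Riemann-sum-versus-integral error together with the tail of $B_m$ beyond the continuum range; both are $O(h_m^2)$ after using that the summand is smooth and the tail is super-exponentially small thanks to the $\cos p h_m - 1$ term being strictly negative and bounded away from $0$ except near $p=0$. Parts (ii) and (iii) are identical except that the Fourier multiplier acquires an extra factor, namely the symbol of $\nabla_{h_m}$ (which is $i h_m^{-1}\sin p h_m = ip + O(h_m^2 p^3)$) or of $\Delta_{h_m}$ (which is $2h_m^{-2}(\cos p h_m - 1) = -p^2 + O(h_m^2 p^4)$); these are polynomially bounded in $p$ and again killed by the Gaussian, so the same $O(h_m^2)$ bound survives — one just needs to track one more factor of $p$ or $p^2$ in the error weight, still integrable against $e^{-\sigma^2 p^2 T/2}$.

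Part (iv) is where the main obstacle lies: comparing $e^{T\hat{\mathcal L}_m(p)}$ with $(1+\delta t\,\hat{\mathcal L}_m(p))^{T/\delta t}$ at fixed $h_m$. The pointwise bound $|e^{z} - (1+\delta t\,z/1)^{n}|$ with $n\delta t = T$, $z = T\hat{\mathcal L}_m(p)$, is the classical estimate $\lesssim \delta t\,|z|^2 e^{\max(\mathrm{Re}\,z,\,\cdot)}$, but here $|z| = T|\hat{\mathcal L}_m(p)|$ can be as large as $O(h_m^{-2})$ near the edge of $B_m$, so the naive bound is useless. The fix is to use that $\mathrm{Re}\,\hat{\mathcal L}_m(p)$ is very negative there, together with the constraint $\delta t \le \tfrac12 h_m^2/(2\sigma^2 - h_m|\mu|)$, which forces $|1 + \delta t\,\hat{\mathcal L}_m(p)| \le 1$ and $|\delta t\,\hat{\mathcal L}_m(p)| \le \tfrac12$ uniformly in $p$; one then splits $B_m$ into a region $|p| \lesssim h_m^{-1/2}$ (say), where $|z|$ is moderate and the classical one-step-error estimate gives $O(\delta t) = O(h_m^2)$ after summing, and the complementary high-frequency region, where both $e^{T\hat{\mathcal L}_m(p)}$ and $(1+\delta t\,\hat{\mathcal L}_m(p))^{T/\delta t}$ are individually super-exponentially small (the former by the Gaussian-type decay, the latter because $|1+\delta t\,\hat{\mathcal L}_m(p)|$ is bounded away from $1$), so their difference is negligible. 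Carrying out this dyadic-in-frequency decomposition cleanly, and verifying that the constants do not blow up as $h_m \to 0$ at the fixed ratios prescribed, is the technical heart of the argument; the rest is bookkeeping on Taylor remainders and Gaussian integrals.
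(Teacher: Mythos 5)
Your proposal is correct and follows essentially the same route as the paper: diagonalize $\mathcal L_m$ by the discrete Fourier transform on the Brillouin zone, split the momentum sum into a low-frequency region (where Taylor expansion of the symbols gives an $O(h_m^2 p^3)+O(h_m^2p^4)$ discrepancy that the Gaussian damping renders summable) and a high-frequency tail (killed by the negativity of $\sigma^2(\cos ph_m-1)/h_m^2$), with the extra multipliers $\sin(ph)/h$ and $2(\cos ph-1)/h^2$ handled identically for (ii)–(iii), and with (iv) obtained by comparing $e^{T\hat\ell_m(p)}$ to $\exp\bigl(\tfrac{T}{\delta t}\log(1+\delta t\,\hat\ell_m(p))\bigr)$ on the same splitting. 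The only cosmetic differences are that the paper compares consecutive levels $m$ and $m+1$ (then iterates) rather than coarse-versus-fine/continuum, and uses a cutoff $K_m\sim\sqrt{|\log h_m|/(\sigma^2T)}$ rather than $|p|\lesssim h_m^{-1/2}$; neither affects the argument.
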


\begin{proof}
It suffices to show this inequality for $m' = m+1$. Let us assume
for simplicity that $L = 2^{m_0} h_{m_0}$ and $A_m = \{ x = h_m i, i
= 0, .... 2^m\}$ for all $m\ge m_0$. The argument extends to more
general lattice geometry but the consideration of these more general
cases would obscure the simplicity of the proof with needless detail
and will thus be omitted.

Let $B_m$ be the Brillouin zone as defined in equation
(\ref{brillouin}). Let ${\mathcal F}_m: \ell^2(A_m) \to \ell^2(B_m)$
be the Fourier transform operator defined so that:
\begin{equation}
\hat f(p) \equiv {\mathcal F}_m(f)(p) = h_m \sum_{x\in A_m} f(x) e^{
- i p x}
\end{equation}
for all $p\in B_m$. The inverse Fourier transform is given by
\begin{equation}
{\mathcal F}_m^{-1} (\hat f)(x) =  {1 \over 2 L} \sum_{p\in B_m}
\hat f(p) e^{i p x}.
\end{equation}

The Fourier transformed generator is diagonal and is given by the
operator of multiplication by
\begin{equation}
\hat {\ell}_m (p) = {\mathcal F}_m {\LLL_m} {\mathcal F}_m^{-1}(p,
p) = -i \mu {\sin h_m p \over h_m} + {\sigma^2 } {\cos h_m p - 1
\over h_m^2}.
\end{equation}
We have
\begin{equation}
h_m^{-1} U_m(x, 0; y, T) = {1 \over 2 L} \sum_{p\in B_m} e^{T\hat
{\ell_m} (p)} e^{i p (y - x) }.
\end{equation}
Using this Fourier series representation, we find
\begin{align}
& \big\lvert h_m^{-1} U_m(x, 0; y, T) - h_{m+1}^{-1} U_{m+1}(x, 0;
y, T) \big\lvert \notag \\
&\leq {1 \over 2 L} \bigg\lvert \sum_{p\in B_m} \bigg( e^{T {\hat
\ell_m} (p)} - e^{T {\hat \ell_{m+1}} (p)} \bigg) e^{i p (y - x)
}\bigg\lvert + {1 \over 2 L} \bigg\lvert \sum_{p\in B_{m+1}\setminus
B_m } e^{T {\hat \ell_{m+1}} (p)} e^{i p (y - x) }\bigg\lvert.
\notag \\
\end{align}

Let
\begin{equation}
K_m = \sqrt{ \lvert \log h_{m+1} \lvert\over \sigma^2 T}.
\label{bm_km}
\end{equation}
If $h_m$ is small enough, i.e. if $m_0$ is sufficiently large, we
have that
\begin{align}
{1 \over 2 L} \bigg\lvert \sum_{p\in B_{m}, \lvert p \lvert \ge K_m
} e^{T {\hat \ell^{m}} (p)} e^{i p (y - x) }\bigg\lvert \leq {1
\over 2 L} \sum_{p\in B_{m}, \lvert p \lvert \ge K_m } e^{T \Re(
\hat\ell^{m} (p))} \leq c \exp\bigg( T \sigma^2 {\cos h_m K_m -
1\over h_m^2} \bigg) \leq c h_m^2. \notag \\
\end{align}
where $\Re(a)$ denotes the real part of $a\in {\mathbb C}$ and $c$
denotes a generic constant. Similarly
\begin{align}
{1 \over 2 L} \bigg\lvert \sum_{p\in B_{m+1}, \lvert p \lvert \ge
K_{m} } e^{T {\hat \ell_{m+1}} (p)} e^{i p (y - x) }\bigg\lvert \leq
{1 \over 2 L} \sum_{p\in B_{m}, \lvert p \lvert \ge K_m } e^{T \Re(
\hat\ell_{m+1} (p))} \notag \\
\leq c \exp\bigg( T \sigma^2 {\cos h_{m+1} K - 1\over h_{m+1}^2}
\bigg) \leq c h_{m+1}^2 \notag \\
\end{align}
Since
\begin{align}
{1 \over 2}h^2 p^3 - {1\over8} h^4 p^5 \leq {\sin hp \over h} -
{\sin 2hp \over 2h} \leq {1 \over 2} h^2 p^3
\end{align}
and
\begin{align}
-{1 \over 8}h^2 p^4 \leq {\cos hp -1 \over h^2} - {\cos 2hp -1 \over
(2h)^2} \leq - {1\over8} {h^2 p^4} + {1\over 48} h^4 p^6.
\end{align}
we find that if $\lvert p \lvert \leq {\sqrt{2} \over h }$ then
\begin{align}
\lvert \hat {\ell}_m(p) - \hat {\ell}_{m+1}(p) \lvert \leq {\mu\over
4} h^2 \lvert p \lvert^3 + {\sigma^2 \over16} h^2 p^4.
\end{align}
Moreover, since
\begin{align}
- {1 \over 2} p^2 \leq {\cos hp -1 \over h} \leq  - {1 \over 2} p^2
+ {1\over 24} h^2 p^4
\end{align}
we conclude that in case $\lvert p \lvert \leq h^{-1} \sqrt {2 \over
3}$, the following inequality holds:
\begin{align}
{\cos hp -1 \over h} \leq - {1\over4} {p^2}
\end{align}
Hence, if  $m_0$ is large enough, we find
\begin{align}
{1 \over 2 L} \bigg\lvert \sum_{p\in B_m, \lvert p \lvert \leq K}
\bigg( e^{T {\hat \ell_m} (p)} - & e^{T {\hat \ell_{m+1}} (p)}
\bigg) e^{i p (y - x) }\bigg\lvert \notag \\
& \leq {1 \over 2 L}  \sum_{p\in B_m, \lvert p \lvert \leq K} e^{-
{1\over4} {p^2}}
\bigg( e^{{\mu T\over 4} h_m^2 \lvert p \lvert^3 + {\sigma^2 T\over16} h_m^2 p^4 } - 1\bigg) \notag \\
& \leq {1 \over 2 L}  \sum_{p\in B_m, \lvert p \lvert \leq K} e^{-
{1\over4} {p^2}} \bigg( {\mu T\over 4} h_m^2 \lvert p \lvert^3 +
{\sigma^2 T\over16} h_m^2 p^4 \bigg) \leq c h_m^2
\end{align}
for some constant $c>0$ independent of $m$. This concludes the proof
of the bound in (\ref{bm_uest}).

To estimate the sensitivity in (\ref{bm_dest}) notice that
\begin{equation}
h_m^{-1} \nabla U_m(x, 0; y, T) = {1 \over L} \sum_{p\in B_m}
e^{T\hat {\ell_m} (p)} {\sin p h_m \over h_m} e^{i p (y - x) }.
\end{equation}
and
\begin{align}
& \big\lvert h_m^{-1} U_m(x, 0; y, T) - h_{m+1}^{-1} U_{m+1}(x, 0;
y, T) \big\lvert \notag \\
&\leq {1 \over 2 L} \bigg\lvert \sum_{p\in B_m} \bigg( e^{T {\hat
\ell_m} (p)}{\sin p h_m \over h_m} - e^{T {\hat \ell_{m+1}}
(p)}{\sin p h_{m+1} \over h_{m+1}} \bigg) e^{i p (y - x)
}\bigg\lvert \notag \\
& + {1 \over 2 L} \bigg\lvert \sum_{p\in B_{m+1}\setminus B_m } e^{T
{\hat \ell_{m+1}} (p)} e^{i p (y - x) }\bigg\lvert.
\notag \\
\end{align}
Let
\begin{equation}
K_m = 2 \sqrt{ \lvert \log h_{m+1} \lvert\over \sigma^2 T}.
\end{equation}
If $h_m$ is small enough, we have that
\begin{align}
&{1 \over 2 L} \bigg\lvert \sum_{p\in B_{m}, \lvert p \lvert \ge K_m
} e^{T {\hat \ell_{m}} (p)} {\sin p h_m \over h_m} e^{i p (y - x)
}\bigg\lvert \leq {1 \over 2 L} \sum_{p\in B_{m}, \lvert p \lvert
\ge K_m }
e^{T \Re (\hat\ell^{m} (p))} {\sin p h_m \over h_m} \notag \\
& \hskip 4cm\leq c \bigg\lvert {\sin K h_m \over h_m} \bigg\lvert
\exp\bigg( T \sigma^2 {\cos K h_m - 1\over h_m^2} \bigg) \leq c
h_m^2. \notag \\
\end{align}
where $c$ denotes a generic constant. Similarly
\begin{align}
{1 \over 2 L} \bigg\lvert \sum_{p\in B_{m+1}, \lvert p \lvert \ge
K_m } e^{T {\hat \ell_{m+1}} (p)} e^{i p (y - x) }\bigg\lvert \leq c
h^2.
\end{align}

If  $m$ is large enough, we also find
\begin{align}
{1 \over 2 L} \bigg\lvert & \sum_{p\in B_m, \lvert p \lvert \leq
K_m} \bigg( {\sin p h_m \over h_m} e^{T {\hat \ell_m} (p)} - {\sin p
h_{m+1} \over h_{m+1}} e^{T {\hat \ell_{m+1}} (p)} \bigg) e^{i p (y
- x) }\bigg\lvert \notag \\
&\leq {1 \over 2 L}  \sum_{p\in B_m, \lvert p \lvert \leq K_m}
\bigg\lvert {\sin p h_{m} \over h_{m}} \bigg\lvert e^{- {1\over4}
{p^2}} \bigg( e^{{\mu T\over 4} h_m^2 \lvert p \lvert^3 + {\sigma^2
T\over16} h_m^2 p^4 } - 1\bigg) \notag
\\ & + e^{- {1\over4} {p^2}} \bigg\lvert
{\sin p h_{m+1} \over h_{m+1}} - {\sin p h_{m} \over h_{m}}
\bigg\lvert \leq c h_m^2 \notag \\
\end{align}
for some constant $c>0$ independent of $m$. This concludes the proof
of the bound in (\ref{bm_dest}). The bound in  (\ref{bm_gest}) can
be derived in a similar way.

Finally, consider the following Fourier representation for the
discretized kernel
\begin{equation}
h_m^{-1} U_m^{\delta t} (x, 0; y, T) = {1 \over L} \sum_{p\in B_m}
\bigg( 1 + \delta t \hat {\ell_m} (p)\bigg)^{T\over \delta t} e^{i p
(y - x) }.
\end{equation}
Consider the formula
\begin{equation}
\bigg( 1 + \delta t \hat {\ell_m} (p)\bigg)^{T\over \delta t} =
\exp\bigg( T \log \big(1 + \hat {\ell_m} (p)\big) \bigg).
\end{equation}
and let's represent the difference between kernels in
(\ref{bm_udtest}) as follows:
\begin{align}
\lvert h_m^{-1} &U_m(x, 0; y, T) - h_{m}^{-1} U_{m}^{\delta t}(x, 0;
y, T)
\lvert \notag \\
& \leq {1 \over 2 L} \bigg\lvert \sum_{p\in B_m} \bigg( \exp\big(T
{\hat \ell_m} (p)\big) - \exp\bigg( {T\over \delta t} \log \big(1 +
\delta t {\hat\ell}_m (p)\big) \bigg) e^{i p (y - x)
}\bigg\lvert \notag \\
& \leq {1 \over 2 L} \sum_{p\in B_m, p\leq K_m} e^{-{1\over4} p^2}
\bigg\lvert \exp\bigg( {T\over \delta t} \log \big(1 + \delta t
{\hat\ell}_m (p)\big) - T {\hat \ell_m} (p) \bigg) -1 \bigg\lvert
\notag \\
&\hskip1cm {1 \over 2 L} \sum_{p\in B_m, p \ge K_m} \bigg\lvert
\exp\big(T {\hat \ell_m} (p)\big)\bigg\lvert  +{1 \over 2 L}
\sum_{p\in B_m, p \ge K_m} \bigg\lvert \exp\bigg({T\over \delta t}
\log \big(1 + \delta t {\hat\ell}_m (p)\big) \bigg) \bigg\lvert
\notag \\
\end{align}
where $K_m$ is chosen as in (\ref{bm_km}). The very same bounds
above lead to the conclusion that this difference is $\leq c h_m^2$.

\end{proof}

\section{Kernel Convergence Estimates for Diffusion Processes}

Diffusions are a particularly important class of Markov processes
which generalize Brownian motions to allow for space dependent
drifts and volatility. In this section we find kernel convergence
estimates for the one-dimensional case following
\cite{AlbaneseKernelEsimtatesA}.

Let $A = (A_m), m = m_0, m_0+1, ...$ be a simplicial sequence
converging to an interval $\lim_{m\to\infty} A_m = [-L,L] \subset
\RRR$, where $0<L<\infty$. Let $\mu(x)$ and $\sigma(x)$ be smooth
functions defined in a neighborhood of $[-L,L]$. The generator of a
diffusion on $A_m$ has the form
\begin{equation}
{\LLL_m} = \mu(x) \nabla_{h_m} + {1\over 2} \sigma(x)^2
\Delta_{h_m}. \label{def_diffusion}
\end{equation}
We assume that $m_0$ is large enough so that
\begin{equation}
{\sigma^2(x)\over 2 h_m^2} > {\lvert \mu(x) \lvert \over 2h_m}
\label{def_diffusion_bounds}
\end{equation}
for all $m\ge m_0$ and all $x\in A_m$. The definition of the
generators at boundary points can be extended by imposing one of the
boundary conditions in the previous section, i.e. reflecting,
absorbing, periodic or mixed.

\begin{theorem}\label{theo_diffker} {\bf (Convergence Estimates for Diffusions.)}
Consider a diffusion process
defined as in (\ref{def_diffusion}) where $\mu(y)$ and $\sigma(y)$
are smooth functions satisfying the bounds in
(\ref{def_diffusion_bounds}) for all $m\ge m_0$. Assume that
boundary conditions are either periodic or absorbing. Then there is
a constant $c>0$ such that
\begin{equation}
\lvert h_m^{-1} U_m(x, 0; y, T) - h_{m'}^{-1} U_{m'}(x, 0; y, T)
 \lvert \leq c h_m^2
\end{equation}
for all $m'\ge m$  and all $y\in A_m$.
\end{theorem}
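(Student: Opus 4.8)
The plan is to reduce the estimate to the convergence theorem for Brownian motion of the previous section by a parametrix (frozen--coefficient) construction, treating the space dependence of $\mu$ and $\sigma$ perturbatively. First, since $h_m = 2^{-m}$, it is enough to establish the bound
\begin{equation}
\big\lvert h_m^{-1} U_m(x,0;y,T) - h_{m+1}^{-1} U_{m+1}(x,0;y,T)\big\rvert \le c\, h_m^2
\end{equation}
with $c$ independent of $m$, $x$ and $y$: the general case then follows by telescoping, because $\sum_{k\ge m} c\, h_k^2 = \tfrac{4}{3}\, c\, h_m^2$, and this simultaneously shows that $h_m^{-1} U_m$ is a Cauchy sequence.

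Next I would freeze the coefficients at the terminal point. For fixed $y\in A_m$, let $\LLL_m^{(y)} = \mu(y)\nabla_{h_m} + \tfrac12 \sigma(y)^2 \Delta_{h_m}$ be the associated constant--coefficient (Brownian) generator, let $Z_m^{(y)}$ be its propagator, and set $Z_m(x,0;y,T) = Z_m^{(y)}(x,0;y,T)$. Because $\mu$ and $\sigma$ are smooth on a neighbourhood of the compact interval $[-L,L]$, the numbers $\mu(y),\sigma(y)$ and all the bounds entering the Brownian estimate are uniform in $y$; hence the theorem on Brownian motion applies with constants independent of $y$ and gives $\lvert h_m^{-1} Z_m - h_{m+1}^{-1} Z_{m+1}\rvert \le c\, h_m^2$ together with the companion bounds for $\nabla_{h_m} Z_m$ and $\Delta_{h_m} Z_m$, as well as uniform pointwise control of $h_m^{-1} Z_m$ and of its discrete derivatives (with small--time Gaussian decay, read off from the Fourier--series representation used in that proof). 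The case of periodic boundary conditions is literally the one treated there; for absorbing boundary conditions the frozen Dirichlet kernel, obtained either as a sine series on $B_m$ or by the method of images, satisfies the same bounds.

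Now I would invoke Duhamel's principle in the form of the forward and backward equations (\ref{backward_eq})--(\ref{forward_eq}) to get the parametrix identity
\begin{equation}
U_m(x,0;y,T) = Z_m(x,0;y,T) + \int_0^T \sum_{z\in A_m} U_m(x,0;z,s)\, R_m(z,s;y,T)\, ds,
\end{equation}
where $R_m(z,s;y,T) = \big(\mu(z)-\mu(y)\big)\nabla_{h_m}Z_m^{(y)}(z,s;y,T) + \tfrac12\big(\sigma(z)^2-\sigma(y)^2\big)\Delta_{h_m}Z_m^{(y)}(z,s;y,T)$ is the error made by freezing the coefficients at $y$. Since $\lvert\mu(z)-\mu(y)\rvert \le C\lvert z-y\rvert$ and $\lvert\sigma(z)^2-\sigma(y)^2\rvert \le C\lvert z-y\rvert$, the prefactor $\lvert z-y\rvert$ absorbs a power in the Gaussian bounds for $\nabla_{h_m}Z_m^{(y)}$ and $\Delta_{h_m}Z_m^{(y)}$, so that $h_m^{-1}\lvert R_m\rvert$ is integrable in $(s,z)$ uniformly in $m$ and strictly less singular near the diagonal than $h_m^{-1}Z_m$. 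Iterating the identity then produces the Neumann (Levi) series $U_m = \sum_{n\ge 0} Z_m \ast R_m^{\ast n}$, convolution being taken in the intermediate space--time variables, and this series converges in the sup norm uniformly in $m$.

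Finally I would compare the two series term by term. The $n$-th term $Z_m \ast R_m^{\ast n}$ is an iterated space--time average of the frozen kernels $Z_m^{(\cdot)}$ and their $\nabla_{h_m},\Delta_{h_m}$ derivatives against the smooth coefficients $\mu,\sigma$; by the previous step each such building block, after the rescaling by $h_m^{-1}$, changes by $O(h_m^2)$ when $m \mapsto m+1$, and the replacement of $\sum_{z\in A_m}$ by $\sum_{z\in A_{m+1}}$ likewise costs only $O(h_m^2)$ rather than the naive $O(h_m)$, because the rescaled summands are, uniformly in $m$, discretely twice differentiable in $z$ (their $\Delta_{h_m}$ derivatives being controlled), so the dyadic--refinement error is governed by second differences exactly as in the estimates $\frac{\cos hp-1}{h^2}-\frac{\cos 2hp-1}{(2h)^2}=O(h^2p^4)$ of the Brownian proof. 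Summing the $O(h_m^2)$ contributions of the terms against the geometric bounds from the Neumann series yields the claim. The hard part is precisely this last step: arranging that every discretization error and every cross--lattice difference stays at order $h_m^2$ throughout the iterated integrals, while keeping uniform control on the convergence of the parametrix series near the diagonal, where the Gaussian bounds degenerate.
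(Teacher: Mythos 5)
Your plan (freeze the coefficients at the terminal point, set up the Duhamel/Levi parametrix series, and compare the two lattice scales term by term) is a legitimate strategy and genuinely different from the paper's, but as written it has a real gap at exactly the point you yourself flag as ``the hard part,'' and that point is the theorem. Concretely: (1) the input you want to import from the Brownian-motion section is stronger than what that theorem provides. Inside the Duhamel integral you need the frozen kernels $Z^{(y)}_m(z,s;y,T)$, their first and second discrete differences, and their two-scale differences for \emph{all} intermediate time separations $T-s$ down to $0$, with quantitative time weights (bounds of the form $c\,h_m^2\,(T-s)^{-\gamma}$ accompanied by off-diagonal Gaussian decay, so that the Lipschitz factor $\lvert z-y\rvert$ and the spatial sum restore integrability in $s$). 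The Brownian theorem is stated for a fixed horizon and, tracing its Fourier-series proof, its constants degenerate as the time separation shrinks (roughly like $(T-s)^{-3/2}$ for the kernel difference and $(T-s)^{-5/2}$ for the second-difference comparison); moreover no off-diagonal Gaussian decay of the lattice kernels or of their discrete derivatives is established there at all -- it does not simply ``read off'' from the Fourier representation. Without such weighted, localized estimates, neither the uniform-in-$m$ convergence of the Levi series nor the claimed $O(h_m^2)$ per-term comparison is supported. (2) The assertion that replacing $\sum_{z\in A_m}$ by $\sum_{z\in A_{m+1}}$ costs only $O(h_m^2)$ requires Euler--Maclaurin-type control of second differences in $z$ of an integrand that already contains $\Delta_{h}Z^{(y)}$, i.e.\ effectively fourth differences of a heat kernel; near the diagonal and near $s=T$ these are not uniformly bounded, so the estimate must exploit cancellation or prior time integration that you do not carry out. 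Since both issues are deferred, the proposal is a program rather than a proof.

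For comparison, the paper circumvents precisely these difficulties: it expands the kernel in the Dyson/path-integral series, passes to the resolvent $G_m(x,y;\omega)$ and a contour integral, disposes of the contour $\mathcal{C}_+$ by a Kato--Rellich resolvent bound, truncates the number of jumps at $q_{\max}\sim h^{-2}$ via a convolution-power estimate, and then compares each coarse-lattice path weight with the resummed ``decorated'' fine-lattice weight hop by hop in the $\omega$ variable, where the per-hop discrepancies telescope into a total derivative plus $O(h^4 q)$ terms; this yields $O(h^3)$ for the Green-function difference and hence $O(h_m^2)$ after the $h_m^{-1}$ normalization. If you wish to salvage the parametrix route, the concrete task is to prove time-weighted, Gaussian-localized estimates (and two-scale difference estimates) for the constant-coefficient lattice kernels and their discrete differences, uniformly in $m$ and in the frozen parameter $y$, including the short-time regime $T-s\lesssim h_m^2$, and then rerun the Levi iteration with those weights; that is a substantial piece of analysis not contained in, nor implied by, the Brownian section you cite.
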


It suffices to establish the above inequality in the case $m' =
m+1$. In fact, given this particular case, the general statement can
be derived with an iterative argument. Let $h = h_{m+1}$ so that
$h_m = 2 h$.

Recall that the path integral representation for a diffusion process
has the form in equation (\ref{eq_udiff1}). In the special case of a
time-homogeneous process, this expansion reads as follows:
\begin{align}
U_m(x, 0; y, T) =& \sum_{q=1}^\infty 2 ^{-q} \sum_{
\begin{matrix}
\gamma\in \Gamma_m : \gamma_0 = x, \gamma_q = y \\
\lvert\gamma_j - \gamma_{j-1}\lvert = 1 \;\;\forall j\ge1
\end{matrix}}
 W_m(\gamma, q, T)
 \label{eq_udiff1}
 \end{align}
where
\begin{align}
W_m(&\gamma, q, T) = \int_0^{T} ds_1 \int_{s_1}^{T} ds_2 ...
\int_{s_{q-1}}^{T} d s_{q} e^{ (T - s_q) {\mathcal L}_m(y, y)}
\prod_{j=0}^{q-1}  \bigg( e^{ (s_{j+1} - s_{j}) {\mathcal
L}_m(\gamma_j, \gamma_{j})} {\mathcal L}_m(\gamma_j, \gamma_{j+1})
\bigg) \label{eq_wdiff}
\end{align}
where $s_0 = 0$.

Let us introduce the following two constants characterizing the
volatility function:
\begin{equation}
\Sigma_0 = \inf_{x \in A_m} \sigma(x), \;\;\;\;\; \Sigma_1 = \sup_{x
\in A_m } \sqrt{\sigma(x)^2 + h_m \lvert \mu(x) \lvert }.
\end{equation}
and let
\begin{equation}
M = \sup_{x \in A_m} \lvert \mu(x)\lvert .
\end{equation}
Since our interval is bounded, we have that $\Sigma_0>0$ and
$\Sigma_1, M <\infty$.

\begin{figure}
\begin{center}
    \includegraphics[width = 12cm]{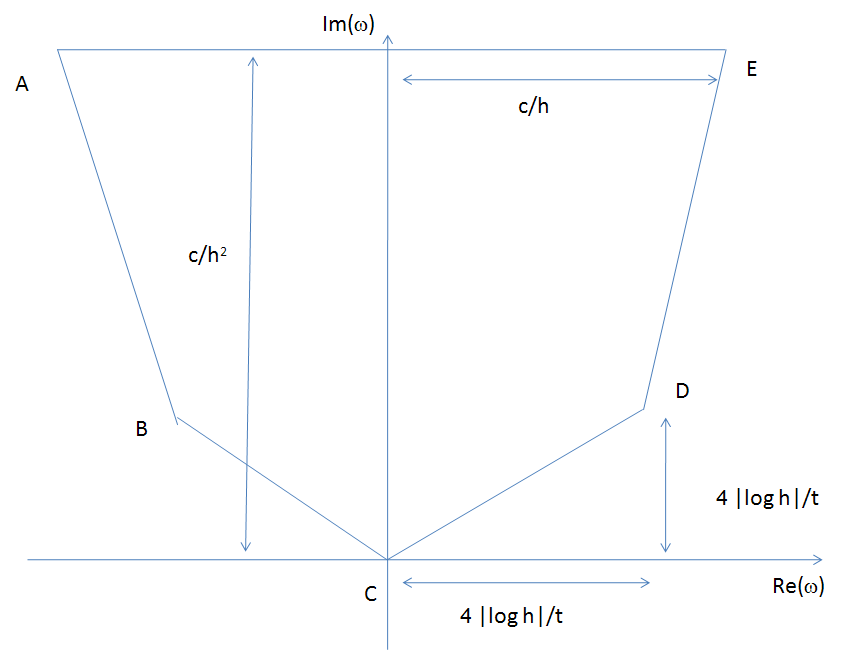}
    \caption{Contour of integration for the integral in (\ref{eq_greenfunc}).
    ${\mathcal C}_+$ is the countour joining the point $D$ to the
    points $E, A, B$. ${\mathcal C}_-$ is
    the countour joining the point $B$ to $C$ to $D$.}
    \label{fig_contour2}
\end{center}
\end{figure}

Let us introduce the following Green's function:
\begin{equation}
G_{m}(x, y; \omega) = \int_0^\infty U_{m}(x, 0; y, t) e^{-i\omega t}
dt = {1\over \mathcal L + i\omega}(x, y).
\end{equation}
The propagator can be expressed as the following contour integral
\begin{equation}
U_{m}(x, 0; y, T) = \int_{{\mathcal C}_-} {d\omega \over 2 \pi}
G_{m}(x, y; \omega) e^{i\omega T} + \int_{{\mathcal C}_+} {d\omega
\over 2 \pi} G_{m}(x, y; \omega) e^{i\omega T}. \label{eq_greenfunc}
\end{equation}
Here, ${\mathcal C}_+$ is the contour joining the point $D$ to the
points $E, A, B$ in Fig. \ref{fig_contour2}, while ${\mathcal C}_-$
is the contour joining the point $B$ to $C$ to $D$. By design, each
point $\omega$ on the upper path ${\mathcal C}_+$ is separated from
the spectrum of $\mathcal L$.

\begin{lemma} \label{lemma_cplus} For $m$ sufficiently large,
there is a constant $c>0$ such that
\begin{equation}
\bigg\lvert\int_{{\mathcal C}_+} {d\omega \over 2 \pi} G_{m}(x, y;
\omega) e^{i\omega T}\bigg\lvert \leq c h^3 .
\end{equation}
\end{lemma}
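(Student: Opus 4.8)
The estimate is a crude one. Write $G_m(\omega)=(i\omega-\LLL_m)^{-1}$ for the resolvent (the form dictated by the Laplace transform $\int_0^\infty U_m(x,0;y,t)\,e^{-i\omega t}\,dt$), so that $G_m(x,y;\omega)=\langle e_x,G_m(\omega)e_y\rangle$ in $\ell^2(A_m)$ with the counting inner product. The plan is to bound
\[
\bigg|\int_{{\mathcal C}_+}\frac{d\omega}{2\pi}\,G_m(x,y;\omega)\,e^{i\omega T}\bigg|\ \le\ \frac{1}{2\pi}\,\mathrm{length}({\mathcal C}_+)\cdot\sup_{{\mathcal C}_+}|G_m(x,y;\omega)|\cdot\sup_{{\mathcal C}_+}|e^{i\omega T}|
\]
and to check that these three factors already produce $h^3$. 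The mechanism is that ${\mathcal C}_+$ is, by design (Fig.~\ref{fig_contour2}), the branch of the contour wrapping around the far, high-$\mathrm{Im}\,\omega$ part of the region occupied by the spectrum of $\LLL_m$ --- around its ``fast'' modes $\lambda\approx-ch^{-2}$, which in the $\omega$-plane sit near $\mathrm{Im}\,\omega\approx ch^{-2}$ --- and stays separated from it by a fixed amount $\delta_0>0$; so on ${\mathcal C}_+$ one simultaneously controls $G_m$ and gains a factor $|e^{i\omega T}|=e^{-T\,\mathrm{Im}\,\omega}$ smaller than any power of $h$.

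For the resolvent I would use $|G_m(x,y;\omega)|\le\|(i\omega-\LLL_m)^{-1}\|\le 1/\mathrm{dist}\big(i\omega,W(\LLL_m)\big)$ for $i\omega\notin\overline{W(\LLL_m)}$, which is legitimate since $\|e_x\|_2=1$. It is essential to use the \emph{numerical range} $W(\LLL_m)$ rather than the spectrum: $\LLL_m$ is far from normal --- already at the continuous level, for periodic boundary conditions, the diffusion operator has genuinely complex spectrum --- and distance to the spectrum would not control the resolvent because of pseudospectral blow-up. One then locates $W(\LLL_m)$ in an $m$-uniform box $\{-c_1h^{-2}\le\mathrm{Re}\,z\le C_0,\ |\mathrm{Im}\,z|\le C_2h^{-1}\}$. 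The only mildly delicate bound is $\mathrm{Re}\langle u,\LLL_m u\rangle\le C_0\|u\|^2$ with $C_0$ uniform in $m$; this is the $\ell^2(A_m)$-quasicontractivity of the Markov semigroup $e^{t\LLL_m}$, obtained by Riesz--Thorin interpolation from its $\ell^\infty$-contractivity and the $\ell^1$-bound $\|e^{t\LLL_m}\|_{1\to1}\le e^{Ct}$, the latter coming from the $O(1)$ column sums $\sum_x\LLL_m(x,y)=-\mu'(y)+\tfrac12(\sigma^2)''(y)+O(h^2)$; equivalently, a discrete integration by parts gives $\langle v,\tfrac12\sigma(x)^2\Delta_{h}v\rangle=-\tfrac1{2h^2}\sum_x\sigma(x)^2(v(x+h)-v(x))^2+O(\|v\|^2)$ for real $v$, the apparent $h^{-1}$ contributions from the variation of $\sigma$ cancelling between diagonal and off-diagonal terms. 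Because ${\mathcal C}_+$ stays at distance $\ge\delta_0$ from $W(\LLL_m)$, this yields $\sup_{{\mathcal C}_+}|G_m(x,y;\omega)|\le\delta_0^{-1}$.

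It remains to combine this with the geometry. The set of $\omega$ with $i\omega$ in the above box is $\{|\mathrm{Re}\,\omega|\le C_2h^{-1},\ -C_0\le\mathrm{Im}\,\omega\le c_1h^{-2}\}$, and ${\mathcal C}_+$ may be taken to run up the two sides $\mathrm{Re}\,\omega=\pm(C_2h^{-1}+\delta_0)$, from the height $\mathrm{Im}\,\omega=s_0$ at which it joins ${\mathcal C}_-$ up to $\mathrm{Im}\,\omega=c_1h^{-2}+\delta_0$, and across the top at that height; here one takes $s_0:=5T^{-1}|\log h|$. Then $\mathrm{length}({\mathcal C}_+)\le 2c_1h^{-2}+O(h^{-1})$, while $\mathrm{Im}\,\omega\ge s_0$ throughout ${\mathcal C}_+$ forces $|e^{i\omega T}|\le e^{-Ts_0}=h^5$, so the integral is $\le\tfrac1{2\pi}\big(2c_1h^{-2}+O(h^{-1})\big)\delta_0^{-1}h^5\le c\,h^3$ for $m$ large, with $c$ independent of $m$. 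The power $5$ --- two more than the target $h^3$ --- is forced by $\mathrm{length}({\mathcal C}_+)=O(h^{-2})$, and this same margin is why the lemma is worth having: it shows that ${\mathcal C}_+$ contributes only $h_m^{-1}O(h^3)=O(h^2)$ to the probability density, so that in the proof of Theorem~\ref{theo_diffker} one may discard ${\mathcal C}_+$ entirely and carry out the delicate lattice-to-lattice comparison only on ${\mathcal C}_-$. That comparison, not this crude bound, is the real obstacle; here the only places needing care are the use of the numerical range (not the spectrum) for the resolvent bound and the bookkeeping of $s_0$.
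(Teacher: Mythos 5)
Your estimates are internally consistent, but they prove the bound for a contour of your own design rather than for the paper's ${\mathcal C}_+$, and the difference is exactly where the content of the lemma lies. Your key inequality $\sup_{{\mathcal C}_+}|G_m(x,y;\omega)|\le\delta_0^{-1}$ rests on the numerical-range bound, which is available only when $i\omega$ is at distance $\ge\delta_0$ from $W(\mathcal{L}_m)$; since $\operatorname{Im}W(\mathcal{L}_m)$ genuinely reaches $\pm c\,h^{-1}$ (test with $u(x)=e^{i\pi x/(2h)}$, for which $\nabla_{h}u=(i/h)\,u$), this forces the vertical legs of your contour out to $|\operatorname{Re}\omega|\sim h^{-1}$. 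The paper's ${\mathcal C}_+$ does not live there: its junction points $B,D$ with ${\mathcal C}_-$ sit at height $4|\log h_m|/T$ and at $|\omega|=O(|\log h_m|)$ --- the companion ${\mathcal C}_-$ estimate explicitly uses $\omega\le|\log h|$ --- so ${\mathcal C}_+$ runs at $O(1)$ to $O(|\log h|)$ horizontal distance from the spectrum, well inside your numerical-range box, where your bound says nothing. Controlling the resolvent in that region is precisely what the paper's proof does: it expands $G_m$ in the geometric series $\sum_{j\ge0}\big(\tfrac12\sigma^2\Delta_h+i\omega\big)^{-1}\big[\mu\nabla_h\big(\tfrac12\sigma^2\Delta_h+i\omega\big)^{-1}\big]^{j}$ and uses a Kato--Rellich relative bound $\|\nabla_h f\|_2\le\alpha\|\Delta_h f\|_2+\beta\|f\|_2$, i.e.\ it exploits the relative boundedness of the drift with respect to the (symmetrizable) diffusion part to get boundedness of $G_m$ at fixed distance from the spectrum even though the numerical range --- and the pseudospectral scale you rightly worry about --- is $O(h^{-1})$ wide; the pointwise kernel bound $|G_m|\le h_m^{-1}$ and the decay $|e^{i\omega T}|\le h_m^{4}$ at the junction height, integrated along the legs, then give $c\,h^3$.

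Moreover, your relocation of the contour is not harmless for the theorem this lemma serves. Moving $B,D$ to $\operatorname{Re}\omega=\pm(C_2h^{-1}+\delta_0)$ (and raising the corner to $5T^{-1}|\log h|$, which you need because your length-times-sup bookkeeping with $\mathrm{length}({\mathcal C}_+)=O(h^{-2})$ would otherwise only yield $h^2$) hands ${\mathcal C}_-$ an extra stretch of length $O(h^{-1})$ on which $|e^{i\omega T}|$ does not decay and $|\omega|\sim h^{-1}$; the ${\mathcal C}_-$ comparison's error terms, which carry factors $|\omega|$ and $|\omega|^2$ against $O(h^4 q)$ with $q\lesssim h^{-2}$, are then no longer small, so the overall $O(h^2)$ kernel estimate would not follow. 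In short: either you keep the paper's contour, in which case a finer resolvent bound than the numerical range provides is required (the Kato--Rellich step is the missing ingredient), or you keep your contour and must redo the ${\mathcal C}_-$ analysis. Your localization of the numerical range in the box $\{-c_1h^{-2}\le\operatorname{Re}z\le C_0,\ |\operatorname{Im}z|\le C_2h^{-1}\}$ (by discrete summation by parts, or by $\ell^1$/$\ell^\infty$ interpolation) is correct and worth keeping, but it is too crude to substitute for the paper's argument on the paper's contour.
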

\begin{proof}
The proof is based on the geometric series expansion
\begin{equation}
G_{m}(\omega) = {1\over {\mathcal L}^m + i\omega} =
\sum_{j=0}^\infty {1\over {1\over2}\sigma^2\Delta^m + i\omega}
\bigg[ \mu\nabla^m {1\over {1\over2}\sigma^2\Delta^m + i\omega}
\bigg]^j \label{geoseries}
\end{equation}
whose convergence for $\omega\in{\mathcal C}_+$ can be established
by means of a Kato-Rellich relative bound, see \cite{Ka1976}. More
precisely, for any $\alpha>0$, one can find a $\beta>0$ such that
the operators $\nabla^m$ and $\Delta^m$ satisfy the following
relative bound estimate:
\begin{equation}
\lvert\lvert \nabla^m f\lvert\lvert_2 \leq \alpha \lvert\lvert
\Delta^m f \lvert\lvert_2 + \beta \lvert\lvert f \lvert\lvert_2.
\end{equation}
for all periodic functions $f$ and all $m\ge m_0$. This bound can be
derived by observing that $\nabla^m$ and $\Delta^m$ can be
diagonalized simultaneously by a Fourier transform, as done in the
previous section, and by observing that for any $\alpha>0$, one can
find a $\beta>0$ such that
\begin{equation}
\bigg\lvert {\sin h_m p \over h_m}  \bigg\lvert \leq \alpha
\bigg\lvert {\cos h_m p - 1 \over h_m^2} \bigg\lvert + \beta
\end{equation}
for all $m\ge m_0$ and all $p\in B_m$.

Under the same conditions, we also have that
\begin{equation} \big\lvert\big\lvert
\mu \nabla^m f\big\lvert\big\lvert_2 \leq {2 M \alpha \over
\Sigma_0^2} \bigg\lvert\bigg\lvert {1\over2} \sigma^2 \Delta^m f
\bigg\lvert\bigg\lvert_2 + \beta \lvert\lvert f \lvert\lvert_2.
\end{equation}
Hence
\begin{equation} \bigg\lvert\bigg\lvert
\mu \nabla^m {1\over {1\over2}\sigma^2\Delta^m + i\omega}
f\bigg\lvert\bigg\lvert_2 \leq {2 M \alpha \over \Sigma_0^2}
\bigg\lvert\bigg\lvert {1\over2} \sigma^2 \Delta^m {1\over
{1\over2}\sigma^2\Delta^m + i\omega} f \bigg\lvert\bigg\lvert_2 +
\beta \bigg\lvert\bigg\lvert {1\over {1\over2}\sigma^2\Delta^m +
i\omega} f \bigg\lvert\bigg\lvert_2 < 1
\end{equation}
where the last inequality holds if $\omega\in{\mathcal C}_+$, if
$\alpha$ is chosen sufficiently small and if $m$ is large enough. In
this case, the geometric series expansion converges in
(\ref{geoseries}) converges in $L^2$ operator norm. The uniform norm
of the kernel $\lvert G_{m}(x, y; \omega) \lvert$ is pointwise
bounded from above by $h_m^{-1}$.

Since the points $B$ and $D$ have imaginary part equal at height $
4{\lvert \log h_m \lvert \over T}$, the integral over the contour
${\mathcal C}_+$ converges also and is bounded from above by $c
h_m^3$ in uniform norm.

\end{proof}

\begin{lemma} If $q\ge {e^2
\Sigma_1^2 T\over 2 h_m^2}$ we have that
\begin{equation}
W_m(\gamma, q; 0, T) \leq \sqrt{q\over 2\pi} \exp\left(-{\Sigma_0^2
T\over 2} - q\right). \label{eq_wbound}
\end{equation}
\end{lemma}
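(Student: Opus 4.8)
The plan is to estimate $W_m(\gamma,q;0,T)$ directly from its representation (\ref{eq_wdiff}), bounding separately the $q$ off-diagonal rate factors $\LLL_m(\gamma_j,\gamma_{j+1})$, the product of diagonal exponential factors, and the volume $T^q/q!$ of the time-ordered simplex, and then defeating the resulting $h_m^{-2q}$ growth by Stirling's inequality together with the hypothesis $q\ge e^2\Sigma_1^2 T/(2h_m^2)$. For the diffusion generator (\ref{def_diffusion}) one has $\LLL_m(x,x)=-\sigma(x)^2/h_m^2\le -\Sigma_0^2/h_m^2$, since $\nabla_{h_m}(x,x)=0$ and $\Delta_{h_m}(x,x)=-2/h_m^2$, hence $e^{t\LLL_m(x,x)}\le e^{-t\Sigma_0^2/h_m^2}$ for $t\ge0$; and for nearest neighbours $\LLL_m(x,x\pm h_m)=\frac{\sigma(x)^2}{2h_m^2}\pm\frac{\mu(x)}{2h_m}$, which is nonnegative by (\ref{def_diffusion_bounds}) and at most $\frac{\sigma(x)^2+h_m|\mu(x)|}{2h_m^2}\le\frac{\Sigma_1^2}{2h_m^2}$.

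In (\ref{eq_wdiff}) there are exactly $q$ off-diagonal factors, each $\le\Sigma_1^2/(2h_m^2)$, while the diagonal exponents $(s_1-s_0),(s_2-s_1),\dots,(s_q-s_{q-1}),(T-s_q)$ are nonnegative on the ordered simplex and sum to $T$ irrespective of the $s_j$, so that $e^{(T-s_q)\LLL_m(y,y)}\prod_{j=0}^{q-1}e^{(s_{j+1}-s_j)\LLL_m(\gamma_j,\gamma_j)}\le e^{-\Sigma_0^2 T/h_m^2}$, a constant independent of the integration variables. Pulling these bounds out of the iterated integral and using $\int_0^T ds_1\int_{s_1}^T ds_2\cdots\int_{s_{q-1}}^T ds_q=T^q/q!$ yields
\[
W_m(\gamma,q;0,T)\le e^{-\Sigma_0^2 T/h_m^2}\Big(\frac{\Sigma_1^2}{2h_m^2}\Big)^q\frac{T^q}{q!}.
\]
Then the Stirling bound $q!\ge\sqrt{2\pi q}\,(q/e)^q$ rewrites the right-hand side as $\frac{1}{\sqrt{2\pi q}}\,e^{-\Sigma_0^2 T/h_m^2}\big(e\Sigma_1^2 T/(2h_m^2 q)\big)^q$, the hypothesis $q\ge e^2\Sigma_1^2 T/(2h_m^2)$ forces the base of the last power to be $\le 1/e$, so it is $\le e^{-q}$; and since $h_m=2^{-m}<1$ we have $\Sigma_0^2 T/h_m^2\ge\Sigma_0^2 T/2$ while $\frac{1}{\sqrt{2\pi q}}\le\sqrt{q/2\pi}$ for $q\ge1$, giving $W_m(\gamma,q;0,T)\le\sqrt{q/2\pi}\,\exp(-\Sigma_0^2 T/2-q)$ as claimed.

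I do not expect a real obstacle: the proof is essentially careful bookkeeping. The two points that need attention are, first, that the off-diagonal entries must be controlled through the combination $\sigma(x)^2+h_m|\mu(x)|$ — i.e. via the constant $\Sigma_1$ rather than $\sigma$ alone — in order to absorb the drift contribution, with (\ref{def_diffusion_bounds}) invoked to know these entries are nonnegative so that the absolute values may be dropped; and second, that the chain of time increments telescopes to $T$, which is exactly what makes the product of diagonal exponentials a harmless constant rather than a quantity coupled to the $s_j$. This telescoping is the one genuinely substantive step.
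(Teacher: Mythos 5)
Your proof is correct, and it takes a more elementary route than the paper's. You estimate the integrand of $W_m(\gamma,q;0,T)$ pointwise: each of the $q$ off-diagonal factors is bounded by $\Sigma_1^2/(2h_m^2)$ (with (\ref{def_diffusion_bounds}) guaranteeing nonnegativity so absolute values can be dropped), the telescoping product of diagonal exponentials by the constant $e^{-\Sigma_0^2 T/h_m^2}$, and you then combine the ordered-simplex volume $T^q/q!$ with Stirling and the threshold $q\ge e^2\Sigma_1^2 T/(2h_m^2)$. The paper instead dominates $W_m(\gamma,q;0,T)$ by the $q$-fold convolution power $\phi^{\star q}(T)$ of the single-jump majorant $\phi(t)=\frac{\Sigma_1^2}{2h_m^2}\,e^{-\Sigma_0^2 t/(2h_m^2)}\,1(t\ge0)$, evaluates $\phi^{\star q}$ in closed form by Fourier inversion and the residue theorem, and only then invokes Stirling under the same condition on $q$. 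The driving mechanism is identical in both arguments (the uniform constants $\Sigma_0,\Sigma_1$, the factorial decay produced by time ordering, and Stirling), so the two proofs are interchangeable here; yours dispenses with the Laplace-transform and contour-integral machinery and even keeps a slightly stronger diagonal decay ($e^{-\Sigma_0^2 T/h_m^2}$ versus the paper's $e^{-\Sigma_0^2 T/(2h_m^2)}$), while the paper's convolution identity gives the exact $(q-1)!$-type expression and serves as the template for the resummation estimates in the surrounding lemmas. One caveat you inherit from the paper rather than introduce: with absorbing boundary conditions the diagonal generator entry vanishes at boundary sites, so the uniform decay bound, in either version, literally applies only to paths whose waiting sites are interior; under periodic boundary conditions the estimate holds verbatim.
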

\begin{proof}
Let us define the function
\begin{equation}
\phi(t) = {\Sigma_1^2\over 2 h_m^2} \; e^{-{\Sigma_0^2 t \over 2
h_m^2}} \; 1(t\ge 0)
\end{equation}
where $1(t\ge0)$ is the characteristic function of $\RRR_+$. We have
that
\begin{equation}
W_m(\gamma, q; 0, T) \leq \phi^{\star q}(T)
\end{equation}
where $\phi^{\star q}$ is the $q-$th convolution power, i.e. the
$q-$fold convolution product of the function $\phi$ by itself. The
Fourier transform of $\phi(t)$ is given by
\begin{equation}
\hat\phi(\omega) = {\Sigma_1^2\over 2 h_m^2} \int_0^\infty
e^{-i\omega t - {\Sigma_0^2 t\over 2 h_m^2}} dt = {\Sigma_1^2 \over
2 i \omega h_m^2 + \Sigma_0^2}.
\end{equation}
The convolution power is given by the following inverse Fourier
transform:
\begin{equation}
\phi^{\star q}(T) = \int_0^\infty \hat \phi(\omega)^q e^{i \omega T}
= \left( {\Sigma_1\over\Sigma_0} \right)^{2 q}
\int_{-\infty}^\infty\left( 1 + {2 i \omega h_m^2 \over \Sigma_0^2}
\right)^{-q} e^{i \omega T} {d\omega\over 2\pi}.
\end{equation}
Introducing the new variable $z = 1 + {2i\omega h_m^2\over
\Sigma_0^2}$, the integral can be recast as follows
\begin{equation}
\phi^{\star q}(T) = {\Sigma_0^{2-2q} \Sigma_1^{2q}\over 4\pi i
h_m^2} \lim_{R\to\infty} \int_{{\mathcal C}_R} z^{-q} \exp\left(
{\Sigma_0^2 T\over 2 h_m^2} (z-1) \right) dz \label{eq_intc}
\end{equation}
where ${\mathcal C}_R$ is the contour in Fig. \ref{fig_contour1}.
Using the residue theorem and noticing that the only pole of the
integrand is at $z = 0$, we find
\begin{equation}
\phi^{\star q}(T) = {1\over (q-1)!} \left({\Sigma_1^2 T \over 2
h_m^2}\right)^q \exp\left( - \Sigma_0^2 T \over 2 h_m^2 \right).
\end{equation}
Making use of Stirling's formula $q! \approx \sqrt{2\pi}
q^{q+{1\over2}} e^{-q}$, we find
\begin{equation}
\phi^{\star q}(T) \approx \sqrt{q\over 2\pi} \exp\left( -
{\Sigma_0^2 T \over 2 h_m^2} + q\log{\Sigma_1^2 T \over 2 h_m^2} + q
(1-\log q) \right).
\end{equation}
If $\log q \ge \log {\Sigma_1^2 T \over 2 h_m^2} +2$, then we arrive
at the bound in (\ref{eq_wbound}).

\begin{figure}
\begin{center}
    \includegraphics[width = 12cm]{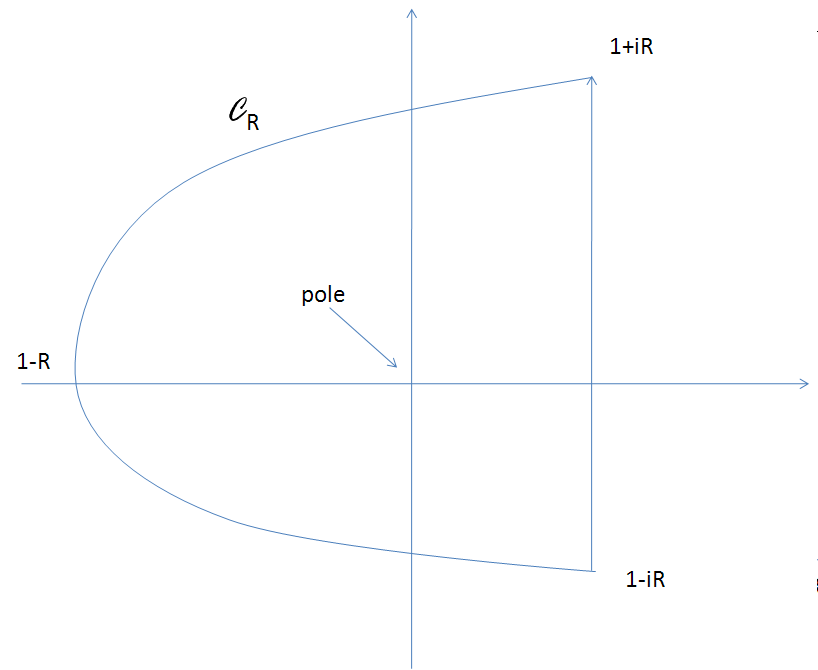}
    \caption{Contour of integration ${\mathcal C}_R$ for the integral in (\ref{eq_intc}).}
    \label{fig_contour1}
\end{center}
\end{figure}

\end{proof}

\begin{definition}{\bf (Decorating Paths.)}
 Let $m\ge m_0$ and let $\gamma = \{y_0, y_1, y_2,
.... \}$ be a symbolic sequence in $\Gamma_m$. A {\it decorating
path around $\gamma$} is defined as a symbolic sequence $\gamma' =
\{y_0, y_1', y_2', .... \}$ with $y_i' \in h_{m+1} \ZZZ$ containing
the sequence $\gamma$ as a subset and such that if $y_j' = y_i$ and
$y_k' = y_{i+1}$, then all elements $y_n'$ with $j < n < k$ are such
that $\lvert y_n' - y_j' \lvert \leq h_{m+1}$. Let ${\mathcal
D}_{m+1}(\gamma)$ be the set of all decorating sequences around
$\gamma$. The decorated weights are defined as follows:
\begin{equation}
{\tilde W}_{m}(\gamma, q; 0, T) = \sum_{q' = q}^\infty \sum_{
\begin{matrix}
\gamma' \in {\mathcal D}_{m+1}(\gamma)\\
\gamma'_{q'} = \gamma_q
\end{matrix}
} W_{m+1}(\gamma', q'; 0, T).
\end{equation}
Finally, let us introduce also the following Fourier transform:
\begin{equation}
\hat W_{m}(\gamma, q; \omega) = \int_0^\infty W_{m}(\gamma, q; 0, t)
e^{i\omega t} dt, \;\;\;\; \hat {\tilde W}_{m}(\gamma, q; \omega) =
\int_0^\infty \tilde W_{m}(\gamma, q; 0, t) e^{i\omega t} dt.
\end{equation}
\end{definition}

\begin{definition}{\bf (Notations.)}
In the following, we set $h = h_{m+1}$ so that $h_m = 2 h$. We also
use the Landau notation $O(h^n)$ to indicate a function $f(h)$ such
that $ h^{-n} f(h) $ is bounded in a neighborhood of $(0)$.
\end{definition}

\begin{lemma}\label{lemma_cplus}  Let $x, y\in A_m$ and let ${\mathcal C_-}$ be an
integration contour as in Fig. \ref{fig_contour2}. Then
\begin{equation}
\bigg\lvert \bigg( \int_{{\mathcal C}_-}  2 G_{m+1}(x, y; \omega) -
G_m(x, y; \omega) \bigg) e^{i\omega T} {d\omega\over 2\pi}
\bigg\lvert = O(h^3).
\end{equation}
\end{lemma}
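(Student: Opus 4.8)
The plan is to compare the two Green's functions $2G_{m+1}$ and $G_m$ on the contour $\mathcal C_-$ by matching their path-integral expansions term by term, exploiting the decorating-path bookkeeping introduced above. First I would use the path-integral representation (\ref{eq_udiff1})--(\ref{eq_wdiff}) to write, after Laplace transform in time,
\begin{equation}
G_m(x,y;\omega) = \sum_{q=1}^\infty 2^{-q} \sum_{\gamma:\gamma_0=x,\gamma_q=y} \hat W_m(\gamma,q;\omega),
\end{equation}
and similarly for $G_{m+1}$ on the finer lattice. The point of the decorated weights $\tilde W_m(\gamma,q;0,T)$ is precisely that grouping the fine-lattice symbolic paths $\gamma'$ according to the coarse-lattice skeleton $\gamma$ they decorate reorganises $G_{m+1}$ into a sum over coarse paths: $2G_{m+1}(x,y;\omega) = \sum_{q} 2^{-q}\sum_{\gamma} \hat{\tilde W}_m(\gamma,q;\omega)$ up to the combinatorial factor that the extra $2$ supplies (each coarse step of length $h_m = 2h$ corresponds, at leading order, to a pair of fine steps, and the $2^{-q}$ normalisations must be reconciled — this is where the factor $2$ on the left-hand side comes from). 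So the claimed quantity becomes $\sum_q 2^{-q}\sum_\gamma \int_{\mathcal C_-}(\hat{\tilde W}_m(\gamma,q;\omega)-\hat W_m(\gamma,q;\omega))e^{i\omega T}\,\frac{d\omega}{2\pi}$.

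Next I would estimate the per-path difference $\hat{\tilde W}_m(\gamma,q;\omega)-\hat W_m(\gamma,q;\omega)$. On a single interval between consecutive coarse sites, the decorated weight is obtained from the undecorated one by replacing the coarse one-step generator element and the coarse diagonal holding-time factor by the corresponding fine-lattice quantities summed over all fine excursions staying within distance $h$. Because $\sigma,\mu$ are smooth, a Taylor expansion of $\sigma(x\pm h), \mu(x\pm h)$ around the coarse site shows that the fine and coarse local generators agree up to $O(h^2)$ in the relevant matrix elements (this is the same Taylor-expansion computation that produced the $O(h^2)$ bounds for Brownian motion, now with $x$-dependent coefficients), while the Laplace variable $\omega$ on $\mathcal C_-$ has imaginary part bounded by $4|\log h_m|/T$, so the resolvent factors $({\tfrac12}\sigma^2\Delta^m+i\omega)^{-1}$ are controlled in $L^2$ operator norm via the Kato--Rellich relative bound already established in Lemma \ref{lemma_cplus} and its proof. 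Summing the $O(h^2)$ discrepancies over the $\sim q$ intervals of a $q$-step path and against the $2^{-q}$ weight — using the previous lemma's bound $W_m(\gamma,q;0,T)\le\sqrt{q/2\pi}\exp(-\Sigma_0^2T/2-q)$ to kill the large-$q$ tail (where $q\gtrsim \Sigma_1^2 T/h_m^2$) — gives a net contribution of $O(h^2)$ at worst from each path, but the sum over $\gamma$ and $q$ of the weights is itself $O(1)$ (it reconstructs a probability kernel), and the extra factor gained from the Taylor cancellation plus the contour length $|\log h|$ yields $O(h^3)$; the polynomial factors in $|\log h|$ are absorbed since $h^{2}|\log h|^k = O(h^{3-\epsilon})$ is too weak — so in fact one must be careful to extract a genuine extra power of $h$ from the first nonvanishing Taylor term, not merely $h^2|\log h|$.

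The main obstacle, and the step I would spend the most care on, is exactly this last bookkeeping: showing that the leading $O(h^2)$ terms in the per-step discrepancy actually cancel in the sum over decorating paths (by symmetry of the $\pm h$ excursions, the first-order terms in the Taylor expansion of $\mu,\sigma$ cancel, and the surviving term carries an extra $h$), so that the residual is $O(h^3)$ per step and the contour integral — whose length along $\mathcal C_-$ contributes only $O(|\log h|)$ — still produces $O(h^3)$ overall rather than $O(h^2|\log h|)$. Equivalently, one must verify that $\hat{\tilde W}_m - \hat W_m$ has a representation as a resolvent series in which the difference operator $\mu(\nabla^{m+1}_{\text{decorated}} - \nabla^m) + \tfrac12(\sigma^2\Delta^{m+1}_{\text{decorated}} - \sigma^2\Delta^m)$ appears, and that this difference operator is $O(h^2)$ in $L^2\to L^2$ norm after accounting for the smoothness of the coefficients — at which point the geometric resolvent expansion, convergent on $\mathcal C_-$ for $m$ large by the same Kato--Rellich argument as in the previous lemma, closes the estimate. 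The remaining contributions (very long paths, and the region $|p|\ge K_m$ in the Fourier picture) are handled exactly as in the Brownian-motion theorem and the lemma on $W_m(\gamma,q;0,T)$, so no new idea is needed there.
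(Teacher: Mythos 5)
Your skeleton matches the paper's (decorated weights grouping the fine-lattice paths by their coarse skeleton, truncation of the $q$-sum at $q_{\rm max}\sim h^{-2}$ via the convolution-power bound on $W_m$, Taylor expansion of the one-step factors, symmetric cancellation of the odd terms), but the step where you close the estimate does not work as stated, and the idea that actually closes it is missing. After the $\pm h$ cancellation, the per-step discrepancy between the coarse factor $\hat\phi_0(\omega,x,\tau)$ and the decoration-resummed fine factor $\hat\phi_1(\omega,x,\tau)$ is indeed $O(h^3)$ --- but the paths that matter have $q\sim h^{-2}$ steps, so ``$O(h^3)$ per step'' accumulates to $O(h)$ per path, not $O(h^3)$; and your alternative accounting, that the sum over $\gamma$ and $q$ of the weights is $O(1)$ because ``it reconstructs a probability kernel,'' is not available on ${\mathcal C}_-$, where $\omega$ is complex and the transformed weights are neither positive nor normalized. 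The paper's key mechanism is that the $O(h^3)$ per-step terms carry the sign $\tau_j={\rm sgn}(\gamma_{j+1}-\gamma_j)$ and therefore telescope along the path: $\sum_j \big(i\omega v'(\gamma_j)/v(\gamma_j)^2+r(\gamma_j)\big)h^3\tau_j$ is $h^2$ times a Riemann--Stieltjes sum, equal to $i\omega h^2\log\big(v(\gamma_q)/v(\gamma_0)\big)+h^2\big(R(\gamma_q)-R(\gamma_0)\big)$ up to $O(h^4q)$, where $R$ is a primitive of the explicit coefficient $r(x)$. Hence the discrepancy of the product over steps is $O(h^2)$ uniformly in $q\le q_{\rm max}$, not $O(h^3q)$; obtaining the explicit $r(x)$ (and hence knowing it has a primitive to sum against) is exactly why the paper expands the $3\times3$ resummation matrix $\bar{\mathcal L}(x;h)$ to third order. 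This summation-by-parts along the path is the missing idea, and without it your arithmetic gives at best $O(h)$.

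Two further ingredients are absent. First, the factor $2$ is not a matter of reconciling the $2^{-q}$ normalizations --- the decorated weights resum $G_{m+1}$ with the same $2^{-q}$ as $G_m$; it enters through the terminal holding factors, $\hat\psi_1(\omega,y)=\tfrac12\hat\psi_0(\omega,y)+O(\omega h^4)$ with $\hat\psi_0\sim h^2$, and writing $2\hat{\tilde W}_m-\hat W_m=(2\hat\psi_1-\hat\psi_0)\prod_j\hat\phi_1+\hat\psi_0\big(\prod_j\hat\phi_1-\prod_j\hat\phi_0\big)$ is what upgrades the per-path difference from $O(h^2)$ to $O(h^4)$. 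Second, the final assembly uses the path count $\binom{q}{q/2+k}2^{-q}\lesssim q^{-1/2}$, so that summing the per-path $O(h^4)$ bound over $q\le q_{\rm max}$ yields $\sqrt{q_{\rm max}}\,O(h^4)=O(h^3)$; you never perform this accounting. Finally, your fallback of treating the coarse/fine discrepancy as a perturbation of size $O(h^2)$ in $L^2$ operator norm inside a resolvent expansion cannot give better than $O(h^2)$ and so cannot prove the lemma; in the paper the Kato--Rellich resolvent argument is used only for the ${\mathcal C}_+$ piece of the contour, while the ${\mathcal C}_-$ piece is handled entirely by the per-path expansion just described.
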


\begin{proof}

We have that
\begin{align}
2 G_{m+1}(x, y; \omega) - G_m(x, y; \omega) = & \sum_{q=1}^\infty 2
^{-q} \sum_{
\begin{matrix}
\gamma\in \Gamma_m : \gamma_0 = x, \gamma_{q} = y\\
\lvert\gamma_j - \gamma_{j-1}\lvert = 1 \forall j\ge1
\end{matrix}}
\left( 2 \hat {\tilde W}_m (\gamma, q; \omega) - \hat W_m (\gamma,
q; \omega)\right). \label{eq_udiff}
\end{align}
The number of paths over which the summation is extended is
\begin{equation}
N(\gamma, q; x, y) \equiv \sharp\{\gamma\in \Gamma_m : \gamma_0 = x,
\gamma_{q} = y, \lvert\gamma_j - \gamma_{j-1}\lvert = 1 \forall
j\ge1 \} = \left(\begin{matrix} q \\ {q\over2} + k
\end{matrix}\right)
\end{equation}
where $k = {\lvert y - x \lvert \over h_m}.$ Applying Stirling's
formula we find
\begin{equation}
N_\gamma \lesssim 2^q \sqrt{2\over \pi q}.
\end{equation}
Hence
\begin{align}
&\bigg\lvert \int_{{\mathcal C}_-} \bigg( 2 G_{m+1}(x, y; \omega) -
G_m(x, y; \omega) \bigg) e^{i\omega T}  {d\omega\over 2\pi}
\bigg \lvert \notag \\
&\leq c \sum_{q=1}^\infty \sqrt{1\over q} \max_{
\begin{matrix}
\gamma\in \Gamma_m : \gamma_0 = x, \gamma_{q} = y\\
\lvert\gamma_j - \gamma_{j-1}\lvert = 1 \forall j\ge1
\end{matrix}}
\bigg \lvert \int_{{\mathcal C}_-} \bigg(2 \hat {\tilde W}_m
(\gamma, q; \omega) - \hat W_m (\gamma, q; \omega)\bigg) e^{i\omega
T}  {d\omega\over 2\pi} \bigg\lvert. \notag
\\\label{eq_udiff}
\end{align}
for some constant $c\approx\sqrt{2\over\pi} >0$. It suffices to
extend the summation over $q$ only up to
\begin{equation}
q_{\rm max} \equiv {e^2 \Sigma_1^2 T\over 2 h^2}.
\end{equation}
To resum beyond this threshold, one can use the previous lemma. More
precisely, we have that
\begin{align}
&\bigg\lvert \int_{{\mathcal C}_-} \bigg( 2 G_{m+1}(x, y; \omega) -
G_m(x,
y; \omega) \bigg) e^{i\omega T}  {d\omega\over 2\pi} \bigg\lvert \notag \\
&\leq c \sqrt{q_{\rm max}} \max_{
\begin{matrix}
q, \gamma\in \Gamma_m : \gamma_0 = x, \gamma_{q} = y\\
\lvert\gamma_j - \gamma_{j-1}\lvert = 1 \forall j\ge1
\end{matrix}}
\bigg \lvert \int_{{\mathcal C}_-} \bigg( 2 \hat {\tilde W}_m
(\gamma, q; \omega) - \hat W_m (\gamma, q; \omega) \bigg) e^{i\omega
T}  {d\omega\over 2\pi}  \bigg\lvert. \notag \\\label{eq_udiff2}
\end{align}

Let $v(x) = \sigma(x)^2$. To evaluate the resummed weight function,
let us form the matrix
\begin{equation} \bar {\mathcal L}(x;h) =
\left(
\begin{matrix}
-{v\left(x+h\right)\over h^2} && {v\left(x+ h\right)\over 2 h^2} -
{\mu(x+h)\over 2 h} && 0 \\
{v\left(x\right)\over 2 h^2}  +\mu(x)/(2h) && -{ v\left(x \right)\over h^2}  && {v\left(x\right) \over 2 h^2} - {\mu(x) \over 2 h} \\
0 && {v\left(x - h\right)\over 2 h^2} + {\mu(x-h)\over 2 h} &&
-{v\left(x - h \right) \over h^2}
\end{matrix} \right)
\end{equation}
and decompose it as follows:
\begin{equation}
\bar {\mathcal L}(x;h) = {1\over h^2} \bar {\mathcal L}_0(x) +
{1\over h} \bar {\mathcal L}_1(x) + \bar {\mathcal L}_2(x) + h \bar
{\mathcal L}_3(x) + O(h^2).
\end{equation}
where
\begin{equation}
\bar {\mathcal L}_0(x) = \left(
\begin{matrix}
-v(x) && {1\over2} {v(x)} && 0 \\
{1\over2} {v(x)} && -{v(x)} && {1\over2} {v(x)} \\
0 && {1\over2} {v(x)} && -{v(x)}
\end{matrix}
\right),
\end{equation}
\begin{equation}
\bar {\mathcal L}_1(x) = \left(
\begin{matrix}
-v'(x) && {1 \over2} v'(x) - {1\over2} \mu(x) && 0 \\
{1\over2} \mu(x) && 0 && -{1\over2} \mu(x) \\
0 && - {1\over2} v'(x) + {1\over2}  \mu(x) &&  v'(x)
\end{matrix}\right),
\end{equation}
\begin{equation}
\bar {\mathcal L}_2(x) = \left(
\begin{matrix}
-{1\over2} v''(x)  && {1\over4} v''(x) - {1\over2} \mu'(x)&& 0 \\
0 && 0 && 0 \\
0 && {1\over4} v''(x) - {1\over2} \mu'(x) && -{1\over2} v''(x)
\end{matrix}\right).
\end{equation}
and
\begin{equation}
\bar {\mathcal L}_3(x) = \left(
\begin{matrix}
-{1\over6} v'''(x)  && {1\over12} v'''(x) - {1\over4} \mu''(x)&& 0 \\
0 && 0 && 0 \\
0 && - {1\over12} v'''(x) + {1\over4} \mu''(x) && {1\over6} v'''(x)
\end{matrix}\right).
\end{equation}

Let us introduce the sign variable $\tau = \pm1$, the functions
\begin{align}
\phi_{0}(t, x, \tau) &\equiv 2 {\mathcal L}_m(x, x+2\tau h) e^{ t
{\mathcal
L}_m(x, x)} 1(t\ge0) \\
\phi_{1}(t, x, \tau) &\equiv 2 {\mathcal L}_{m+1}(x+\tau h, x+2\tau
h) e^{t \bar {\mathcal L}(x; h)}(x, x+\tau h) 1(t\ge0)
\end{align}
and their Fourier transforms
\begin{align}
\hat\phi_{0}(\omega, x, \tau) &= \left({v(x)\over 4 h^2} + \tau {
\mu(x)\over 2 h}\right) \left( {v(x)\over 4 h^2} + i \omega
\right)^{-1} \notag \\ \hat\phi_{1}(\omega, x, \tau) &=
\left({v(x)\over h^2} + \tau  {\mu(x) + v'(x)\over h} +
{v''(x)+\mu'(x)\over2} +\left( {v'''(x)\over6} + {\mu''(x)\over2}
\right)\tau h +
O(h^2)\right) \notag \\
&\hskip8cm < x \lvert \left( -\bar {\mathcal L}(x; h) + i\omega
\right)^{-1}\lvert x+\tau h >.
\end{align}
where
\begin{equation}
\lvert x> = \left(\begin{matrix} 0 \\ 1 \\
0 \end{matrix}\right), \;\;\;\;{\rm and}\;\;\;\; \lvert x+\tau h> =
\left( \begin{matrix} \delta_{\tau, 1} \\ 0 \\
\delta_{\tau, -1} \end{matrix}\right).
\end{equation}
We also require the functions
\begin{equation}
\psi_{0}(t, x) \equiv e^{ t {\mathcal L}_m(x, x) } 1(t\ge0),
\;\;\;\;\;\; \psi_{1}(t, x) \equiv e^{t \bar {\mathcal L}(y; h)}(x,
x) 1(t\ge0)
\end{equation}
and the corresponding Fourier transforms
\begin{align}
\hat\psi_{0}(\omega, x) = \left( {v(x)\over 4 h^2}  + i \omega
\right)^{-1}, \;\;\;\;\; \hat\psi_{1}(\omega, x) =  <x \lvert \left(
-\bar {\mathcal L}(x; h) + i\omega \right)^{-1}\lvert x>.
\end{align}

If $\gamma$ is a symbolic sequence, then
\begin{align}
\hat W_m(\gamma, q; \omega) &= \hat\psi_{0}(\omega, \gamma_q)
\prod_{j=0}^{q-1} \hat\phi_0(\omega;
\gamma_j, {\rm sgn}(\gamma_{j+1}-\gamma_j))\\
 \hat {\tilde W}_m(\gamma, q; \omega)
&= \hat\psi_{1}(\omega, \gamma_q) \prod_{j=0}^{q-1}
\hat\phi_1(\omega; \gamma_j, {\rm sgn}(\gamma_{j+1}-\gamma_j)).
\end{align}

Let us estimate the difference between the functions
$\hat\phi_{1}(\omega, x, \tau)$ and $\hat\phi_{2}(\omega, x, \tau)$
assuming that $\omega$ is in the contour ${\mathcal C}_-$ in Fig.
\ref{fig_contour1}. Retaining only terms up to order up to $O(h^3)$,
we find
\begin{eqnarray}
\hat\phi_{0}(\omega, x, \tau) = 1 + {2\mu(x)\tau h \over v(x)} - {4
i \omega h^2 \over v(x)} - 8\mu(x) {i\omega\tau h^3 \over v(x)^2}
-{16\omega^2 h^4 \over v(x)^2}
+ O(h^5). \notag \\
\end{eqnarray}
A lengthy but straightforward calculation which is best carried out
using a symbolic manipulation program, gives
\begin{align}
&\hat\phi_{1}(\omega, x, \tau) = 1+{2\mu(x) \tau h \over v(x)}-{ 4 i
\omega h^2 \over v(x)}  - \big[8\mu(x)  - v'(x) \big]
{i\omega\tau h^3 \over v(x)^2} \notag \\
&\hskip6cm+ r(x)\cdot h^3 \tau
 + i\omega h^4 p(x) - {14\omega^2 h^4 \over v(x)^2} + O(h^5) \notag \\
\end{align}
where
\begin{align}
& r(x) = {1\over 2 v(x)^3} \big[ \mu''(x) v(x) -  4\mu(x)^3 + 2
v'(x) \mu(x)^2 - 2v'(x) v(x) \mu'(x)  \notag \\
&\hskip6cm  - \big(2\mu(x) \mu'(x) + v''(x) v(x) - 2v'(x)^2\big) \mu(x) \big] . \notag \\
&p(x) = {1\over v(x)^3} \big[4\mu(x)^2-2v'(x)\mu(x)+4v(x)\mu'(x)+
v''(x)
v(x)- 2 v'(x)^2  \big].  \notag \\
\end{align}

We have that
\begin{align}
& \sum_{j=0}^{q-1} \bigg(\log \hat\phi_0(\omega; \gamma_j, {\rm
sgn}(\gamma_{j+1}-\gamma_j)) -
 \log \hat\phi_1(\omega; \gamma_j, {\rm
sgn}(\gamma_{j+1}-\gamma_j))\bigg)
\notag \\
&= \sum_{j=0}^{q-1} \bigg(  {i \omega v'(\gamma_j)\over
v(\gamma_j)^2} + r(\gamma_j) \bigg) h^3 {\rm
sgn}(\gamma_{j+1}-\gamma_j) + \big(\lvert \omega\lvert \lvert\lvert
p\lvert\lvert_{\infty} + 2 \lvert \omega\lvert^2
\lvert\lvert v^{-2} \lvert\lvert_{\infty} \big) O(h^4 q) \notag \\
&= i\omega h^2 \log\bigg({v(\gamma_{q})\over v(\gamma_0)}\bigg) +
h^2 \big(R(\gamma_q) - R(\gamma_0)\big) + \big(\lvert \omega\lvert
\lvert\lvert p\lvert\lvert_{\infty} + 2 \lvert \omega\lvert^2
\lvert\lvert v^{-2} \lvert\lvert_{\infty} \big)
O( h^4 q) \notag \\
\end{align}
where $R(x)$ is a primitive of $r(x)$, i.e.
\begin{equation}
R(x) = \int^x r(z) dz.
\end{equation}

We conclude that there is a constant $c>0$ such that
\begin{equation}
\bigg\lvert \int_{{\mathcal C}_-} \bigg( \prod_{j=0}^{q-1}
\hat\phi_0(\omega; \gamma_j, {\rm sgn}(\gamma_{j+1}-\gamma_j)) -
\prod_{j=0}^{q-1} \hat\phi_1(\omega; \gamma_j, {\rm
sgn}(\gamma_{j+1}-\gamma_j)) \bigg) e^{i\omega T}  {d\omega\over
2\pi} \bigg\lvert \leq c h^2.
\end{equation}
for all $q\leq q_{\max}$. Here we use the decay of $e^{i\omega T}$
in the upper half of the complex $\omega$ plane to offset the
$\omega$ dependencies in the integrand. Similar calculations lead to
the following expansions:
\begin{equation}
\hat\psi_{0}(\omega, y) = {4 h^2 \over v(y)}  + O(\omega h^4),
\;\;\;\;\; \hat\psi_{1}(\omega, y) =  {2 h^2 \over v(y)} + O(\omega
h^4) = {1\over 2} \hat\psi_{0}(\omega, y) + O(\omega h^4).
\end{equation}

Since $q<c h^{-2}$ and $\omega \leq \lvert \log h \lvert$, we find
\begin{align}
\bigg\lvert \int_{{\mathcal C}_-} \bigg(  2 G_{m+1}(x, y; \omega) -
G_m(x, y; \omega) \bigg) e^{i\omega T}  {d\omega\over 2\pi}
\bigg\lvert \leq c {q_{\rm max}}^{1\over2} h^4 \leq c h^3.
\label{eq_udiff3}
\end{align}
This completes the proof of the Lemma and of the Theorem.
\end{proof}

\section{Convergence of Time Discretization Schemes}

In this section we analyze the convergence of the time discretized kernel
that is obtained by means of fast exponentiation.

Let $A = (A_m), m = m_0, m_0+1, ...$ be a simplicial sequence
converging to an interval $\lim_{m\to\infty} A_m = [0,L] \subset
\RRR$, where $0<L<\infty$. Let $\mu(x)$ and $\sigma(x)$ be
functions on $[0,L]$ satisfying all the conditions in the previous
section and consider the generator of a diffusion on $A_m$ of the form
\begin{equation}
{\mathcal L}_m = \mu(x) \nabla_{h_m} + {1\over 2} \sigma(x)^2 \Delta_{h_m}.
\label{def_diffdtgen}
\end{equation}

\begin{theorem}\label{theo_diffdtker} {\bf (Convergence Estimates for Fast Exponentiation.)}
Let $\delta t>0$ and consider the discretized kernel
\begin{equation}
U^{\delta t}_m(x, 0; y, T) = \left( 1 + \delta t {\mathcal L}_m \right)^{T\over\delta t} (x, 0; y, T).
\end{equation}
where ${\mathcal L}_m$ is the operator in (\ref{def_diffdtgen}) and
$\delta t_m $ is so small that
\begin{equation}
\min_{x\in A_m} 1 + \delta t_m {\mathcal L}_m(x, x) > 0
\end{equation}
Assume that boundary conditions are either periodic or absorbing
and that the ratio ${T\over \delta t} = N$ is an integer.
Then there is a constant $c>0$ such that
\begin{equation}
\lvert h_m^{-1} U_m(x, 0; y, T) - h_m^{-1} U_m^{\delta t}(x, 0; y, T)
 \lvert \leq c h_m^2
\end{equation}
for all $m\ge m_0$ and all $y\in A_m$.
\end{theorem}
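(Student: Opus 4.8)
The plan is to exploit a structural simplification that is not available in Theorem~\ref{theo_diffker}: here the comparison is at a single refinement level $m$, so the Euler step $1+\delta t{\mathcal L}_m$ commutes with ${\mathcal L}_m$ and both kernels are functions of the one operator ${\mathcal L}_m$. Wherever the principal logarithm is unambiguous one writes $U^{\delta t}_m(x,0;y,T)=e^{T{\mathcal L}^{\delta t}_m}(x,y)$ with ${\mathcal L}^{\delta t}_m=\frac{1}{\delta t}\log(1+\delta t{\mathcal L}_m)$, so that, by commutativity,
\[
U_m-U^{\delta t}_m=e^{T{\mathcal L}_m}\bigl(1-e^{T({\mathcal L}^{\delta t}_m-{\mathcal L}_m)}\bigr),\qquad {\mathcal L}^{\delta t}_m-{\mathcal L}_m=-\frac{\delta t}{2}{\mathcal L}_m^2+O(\delta t^2{\mathcal L}_m^3).
\]
The admissibility hypothesis $1+\delta t_m{\mathcal L}_m(x,x)>0$ together with ${\mathcal L}_m(x,x)=-\sigma(x)^2/h_m^2$ forces $\delta t_m=O(h_m^2)$; this is precisely why a time-discretisation error comes out of the same order $O(h_m^2)$ as the space-discretisation one. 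One must note at the outset that operator-norm bounds are too crude: $\|{\mathcal L}_m\|_{\ell^2}=O(h_m^{-2})$, hence $\|{\mathcal L}^{\delta t}_m-{\mathcal L}_m\|_{\ell^2}=O(\delta t\,h_m^{-4})=O(h_m^{-2})$, as large as ${\mathcal L}_m$ itself; and even the correct operator bound $\|{\mathcal L}_m^2 e^{T{\mathcal L}_m}\|_{\ell^2}=O(1)$ only yields $|h_m^{-1}(U_m-U^{\delta t}_m)(x,y)|=O(h_m)$, one power short. The missing power of $h_m$ has to come from the kernel-level Fourier/path-integral machinery of Sections~10--12.

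Concretely, I would run the Green's-function contour argument of Theorem~\ref{theo_diffker} in parallel for $U_m$ and $U^{\delta t}_m$. At frozen coefficients this is a symbol comparison: writing $z=\hat{\mathcal L}_m(x,p)$,
\[
Tz-\frac{T}{\delta t}\log(1+\delta t z)=\frac{T\delta t}{2}z^2+O(T\delta t^2 z^3),\qquad e^{Tz}-(1+\delta t z)^{T/\delta t}=e^{Tz}\Bigl(\frac{T\delta t}{2}z^2+\cdots\Bigr).
\]
Splitting the inverse Fourier sum at $|p|=K_m$, with $K_m$ of order $\sqrt{|\log h_m|}$ chosen (as for part (iv) of the Brownian estimate) so that the tails are $O(h_m^2)$: for $|p|>K_m$ both kernels are $O(h_m^2)$, using $\Re\hat{\mathcal L}_m(x,p)=\sigma(x)^2(\cos h_m p-1)/h_m^2\le -cp^2$ for the continuous kernel and $|1+\delta t\hat{\mathcal L}_m(x,p)|^{T/\delta t}\le e^{-cK_m^2}$ (guaranteed by the stability hypotheses, including (\ref{def_diffusion_bounds})) for the discrete one; this region also contains the only place, near $p=\pi/h_m$, where $1+\delta t\hat{\mathcal L}_m$ can touch the branch cut of the logarithm, so the cut needs no separate treatment. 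For $|p|\le K_m$ one has $|\hat{\mathcal L}_m(x,p)|\le c(|p|+p^2)$, so the symbol discrepancy is $O(h_m^2(p^2+p^4))$, and, weighted by $e^{-cp^2}$ and summed against the Riemann measure $\frac{1}{2L}\sum_{p\in B_m}$, contributes $O(h_m^2)$.

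The passage from frozen to variable coefficients is exactly as in Theorem~\ref{theo_diffker}. I would write both kernels as path sums $\sum_q 2^{-q}\sum_\gamma W(\gamma,q,T)$ over the same symbolic paths $\gamma\in\Gamma_m$, the discrete weight $W^{\delta t}(\gamma,q,T)$ replacing each continuous holding factor $e^{(s_{j+1}-s_j){\mathcal L}_m(\gamma_j,\gamma_j)}$ by its Euler counterpart $(1+\delta t{\mathcal L}_m(\gamma_j,\gamma_j))^{n_j}$, and compare them after a time Fourier transform. The vertex symbols $\hat\phi_0,\hat\psi_0$ of Section~12 are matched against their fast-exponentiation analogues $\hat\phi^{\delta t}_0,\hat\psi^{\delta t}_0$; each logarithmic per-vertex discrepancy is an oscillatory quantity of size $O(\omega\,\delta t)$ rather than a positive exponent, so, exactly as the $O(h^3)$ per-vertex terms of Theorem~\ref{theo_diffker} telescope into $\gamma$-boundary terms plus an $O(h^4 q)$ remainder, summing along a path of length $q\le q_{\rm max}=e^2\Sigma_1^2 T/(2h^2)$ produces no exponential growth and an accumulated error $O(h_m^2)$ uniform in $q$. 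The tail $q>q_{\rm max}$ is resummed through the bound (\ref{eq_wbound}), which holds verbatim for the discrete weights since $|1+\delta t{\mathcal L}_m(x,x)|\le 1$. Multiplying by the Stirling path count $2^q\sqrt{2/(\pi q)}$, summing over $q$, and adding the $O(h^3)$ contribution of the upper contour ${\mathcal C}_+$ (Lemma~\ref{lemma_cplus}) gives $O(h^3)$ for $U_m-U^{\delta t}_m$, hence $O(h_m^2)$ for the density. Absorbing boundary conditions only strengthen the decay, the Brillouin sum being replaced by the absorbing eigenbasis.

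The hard part will be this last step: re-deriving $\hat\phi^{\delta t}_0$ and $\hat\psi^{\delta t}_0$ to the order in $h$ required (carrying the extra $-\frac{\delta t}{2}z^2$ correction), verifying that the length-$q$ product of logarithmic symbol differences telescopes the way it does in Section~12 so that only $\gamma$-boundary terms and an $O(h^4 q)$ remainder survive, and checking that, after the Stirling factor and the summation up to $q_{\rm max}\sim h^{-2}$, the bound stays at $O(h^3)$. Everything else is a transcription of estimates already established for diffusion kernels.
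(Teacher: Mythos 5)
Your plan is essentially the paper's own proof: a Dyson/path-sum representation of the discretized kernel, a time-Fourier Green's function integrated over the deformed contour, Lemma~\ref{lemma_cplus} for ${\mathcal C}_+$, a per-vertex comparison of $\hat\phi_0,\hat\psi_0$ with their Euler analogues $\hat\phi_{\delta t},\hat\psi_{\delta t}$ on ${\mathcal C}_-$, truncation at $q_{\rm max}\sim h^{-2}$ with the tail controlled by the weight bound, and the Stirling path-count factor giving $O(h^3)$ for the kernel, i.e. $O(h_m^2)$ for the density. The only point where you are slightly off (and where no new idea is actually needed) is the per-vertex bookkeeping: the paper computes $\hat\phi_{\delta t}(\omega,x,\tau)-\hat\phi_0(\omega,x,\tau)=\frac{\omega^2}{2v(x)}h^2\delta t+O(h^2\delta t^2)=O(h^4)$ directly (using $\delta t=O(h^2)$, forced by the stability condition), so the discrepancy accumulates harmlessly over $q\le q_{\rm max}$ without any telescoping of the Section~12 type.
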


\begin{proof}

A Dyson expansion can also be obtained for the time-discretized kernel and
has the form
\begin{align}
U_m^{\delta t}(y_1, 0; y_2, T) =& \sum_{q=1}^\infty \sum_{\gamma\in \Gamma_m :
\gamma_0 = x, \gamma_{q} = y}
 \sum_{k_1 = 1}^{N} \sum_{k_2 = k_1 + 1}^{N} ...
\sum_{k_{q} = k_{q-1} + 1}^{N}  \notag \\
& \bigg(1 + \delta t \LLL_m(\gamma_0, \gamma_0)\bigg)^{k_{1}-1}
(\delta t)^q \prod_{j=1}^{q} \LLL_m(\gamma_{j-1}, \gamma_j) \bigg(1
+ \delta t \LLL_m(\gamma_j, \gamma_j)\bigg)^{k_{j+1} - k_{j} - 1}
\label{eq_upathint}
\end{align}
where $t_{q+1} = T$ and $k_{q+1}=N$. In this case, the propagator can be expressed
through a Fourier integral as follows:
\begin{equation}
U_m^{\delta t}(y_1, 0; y_2, T) = \int_{-{\pi\over\delta
t}}^{\pi\over\delta t} G_m^{\delta t}(y_1, y_2; \omega) e^{i\omega
t} {d\omega\over 2\pi}
\end{equation}
where
\begin{equation}
G_m^{\delta t}(y_1, y_2; \omega) = \delta t \sum_{j = 0}^{T\over
\delta t} U_m^{\delta t}(y_1, 0; y_2, j \delta t) e^{ - i\omega j
\delta t}.
\end{equation}
The propagator can also be represented as the limit
\begin{equation}
U_m^{\delta t}(y_1, 0; y_2, T) = \lim_{H\to\infty} \int_{{\mathcal
C}_H} G_m^{\delta t}(y_1, y_2; \omega) e^{i\omega t} {d\omega\over
2\pi} \label{eq_greenfuncdt}
\end{equation}
where ${\mathcal C}_H$ is the contour in Fig. \ref{fig_contour3}.
This is due to the fact that the integral along the segments $BC$
and $DA$ are the negative of each other, while the integral over
$CD$ tends to zero exponentially fast as $\Im(\omega) \to \infty$,
where $\Im(\omega)$ is the imaginary part of $\omega$. Using
Cauchy's theorem, the contour in Fig. \ref{fig_contour3} can be
deformed into the contour in Fig. \ref{fig_contour2}. To estimate
the discrepancy between the time-discretized kernel and the
continuous time one, one can thus compare the Green's function along
such contour. Again, the only arc that requires detailed attention
is the arc $BCD$, as the integral over rest of the contour of
integration can be bounded from above as in the previous section.
\begin{figure}
\begin{center}
    \includegraphics[width = 12cm]{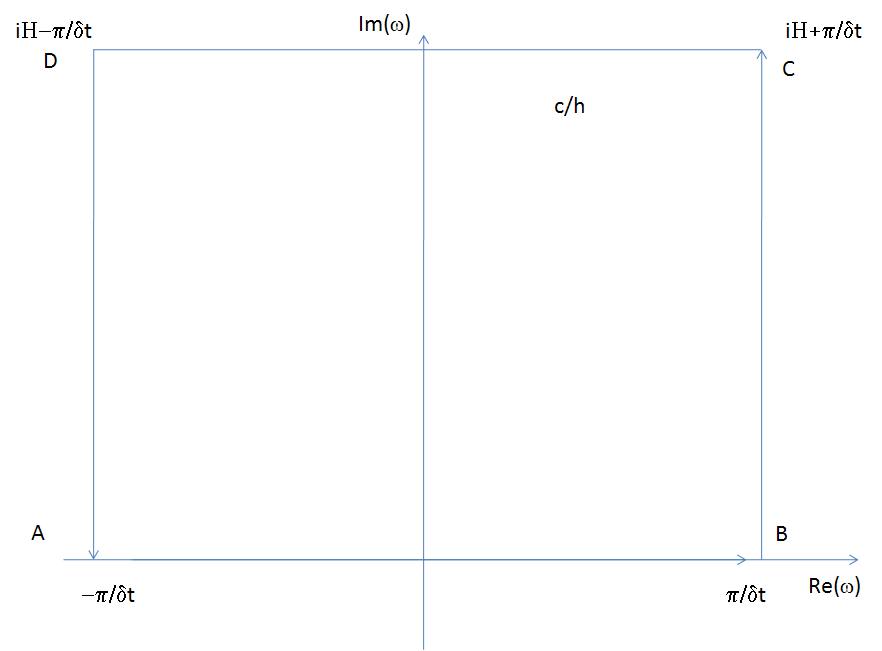}
    \caption{Contour of integration for the integral in (\ref{eq_greenfuncdt}).}
    \label{fig_contour3}
\end{center}
\end{figure}

Let $h = h_m$ and let us introduce the two functions
\begin{align}
\phi_{0}(t, x, \tau) &\equiv 2 {\mathcal L}_m(x, x+\tau h)
e^{ t {\mathcal L}_m(x, x)} 1(t\ge0), \\
\phi_{\delta t}(j, x, \tau) &\equiv 2 {\mathcal L}_m(x, x+\tau h)
\big( 1 + \delta t {\mathcal L}_m(x, x) \big)^{j-1}.
\end{align}
and the corresponding Fourier transforms
\begin{align}
\hat\phi_{0}(\omega, x, \tau) &= \int_0^\infty \phi_{0}(t, x, \tau)
e^{-i\omega t} {d\omega\over 2\pi} =  \left({v(x)\over h^2} + \tau {
\mu(x)\over h}\right) \left( {v(x)\over
h^2} + i \omega \right)^{-1} \\
\hat\phi_{\delta t}(\omega, x, \tau) &= \sum_{j=0}^{t\over \delta t}
\phi_{\delta t}(j, x, \tau) e^{-i\omega j\delta t} =
\left({v(x)\over h^2} + \tau {\mu(x)\over h}\right) \left(
e^{i\omega\delta t} - 1 + \delta t {v(x)\over h^2} \right)^{-1}.
\end{align}
We have that
\begin{align}
\hat\phi_{\delta t}(\omega, x, \tau) &=
\left({v(x)\over h^2} + \tau {\mu(x)\over h}\right)
\left(
i\omega  + {v(x)\over h^2} - {\omega^2 \over 2} \delta t + O(\delta t^2)
\right)^{-1} \notag \\
& = \hat\phi_{0}(\omega, x, \tau) + {\omega^2\over 2 v(x)} h^2 \delta t + O(h^2 \delta t^2).
 = \hat\phi_{0}(\omega, x, \tau) +  O(h^4),
 \end{align}
where the last step uses the fact that $\delta t = O(h^2)$.

Let us also introduce the functions
\begin{align}
\psi_{0}(t, x, \tau) &\equiv e^{ t {\mathcal L}_m(x, x)} 1(t\ge0),
\hskip1cm \psi_{\delta t}(j, x, \tau) \equiv \sum_{k = 1}^j \big( 1
+ \delta t {\mathcal L}_m(x, x) \big)^{j-1}.
\end{align}
and the corresponding Fourier transforms
\begin{align}
\hat\psi_{0}(\omega, x, \tau) &= \left( {v(x)\over h^2} + i \omega
\right)^{-1}, \hskip1cm \hat\psi_{\delta t}(\omega, x, \tau) =
\left( e^{i\omega\delta t} - 1 + \delta t {v(x)\over h^2}
\right)^{-1}.
\end{align}
Again we find that
\begin{align}
\hat\psi_{0}(\omega, x, \tau) = \hat\psi_{\delta t}(\omega, x, \tau)
 +  O(h^4).
\end{align}

If $\gamma$ is a symbolic sequence, then let us set
\begin{align}
\hat W_m(\gamma, q; \omega) &= \hat\psi_{0}(\omega, \gamma_q)
\prod_{j=0}^{q-1} \hat\phi_0(\omega;
\gamma_j, {\rm sgn}(\gamma_{j+1}-\gamma_j))\\
\hat {W}_m^{\delta t}(\gamma, q; \omega)
&=  \hat\psi_{\delta t}(\omega, \gamma_q) \prod_{j=0}^{q-1}
\hat\phi_{\delta t}(\omega; \gamma_j, {\rm sgn}(\gamma_{j+1}-\gamma_j)).
\end{align}
We have that
\begin{align}
G^{\delta t}_{m}(x, y; \omega) - G_m(x, y; \omega) = &
\sum_{q=1}^\infty 2 ^{-q} \sum_{
\begin{matrix}
\gamma\in \Gamma_m : \gamma_0 = x, \gamma_{q} = y\\
\lvert\gamma_j - \gamma_{j-1}\lvert = 1 \forall j\ge1
\end{matrix}}
\left( \hat {W}_m^{\delta t} (\gamma, q; \omega) - \hat W_m (\gamma,
q; \omega)\right). \label{eq_udiff}
\end{align}
The integration over the contour in Fig.  \ref{fig_contour2} can
again be split into an integration over the countour ${\mathcal
C}_-$ and an integration over ${\mathcal C}_+$. The integral over
${\mathcal C}_+$ can be bounded from above thanks to Lemma
\ref{lemma_cplus}. Furthermore,  we have that
\begin{align}
&\bigg\lvert \int_{{\mathcal C}_-} \bigg( G^{\delta t}_{m}(x, y;
\omega) - G_m(x, y; \omega) \bigg)
e^{i\omega T}  {d\omega\over 2\pi} \bigg\lvert \notag \\
&\leq c \sqrt{q_{\rm max}} \max_{
\begin{matrix}
q, \gamma\in \Gamma_m : \gamma_0 = x, \gamma_{q} = y\\
\lvert\gamma_j - \gamma_{j-1}\lvert = 1 \forall j\ge1
\end{matrix}}
\bigg \lvert \int_{{\mathcal C}_-} \bigg( \hat {W}_m^{\delta t}
(\gamma, q; \omega) - \hat W_m (\gamma, q; \omega) \bigg) e^{i\omega
T}  {d\omega\over 2\pi}  \bigg\lvert. \notag
\\
& \leq c h^3 \label{eq_udiff2}
\end{align}

\end{proof}

\section{Hypergeometric Brownian Motion}

This section is based on work in collaboration with Joe Campolieti,
Peter Carr and Alex Lipton, see \cite{ACCL2002}.

In this section we pose the problem of constructing driftless
diffusion models which reduce to a given diffusion by means of a
combination of a measure change and a coordinate transformation.

Consider a Markov process on the simplicial sequence $A_m \subset
\RRR^d$ with generator ${\mathcal L}_m(x, x'; t)$. Let $\rho$ be a
real valued parameter and suppose $f_{m}^1(x)$ and $f_{m}^2(x)$ are
two linearly independent solutions of the equation
\begin{eqnarray}
    \sum_{x'\in A_m} {\mathcal L}_m(x, x'; t) f_m)(x') = \rho f_m(x)
    \label{eq_lap2}
\end{eqnarray}
for all $x\in Int(A_m)$. Notice that on the boundary $\partial
(A_m)$ these equations may fail and, in actual applications, they
will indeed as a rule fail. Consider the function
\begin{equation}
g_m(x; t) = e^{-\rho t} \big(c_1 f_{m}^1(x) + c_2 f_{m}^2(x)\big)
\end{equation}
for some choice of constants $c_1, c_2$ such that this function is
strictly positive. This function satisfies the equation
\begin{equation}
{\partial g_m(x; t)\over\partial t} + (\LLL_m g_m)(x; t) = 0.
\label{eq_numeraire}
\end{equation}
Hence $g_m(x; t)$ defines a measure change and one can construct a
new Markov generator by setting
\begin{equation}
\LLL_m^g(x,x'; t) = {g_m(x'; t)\over g_m(x; t)} \LLL_m(x,x';t)  +
{1\over g_m(x; t)} {\partial g_m(x; t) \over \partial t} \delta_{x
x'}. \label{eq_numchange}
\end{equation}
Consider the linear fractional transformation
\begin{equation}
Y_m(x) = {c_3 f_{m}^1(x) + c_4 f_{m}^2(x) \over c_1 f_{m}^1(x) + c_2
f_{m}^2(x)}.
\end{equation}
for some choice of constants $c_3, c_4$.
\begin{theorem}{\bf(Linear Fractional Transformations.)}
The process $Y(x_t)$ satisfies the martingale condition
\begin{equation}
\lim_{s\downarrow 0} {1\over
s}E_t\bigg[{Y_m(x_{t+s})-Y_m(x_{t})\over s} \bigg] = 0.
\end{equation}
for all $x_t\in Int(A_m)$.
\end{theorem}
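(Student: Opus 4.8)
The plan is to reduce the martingale condition to the single identity $(\mathcal{L}^g_m Y_m)(x) = 0$ for every $x \in {\rm Int}(A_m)$, where $\mathcal{L}^g_m$ is the generator (\ref{eq_numchange}) obtained from the numeraire change associated with $g_m$, and the conditional expectation $E_t$ is understood with respect to the measure generated by $\mathcal{L}^g_m$. Indeed, by the definition (\ref{eq_gen}) of the Markov generator together with $U^g_m(x,t;x',t) = \delta_{xx'}$, for any function $F$ on the (finite) lattice $A_m$ one has $\lim_{s\downarrow 0}\frac{1}{s}E_t\big[F(x_{t+s}) - F(x_t)\,\big|\,x_t = x\big] = (\mathcal{L}^g_m F)(x)$; taking $F = Y_m$ shows that the vanishing-drift statement of the theorem is exactly $(\mathcal{L}^g_m Y_m)(x) = 0$ at interior points.

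The key point is that $Y_m$ is a quotient of two $\rho$-eigenfunctions of $\mathcal{L}_m$. Write $u(x) = c_1 f^1_m(x) + c_2 f^2_m(x)$ and $w(x) = c_3 f^1_m(x) + c_4 f^2_m(x)$, so that $g_m(x;t) = e^{-\rho t} u(x)$ and $Y_m(x) = w(x)/u(x)$; the constants are chosen so that $u$, hence $g_m$, is strictly positive, which makes $Y_m$ everywhere well defined. By linearity of (\ref{eq_lap2}), both $u$ and $w$ satisfy $\sum_{x'\in A_m}\mathcal{L}_m(x,x';t)\,u(x') = \rho\,u(x)$ and $\sum_{x'\in A_m}\mathcal{L}_m(x,x';t)\,w(x') = \rho\,w(x)$ for all $x\in {\rm Int}(A_m)$.

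Next I would expand $(\mathcal{L}^g_m Y_m)(x)$ using (\ref{eq_numchange}). The off-diagonal part gives $\frac{1}{g_m(x;t)}\sum_{x'}g_m(x';t)\,\mathcal{L}_m(x,x';t)\,Y_m(x')$; since $g_m(x';t)\,Y_m(x') = e^{-\rho t}u(x')\cdot\frac{w(x')}{u(x')} = e^{-\rho t}w(x')$, the eigenfunction relation for $w$ at the interior point $x$ collapses this to $\frac{e^{-\rho t}}{g_m(x;t)}\,\rho\,w(x)$. The diagonal part gives $\frac{1}{g_m(x;t)}\frac{\partial g_m(x;t)}{\partial t}\,Y_m(x) = \frac{-\rho e^{-\rho t}u(x)}{g_m(x;t)}\cdot\frac{w(x)}{u(x)} = -\frac{\rho e^{-\rho t}w(x)}{g_m(x;t)}$. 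The two contributions cancel, so $(\mathcal{L}^g_m Y_m)(x) = 0$ for all $x\in{\rm Int}(A_m)$, which is the assertion.

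The computation is routine and no real obstacle arises; the substantive content is the algebraic fact that a ratio of two $\rho$-eigenfunctions of $\mathcal{L}_m$ becomes $\mathcal{L}^g_m$-harmonic after the numeraire change by the eigenfunction placed in the denominator. The one point to handle with care is that equation (\ref{eq_lap2}) is imposed only on ${\rm Int}(A_m)$, so the cancellation — and hence the martingale property of $Y_m(x_t)$ — can be claimed only at interior points, exactly as in the statement; on $\partial(A_m)$ the drift of $Y_m$ depends on the boundary conditions imposed on $\mathcal{L}_m$ and will in general not vanish.
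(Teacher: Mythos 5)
Your proof is correct and follows essentially the same route as the paper: you identify the limiting drift with $(\mathcal{L}^g_m Y_m)(x)$, expand it via the numeraire-change formula (\ref{eq_numchange}), use that $g_m(x';t)Y_m(x') = e^{-\rho t}\big(c_3 f^1_m(x') + c_4 f^2_m(x')\big)$ together with the eigenvalue equation (\ref{eq_lap2}) at interior points, and observe the cancellation with the $\partial_t g_m$ term. Your explicit remark that the argument is confined to ${\rm Int}(A_m)$ because (\ref{eq_lap2}) may fail on the boundary is consistent with the paper's statement and is a welcome clarification.
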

\begin{proof}
We have that
\begin{eqnarray}
&\hskip-4cm \lim_{s\downarrow 0} {1\over
s}E_t\bigg[{Y_m(x_{t+s})-Y_m(x_{t})\over s} \bigg] =
 \sum_{x'\in A_m} {\mathcal L}_m^g(x, x')
\big(Y_m(x')-Y_m(x)\big) \notag \\
& = {1\over g_m(x, t)} \bigg(\sum_{x'\in A_m} {\mathcal L}_m(x, x')
\big(c_3 f_{m}^1(x') + c_4 f_{m}^2(x')\big)\bigg) + {1\over g_m(x,
t)^2} {\partial g_m(x; t) \over \partial t}
\big(c_3 f_{m}^1(x) + c_4 f_{m}^2(x)\big) \notag \\
& \hskip-8.75cm = e^{\rho t} {\partial \over \partial t}
\bigg[{e^{-\rho t}( c_3 f_{m}^1(x) + c_4 f_{m}^2(x) ) \over g_m(x,
t)} \bigg] = 0
\notag \\
\end{eqnarray}
\end{proof}

This Theorem provides a general methodology for constructing a
process which is nearly a martingale out of a Markov process. In the
particular case of one-dimensional diffusion processes, the
construction gives rise to families with up to 7 adjustable
parameters of analytically solvable diffusions with drift equal to
zero within the interior of the domain of definition. What makes the
case of one-dimensional diffusions special is the fact that the
function $Y_m(x)$ is invertible in the limit as $m\to\infty$, i.e.
either monotone increasing or monotone decreasing in this limit.

More specifically, consider the diffusion with Markov generator:
\begin{equation}
{\mathcal L}_m = \delta(x) \nabla_{h_m} + {1\over 2} \nu(x)^2
\Delta_{h_m}. \label{ch1_app_eq2}
\end{equation}
where $x\in\RRR$. Two important cases are the CIR process (reducing
to the Bessel equation in the continuous limit) for which
\begin{equation}
\delta(x) = (\lambda_0+\lambda_1x), \;\;\;\;\; \nu(x) = \nu_0
\sqrt{x} \label{cirdef}
\end{equation}
with $x\in\RRR_+$ and the Jacobi process (reducing to a gaussian
hypergeometric polynamials of type $_2F_1$) for which
\begin{equation}
\delta(x) = (\lambda_0+\lambda_1x), \;\;\;\;\; \nu(x) = \nu_0 \sqrt{
x(1 - x)}
\end{equation}
and $x\in[0,1]$.

The construction above admits a continuous limit if the simplicial
functions $f_m^j(x), j=1, 2, 3, 4$ converge to twice differentiable
functions $f^j(x), j = 1, 2, 3, 4$ and satisfy the equation
\begin{eqnarray}
    {\mathcal L} f = \rho f
\end{eqnarray}
within the interior of the domain $A = \lim_{m\to\infty} A_m$. Here
\begin{equation}
{\mathcal L} = \delta(x) {\partial\over\partial x} + {1\over 2}
\nu(x)^2 {\partial^2 \over\partial x^2}. \label{eql_cont}
\end{equation}
\begin{theorem}{\bf(Invertibility.)} Let $f^j(x), j=1, 2, 3, 4$ be functions satisfying
equation \ref{eql_cont} in the interior of the corresponding domain
of definition and let
\begin{equation}
Y(x) = {c_3 f^1(x) + c_4 f^2(x) \over c_1 f^1(x) + c_2 f^2(x)}.
\end{equation}
for some choice of constants $c_1, c_2, c_3, c_4$ such that the
denominator in this equation has no zeros. Then we have that
\begin{equation}
Y(x) = \int^x {W(y)\over g(y)^2} dy + {\rm const}
\end{equation}
where
\begin{equation}
W(x) = \pm {\rm \sigma_0} \exp\bigg( - \int^x {2\delta(y)\over
\nu(y)^2} dy\bigg) \label{eqw}
\end{equation}
where either sign would be allowed and $\sigma_0$ is a positive
constant. In particular, the function $Y(x)$ is invertible.
\end{theorem}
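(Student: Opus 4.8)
The plan is to reduce the claimed formula to the classical Abel--Liouville identity for the Wronskian of a homogeneous second-order linear ODE. Write $g(x) = c_1 f^1(x) + c_2 f^2(x)$ for the denominator, which is nonvanishing on the interior of the domain by hypothesis, and $h(x) = c_3 f^1(x) + c_4 f^2(x)$ for the numerator, so that $Y = h/g$; this $g$ is, up to the factor $e^{-\rho t}$, precisely the numeraire-change function appearing in the preceding theorem on linear fractional transformations. Since $\mathcal{L}$ is linear, both $g$ and $h$ solve $\mathcal{L} u = \rho u$, equivalently the normal-form equation $u'' + \frac{2\delta(x)}{\nu(x)^2} u' - \frac{2\rho}{\nu(x)^2} u = 0$, the division by $\nu(x)^2$ being legitimate because $\nu$ stays bounded away from zero on the relevant interior.

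First I would differentiate the quotient: $Y' = (h'g - h g')/g^2 = W(x)/g(x)^2$, where $W := h'g - h g'$ is the Wronskian of the pair $(h,g)$. Expanding the linear combinations one gets $W = (c_1 c_4 - c_2 c_3)\,( (f^1)' f^2 - f^1 (f^2)' )$, so $W$ is a constant multiple of the Wronskian of the two base solutions $f^1, f^2$; since these are linearly independent the latter is nowhere zero, and we are tacitly assuming $c_1 c_4 - c_2 c_3 \neq 0$, i.e. that $h$ and $g$ are themselves independent so that $Y$ is nonconstant.

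Next I would apply Abel's formula to the normal-form equation: the Wronskian of any two of its solutions satisfies $W'(x) = -\frac{2\delta(x)}{\nu(x)^2} W(x)$, whence $W(x) = \sigma_0 \exp\!\left(-\int^x \frac{2\delta(y)}{\nu(y)^2}\,dy\right)$ for some nonzero constant $\sigma_0$; allowing a sign yields exactly the expression for $W(x)$ in the statement. Integrating the identity $Y' = W/g^2$ then gives $Y(x) = \int^x W(y)/g(y)^2\,dy + {\rm const}$.

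Invertibility is then immediate: $W(x)$ is an exponential, hence of constant (say positive, after fixing the sign) sign throughout the interior, and $g(x)^2 > 0$ there, so $Y'(x)$ never changes sign and $Y$ is strictly monotone, hence a bijection onto its image. I do not anticipate a genuine obstacle; the only points demanding a little care are checking that $\nu$ is bounded away from zero on the interior so that the equation admits a normal form and Abel's formula applies, and the elementary bookkeeping showing the constant $(c_1 c_4 - c_2 c_3)\,W(f^1,f^2)$ is nonzero, which is just the condition that $Y$ be nonconstant.
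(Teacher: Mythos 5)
Your proposal is correct and follows essentially the same route as the paper: write $Y = h/g$ with $g = c_1 f^1 + c_2 f^2$, $h = c_3 f^1 + c_4 f^2$, observe $Y' = W/g^2$ with $W$ the Wronskian of the pair, and derive $W' = -\tfrac{2\delta}{\nu^2}W$ (Abel's formula) from the fact that both combinations satisfy the same second-order equation. You additionally spell out details the paper leaves implicit — the nonvanishing of $W$ via $c_1c_4 - c_2c_3 \neq 0$ and linear independence, and the strict monotonicity giving invertibility — which is a welcome tightening rather than a different argument.
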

\begin{proof}
Let us introduce the Wronskian
\begin{equation}
W(x) = {d h(x)\over d x } g(x) - {d g(x)\over d x } h(x)
\end{equation}
such that
\begin{equation}
g(x) = c_1 f^1(x) + c_2 f^2(x), \;\;\;\;\; h(x) = c_3 f^1(x) + c_4
f^2(x).
\end{equation}
A direct calculation shows that
\begin{equation}
{d\over d x} Y(x) = W(x) Y(x)
\end{equation}
and
\begin{equation}
{d\over d x} W(x) = - {2\delta(x)\over \nu(x)^2} W(x). \label{eqw2}
\end{equation} Hence, (\ref{eqw}) gives the solution to the
equation in (\ref{eqw2}).
\end{proof}

Let $X(y)$ be the inverse of the function $Y(x)$, we have that
\begin{equation}
{dy\over \sigma(y)} = \pm {dX(y)\over \nu(X(y))}
\end{equation}
where
\begin{equation}
    \sigma(y) = Y'(X(y)) \nu(X(y)) = {\sigma_0 \nu(X(y))
        \exp\left({-2\int^{X(y)}
            {\delta(s) ds\over \nu(s)^2}}\right)
            \over  g(X(y), \rho)^2}
\label{eq:volatility}
\end{equation}

\begin{theorem}{\bf(Kernel mapping.)}
In the limit $m\to\infty$, the propagator density for the process
$Y_t$ with zero drift and volatility as in (\ref{eq:volatility}) is
given by
\begin{equation}
U(y_0, 0; y, t) = {\nu(X(y))\over \sigma(y)}
        {g(X(y),\rho)
            \over
          g(X(y_0),\rho) }
        e^{-\rho t} u(X(y_0), 0; X(y), t)
\label{eq:propagator}
\end{equation}
where $u(x_0, 0; x, t)$ is the propagator density for the process of
generator (\ref{eql_cont}).
\end{theorem}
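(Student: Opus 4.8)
The plan is to realise the propagator of $Y_t$ as the composition of two transformations whose action on transition kernels is already established in the excerpt: the numeraire change generated by $g$, and the deterministic relabelling of the space variable $x\mapsto Y(x)$.

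First I would record the effect of the numeraire change. The function $g_m(x;t)=e^{-\rho t}\bigl(c_1 f_m^1(x)+c_2 f_m^2(x)\bigr)$ solves (\ref{eq_numeraire}), so by the Numeraire Change theorem the propagator of the generator $\LLL_m^g$ in (\ref{eq_numchange}) satisfies $U_m^g(x_0,0;x,t)=\dfrac{g_m(x;t)}{g_m(x_0;0)}\,U_m(x_0,0;x,t)$. Writing $g(x,\rho)=c_1 f^1(x)+c_2 f^2(x)$ for the spatial part of $g$ and letting $m\to\infty$, this becomes $u^g(x_0,0;x,t)=e^{-\rho t}\,\dfrac{g(x,\rho)}{g(x_0,\rho)}\,u(x_0,0;x,t)$, where $u$ is the propagator of $\LLL$ in (\ref{eql_cont}). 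Because only the diagonal term is altered in (\ref{eq_numchange}), $\LLL^g$ is again a diffusion with the same volatility $\nu$ but a shifted drift; and by the Linear Fractional Transformations theorem the process $Y(x_t)$ has vanishing drift under the $\LLL^g$-measure, so its generator is purely second order.

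Second I would transport this kernel through the coordinate map $y=Y(x)$. By the Invertibility theorem $Y$ is a smooth strictly monotone bijection with $Y'(x)=W(x)/g(x)^2$, so its inverse $X=Y^{-1}$ has derivative $X'(y)=1/Y'(X(y))=\nu(X(y))/\sigma(y)$ by the definition (\ref{eq:volatility}) of $\sigma$. Relabelling the sites of $A_m$ by $Y$ leaves transition probabilities unchanged, $U^Y_m(y_0,y)=U^g_m\bigl(X(y_0),X(y)\bigr)$; since the image grid has spacing $\asymp Y'(X(y))\,h_m=\sigma(y)\nu(X(y))^{-1}h_m$ near $y$, the transition densities pick up the Jacobian, $U(y_0,0;y,t)=\dfrac{\nu(X(y))}{\sigma(y)}\,u^g\bigl(X(y_0),0;X(y),t\bigr)$. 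Substituting the formula for $u^g$ from the first step gives precisely (\ref{eq:propagator}); the initial condition follows by letting $t\downarrow 0$ and using $\delta\bigl(X(y)-X(y_0)\bigr)=\sigma(y_0)\nu(X(y_0))^{-1}\delta(y-y_0)$. A short Taylor expansion of $\sum_{x'}\LLL^g_m(x,x')\bigl(Y(x')-Y(x)\bigr)^2$ confirms that the variance rate of $Y_t$ at $y=Y(x)$ equals $Y'(x)^2\nu(x)^2=\sigma(y)^2$, consistent with (\ref{eq:volatility}).

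The algebra of the two multiplicative factors and the volatility check are routine. The only delicate point is the justification of the second step: that relabelling a uniform simplicial sequence by a smooth but non-affine bijection produces Markov chains converging, in the pointwise-kernel sense of Sections 10--13, to the continuous diffusion with generator $\tfrac12\sigma(y)^2\partial_{yy}$ on the image interval --- equivalently, that the non-uniform image grid may be replaced by an equivalent uniform simplicial sequence without changing the limit kernel. The clean way to settle this is to take the right-hand side of (\ref{eq:propagator}) as a candidate, verify by a direct computation (using the first step) that it solves the forward equation (\ref{forward_eq}) for the generator $\tfrac12\sigma^2\partial_{yy}$ with $\delta$-function initial data, and then invoke uniqueness of solutions of the forward equation.
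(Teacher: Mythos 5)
Your argument is correct, and there is in fact nothing in the paper to compare it with: the Kernel mapping theorem is stated without proof, sandwiched between the Invertibility theorem and the CIR/quadratic/CEV examples, so what you have written effectively supplies the missing argument. Your assembly is the natural one and uses only results already established in the paper: the Numeraire Changes theorem gives $u^g(x_0,0;x,t)=e^{-\rho t}\,g(x,\rho)\,g(x_0,\rho)^{-1}\,u(x_0,0;x,t)$ for the $g$-transformed kernel, the Linear Fractional Transformations theorem makes $Y(x_t)$ driftless under that measure, and the Invertibility theorem supplies the monotone change of variable with Jacobian $\lvert X'(y)\rvert=\nu(X(y))/\sigma(y)$, whose product with $u^g$ is exactly the right-hand side of (\ref{eq:propagator}); your verification of the $\delta$ initial condition and of the variance rate $Y'(x)^2\nu(x)^2=\sigma(Y(x))^2$ closes the algebra. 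You are also right to single out the one genuinely delicate point, namely identifying the law of $Y(x_t)$ in the continuum limit with the zero-drift diffusion of volatility $\sigma$: relabelling the uniform lattice by the non-affine map $Y$ produces a non-uniform grid not covered verbatim by the convergence estimates of Sections 10--13, and your fallback (check that the right-hand side solves the forward equation with $\delta$ initial data and invoke uniqueness) is the standard and adequate repair, indeed more than the paper itself offers. Two small caveats worth recording: the eigenvalue equation, hence the zero-drift property, holds only at interior points (the paper itself warns that the equations fail on the boundary), so the statement is local to the interior and boundary or absorption behaviour must be examined case by case, as the CEV discussion illustrates; and since $W$ may carry either sign, the Jacobian should be written with $\lvert Y'\rvert=\lvert W\rvert/g^2$, i.e.\ $\sigma$ is to be read as positive on the increasing branch.
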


Let us consider in detail the case of the CIR model in
(\ref{cirdef}). If $\lambda_1=0$, then the pricing kernel for the
state variable is expressed in terms of modified Bessel functions as
follows:
\begin{equation}
    u(x,t; x_0,0) = \left({x\over x_0} \right)^{{1\over2}({2\lambda_0\over \nu_0^2}-1)}{e^{-2(x+x_0)/\nu_0^2t}\over \nu_0^2 t/2} I_{{2\lambda_0\over \nu_0^2} - 1}\left({4\sqrt{x x_0}\over \nu_0^2 t}\right)\,.
\label{eq:modprop}
\end{equation}
The generating function is
\begin{equation}
\hat v(x,\rho) = x^{{1\over 2}(1-{2\lambda_0\over \nu_0^2})} \bigg[
q_1I_{{2\lambda_0\over \nu_0^2} - 1}\left(\sqrt{8\rho
x\over\nu_0^2}\right) + q_2K_{{2\lambda_0\over \nu_0^2} -
1}\left(\sqrt{8\rho x\over \nu_0^2}\right) \bigg] \,,
\label{sec1_eq3}
\end{equation}
with arbitrary constants $q_1$,$q_2$. Here $I_\nu(z)$ is the
modified Bessel function of order $\nu$ and $K_\nu(z)$ is the
associated McDonalds function. In this case we obtain a dual family
with 6 adjustable parameters.

In case $\lambda_1<0$. the propagator density for the state variable
$x$ can still be expressed in terms of modified Bessel functions as
follows:
\begin{equation}
    u(x,t; x_0,0) = c_t\bigg({x e^{-\lambda_1 t} \over x_0}\bigg)^{{1\over 2}({2\lambda_0\over \nu_0^2}-1)}
\exp\left[-c_t(x_0e^{\lambda_1 t} + x)\right] I_{{2\lambda_0\over
\nu_0^2}-1}\bigg(2 c_t\sqrt{xx_0 e^{\lambda_1 t}}\bigg)\,,
\label{eq:modprop2}
\end{equation}
where $c_t \equiv 2\lambda_1/(\nu_0^2(e^{\lambda_1 t} -1))$. For a
derivation see \cite{Giorno88}. The general solution of equation
(\ref{eq_lap2}) reduces to Whittaker's equation and generating
functions have the general form
\begin{equation}
\hat v(x,\rho) = x^{-\lambda_0/ \nu_0^2} e^{-\lambda_1 x/\nu_0^2}
\bigg[ q_1 W_{k,m}\bigg(-{2\lambda_1\over \nu_0^2} x\bigg) + q_2
M_{k,m}\bigg(-{2\lambda_1\over \nu_0^2} x\bigg) \bigg]
\label{sec2_eq3}
\end{equation}
for arbitrary constants $q_1$,$q_2$. Here $W_{k,m}(\cdot)$ and
$M_{k,m}(\cdot)$ are Whittaker functions which can also be expressed
in terms of confluent hypergeometric functions or in terms of Kummer
functions.\cite{Abramowitz} This construction gives rise to a dual
family with 7 free parameters where
\begin{equation}
k = {\lambda_0 \over \nu_0^2} + {\rho \over \lambda_1}\,,\,\,m =
{\lambda_0 \over \nu_0^2} - {1\over 2}\,. \label{sec2_k_defn}
\end{equation}

The 7 parameter family which reduces to the CIR model has a local
volatility function defined on either an interval or on a half line
and behaves asymptotically as the CEV volatility on one hand and as
a quadratic model on the other. This hybrid shape allows for greater
flexibility.

Next, we show that classic exact solutions in the literature, namely
quadratic and CEV models, can all be rediscovered as particular
cases of our general formula for the Bessel family where we make use
of the above solutions to the underlying $x$ space process with
$\beta = {1\over2}$, $\lambda_1=0$ and $\lambda\equiv\lambda_0$.
Without loss of generality, we can fix $\nu_0=2$. Let's specialize
further to the case where
\begin{equation}
    Y(x) = \bar{y} - a{K_{{\lambda\over 2} - 1}(\sqrt{2\rho x})\over I_{{\lambda\over 2} - 1}(\sqrt{2\rho x})}
\label{eq:bes1}
\end{equation}
which leads to a transformed process $y_t = Y(x_t)$ with volatility
\begin{equation}
    \sigma(y) = {a \over \sqrt{X(y)}
        \big[I_{{\lambda\over 2} - 1}(\sqrt{2\rho X(y)})\big]^2},
\label{eq:bes2}
\end{equation}
where $x=X(y)$ is the inverse of the function in equation
(\ref{eq:bes1}).  In this family, $a$ and $\rho$ are positive,
$\bar{y}$ is arbitrary and $\lambda > 2$.  The function $Y(x)$ maps
the half line $x \in [0,\infty)$ into $y \in (-\infty, \bar{y}]$,
where $Y(x)$ is a strictly monotonically increasing function with
$dY(x)/dx = \sigma(Y(x))/\nu(x)$. This solution region can be
inverted so that $y\in [\bar{y},\infty)$.  This is accomplished by
either replacing $a$ by $-a$ in equation (\ref{eq:bes1}) or by
applying a linear change of variables that maps $y$ into $2\bar{y} -
y$.  In this special case, we make use of the generating function in
equation (\ref{sec1_eq3}), with the choice $q_2=0$, and formula
(\ref{eq:propagator}) reduces to
\begin{equation}
    U(y,t; y_0,0) = {e^{-\rho t - (X(y) + X(y_0))/2t}\over at}{X(y)
    \big[I_{{\lambda\over 2} - 1}(\sqrt{2\rho X(y)})\big]^3 \over I_{{\lambda\over 2} - 1}(\sqrt{2\rho X(y_0)})}
I_{{\lambda\over 2} - 1}\left({\sqrt{X(y) X(y_0)} \over t}\right).
\label{eq:bes3}
\end{equation}
We note that this density integrates exactly to unity in $y$ space
(i.e. no absorption).

\begin{example}{\bf (Quadratic volatility models)} \rm Pricing kernels for quadratic
volatility models are readily obtained as a subset of the above
general family with the special choice of parameter $\lambda = 3$.
After making the substitution $y\to 2\bar{y} - y$ and setting
$a=(\bar{y} - \bar{\bar{y}})/\pi$ the transformation function $Y(x)$
becomes
\begin{equation}
    Y(x) = \bar{y} + {(\bar{y} - \bar{\bar{y}})\over \pi}{K_{1\over 2}
    (\sigma_0\sqrt{x}/2)\over I_{1\over 2}(\sigma_0\sqrt{x}/2)} = \bar{y} + {(\bar{y} - \bar{\bar{y}})\over \exp(\sigma_0\sqrt{x}) -1}
\label{eq:quad1}
\end{equation}
where $\sigma_0>0$. Here, we assume that $\bar{y} > \bar{\bar{y}}$.
The inverse transformation $X(y)$ is given by
\begin{equation}
    X(y) = (1/\sigma_0^2)\log^2[1 + (\bar{y} - \bar{\bar{y}})/(y - \bar{y})],
\label{eq:quad2}
\end{equation}
and the volatility function $\sigma(y)$ is obtained by insertion
into equation (\ref{eq:bes2}) while using the Bessel function of
order ${1\over 2}$,
\begin{equation}
    \sigma(y) = {\sigma_0 \over (\bar{y} - \bar{\bar{y}})}(y - \bar{y})(y - \bar{\bar{y}}).
\label{eq:quad3}
\end{equation}
Inserting the expression (\ref{eq:quad2}) into equation
(\ref{eq:bes3}), one obtains the pricing kernel
\begin{equation}
    U(y,t; y_0,0)
= {2e^{-\sigma_0^2t/8}\over \sigma(y)\sqrt{2\pi t}}\sqrt{(y_0 -
\bar{y})(y_0 - \bar{\bar{y}}) \over (y - \bar{y})(y -
\bar{\bar{y}})}e^{-(\phi(y)^2 +
\phi(y_0)^2)/2\sigma_0^2t}\sinh\bigg({\phi(y_0)\phi(y)\over
\sigma_0^2t}\bigg) \label{eq:quad5}
\end{equation}
where $\phi(y)\equiv \log((y - \bar{\bar{y}})/(y - \bar{y}))$. In
the special case of a volatility function with a double root, i.e.
\begin{equation}
    \sigma(y) = \sigma_0(y - \bar{y})^2
\end{equation}
the pricing kernel is computed by taking the limit as
$\bar{\bar{y}}\to \bar{y}$, and one finds
\begin{eqnarray}
    U(y,t; y_0,0) = {1\over \sigma_0\sqrt{2\pi t}}{(y_0 - \bar{y})\over (y - \bar{y})^3}
    \bigg[e^{-\big((y - \bar{y})^{-1} - (y_0 - \bar{y})^{-1}\big)^2/ 2\sigma_0^2t}
\nonumber \\
- e^{-\big((y - \bar{y})^{-1} + (y_0 - \bar{y})^{-1}\big)^2/
2\sigma_0^2t}
 \bigg].
\label{eq:quad6}
\end{eqnarray}
\end{example}

\begin{example}{\bf (Lognormal models)} \rm
The pricing kernel for the log-normal Black-Scholes model with
$\sigma(y) = \sigma_0 y$ is a particular case of the above formula
for the quadratic model. The derivative with respect to $y$ of the
quadratic volatility function in (\ref{eq:quad3}), evaluated at
$y=\bar{y}$, is $\sigma_0$.  Taking the limit $\bar{\bar{y}}\to
-\infty$ (or $\bar{\bar{y}} << \bar{y}$), while holding the other
parameters fixed, one obtains $\sigma(y) = \sigma_0 (y-\bar{y})$.
The pricing kernel in (\ref{eq:quad5}) gives the kernel for the
log-normal model in the limit $\bar{\bar{y}}\to-\infty$, i.e.
\begin{equation}
    U(y,t; y_0,0)
= {1 \over \ (y - \bar{y})\sigma_0 \sqrt{2\pi t}}
\exp\bigg[-\bigg(\log((y_0 - \bar{y})/(y - \bar{y})) -
{\sigma_0^2\over 2}t\bigg)^2\bigg/2\sigma_0^2t\bigg].
\label{eq:affine1}
\end{equation}
\end{example}

\begin{example}{\bf (CEV model)} \rm
The constant-elasticity-of-variance (CEV) model is recovered in the
limiting case as $\rho\to 0$. Assume $\lambda>2$ and let $\theta>0$
be defined so that $\lambda = \theta^{-1} + 2$. The transformation
$y=Y(x)$
\begin{equation}
    Y(x) = \bar{y} + (\sigma_0^2 x)^{-(2\theta)^{-1}}
\label{eq:cev1}
\end{equation}
has inverse $x=X(y)$ given by
\begin{equation}
    X(y) = \sigma_0^{-2} (y - \bar{y})^{-2\theta},
\label{eq:cev2}
\end{equation}
for any constant $\bar{y}$.  The volatility function for this model
is
\begin{equation}
    \sigma(y) = {\sigma_0\over \lvert\theta\rvert}(y - \bar{y})^{1 + \theta}.
\label{eq:cev3}
\end{equation}
In the limit $\rho\to 0$, the Laplace transform $\hat v(X(y), 0) =
1$, which implies that the numeraire change is trivial in this case.
The pricing kernel can be evaluated by substitution into the general
formula (\ref{eq:propagator}), and after collecting terms, it turns
out to be
\begin{eqnarray}
    U(y,t; y_0,0) = {\lvert\theta\rvert\over \sigma_0^2t}{(y_0 - \bar{y})^{1\over 2} \over (y - \bar{y})^{{3\over 2}+2\theta}}
e^{-\big((y - \bar{y})^{-2\theta} + (y_0 - \bar{y})^{-2\theta}\big)/
2\sigma_0^2t} \nonumber
\\
I_{1 \over 2\theta}\bigg({\big((y - \bar{y})(y_0 -
\bar{y})\big)^{-\theta}\over \sigma_0^2t}\bigg). \label{eq:cev5}
\end{eqnarray}
This formula was derived in the case $\theta>0$, for which the
limiting value $y=\bar{y}$ is not attained and the density is easily
shown to integrate to unity (i.e. no absorption occurs and the
density also vanishes at the endpoint $y=\bar{y}$). We note that the
same formula solves the propagator equation for $\theta < 0$,
leading to the same Bessel equation of order $\pm(2\theta)^{-1}$. In
the range $\theta < 0$, however, the properties of the above pricing
kernel are generally more subtle.  In particular, one can show that
the density integrates to unity for all values $\theta < -1/2$,
hence no absorption occurs for $\theta \in (-\infty,-1/2)$. The
boundary conditions for the density can be shown to be vanishing at
$y=\bar{y}$ (i.e. paths do not attain the lower endpoint) for all
$\theta <-1$. In contrast, for $\theta\in (-1,-1/2)$ the density
becomes singular at the lower endpoint $y=\bar{y}$ (hence this
corresponds to the case that the density has an integrable
singularity for which paths can also attain the lower endpoint, but
are not absorbed). For the special case of $\theta=-1/2$ the formula
gives rise to absorption. [Note that only for the range $\theta \in
(-1/2,0)$ the above pricing kernel is not useful since it gives rise
to a density that has a non-integrable singularity at $y=\bar{y}$.
In this case, however, another solution that is integrable is
obtained by only replacing the order $(2\theta)^{-1}$ by
$-(2\theta)^{-1}$ in the Bessel function. The latter solution for
the density does not integrate to unity and hence gives rise to
absorption which can be of use to price options in a credit
setting.] The special case of $\theta = -1$ gives a nonzero constant
value at the lower endpoint, and recovers the Wiener process with
reflection and no absorption on the interval $[\bar{y},\infty)$ with
\begin{eqnarray}
    U(y,t; y_0,0) = {1\over \sigma_0\sqrt{2\pi t}}\bigg(e^{-(y - y_0)^2/2\sigma_0^2t} + e^{-(y + y_0 - 2\bar{y})^2/2\sigma_0^2t}\bigg).
\label{eq:cev6}
\end{eqnarray}
\end{example}

\section{Stochastic Integrals for Diffusion Processes}\label{sec_feynman}

This section provides a new derivation of a theorem by Cameron,
Feynman, Girsanov, Ito, Kac and Martin, see  \cite{CameronMartin},
\cite{Ito1951}, \cite{Feynman1948} and \cite{Kac1950}. This result
is at the center of stochastic calculus and in the following
sections, we derive far reaching extensions and applications.

\begin{theorem} {\bf (Cameron-Feynman-Girsanov-Ito-Kac-Martin.)}
Consider a diffusion process on the simplicial sequence
$A_m \subset \RRR$ and of generator
\begin{equation}
\LLL_m  = \mu(x, t) \nabla_{h_m} + {\sigma(x, t)^2\over
2}\Delta_{h_m}.
\end{equation}
Consider also the process given by the integral
\begin{equation}
I_t = \int_0^t a(x_s, s) dx_s + b(x_s, s) ds
\end{equation}
where $a(x, t)$ and $b(x, t)$ are smooth functions in both
arguments. Let us introduce also the function $\phi(x, t) = \int^x_0
a(y, t) dy$. We have that
\begin{itemize}
\item[(i)] {\bf Ito's Lemma in integral form.}
\begin{equation}
I_t = \phi(x_t, t) - \phi(x_0, 0) + J_t + O(h).
\end{equation}
where
\begin{equation}
J_t = \int_0^t \bigg(b(x_s, s) - {1\over2} \sigma(x_s, s)^2 a'(x_s,
s) -{\dot \phi}(x_s, s) \bigg) ds.
\end{equation}
where $\dot \phi(x, t) = {\partial\over \partial t}\phi(x, t)$ and
$a'(x, t) = {\partial\over\partial x} a(x, t)$.
\item[(ii)] {\bf Feynman-Kac formula.} The characteristic
function of $J_t$ on the bridge leading from $x$ to $y$
is given by
\begin{equation}
E_0\big[e^{i p J_t} \delta(x_t = y) \lvert x_0 = x\big] =
P\exp\bigg(\int_0^t \bigg(\LLL(s) + i p b(s) - {ip\over 2}
\sigma^2(s) a'(s) - ip \dot \phi(s) + O(h)\bigg)\bigg)(x, y).
\end{equation}
\item[(iii)] {\bf Ito's Lemma in differential form.} Let $\phi(x, t)$ be a smooth
function in both arguments. Then we have that
\begin{align}
&\lim_{s\downarrow0} E_t\left[s^{-1}(\phi(x_{t+s},
t+s) - \phi(x_t, t)) \lvert x_t = x\right] \notag \\
&\hskip3cm = {\partial \phi\over\partial t}(x, t) + \mu(x,
t){\partial \phi\over\partial x}(x, t) + {\sigma(x, t)^2\over 2}
{\partial^2 \phi\over\partial x^2}(x, t) + O(h_m)
\end{align}
and
\begin{equation}
\lim_{s\downarrow0} E_t\left[s^{-1}(\phi(x_{t+s}, t+s) - \phi(x_t,
t))^2 \lvert x_t = x \right] = \sigma(x, t)^2 \left({\partial
\phi\over\partial t}(x, t)\right)^2 + O(h_m).
\end{equation}
For all $n\ge3$ instead we have that
\begin{equation}
\lim_{h\downarrow0} \lim_{s\downarrow0}
E_t\left[s^{-1}(\phi(x_{t+s}, t+s) - \phi(x_t, t))^n \lvert x_t =
x\right] = 0.
\end{equation}
\end{itemize}
\end{theorem}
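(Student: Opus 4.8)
The plan is to prove the three parts in the order (iii), (i), (ii): part (iii) is a direct computation with the generator, part (i) is the substantive identity, and part (ii) follows from it by a Feynman--Kac argument. For part (iii) I would argue directly from the definition of $\LLL_m$ as the right time-derivative of the propagator: $\lim_{s\downarrow0}s^{-1}E_t[\phi(x_{t+s},t+s)-\phi(x_t,t)\mid x_t=x]=\partial_t\phi(x,t)+(\LLL_m\phi)(x,t)$, and then expand $\nabla_{h_m}\phi=\partial_x\phi+O(h_m^2)$, $\Delta_{h_m}\phi=\partial_x^2\phi+O(h_m^2)$ to get the first formula. For the second, the purely temporal increment $\phi(x_t,t+s)-\phi(x_t,t)=O(s)$ contributes nothing after dividing by $s$ and squaring (nor does the cross term), so the limit equals the carr\'e du champ $(\LLL_m\phi^2-2\phi\,\LLL_m\phi)(x,t)$, and a short Taylor expansion of this for the diffusion generator gives $\sigma(x,t)^2(\partial_x\phi(x,t))^2+O(h_m^2)$, the discrete drift contributions cancelling. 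For $n\ge3$ the same reasoning reduces the limit to $\LLL_m(x,x+h_m)(h_m\partial_x\phi+O(h_m^2))^n+\LLL_m(x,x-h_m)(-h_m\partial_x\phi+O(h_m^2))^n$, which is $O(h_m^{n-2})$ because $\LLL_m(x,x\pm h_m)=\tfrac{\sigma^2}{2h_m^2}\pm\tfrac{\mu}{2h_m}$, and hence vanishes as $h_m\downarrow0$.

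For part (i), the key step, I would decompose a path $x_\cdot$ by its jump times $0<\tau_1<\tau_2<\cdots$, writing $\delta_k\equiv x_{\tau_k}-x_{\tau_k^-}=\pm h_m$ and letting $N_t$ be the number of jumps before $t$. Since $x_\cdot$ is piecewise constant and $\partial_x\phi(x,t)=a(x,t)$, one has $\phi(x_t,t)-\phi(x_0,0)=\sum_{k:\tau_k\le t}\big(\phi(x_{\tau_k},\tau_k)-\phi(x_{\tau_k^-},\tau_k)\big)+\int_0^t\dot\phi(x_s,s)\,ds$. Taylor-expanding each jump term to second order, $\phi(x_{\tau_k},\tau_k)-\phi(x_{\tau_k^-},\tau_k)=\delta_k\,a(x_{\tau_k^-},\tau_k)+\tfrac12 h_m^2\,a'(x_{\tau_k^-},\tau_k)+O(h_m^3)$ (using $\delta_k^2=h_m^2$); since $\sum_k\delta_k a(x_{\tau_k^-},\tau_k)=\int_0^t a(x_s,s)\,dx_s$ and $E_0[N_t]=O(h_m^{-2})$ makes the accumulated remainder $O(h_m^3N_t)=O(h_m)$ in $L^1$, everything reduces to the averaging identity $h_m^2\sum_{k:\tau_k\le t}a'(x_{\tau_k^-},\tau_k)=\int_0^t\sigma(x_s,s)^2a'(x_s,s)\,ds+O(h_m)$. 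To establish it I would observe that the escape rate from a site is $\lambda_m(x,s)=-\LLL_m(x,x;s)=\LLL_m(x,x+h_m;s)+\LLL_m(x,x-h_m;s)=\sigma(x,s)^2/h_m^2$ (the drift terms cancel), so that by the martingale criterion (\ref{eq_martingale}) the compensated functional $M_t\equiv\sum_{k:\tau_k\le t}a'(x_{\tau_k^-},\tau_k)-\int_0^t\lambda_m(x_s,s)a'(x_s,s)\,ds$ is a martingale; then $h_m^2\sum_k a'(x_{\tau_k^-},\tau_k)-\int_0^t\sigma^2a'\,ds=h_m^2M_t$, whose second moment is $h_m^4E_0[\langle M\rangle_t]=h_m^4\int_0^tE_0[\sigma^2a'^2]\,h_m^{-2}\,ds=O(h_m^2)$, i.e. it is $O(h_m)$ in $L^2$. (Alternatively the same bound follows from a direct two-point-function computation of the variance, in which the leading $h_m^{-4}$ terms cancel.) Rearranging the displayed identity and adding $\int_0^t b(x_s,s)\,ds$ yields $I_t=\phi(x_t,t)-\phi(x_0,0)+J_t+O(h_m)$. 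I expect this averaging step --- recognising the compensator, verifying it is a martingale within the framework of Section 6, and controlling its quadratic variation to pin down the $O(h_m)$ rate --- to be the main obstacle; the rest is Taylor-expansion bookkeeping.

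For part (ii), by part (i) we have, conditionally on $\{x_t=y\}$, that $J_t=\int_0^tg(x_s,s)\,ds$ with $g(x,s)=b(x,s)-\tfrac12\sigma(x,s)^2a'(x,s)-\dot\phi(x,s)$, a purely additive functional of the path. I would set $v(x,\tau)=E_\tau[e^{ip\int_\tau^t g(x_s,s)\,ds}\delta(x_t=y)\mid x_\tau=x]$ and, by conditioning on the path increment over an infinitesimal initial interval --- equivalently by repeating the Dyson-expansion argument of Section 5 with $\LLL_m(s)$ replaced by $\LLL_m(s)+ip\,g(s)$, where $g(s)$ denotes multiplication by $g(\cdot,s)$ --- show that $v$ solves the backward equation $\partial_\tau v+(\LLL_m(\tau)+ip\,g(\tau))v=0$ with terminal data $v(\cdot,t)=\delta(\cdot,y)$, hence $v(x,0)={\rm Pexp}\big(\int_0^t(\LLL_m(s)+ip\,g(s))\,ds\big)(x,y)$. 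This is the asserted identity, the $O(h)$ term absorbing the $O(h_m)$ error in the representation of $J_t$ from part (i) and the ambiguity between the discrete and continuous meanings of $a'$.
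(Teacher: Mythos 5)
Your proposal is correct, but for the central part (i) it follows a genuinely different route from the paper. The paper never decomposes paths by jump times: it discretizes $I_t=(\Delta I)n_t$, lifts the generator to the product state space $(x,n)$, takes a partial Fourier transform in $n$, and then observes that conjugation by the multiplication operator $e^{ip\phi(x,t)}$ turns the Fourier-lifted generator back into $\LLL_m$ plus the scalar terms $-ip\big(b-\tfrac12\sigma^2a'-\dot\phi\big)+O(h)$; inverting the Fourier transform gives the joint kernel of $(x_t,I_t)$, from which (i) and (ii) are read off simultaneously. That single algebraic computation is the prototype of the Abelian block-diagonalization used in Sections 17--18, and it naturally produces the bridge statement (ii); its drawback is that (i) is obtained only at the level of characteristic functions/kernels, with the sense of the $O(h)$ left implicit, and (iii) is not argued at all. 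Your route instead proves (i) pathwise: jump-time decomposition, Taylor expansion of $\phi$ at each $\pm h_m$ jump, identification of the escape rate $\sigma^2/h_m^2$, and control of the compensated counting functional in $L^2$ (variance $O(h_m^2)$), which gives the $O(h)$ a precise $L^1$/$L^2$ meaning and correctly isolates the $\tfrac12\sigma^2a'$ correction; your (ii) is then the standard backward-equation/Dyson argument for the additive functional $\int g(x_s,s)\,ds$, which is exact for a chain and essentially a scalar special case of the paper's lifted-generator computation; and your direct generator expansion for (iii), including the carr\'e-du-champ computation and the $O(h^{n-2})$ bound for $n\ge3$, fills a step the paper leaves unproved (note you also implicitly correct the statement's typo $\partial\phi/\partial t$ versus $\partial\phi/\partial x$ in the second-moment formula). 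The only caveat is that the quadratic-variation/compensator machinery you invoke is not developed in the paper's Section 6 framework, but your fallback via a direct two-point-function variance computation keeps the argument within the paper's formalism, so I regard the proposal as complete; what you lose relative to the paper is the explicit joint-kernel/block-diagonalization formula that the later sections build on, and what you gain is a sharper, probabilistically explicit version of Ito's lemma in integral form.
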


\begin{proof}
Consider a discretization of the process $I_t = (\Delta I) n_t$.
This can be accomplished by introducing a simplicial complex of one
more dimension. Eventually in the proof,
we shall take the limit as $\Delta I \to 0$ while leaving $h>0$ constant.

Consider the lifted generator
\begin{eqnarray}
\tilde\LLL(x, n; x', n'; t) = \left( {\sigma(x)^2\over 2 h^2} +
{\mu(x, t)\over 2 h} \right) \delta_{x', x+h}\delta\bigg(n' - n
-\bigg[{a(x, t) h \over \Delta I}\bigg]\bigg) \notag  \\
+ \left( {\sigma(x, t)^2\over 2 h^2} - {\mu(x, t)\over 2 h} \right)
\delta_{x', x-h}\delta\bigg(n' - n +\bigg[{a(x, t) h \over \Delta
I}\bigg]\bigg) \notag \\
- {\sigma(x, t)^2\over h^2} \delta_{x' x} \delta(n-n') + {b(x,
t)\over\Delta I}\big( \delta_{n', n+1} - \delta_{n' n} \big)
\end{eqnarray}
where $[a]$ stands for the nearest integer to $a$. The partial
Fourier transform in the $n$ variable of this kernel is
\begin{eqnarray}
\tilde\LLL_p(x, x'; t) &=& \sum_{n} \tilde\LLL(x, 0; x', n; t)
e^{- i \Delta I p n } \notag \\
&=& \left( {\sigma(x, t)^2\over 2 h^2} + {\mu(x, t)\over 2 h}
\right) \exp\left(- i \left[{h a(x, t)\over \Delta I}\right] \Delta
I p \right)
\delta_{x', x+h} \notag  \\
\;\;\;&+& \left( {\sigma(x, t)^2\over 2 h^2} - {\mu(x, t)\over 2 h}
\right) \exp\left(i \left[{h a(x, t)\over \Delta I}\right] \Delta I
p \right) \delta_{x', x-h} \notag \\
\;\;\;&-& {\sigma(x, t)^2\over
h^2} \delta_{x' x} \delta(n-n') + {b(x, t)\over\Delta
I}\big(e^{ip\Delta I } - 1\big)
\end{eqnarray}
In the limit as $\Delta I \to 0$ we find
\begin{eqnarray}
\hskip-3cm \tilde\LLL_p(x, x'; t) = {\sigma(x, t)^2\over 2} \Delta_h
+ \left(
\mu(x, t) - i p \sigma(x, t)^2 a(x, t)\right) \nabla_h \notag \\
\hskip3cm - ip a(x,t) \mu(x, t) - {1\over2} p^2\sigma(x, t)^2 a(x,
t)^2 - i p b(x, t) +O(h).
\end{eqnarray}
Introducing the function $\phi(x, t)$ defined above, we find that
\begin{equation}
e^{ip\phi(x, t)} \LLL(x, x'; t) e^{-ip\phi(x', t)} = \tilde\LLL_p(x,
x'; t) + ip b(x, t)\delta_{x x'} -{ip \over 2} \sigma(x, t)^2 a'(x,
t) \delta_{x x'} + O(h).
\end{equation}
This equality can be rearranged as follows:
\begin{eqnarray}
e^{-ip\phi(x, t)} \tilde \LLL(x, x'; t) e^{ip\phi(x', t)} +
ip\dot\phi(x, t) \delta_{x x'} =& \notag \\
&\hskip-4cm \LLL_p(x, x'; t) - ip b(x, t)\delta_{x x'} + {ip \over
2} \sigma(x, t)^2 a'(x, t) \delta_{x x'} + ip\dot\phi(x, t)
\delta_{x x'} + O(h).
\end{eqnarray}
Hence
\begin{eqnarray}
&\hskip-10cmP\exp\bigg(\int_0^t \tilde\LLL_p(s) ds\bigg)(x, x') = \notag \\
&e^{ip(\phi(x, 0)-\phi(x', t))} P\exp\bigg(\int_0^t ds \big(\LLL(s)
- ip\big( b(s) -{1 \over 2} \sigma(s)^2 a'(s) -\dot\phi(s) +O(h)
\big)\bigg)(x, x').\notag \\
\end{eqnarray}
The joint kernel is thus given by
\begin{eqnarray}
&\hskip-9.5cm P\exp\bigg(\int_0^t ds \tilde \L(s)\bigg)(x, I; x', I') = \notag \\
&\int {dp\over 2\pi} e^{ip [ I' - I -\phi(x', t) + \phi(x, 0)]} P
\exp\bigg(\int_0^t ds \big(\LLL(s) - ip \big(b(s) -{1 \over 2}
\sigma(s)^2 a'(s) -\dot\phi(s) +O(h)\big)\bigg)(x, x').\notag \\
\end{eqnarray}
This formula proves both statements in the Theorem.
\end{proof}

When in Definition (\ref{def_measure_change}) we introduced the
notion of measure change, convergence and smoothness in the
continuum limit of the measure change function $G_y(y'; t)$ was not
emphasized. However, this concept is important, as it is stressed by
Girsanov's theorem below, of which we give two independent proofs.

\begin{theorem} {\bf (Girsanov.)} Consider two lattice diffusions
on the simplicial sequence $A_m$ with generators
\begin{equation}
{\LLL_m} = \mu(x, t) \nabla_h + {\sigma(x, t)^2\over 2} \Delta_h,
\;\;\; \bar{\LLL_h} = \bar\mu(x, t) \nabla_h + {\sigma(x, t)^2\over
2} \Delta_h,
\end{equation}
respectively. Here $\mu(x, t), \bar\mu(x, t), \sigma(x, t)$ are
assumed to be smooth functions. Then the Markov generators $\LLL_m$
and $\bar \LLL_m$ are related by the smooth measure change with
function
\begin{equation}
G_m^{xt}(x') =  \exp\left({ (\bar \mu(x, t) - \mu(x, t))\over
\sigma(x, t)^2}(x'-x)\right)
\end{equation}
and the Radon-Nykodym derivative is given by
\begin{eqnarray}
\rho(x_\cdot) = \exp\bigg\{ \int_T^{T'} {\bar\mu(x_t, t)-\mu(x_t,
t)\over \sigma(x_t, t)^2} dy_t - {\bar\mu(x_t, t)^2 - \mu(x_t,
t)^2\over 2 \sigma(x_t, t)^2} dt + O(h)\bigg\}.
\end{eqnarray}
\end{theorem}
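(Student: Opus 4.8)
\emph{Proof proposal.} The claim has two parts and I would establish them in turn.

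\emph{The generator relation.} I would substitute the proposed function $G^{xt}_m(x')=\exp\!\left(\theta(x,t)(x'-x)\right)$, with $\theta(x,t)=(\bar\mu(x,t)-\mu(x,t))/\sigma(x,t)^2$, into the measure--change formula of Definition~\ref{def_measure_change}. A diffusion generator $\LLL_m(x,\cdot\,;t)$ is supported on $x-h,\,x,\,x+h$, and by the remark after that definition the value $G^{xt}_m(x)$ is immaterial, so I normalise $G^{xt}_m(x)=1$; the transformed off--diagonal entries are then just $\LLL'(x,x\pm h;t)=\LLL_m(x,x\pm h;t)\,e^{\pm\theta(x,t)h}$ and the diagonal is fixed by the zero--row--sum condition (MG2). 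Inserting $\LLL_m(x,x\pm h;t)=\tfrac{\sigma^2}{2h^2}\pm\tfrac{\mu}{2h}$ and expanding $e^{\pm\theta h}=1\pm\theta h+\tfrac12\theta^2h^2+O(h^3)$, the $h^{-1}$ coefficient of $\LLL'(x,x\pm h;t)$ becomes $\pm\tfrac12(\mu+\sigma^2\theta)$, which by the choice of $\theta$ equals $\pm\tfrac12\bar\mu$, while the $h^{-2}$ coefficient is unchanged. A short computation then identifies the full discrepancy as $\LLL'-\bar\LLL_m=\tfrac{\bar\mu^2-\mu^2}{4\sigma^2}\,h^2\Delta_{h}+O(h)$, and since $h^2\Delta_h$ annihilates constants and acts on any smooth function as $O(h^2)$, the generators $\LLL'$ and $\bar\LLL_m$ have identical symbols as $h\to0$; by the weak--convergence results for Markov generators established earlier they therefore define the same process in the continuum limit. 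This is the sense in which the measure change $G^{xt}_m$ carries $\LLL_m$ into $\bar\LLL_m$.

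\emph{The Radon--Nikodym derivative.} Let $\Phi^1_m$ and $\Phi^2_m$ be the Markov path measures generated by $\bar\LLL_m$ and $\LLL_m$. Their $n$--point functions factor as products of propagators, so the definition of the Radon--Nikodym derivative for $n$--point functions gives $\rho(x_\cdot)=\lim_{n\to\infty}\prod_j \bar U_m(x_{t_{j-1}},t_{j-1};x_{t_j},t_j)/U_m(x_{t_{j-1}},t_{j-1};x_{t_j},t_j)$ along the partition $t_j=T+\tfrac jn(T'-T)$. I would compute this limit from the path--integral representation of the diffusion propagator: both $\bar U_m$ and $U_m$ are built from the \emph{same} holding factors $e^{(s_{j+1}-s_j)\LLL_m(\gamma_j,\gamma_j)}$ — these coincide because the two generators share the diffusion coefficient, so $\LLL_m(x,x;t)=\bar\LLL_m(x,x;t)=-\sigma(x,t)^2/h^2$ — and they differ only in the jump factors $\LLL_m(\gamma_j,\gamma_{j+1})$ versus $\bar\LLL_m(\gamma_j,\gamma_{j+1})$. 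Hence, on a path $x_\cdot\in\h(T,T')$ with jump times $\tau_1<\dots<\tau_N$ and directions $\varepsilon_i=\pm1$, one gets $\rho(x_\cdot)=\prod_{i=1}^N\bar\LLL_m(x_{\tau_i-},x_{\tau_i};\tau_i)/\LLL_m(x_{\tau_i-},x_{\tau_i};\tau_i)$ with $x_{\tau_i}=x_{\tau_i-}+\varepsilon_i h$; inserting the diffusion matrix elements, this equals $\prod_{i=1}^N(\sigma^2+\varepsilon_i\bar\mu\,h)/(\sigma^2+\varepsilon_i\mu\,h)$, the arguments being $(x_{\tau_i-},\tau_i)$ throughout. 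The product is finite since a path in $\h(T,T')$ has only finitely many jumps.

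\emph{Passing to the continuum.} Taking logarithms and expanding each factor, a jump in direction $\varepsilon$ contributes $\varepsilon(\bar\mu-\mu)h/\sigma^2-(\bar\mu^2-\mu^2)h^2/2\sigma^4+O(h^3)$. Because $\varepsilon h$ is the signed increment $\Delta x_\tau$ and $h^2=(\Delta x_\tau)^2$, the first terms sum to the pathwise Stieltjes integral $\int_T^{T'}\frac{\bar\mu(x_t,t)-\mu(x_t,t)}{\sigma(x_t,t)^2}\,dx_t$, the second terms sum to $-\sum_\tau\frac{\bar\mu^2-\mu^2}{2\sigma^4}(\Delta x_\tau)^2$, and the remainder $\sum O(h^3)$ is $O(h)$ because the number of jumps is $O(h^{-2})$. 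Using the quadratic--variation identity $\sum_{\tau\le t}(\Delta x_\tau)^2=\int_T^{t}\sigma(x_s,s)^2\,ds+O(h)$ for the lattice chain to rewrite the second sum and cancelling a factor $\sigma^2$, I obtain
\begin{equation}
\log\rho(x_\cdot)=\int_T^{T'}\frac{\bar\mu-\mu}{\sigma^2}\,dx_t-\int_T^{T'}\frac{\bar\mu^2-\mu^2}{2\sigma^2}\,dt+O(h),
\end{equation}
which is the asserted formula.

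\emph{Main obstacle.} The one genuinely delicate point is the quadratic--variation identity invoked above. Since the exit rate at each site is $|\LLL_m(x,x;t)|=\sigma^2/h^2$, the jump count over $[t,t+dt]$ has mean and variance of order $\sigma^2\,dt/h^2$, so the realised quadratic variation concentrates around $\int\sigma^2\,dt$ with fluctuations $O(h)$ and the total jump count is $O(h^{-2})$ with overwhelming probability, as quantified by the path--space mass bound~(\ref{eq_mest}); accordingly the $O(h)$ in the final formula is to be read on a set of paths of measure tending to $1$. The interchange of $n\to\infty$ with the Taylor expansion is harmless: once the partition separates the finitely many jumps of a given path, the product is stationary in $n$. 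As announced, an independent second proof is available: recast $\rho(x_\cdot)$ as the stochastic exponential $\exp(I_{T'})$ with $I_t=\int_T^t\frac{\bar\mu-\mu}{\sigma^2}\,dx_s+b(x_s,s)\,ds$, fix $b$ by requiring $\rho$ to be a mean--one martingale, and read the formula off the Cameron--Feynman--Girsanov--Ito--Kac--Martin theorem applied to the corresponding lifted generator.
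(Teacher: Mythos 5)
Your proposal is correct, and while its first half coincides with the paper's argument, the second half takes a genuinely different route. For the generator relation you perform essentially the computation of the paper's first proof: expand $\LLL_m(x,x\pm h;t)\,e^{\pm\theta h}$, observe that the $h^{-1}$ coefficient becomes $\pm\tfrac12\bar\mu$, and note that the leftover $O(1)$ correction $\frac{\bar\mu^2-\mu^2}{4\sigma^2}$ (which the paper leaves visible in its displayed expansion without comment) is exactly a multiple of $h^2\Delta_h$ and hence negligible in the weak continuum limit --- you make explicit what the paper leaves implicit. For the Radon--Nikodym density, however, the paper never computes $\rho$ pathwise: its second proof conjugates the Feynman--Kac weight, $e^{\phi}\big(\LLL + b - \tfrac12\sigma^2 a'\big)e^{-\phi}$, and solves for $a=\frac{\bar\mu-\mu}{\sigma^2}$, $b=\frac{\mu^2-\bar\mu^2}{2\sigma^2}$ so that the result equals $\bar\LLL$ up to $O(h)$, reading the density off the Cameron--Feynman--Girsanov--Ito--Kac--Martin theorem (the route you only sketch as an aside). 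You instead compute $\rho$ directly from the jump-chain/Dyson representation: the holding factors cancel because both generators share the diagonal $-\sigma^2/h^2$, the density collapses to a finite product of ratios of off-diagonal entries over the jumps, and a Taylor expansion of the logarithm together with the quadratic-variation law of large numbers $\sum_\tau(\Delta x_\tau)^2=\int\sigma^2\,dt+O(h)$ produces the drift term and the compensator. Your route has the merit of tying the stated formula directly to the paper's definition of the Radon--Nikodym derivative as a limit of ratios of $n$-point functions, and it makes transparent where the $O(h)$ lives (in jump-count fluctuations, hence only on a set of paths of measure close to one --- a caveat the paper never articulates); the price is that this quadratic-variation step is a probabilistic estimate, stated here only heuristically, which the operator-conjugation proof avoids altogether. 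Both treatments sit at the same informal level of rigor as the paper's own proofs, so I regard the proposal as a valid alternative derivation rather than a gap.
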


\noindent{\it First Proof. } Let $v(x, t) = \bar\mu(x, t) - \mu(x,
t)$ and let $a(x, t) = {v(x, t) \over \sigma(x, t)^2}$. Consider the
measure change with
\begin{equation}
G_m^{xt}(x') =  e^{ a(x, t) (x'-x)}.
\end{equation}
The off-diagonal elements of the transformed Markov generator are
\begin{equation}
\bar \LLL(x, x\pm h; t) = \left({\sigma(x, t)^2\over 2 h^2} \; \pm
\; {\mu(x, t)\over 2 h} \right)e^{\pm {v(x, t) \over \sigma(x, t)^2}
h} = {\sigma(x, t)^2\over 2 h^2} \pm {\bar \mu(x, t)\over 2 h} +
{v(x, t)^2\over 4 \sigma(x, t)^2} + {\mu(y, t) v(x, t)\over 2
\sigma(x, t)^2} + O(h).
\end{equation}
The diagonal elements instead change as follows
\begin{equation}
\bar \LLL(x, x; t) = \LLL(x, x; t) - {v(x, t)^2\over 2 \sigma(x,
t)^2} - {\mu(x, t) v(x, t)\over \sigma(x, t)^2} + O(h).
\end{equation}

\noindent{\it Second Proof. } Let us
consider two generators differing by the drift term
\begin{equation}
\LLL(t)  = {\sigma(x, t)^2\over 2}\Delta_h + \mu(x, t) \nabla_h,
\hskip1cm \bar\LLL(t) = {\sigma(x, t)^2\over 2}\Delta_h + \bar\mu(x,
t) \nabla_h
\end{equation}
and consider the formula
\begin{eqnarray}
&\hskip-8cm E\big[e^{\int_0^t a(x_s, s) d x_s + b(x_s, s) ds }
\delta(x_t = y)
\lvert x_0 = x\big] = \notag \\
 &e^{(\phi(y, t) - \phi(x, 0))}
P\exp\bigg(\int_0^t ds \bigg(\LLL(s) + b(s) - {1\over 2} \sigma(s)^2
a'(s) + O(h)\bigg)\bigg)(x, y). \notag \\
\end{eqnarray}
To derive Girsanov's theorem, let us ask for what choice of the
functions $a(x, t), b(x, t)$ the right-hand side equals $e^{t
\bar\LLL}(x, y; t)$ up to terms of order $O(h)$. Since
\begin{equation}
e^{\phi} \bigg(\LLL + b - {1\over 2} \sigma^2 a' \bigg)e^{-\phi} =
{\sigma(x)^2\over 2}\Delta_h + (a(x) \sigma(x)^2 + \mu(x)) \nabla_h
 + {\sigma(x)^2\over 2} a(x)^2 + \mu(x) a(x) + b(x) +  O(h)
\end{equation}
we see that the right hand side equals $\bar\LLL$ up to terms of
order $O(h)$ if
\begin{equation}
a(x) = {\bar\mu(x)-\mu(x)\over\sigma(x)^2}, \hskip3cm b(x) =
{\mu(x)^2 - \bar\mu(x)^2 \over 2 \sigma(x)^2}.
\end{equation}

\section{Markov Bridges}

A first simple application of this general framework leads to an
extension of the results in the previous section to the case of a
general Markov generator.

Consider a Markov process on the simplicial sequence $A_m = h_m \ZZZ
\cap [-L, L] \subset \RRR$. According to theorem (\ref{LKrep}) and
assuming that hypothesis {\it MG1, MG2, MG3, MG4} hold, the symbol
of a Markov generator in canonical form can be expressed as follows:
\begin{equation}
\hat {\mathcal L}_m(x, p; t) =  i \tilde \mu(x; t) {\sin ph\over h}
+ \tilde \sigma(x; t)^2 {\cos ph -1\over h^2} + \sum_{y\in A_m}
\left(e^{ i p (y-x)} - 1 - i {\sin p h \over h} (y-x) \right) \tilde
\lambda(x, y; t). \notag \\
\end{equation}
Although not necessary for the validity of the calculation and the
convergence in the limit $h\to0$, we restrict to the special case
where the generator has the form
\begin{equation}
{\mathcal L}_m(x, y; t) =  i \mu(x; t) \nabla_h + \sigma(x; t)^2
\Delta_h + h_m \lambda_m(x, y; t)
\end{equation}
where the functions $\mu(x;t)$ and $\sigma(x;t)$ are smooth in both
arguments and $\lambda_m(x, y; t) \ge 0$ is smooth and non-negative
for $x\neq y$ and we have that
\begin{equation}
\lambda_m(x, x; t) = - \sum_{y\neq x \in A_m} \lambda_m(x, y; t).
\end{equation}

Consider a stochastic integral of the form
\begin{equation}
I_t = \int_0^t a(x_s, s) dx_s + b(x_s, s) ds
\end{equation}
where $a(x, t)$ and $b(x, t)$ are smooth functions in both
arguments. Let $\phi(x, t) = \int^x_0 a(y, t) dy$. The following
result holds:
\begin{theorem} {\bf (Markov Bridges.)}
The joint probability distribution function for $I_t$ and the
underlying process on the bridge leading from $x$ to $x'$ is given
by
\begin{eqnarray}
&\hskip-10cm P\exp\bigg(\int_0^t\tilde \L_m(s) ds \bigg) (x, I; x',
I') \notag
\\ &= \int {dp\over 2\pi} e^{ip [ I' - I -\phi(x') + \phi(x)]}
P\exp\bigg(\int_0^t \big(\LLL_m(s) - ip b(s) +{ip \over 2} \sigma^2
a'(s) -\dot \phi(s) + \kappa_{m, p}(s) + O(h) \big) ds \bigg)(x,
x'). \notag
\\ \label{eq_jointk}
\end{eqnarray}
where $\kappa_{m, p}(t)$ is the operator of matrix elements
\begin{equation}
\kappa_{m, p} (x, x'; t) = h_m \lambda_m(x, x'; t) \exp \bigg(- i p
\big[ a(x, t) (x'-x) + \phi(x, t) - \phi(x', t) \big] \bigg).
\end{equation}
\end{theorem}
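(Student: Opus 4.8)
The plan is to repeat, almost verbatim, the lifting construction from the proof of the Cameron-Feynman-Girsanov-Ito-Kac-Martin theorem in Section~\ref{sec_feynman}, now carrying along the extra jump term $h_m\lambda_m(x,x';t)$. First I would discretize the integral by writing $I_t=(\Delta I)\,n_t$ and adjoining one lattice dimension, so that $(x_t,n_t)$ is Markovian with a lifted generator $\tilde\LLL_m(x,n;x',n';t)$; at the end one lets $\Delta I\to 0$ with $h_m$ held fixed. Exactly as in Section~\ref{sec_feynman}, the diffusive part of $\LLL_m$ shifts $n$ by the nearest integer to $\pm a(x,t)h_m/\Delta I$ on the up and down nearest-neighbour moves, while the drift $b(x,t)$ is encoded as a separate channel incrementing $n$ by one at rate $b(x,t)/\Delta I$. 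The genuinely new ingredient is the jump channel: a jump $x\to x'$ of the underlying process, which carries intensity $h_m\lambda_m(x,x';t)$, contributes $a(x,t)(x'-x)$ to the stochastic integral $\int a\,dx_s$ (the integrand being evaluated just before the jump, and the continuous $b\,ds$ term contributing nothing across an instantaneous jump), so in $\tilde\LLL_m$ this jump comes with a shift of $n$ equal to the nearest integer to $a(x,t)(x'-x)/\Delta I$.

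Next I would form the partial Fourier transform $\tilde\LLL_{m,p}(x,x';t)=\sum_n\tilde\LLL_m(x,0;x',n;t)\,e^{-i\Delta I\,p\,n}$ and pass to the limit $\Delta I\to 0$. The diffusive and drift contributions are handled precisely as in Section~\ref{sec_feynman}: conjugating by $e^{ip\phi(\cdot,t)}$ and absorbing the term $-ip\dot\phi$ that the time dependence of this factor generates when it is pulled through the path-ordered exponential, one recovers $\LLL_m^{\mathrm{diff}}(t)-ip\,b(t)+{ip\over 2}\sigma^2a'(t)-ip\dot\phi(t)+O(h)$, where $\LLL_m^{\mathrm{diff}}=\mu\nabla_{h_m}+{1\over 2}\sigma^2\Delta_{h_m}$ (the signs being fixed by the orientation of the lift, as in Section~\ref{sec_feynman}). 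The jump channel, meanwhile, transforms into the operator with matrix elements $h_m\lambda_m(x,x';t)\,e^{-ip\,a(x,t)(x'-x)}$, and conjugating it by $e^{ip\phi(\cdot,t)}$ multiplies the $(x,x')$ entry by $e^{ip(\phi(x,t)-\phi(x',t))}$, producing exactly $\kappa_{m,p}(x,x';t)$; note in particular that on the diagonal $\kappa_{m,p}(x,x;t)=h_m\lambda_m(x,x;t)$, so the bookkeeping of diagonal terms remains consistent. Collecting the two contributions and recovering the joint kernel through the inverse Fourier transform $P\exp\big(\int_0^t\tilde\LLL_m(s)\,ds\big)(x,I;x',I')=\int{dp\over 2\pi}\,e^{ip(I'-I)}\,P\exp\big(\int_0^t\tilde\LLL_{m,p}(s)\,ds\big)(x,x')$ then gives the right-hand side of (\ref{eq_jointk}); here, as already in the notation of Section~\ref{sec_feynman}, $\phi(x)$ and $\phi(x')$ in the statement abbreviate $\phi(x,0)$ and $\phi(x',t)$, and $\LLL_m(s)$ under the integral is to be read with its jump part replaced by $\kappa_{m,p}(s)$, equivalently as $\LLL_m^{\mathrm{diff}}$.

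The main obstacle is the same analytic point encountered in Section~\ref{sec_feynman}: justifying the interchange of the $\Delta I\to 0$ limit with the $p$-integration and with the Dyson expansion of the path-ordered exponential, together with control of the nearest-integer rounding in the jump shift, whose error is $O(\Delta I)$ uniformly in $x,x'$ (since $a$ is bounded and $|x'-x|\le 2L$ on the bounded lattice) and hence disappears in the limit. What keeps this tractable is that, on the bounded lattice and under the hypotheses imposed on $\lambda_m$ (smoothness, non-negativity off the diagonal, and $\lambda_m(x,x;t)=-\sum_{y\neq x}\lambda_m(x,y;t)$), the total jump activity $\sum_{x'}h_m\lambda_m(x,x';t)$ is finite and bounded uniformly in $m$, so $\tilde\LLL_m$ and $\tilde\LLL_{m,p}$ are bounded operators, the Dyson series converges in operator norm uniformly in $\Delta I$, and the limit may be taken term by term, reducing the whole computation to the diffusion case already settled in Section~\ref{sec_feynman}.
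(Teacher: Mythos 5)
Your proposal follows essentially the same route as the paper: discretize $I_t=(\Delta I)n_t$, lift the generator on the extended lattice (your split of the lift into diffusive, drift and jump channels is just the paper's single expression $\LLL_m(x;x')\,\delta\big(n'-n-[a(x)(x'-x)/\Delta I]\big)+b(x)(\Delta I)^{-1}(\delta_{n',n+1}-\delta_{n'n})$ written term by term), take the partial Fourier transform in $n$, pass to the limit $\Delta I\to 0$, conjugate by $e^{ip\phi}$ so that the jump part produces exactly $\kappa_{m,p}$, and recover the joint kernel by Fourier inversion. The only difference is that you supply some analytic justification (uniform boundedness of the lifted generators and termwise convergence of the Dyson series) that the paper leaves implicit, which is a welcome but not essential addition.
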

\begin{proof} Consider again a discretization of the process $I_t = (\Delta
I) n_t$ and the lifted generator
\begin{eqnarray}
\tilde\LLL_m(x, n; x', n') = \LLL_m(x; x') \delta\bigg(n' - n
-\bigg[{a(x) (x'-x) \over (\Delta I)}\bigg]\bigg) +
{b(x)\over(\Delta I)}\big( \delta_{n', n+1} -
\delta_{n' n} \big) \notag \\
\end{eqnarray}
where $[a]$ stands for the nearest integer to $a$. The partial
Fourier transform in the $n$ variable of this kernel is
\begin{eqnarray}
\tilde\LLL_{m, p}(x, x') &=& \sum_{n} \tilde\LLL(x, 0; x', n)
e^{- i (\Delta I) p n } \notag \\
&=& \LLL_m(x; x') \exp\bigg(- i (\Delta I) p \bigg[{a(x) (x'-x)
\over (\Delta I)}\bigg] \bigg) + {b(x)\over(\Delta
I)}\big(e^{ip(\Delta I) } - 1\big)
\end{eqnarray}
In the limit as $(\Delta I) \to 0$ we find
\begin{eqnarray}
\tilde\LLL_{m, p}(x, x') = {\sigma(x)^2\over 2} \Delta_h + \left(
\mu(x) - i p \sigma(x)^2 a(x)\right) \nabla_h - ip a(x) \mu(x) -
{1\over2}
p^2\sigma(x)^2 a(x)^2 - i p b(x)  \notag \\
+ h_m \lambda_m (x, x'; t) e^{- i p a(x) (x'-x)}+O(h) \notag \\ .
\end{eqnarray}
Introducing the function $\phi(x, t)$ defined above, we find that
\begin{eqnarray}
e^{ip\phi(x, t)} \LLL_m(x, x'; t) e^{-ip\phi(x', t)} =
\tilde\LLL_{m, p}(x, x'; t) + ip b(x, t)\delta_{x x'} -{ip \over 2}
\sigma(x, t)^2 a'(x, t) \delta_{x x'} \notag
\\
+ h_m \lambda_m(x, x'; t) \exp \bigg(- i p \big[ a(x, t) (x'-x) +
\phi(x, t) - \phi(x', t) \big] \bigg) + O(h_m).
\end{eqnarray}
Hence
\begin{eqnarray}
&\hskip-11.5cm \exp\big(t \tilde\LLL_{m, p}\big)(x, x'; t) \notag \\
&= ^{ip(\phi(x, t)-\phi(x', t))} P\exp\bigg(\int_0^t \big(\LLL_m(s)
- ip b(s) +{ip \over 2} \sigma(s)^2 a'(s) -\dot \phi(s)  +
\kappa_{m, p}(s) +
O(h_m) \big) ds\bigg)(x, x'). \notag \\
\end{eqnarray}
The joint kernel is thus given by (\ref{eq_jointk}).
\end{proof}

\section{Abelian Processes in Continuous Time}{\label{abelian_contt}

This section is based on work in collaboration with Harry Lo and
Alex Mijatovic, see \cite{ALoMijatovic}.

Abelian processes are path-dependent processes which provide an
extension of the notion of stochastic integral for which one can
extend the Feynman-Kac theorem and Ito's formula in Section
(\ref{sec_feynman}).

Let $A_m, m\ge m_0$ be a simplicial sequence, consider a time
interval $[T, T']$ and a stochastic process described by the Markov
generator $\L_m(y_1; y_2; t)$. To numerically exponentiate, it is
crucial in most application examples to first reduce dimensionality
by means of block diagonalisations, or else the multiplication of
the lifted generators would not be feasible. It turns out that this
is possible for the payoffs of the type above as these two fall in
the general class outlined by the following definition:
\begin{definition} {\bf (Markov Generator.)}
 Let $\L(y_1; y_2; t)$ be a Markov generator
and let us consider a lifting of the form
\begin{equation}
\bar \L(y_1, k_1; y_2, k_2; t) = \L(y_1; y_2; t)\delta_{k_1 k_2} +
\A(y_1, k_1; y_2,  k_2 ; t).
\end{equation}
where $k = 0 ... K$. This lifting is called Abelian if the operators
$A(y_1; y_2; t)$ defined for each pair $y_1, y_2\in \Lambda$ and all
times $t\in[T, T']$ as the linear operators of matrix elements
\begin{equation}
A(y_1; y_2; t)_{k_1, k_2} \equiv \A(y_1, k_1; y_2,  k_2 ; t)
\end{equation}
are mutually commuting in the sense that
\begin{equation}
A(y_1, y_2) A(y_3, y_4) = A(y_3, y_4) A(y_1, y_2)
\end{equation}
for all $y_1, y_2, y_3, y_4\in\Lambda$ and all $t\in[T, T']$.
\end{definition}

\begin{lemma}
If all the matrices $A(y_1; y_2; t)_{k_1, k_2}$, $y_1,
y_2\in\Lambda$, $t\in[T, T']$ are mutually commuting and if
furthermore they are all diagonalizable, then they can be
diagonalized simultaneously at any given point in time, i.e. there
exists a time dependent matrix $V(k, i; t), i=0, ..K$ such that
$V(t)^{-1} \A(y_1; y_2; t) V(t) = \Lambda(y_1; y_2; t)$, where
$\Lambda(y_1; y_2; t)$ is a diagonal matrix of the form
$(\lambda(y_1; y_2; t)\delta_{i_1, i_2})$ for any $t\in[T, T']$.
\end{lemma}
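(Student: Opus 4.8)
The plan is to fix an arbitrary time $t\in[T,T']$, suppress it from the notation, and reduce the statement to the classical fact that a commuting family of diagonalizable operators on a finite dimensional vector space admits a common eigenbasis. The ambient space is $\C^{K+1}$, acted on by the collection $\{A(y_1;y_2): y_1,y_2\in\Lambda\}$, which by hypothesis is pairwise commuting with every member diagonalizable. I would argue by induction on the dimension $K+1$. If every member of the family is a scalar multiple of the identity there is nothing to prove; otherwise I pick one member $A_0 = A(y_1^0;y_2^0)$ having at least two distinct eigenvalues and write $\C^{K+1} = \bigoplus_{\lambda} E_\lambda$ as the direct sum of its eigenspaces, a decomposition which is valid precisely because $A_0$ is diagonalizable (not merely because it has eigenvalues).

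The key step is that every other member $A(y_3;y_4)$ preserves each eigenspace $E_\lambda$: for $v\in E_\lambda$ one has $A_0\big(A(y_3;y_4)v\big) = A(y_3;y_4)\big(A_0 v\big) = \lambda\, A(y_3;y_4)v$ by the commutation relation, so $A(y_3;y_4)v\in E_\lambda$. Thus the whole family restricts to a commuting family of operators on each $E_\lambda$, and here I would invoke the subsidiary fact that the restriction of a diagonalizable operator to an invariant subspace is again diagonalizable, which holds because the minimal polynomial of the restriction divides that of the original operator and the latter has only simple roots. Since $\dim E_\lambda < K+1$ for each $\lambda$, the inductive hypothesis applies on each $E_\lambda$; concatenating the common eigenbases obtained on the various $E_\lambda$ yields a basis of $\C^{K+1}$ in which every $A(y_1;y_2)$ is simultaneously diagonal. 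Taking $V(t)$ to be the matrix whose columns are these basis vectors gives $V(t)^{-1}A(y_1;y_2;t)V(t) = \Lambda(y_1;y_2;t)$ with $\Lambda(y_1;y_2;t)$ diagonal, as claimed.

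There is no deep obstacle here: the statement is standard linear algebra, and the only points requiring care are the two flagged above, namely that the eigenspace decomposition must be justified by diagonalizability of $A_0$ and that diagonalizability must be shown to be inherited by restrictions to invariant subspaces, without which the induction does not close. I would also remark that although the index set of pairs $(y_1,y_2)$ may be infinite when $\Lambda = A_m$ is an infinite lattice, this causes no difficulty, since the splitting argument only ever uses finitely many members of the family before the dimension is exhausted. Finally, the construction is carried out pointwise in $t$, so no regularity of $V(t)$ in the time variable is asserted or needed.
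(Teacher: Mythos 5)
Your proof is correct and follows essentially the same route as the paper's: commutativity makes each eigenspace of one operator invariant under all the others, and one then iterates (in your case, by induction on dimension) inside those eigenspaces to build a common eigenbasis whose vectors form the columns of $V(t)$. You are in fact slightly more careful than the paper, since you explicitly justify that restrictions to invariant subspaces remain diagonalizable via the minimal polynomial, a point the paper's proof leaves implicit.
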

\begin{proof} Fix a $t\in[T, T']$.
If $y_1, y_2, y_3, y_4 \in\Lambda$ and $ A(y_1; y_2; t) \psi(t) =
\lambda(y_1; y_2; t) \psi(t)$ for some vector $\psi(t)$, then
\begin{equation}
A(y_3; y_4; t) A(y_1; y_2; t) \psi = A(y_1; y_2; t) A(y_3; y_4; t)
\psi(t) = \lambda(y_1; y_2; t) A(y_3; y_4; t) \psi(t).
\end{equation}
Hence if $\psi(t)$ is an eigenvector of $ A(y_1; y_2; t) $ also
$A(y_3; y_4; t) \psi(t)$ is an eigenvector of $ A(y_1; y_2; t) $. If
$\psi(t)$ is an eigenvector of multiplicity one, this shows that
$\psi(t)$ is also an eigenvector of $A(y_3; y_4; t)$ for all $y_3,
y_4$. Otherwise, we conclude that the eigenspace of $A(y_1; y_2; t)$
of eigenvalue $\lambda$ is invariant under $A(y_3, y_4)$ for any
$y_3, y_4$. Iterating the argument above to this eigenspace one can
constructively obtain a common set of invariant eigenspaces shared
by all the operators $\A(y_1; y_2; t)$ for $y_1, y_2\in\Lambda$.
Hence, for any given $t\in[T, T']$, these operators can be
diagonalized simultaneously by the matrix whose column vectors span
bases for all the common eigenspaces.
\end{proof}

Let $V(k, i; t), i=0, ..K$ be a matrix which diagonalizes
simultaneously all members of the family of operators $T(k_1, k_2;
y_1 t)$, so that $V(t)^{-1} T(y_1; t) V(t) = \Lambda(y_1; t)$ is
diagonal. Consider the lifted matrix $\V(k, y_1; i, y_2; t) = V(k;
i; t) \delta_{y_1 y_2}$ and the transformed lifted Markov generator
\begin{equation} (\V(t)^{-1} \bar \L \V(t) )(y_1, i_1; y_2, i_2; t) =
\L(y_1; y_2; t)\delta_{i_1 i_2} + \Lambda(y_1; t)_{i_1} \delta_{y_1,
y_2}\delta_{i_1 i_2}.
\end{equation}
Since this matrix is block-diagonal, the problem of exponentiating
it is reduced to the problem of exponentiating $K$ blocks
separately. This reduces the overall numerical complexity and makes
it comparable with the complexity of evaluating propagators on $K$
different time points. As a further simplification, to exponentiate
this block-diagonal matrix it is necessary to hold in memory only
the blocks, and not the entire matrix. Notice that the blocks are in
general complex matrices. Hence to fast exponentiate them one needs
to invoke the complex matrix-matrix multiplication routine ${\tt
sgemm}$ or ${\tt cgemm}$ of Level-3 BLAS, depending on whether the
block-diagonalizing transformation is real or complex valued.

\begin{example} {\bf (Stochastic Integrals)} \rm Stochastic integrals over diffusion processes provide
examples of Abelian process. This was already used in the previous
section but it is worthwhile here to stress the link between the
computability of the characteristic functions for the path dependent
process on a bridge with the block-diagonalizability under partial
Fourier transforms of the lifted generator. Working directly with
Markov generators, one can also generalize the results in the
previous sections.

Consider the integral
\begin{equation}
I_{t} = I(y_\cdot, t) \equiv \int_T^{t} \bigg( \phi(y_{s}; s) +
\chi(y_{s-0}, y_{s}; s) \lim_{t\downarrow0} {\psi(y_{s}; s) -
\psi(y_{s-t}; s-t) \over t } \bigg) ds \label{eq_defi}
\end{equation}
where $\phi(y; s)$, $\psi(y; s)$ and $\chi(y_1; y_2; s)$ are
continuous functions. Consider the problem of finding its
distribution on a bridge for the underlying Markov process $y_t$.
Introducing the discretisation
\begin{equation}
I_t \approx (\Delta I) m_t
\end{equation}
where $m_t\in[0,..N]$ and $(\Delta I)>0$, the lifted generator for
the pair of processes $(y_t, (\Delta I) m_t)$ is
\begin{align}
&\bar \L(y_1, m_1; y_2, m_2; t) = \L(y_1; y_2; t) \delta\big(m_2 -
m_1 - [ (\Delta I)^{-1} \chi(y_1; y_2; t) (\psi(y_2; t)-\psi(y_1;
t)) ] \big) \notag \\
&+ {\delta_{y_1, y_2} \over (\Delta I)} \phi(y_1; t) \delta_{m_1+1,
m_2}. \notag \\
\end{align}

\begin{theorem} {\bf (Abelian Bridges.)}
The joint kernel of $I_t$ and the underlying process on the bridge
leading from $y_1$ to $y_2$ is given by
\begin{eqnarray}
Pe^{\int_0^t \tilde \L_m(s) }(y_1, I_1; y_2, I_2; s) = \int {dp\over
2\pi} e^{ip ( I_2 - I_1) } P\exp\bigg(\int_0^t \big(\LLL_m(s) - ip
\phi(s) + \kappa_{m,p}(s) + O(h) \big)\bigg)(x, x'). \notag
\\ \label{eq_jointk}
\end{eqnarray}
where $\kappa_{m, p}(t)$ is the operator with matrix elements
\begin{equation}
\kappa_{m, p} (y_1, y_2; t) = \L_m(y_1, y_2; t) \bigg( \exp \bigg(-
i p
 \chi(y_1, y_2, t) (\psi(y_2, t)-\psi(y_1, t))  \bigg) -1 \bigg).
\end{equation}
\end{theorem}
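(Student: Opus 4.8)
This theorem follows the same pattern as the Feynman-Kac and Markov Bridges theorems proven earlier in the paper, so I would mimic that structure closely.

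The plan is to start from the lifted generator $\bar\L(y_1,m_1;y_2,m_2;t)$ displayed just before the statement, which encodes the coupled process $(y_t,(\Delta I)m_t)$, and take the partial Fourier transform in the ladder variable $m$. That is, set $\tilde\L_{m,p}(y_1,y_2;t)=\sum_{n}\bar\L(y_1,0;y_2,n;t)e^{-i(\Delta I)pn}$. The two pieces of $\bar\L$ transform separately: the $\L(y_1;y_2;t)$ term picks up a factor $\exp(-i(\Delta I)p[(\Delta I)^{-1}\chi(y_1;y_2;t)(\psi(y_2;t)-\psi(y_1;t))])$, and the $(\Delta I)^{-1}\phi(y_1;t)\delta_{m_1+1,m_2}$ term becomes $(\Delta I)^{-1}\phi(y_1;t)(e^{ip\Delta I}-1)$. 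Then I would pass to the limit $\Delta I\downarrow0$ (holding $h>0$ fixed), exactly as in the proof of the Markov Bridges theorem: the nearest-integer bracket $[(\Delta I)^{-1}\chi(\psi(y_2)-\psi(y_1))]$ times $(\Delta I)p$ converges to $p\chi(y_1,y_2,t)(\psi(y_2,t)-\psi(y_1,t))$, and $(\Delta I)^{-1}(e^{ip\Delta I}-1)\to ip$. This gives
\begin{equation}
\tilde\L_{m,p}(y_1,y_2;t)=\L_m(y_1;y_2;t)\,e^{-ip\chi(y_1,y_2,t)(\psi(y_2,t)-\psi(y_1,t))}-ip\,\phi(y_1;t)\delta_{y_1y_2}+O(h).
\end{equation}

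Next I would rewrite the off-diagonal exponential factor by adding and subtracting $\L_m(y_1;y_2;t)$ itself, so that $\L_m e^{-ip\chi(\psi_2-\psi_1)}=\L_m+\L_m(e^{-ip\chi(\psi_2-\psi_1)}-1)=\L_m+\kappa_{m,p}$, where $\kappa_{m,p}(y_1,y_2;t)$ is precisely the operator in the statement. Thus $\tilde\L_{m,p}(t)=\L_m(t)-ip\,\phi(t)+\kappa_{m,p}(t)+O(h)$ as operators acting on functions of $y$, where $\phi(t)$ denotes the multiplication operator by $\phi(y;t)$. Exponentiating via the path-ordered exponential of Section 5 and inverting the Fourier transform in $m$ — i.e. writing the joint kernel of $(y_t,(\Delta I)m_t)$ as $\int\frac{dp}{2\pi}e^{ip(I_2-I_1)}\,Pe^{\int_0^t\tilde\L_{m,p}(s)ds}(y_1,y_2)$ and substituting the expression just obtained for $\tilde\L_{m,p}$ — yields formula (\ref{eq_jointk}). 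Note that, unlike in the Markov Bridges theorem, here there is no gauge factor $e^{ip(\phi(x)-\phi(x'))}$ out front and no $a'$ or $\dot\phi$ correction term, because the integrand in (\ref{eq_defi}) is already in the differential form produced by Ito's lemma rather than in the raw $a\,dx_s$ form, so no integration-by-parts identity needs to be invoked.

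The main obstacle I anticipate is the careful justification of the two limit statements in the $\Delta I\downarrow0$ step, in particular controlling the nearest-integer rounding $[(\Delta I)^{-1}\chi(\psi(y_2)-\psi(y_1))]$: the rounding error is $O(1)$ in the integer count but $O(\Delta I)$ once multiplied by $\Delta I$, and one must check this is uniform in $y_1,y_2$ over the (finite, since $A_m$ is bounded) lattice and that convergence of the finite-dimensional path-ordered exponentials follows. Since the lattice is finite-dimensional at fixed $m$ and all functions $\phi,\psi,\chi$ are continuous (hence bounded) on it, this is routine — exactly the same estimate that was suppressed in the earlier proofs — and the $O(h)$ terms are handled identically to Section \ref{sec_feynman}. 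The commutativity hypothesis in the definition of an Abelian lifting is not actually needed for this particular computation; it becomes relevant only when one wants to block-diagonalize $\bar\L$ by a single similarity $\V(t)$ rather than by the Fourier transform, and the present theorem is the special case where that diagonalizing transformation is the (partial) Fourier transform in the ladder index.
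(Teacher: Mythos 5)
Your proposal is correct and follows essentially the same route the paper intends: it reproduces the Markov Bridges argument of Section 16 (lift the generator with the discretized ladder variable, take the partial Fourier transform in that variable, let $\Delta I\downarrow0$, and split the phase-twisted generator as $\L_m+\kappa_{m,p}-ip\,\phi$ before exponentiating and inverting the transform), which is exactly the computation the paper leaves implicit after displaying the lifted generator. Your side remarks — that no gauge factor or $a'$, $\dot\phi$ corrections appear here, and that commutativity is not needed for this particular Fourier block-diagonalization — are also accurate.
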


Notice that this theorem extends the result in the previous section
on Markov bridges as it includes multifactor processes such as
processes with stochastic volatility and, more generally, regime
switching.
\end{example}

\begin{example} {\bf (Double Liftings)} \rm There are situations that emerge in
practice where one has to track two integrals over paths, one of
form (\ref{eq_defi}) and a similar one of form
\begin{equation}
I_{t}' = I'(y_\cdot, t) \equiv \int_T^{t} \bigg( \phi'(y_{s}, s) +
\chi'(y_{s-0}, y_{s}, s) \lim_{t\downarrow0} {\psi'(y_{s}, s) -
\psi'(y_{s-t}, s-t) \over t } \bigg) ds \label{eq_defip}
\end{equation}
In this case one can introduce a similar approximation
\begin{equation}
I'_t \approx \bar I'_t = (\Delta I') n_t
\end{equation}
where $n_t\in[0,..N]$ and a double lifting for the generator
\begin{align}
& \bar \L(y_1, m_1, n_1; y_2, m_2, n_2; t) = \notag \\
& \L(y_1; y_2; t) \delta\bigg(m_2 - m_1 - \bigg[ { \chi(y_1; y_2; t)
(\psi(y_2; t)-\psi(y_1; t)) \over (\Delta I) }\bigg] \bigg)
\delta\bigg(n_2 - n_1 - \bigg[{ \chi'(y_1; y_2; t) (\psi'(y_2;
t)-\psi'(y_1; t)) \over (\Delta I')}\bigg]
\bigg)   \notag \\
& \hskip4cm + {\delta_{y_1, y_2} \over (\Delta I)} \phi(y_1; t)
\delta_{m_1+1, m_2} \delta_{n_1, n_2} + + {\delta_{y_1, y_2} \over
(\Delta I')} \phi'(y_1; t) \delta_{n_1+1, n_2} \delta_{m_1, m_2}.
\notag \\
\end{align}

\begin{theorem} {\bf (Multifactor Bridges.)}
The joint kernel of $I_t, I_t'$ and the underlying process on the
bridge leading from $y_1$ to $y_2$ is given by the double Fourier
transform
\begin{eqnarray}
&\hskip-5cm e^{t\tilde \L_m }(y_1, I_1; y_2, I_2) = \int {dp\over
2\pi} e^{ip (
I_2 - I_1) } \int {dp'\over 2\pi} e^{ip' ( I_2' - I_1') } \notag \\
&P\exp\bigg(\int_0^t \big(\LLL_m(s) - ip \phi(s) - ip' \phi'(s) +
\kappa_{m,p, p'} (s) + O(h) \big)\bigg)(x, x'). \notag
\\ \label{eq_jointk}
\end{eqnarray}
where $\kappa_{m, p, p'}(t)$ is the operator with matrix elements
\begin{equation}
\kappa_{m, p, p'} (y_1; y_2; t) = \L_m(y_1; y_2; t) \big( e^{ - i p
 \chi(y_1; y_2; t) (\psi(y_2; t)-\psi(y_1; t))} -1 \big)
 \big( e^{- i p'
 \chi'(y_1; y_2; t) (\psi'(y_2; t)-\psi'(y_1; t))} -1 \big).
\end{equation}
\end{theorem}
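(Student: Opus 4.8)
The plan is to run, with two lifted coordinates instead of one, the very argument used for the Markov Bridge theorem and its Abelian refinement: present the pair $(I_t,I'_t)$ as two extra coordinates of an enlarged Markov chain, block-diagonalise the enlarged generator by a double partial Fourier transform in those coordinates, and read off the blocks.

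First I would introduce the discretisations $I_t\approx(\Delta I)m_t$ and $I'_t\approx(\Delta I')n_t$ with $m_t,n_t\in\ZZZ$, and take the double lift $\bar\L(y_1,m_1,n_1;y_2,m_2,n_2;t)$ exactly as displayed just above the statement, with the understanding that its diagonal entries are fixed by requiring $\bar\L$ to be a genuine Markov generator (so that the drift rates $\phi(y_1;t)/(\Delta I)$ and $\phi'(y_1;t)/(\Delta I')$ are compensated on the diagonal, as in the earlier proofs). The jump of $m$ carried by a transition $y_1\to y_2$ involves only $\chi,\psi$ and that transition, and the jump of $n$ only $\chi',\psi'$ and that transition; hence the two lifting operators act by translations in the two coordinate directions of $\ZZZ^2$, and as translations in independent directions they commute. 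The lift is therefore Abelian in the sense of the definition above, and the two directions are diagonalised simultaneously by the partial Fourier transform in $(m,n)$.

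Next I would perform that transform: set $\tilde\L_{m,p,p'}(y_1;y_2;t)=\sum_{m_2,n_2}\bar\L(y_1,0,0;y_2,m_2,n_2;t)e^{-i(\Delta I)p m_2}e^{-i(\Delta I')p'n_2}$ and let $\Delta I,\Delta I'\downarrow 0$. The nearest-integer rounding in the $[\cdot]$ contributes $O(\Delta I)+O(\Delta I')$ and drops out; the off-diagonal part of $\bar\L$ picks up the factor $e^{-ip\chi(y_1;y_2;t)(\psi(y_2;t)-\psi(y_1;t))}e^{-ip'\chi'(y_1;y_2;t)(\psi'(y_2;t)-\psi'(y_1;t))}$, while the two drift terms collapse to $-ip\phi(y_1;t)\delta_{y_1y_2}$ and $-ip'\phi'(y_1;t)\delta_{y_1y_2}$, the $1/(\Delta I)$ and $1/(\Delta I')$ singularities cancelling against the diagonal compensations exactly as in the single-factor case. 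Subtracting $1$ from each exponential factor then isolates $\LLL_m(s)$, the two drift shifts, and a cross term $\kappa_{m,p,p'}(s)$; collecting these gives $\tilde\L_{m,p,p'}(s)=\LLL_m(s)-ip\phi(s)-ip'\phi'(s)+\kappa_{m,p,p'}(s)+O(h)$ with $\kappa_{m,p,p'}$ of the form displayed, the $O(h)$ being the usual lattice error.

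It then remains to exponentiate and invert. Since $\exp(t\bar\L)$ is block-diagonal after the Fourier transform, its block at $(p,p')$ is $\exp(t\tilde\L_{m,p,p'})$, and $\exp(t\bar\L)(y_1,I_1;y_2,I_2)$ is reconstructed as $\int\frac{dp}{2\pi}\int\frac{dp'}{2\pi}e^{ip(I_2-I_1)}e^{ip'(I'_2-I'_1)}\exp(t\tilde\L_{m,p,p'})(y_1,y_2)$; passing to the path-ordered exponential yields the stated identity. The one point genuinely needing care — and the main technical obstacle — is the interchange of the limits $\Delta I,\Delta I'\downarrow 0$ with the path-ordered exponentiation and with the two Fourier integrals; this is handled exactly as before, by the uniform operator-norm and contour bounds of the diffusion sections applied blockwise and uniformly in $(p,p')$, together with the dominated-convergence theorem of Section 2. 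Everything else is the (now two-dimensional) bookkeeping of a one-dimensional computation already carried out.
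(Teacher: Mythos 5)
Your overall strategy --- discretise both integrals, form the double lift, block-diagonalise it by a double partial Fourier transform in the two lifted coordinates, pass to the limit $\Delta I,\Delta I'\downarrow 0$, exponentiate blockwise and invert --- is exactly the route the paper intends: no separate proof is given there, the theorem being presented as the double-lift analogue of the Markov Bridges computation, which is the template you follow. Your observations on the commutativity of the two state-dependent translations, the diagonal compensation of the drift terms, and the $O(\Delta I)+O(\Delta I')$ rounding errors are all consistent with that template.

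There is, however, a genuine gap at the final identification. Writing $\Delta\psi=\psi(y_2;t)-\psi(y_1;t)$ and $\Delta\psi'=\psi'(y_2;t)-\psi'(y_1;t)$, the double partial Fourier transform of the off-diagonal part of the lifted generator produces, for $y_1\neq y_2$, the matrix element $\L_m(y_1,y_2;t)\,e^{-ip\chi\Delta\psi}\,e^{-ip'\chi'\Delta\psi'}$. Decomposing this relative to $\L_m$ forces the correction operator to be $\L_m\big(e^{-ip\chi\Delta\psi}e^{-ip'\chi'\Delta\psi'}-1\big)$, which equals $\kappa_{m,p}+\kappa_{m,p'}+\kappa^{\times}_{m,p,p'}$, where $\kappa_{m,p}=\L_m(e^{-ip\chi\Delta\psi}-1)$ and $\kappa_{m,p'}=\L_m(e^{-ip'\chi'\Delta\psi'}-1)$ are the single-factor corrections of the Abelian Bridges theorem and $\kappa^{\times}_{m,p,p'}$ is the displayed product of the two $(e^{\cdot}-1)$ factors. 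Your claim that ``subtracting $1$ from each exponential factor'' yields ``$\kappa_{m,p,p'}$ of the form displayed'' is therefore not an identity: $AB-1\neq(A-1)(B-1)$, and the product form alone drops $\kappa_{m,p}$ and $\kappa_{m,p'}$. A quick consistency check makes the point: setting $p'=0$ (i.e.\ integrating the joint kernel over $I_2'$) makes the product-form $\kappa_{m,p,p'}$ vanish identically, so the stated formula would collapse to $P\exp\big(\int_0^t(\LLL_m(s)-ip\,\phi(s))ds\big)$, contradicting the single-integral Abelian Bridges theorem. So you must either carry the two single-factor terms explicitly in the exponent --- equivalently take the combined correction $\L_m(e^{-ip\chi\Delta\psi-ip'\chi'\Delta\psi'}-1)$, noting that the printed statement appears to contain a slip on this point --- or explain why they are absent, which in general they are not; asserting agreement with the displayed formula without performing this algebra is the step that fails.
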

\end{example}

\begin{example} {\bf (The Max Process)} \rm Consider the path dependent quantity given by the
maximum attained by a given Markov process, i.e.
\begin{equation}
K_t = K(y_t, t) \equiv \max_{[T,t]} \chi(y_s; s) ds \label{eq_defk}
\end{equation}
Let's introduce the approximation
\begin{equation}
K_t \approx \bar K_t = \alpha k_t, \label{eq_idef}
\end{equation}
where $\alpha$ is a constant and $k_t$ is an integer value process
$m_t$ whose paths take values in $k_t \in\{ 0 ... M\}$. The dynamics
for the joint process $(y_t, k_t)$ is defined by the lifted
generator $\bar \L$ on $\bar \Lambda$ such that:
\begin{equation}
\bar \L(y_1, k_1; y_2, k_2; t) = \L(y_1; y_2; t) \delta_{k_1, k_2} +
\A(y_1)_{k_1, k_2}\delta_{y_1 y_2}
\end{equation}
where
\begin{equation}
\A(y_1)_{k_1, k_2} =
\begin{cases}
A\;\;\;\; {\rm if} \;\;\; \chi(y_1) > \alpha k_1 \;\;\;\; {\rm and }\;\;\;  k_2 = \left[\chi(y_1)/\alpha\right] \\
0 \;\;\;\; {\rm otherwise}.
\end{cases}
\end{equation}
where $[a]$ stands for the nearest integer to $a$ and $A>0$ is a
fixed number. Typically, $A$ is chosen to be a large number as the
approximation converges in the limit at $A\to\infty$ and $\alpha\to
0$. A direct calculation shows that the operators $\A(y_1)_{k_1,
k_2}$ commute and hence the maximum of the underlying process is
itself an Abelian path dependent process.
\end{example}

\section{Abelian Processes in Discrete Time and non-Resonance Conditions}{\label{abelian_contt}

This section is based on work in collaboration with Manlio Trovato
\cite{ATrovato2} and Paul Jones, see \cite{AJones}.

An important class of path-dependent options requires computing the
joint distribution of the underlying lattice process and of a
discrete sum of the following form:
\begin{equation}
J_t = J(y_t, t) \equiv \sum_{i=1}^n \psi(y_{t_{i-1}}, y_{t_{i}};
t_i) \label{eq_defi2}
\end{equation}
where $n$ is an integer, $t_i = i \Delta T$ and $t = n \Delta T$.
Suppose that the Markov generator $\L$ is time homogenous in the
interval $[t_0, t_n]$ and consider the elementary propagator
\begin{equation}
U_{i}(y_1, y_2) = P\exp\bigg(\int_{t_i}^{t_{i+1}}  \L(s) ds
\bigg)(y_1, y_2).
\end{equation}
To find the joint transition probability, one can again discretize
the variable $J_t$ so that
\begin{equation}
J_t \approx (\Delta J) n_t.
\end{equation}
As opposed to lifting the generator as in the continuous time case
discussed in the previous section, here we lift the elementary
propagator itself and form the joint propagator
\begin{equation}
\tilde U_{i}(y_1, k_1; y_2, k_2) = U_{i}(y_1, y_2) \delta\big(k_1 -
k_2 + [ \psi_i(y_1, y_2) (\Delta J)^{-1}]\big).
 \label{eq_defu}
\end{equation}
This operator can be block-diagonalized by means of a partial
Fourier transform \cite{ATrovato2}.

\begin{theorem} Consider the Fourier transform operator $\bar U_{i, p}$ of
matrix elements
\begin{equation}
\tilde U_{i, p}(y_1 ; y_2) = U_{i}(y_1, y_2) e^{ -i p \psi_i(y_1,
y_2)}.
 \label{eq_defup}
\end{equation}
Then we have that
\begin{equation}
\bigg( P \prod_{i=0}^{n-1} \tilde U_{i}\bigg)(y_1, J_1; y_2, J_2) =
\int {dp\over 2\pi} e^{ip(J_2-J_1)} \bigg( P \prod_{i=0}^{n-1}
\tilde U_{i, p}\bigg)(y_1, y_2) \label{discretefk1}
\end{equation}
\end{theorem}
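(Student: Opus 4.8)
The plan is to establish (\ref{discretefk1}) by expanding the time-ordered product $P\prod_{i=0}^{n-1}\tilde U_i$ into a sum over lattice paths and then inserting a Fourier representation of the single constraint that ties the overall increment $J_2-J_1$ to the accumulated sum $\sum_i\psi_i$. The structural observation that drives everything is that each lifted propagator $\tilde U_i$ in (\ref{eq_defu}) is the product of the ordinary propagator $U_i$ acting on the $y$-coordinate with a translation operator in the auxiliary coordinate $k$ (the discretised version of $J$), where the amount of translation, $[\psi_i(y_1,y_2)(\Delta J)^{-1}]$, depends only on the pair of endpoints. Translations in $k$ form a one-parameter abelian group and are therefore simultaneously diagonalised by the partial Fourier transform in the $k$-variable; the eigenvalue attached to the Fourier mode $p$ is exactly $e^{-ip\psi_i(y_1,y_2)}$, which is how $\tilde U_{i,p}$ arises. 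Because the relevant group is one-dimensional, no appeal to the simultaneous-diagonalisation lemma for general Abelian liftings is needed here: the Fourier transform performs the block-diagonalisation by itself. This is the discrete-time counterpart of the Abelian Bridges theorem.

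Concretely, the first step is to write
\begin{equation*}
\Big(\tilde U_0\cdots\tilde U_{n-1}\Big)(y_1,J_1;y_2,J_2)=\sum_{z_1,\dots,z_{n-1}}\ \prod_{i=0}^{n-1}U_i(z_i,z_{i+1})\ \prod_{i=0}^{n-1}\delta\big(J^{(i)}-J^{(i+1)}+\psi_i(z_i,z_{i+1})\big),
\end{equation*}
with $z_0=y_1$, $z_n=y_2$, $J^{(0)}=J_1$, $J^{(n)}=J_2$ and the intermediate $J^{(1)},\dots,J^{(n-1)}$ summed out; telescoping the chain of delta-functions uses $n-1$ of them to fix the intermediate variables and collapses the product to the single constraint $J_2-J_1=\sum_{i=0}^{n-1}\psi_i(z_i,z_{i+1})$. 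The second step represents that constraint as $\delta\big(J_2-J_1-\sum_i\psi_i(z_i,z_{i+1})\big)=\int\frac{dp}{2\pi}\,e^{ip(J_2-J_1)}\prod_{i=0}^{n-1}e^{-ip\psi_i(z_i,z_{i+1})}$. The third step is to interchange the $p$-integral with the sum over the intermediate sites $z_1,\dots,z_{n-1}$ and to recognise
\begin{equation*}
\sum_{z_1,\dots,z_{n-1}}\ \prod_{i=0}^{n-1}U_i(z_i,z_{i+1})\,e^{-ip\psi_i(z_i,z_{i+1})}=\Big(\tilde U_{0,p}\cdots\tilde U_{n-1,p}\Big)(y_1,y_2),
\end{equation*}
which is precisely the statement that the partial Fourier transform turns the convolution in $J$ into multiplication mode by mode. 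Collecting the three steps gives (\ref{discretefk1}).

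The main obstacle is technical and sits in two places. First, the interchange of the $p$-integral with the path sum must be justified; this follows because each $U_i$ is a probability kernel on the bounded lattice $A_m$, hence uniformly bounded and summable in the space variable, while $|e^{-ip\psi_i}|=1$, so Fubini applies once one works over a bounded range of $p$ and then passes to the limit. Second, and this is the genuinely delicate point, the lifting (\ref{eq_defu}) carries the nearest-integer rounding $[\psi_i(\Delta J)^{-1}]$, so the identity written with the unrounded exponent $e^{-ip\psi_i}$ is exact only in the continuum limit $\Delta J\downarrow0$, equivalently when $p$ ranges over all of $\RRR$ rather than over the Brillouin zone $[-\pi/(\Delta J),\pi/(\Delta J)]$. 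Controlling this rounding is done exactly as in the estimates of Section~\ref{sec_feynman}: since $\psi_i-(\Delta J)[\psi_i(\Delta J)^{-1}]=O(\Delta J)$, the induced corrections to the exponent are $O(\Delta J)$ uniformly and vanish in the limit, so the limiting joint kernel satisfies the stated exact identity.
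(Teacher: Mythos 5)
Your proof is correct and follows exactly the route the paper intends: the paper states this theorem without a written proof (deferring to the cited collaboration work), and your argument---expanding the time-ordered product over lattice paths, telescoping the delta constraints in the lifted coordinate to the single condition $J_2-J_1=\sum_i\psi_i(z_i,z_{i+1})$, and inserting its Fourier representation so the partial Fourier transform diagonalises the translation part mode by mode---is precisely the partial-Fourier block-diagonalization used in the paper's continuous-time Markov-bridge and Abelian-bridge derivations, transposed to discrete time. Your remark that the identity with the unrounded exponent $e^{-ip\psi_i}$ is exact only modulo the $O(\Delta J)$ rounding in the lifting (\ref{eq_defu}), i.e.\ in the limit $\Delta J\downarrow 0$, is a worthwhile precision the paper glosses over.
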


This case also falls under a more general class of Abelian liftings.
\begin{definition} {\bf (Propagator Liftings.)}
 Let $U_{i}(y_1, y_2)$ be a family of Markov
propagators indexed by $i = 0 , 1, ... n-1$ defined over the time
intervals $[t_i, t_{i+1}]$. Consider a propagator lifting of the
form
\begin{equation}
\bar U_{i}(y_1, k_1; y_2, k_2) = U_{i}(y_1, y_2) Q_i(y_1, y_2)_{k_1
k_2}.
\end{equation}
where $k_1, k_2 = 0 ... K$. This lifting is called Abelian if the
operators $Q_i(y_1, y_2)$ satisfy the following commutation
condition:
\begin{equation}
Q_i(y_1, y_2) Q_{i}(y_1', y_2') - Q_{i}(y_1', y_2')Q_i(y_1, y_2) = 0
\end{equation}
for all $y_1, y_2, y_3, y_4 \in A_m$ and all time intervals
$i=0,...n-1$.
\end{definition}

Let $V(k, j), j=0, ..K$ be a matrix that simultaneously diagonalizes
all operators of the family  $Q(y_1, y_2)_{k_1 k_2}$ so that $V^{-1}
Q(y_1, y_2) V = \Lambda(y_1, y_2)$ is diagonal. Consider the lifted
matrix $\V(k, y_1; j, y_2) = V(k, j) \delta_{y_1 y_2}$ and the
transformed lifted Markov generator
\begin{equation}
(\V^{-1} \bar U_{i} \V )(y_1, j_1; y_2, j_2) = \bar U_{i}(y_1; y_2)
\Lambda_{i} (y_1, y_2)\delta_{j_1 j_2} .
\end{equation}
Since this matrix is block-diagonal, the evaluation of the
time-ordered product
\begin{equation}
P \prod_{i=0}^{n-1} \bar U_{i} = \V  \bigg( P \prod_{i=0}^{n-1}
(\V^{-1} \bar U_{i} \V ) \bigg) \V^{-1} \label{discretefk}
\end{equation}
involves only multiplying blocks whose dimension equals the size of
the lattice $A_m$. The reader will recognize that the formula in
(\ref{discretefk}) is yet another generalization and extension of
the Cameron-Feynman-Girsanov-Ito-Kac-Martin formulas discussed
above.

The non-singular linear transformation that accomplished the
block-diagonalization can be a Fourier transform or a more general
transformation. Several possibilities are open and the optimal
choice depends on the objective. When using transforms more general
than the Fourier transform, one has to keep present that the
simultaneous diagonalization of the matrices $Q(y_1, y_2)_{k_1k_2}$
above can possibly result in a numerically ill-conditioned problem.
An example of this phenomenon arises when one attempts to use a
non-homogeneous discretization of the path process coordinate. To
seize the benefit of inhomogeneous lattices, one needs to be careful
when discretizing and avoid resonances.

Consider a lattice for the path dependent process defined as follows
$\Omega = \{ \omega_1, \omega_1, .... \omega_K \}$ where
$\omega_1<\omega_2<...<\omega_K$. Consider the shift operator
${\mathcal R}$ of matrix elements
\begin{equation}
R_{k_1 k_2} = (1-p_{k_1}) \delta_{k_1 k_2} + p_{k_1}
\delta_{k_2,k_1+1},
\end{equation}
if $k_1<k_2$ and $R_{KK} = 0$. Here we assume that $0<p_k<1$, for
$k=1,..n$. The eigenvalues of ${\mathcal R}$ are given by the
diagonal elements $\rho_k = 1 - p_{k}$. Diagonalizing this sort of
matrix is potentially a seriously ill-conditioned especially if the
matrix is large and the eigenvalues are degenerate of nearly
degenerate. For practical application, one thus needs to achieve non
resonance conditions and keep the eigenvalues $\rho_i$ as far apart
from each other as possible.

Given a non-resonant choice of transition probabilities such as the
one above, one can then fix a $\omega_0$ and a $\Delta \omega > 0$
and determine the lattice $\Omega$ in such a way that
\begin{equation}
\sum_{k_2=1}^K  R_{k_1 k_2} \omega_{k_2} = \omega_{k_1} + \Delta
\omega
\end{equation}
for all $k_1 = 1, .... K-1$.

To obtain a non-resonant spectrum, one may choose the lattice
$\omega_k$ so that
\begin{equation}
\omega_k = \omega_0 \cdot Z^k
\end{equation}
where $\omega_0>0$ and $Z>1$. For this discretization to be
acceptable, the resulting transition probabilities
\begin{equation}
p_k = {\Delta \omega \over \omega_0 Z^k(Z-1)}
\end{equation}
ought to be between 0 and 1. Hence $Z$ must satisfy the additional
constraint
\begin{equation}
Z (Z-1) > {\Delta \omega \over \omega_0}.
\end{equation}

We interpolate between the transition kernels $\mathcal{R}$ to
obtain the relevant kernels of all possible values of $\psi(x_1,
x_2)$. We construct the relevant kernels $\Lambda_{\psi}$ as
follows:
\begin{eqnarray}
Q(y_1, y_2) = ( [\psi(y_1,y_2)]+1-\psi(y_1,y_2) ){\mathcal
R}^{[\psi(y_1,y_2)]}+ ( \psi(y_1,y_2) - [\psi(y_1,y_2)] ){\mathcal
R}^{[\psi(y_1,y_2)]+1}, \notag \\
\end{eqnarray}
for all $y_1$ and $y_2$ in $A_m$, where $\left[ \psi(y_1,y_2)
\right]$ represents the integer part of the functional $\psi$ at
$(y_1, y_2)$. In the representation in which the operator ${\mathcal
R}$ is diagonal, the operators $Q(y_1, y_2)$ are also diagonal and
the lifted propagator is block-diagonal.

\section{Univariate Moment Expansions on Bridges}\label{sec:moment1}

This section is based on work in collaboration with Adel Osseiran,
see \cite{AOsseiran}.

A second methodology that applies to most Abelian path dependent
options is is based on obtaining only a few moments of an Abelian
process on any given bridge with respect to the underlying Markov
process, as opposed to reconstructing the entire conditional
distribution.

Consider a time interval $[T, T']$ and a Markov generator $\L(y_1,
y_2; t)$. Consider again the integral $I_t$ in (\ref{eq_defi}).
Let's introduce the following one parameter family of deformed
Markov operators parameterized by $\epsilon\in\RRR$
\begin{equation}
\L_\epsilon(y_1, y_2; t) = \L(y_1, y_2; t) + \epsilon V(y_1, y_2; t)
\end{equation}
where
\begin{equation}
V(y_1, y_2; t) = \phi(y_1; t) \delta_{y_1, y_2} + \L(y_1, y_2; t)
\chi(y_1, y_2; t)(\psi(y_2; t)-\psi(y_1; t))
\end{equation}

\begin{theorem} {\bf (Dyson Formula.)}
We have that
\begin{equation}
\left({d\over d\epsilon}\right)^n\bigg\lvert_{\epsilon=0}
P\exp\bigg(\int_T^{t} \L_\epsilon(s) ds \bigg)(y_1, y_2) = E_T\big[
I_t^n \delta(y_{t} - y_2) \lvert y_{T} = y_1
 \big].
\end{equation}
\end{theorem}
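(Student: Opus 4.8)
The plan is to recognize that the deformed propagator $P\exp\bigl(\int_T^t \L_\epsilon(s)\,ds\bigr)$ is exactly the $p$-Fourier-block of a lifted (Abelian) generator of the type appearing in the Abelian Bridges theorem, with the identification $\epsilon \leftrightarrow -ip$, and then to differentiate the lifting identity $n$ times in $\epsilon$ at $\epsilon=0$. Concretely, first I would expand the path-ordered exponential using the Dyson series (equation~(\ref{ma_dyson}) applied to the time-dependent generator $\L_\epsilon$): since $\L_\epsilon = \L + \epsilon V$ is affine in $\epsilon$, the coefficient of $\epsilon^n$ in the Dyson expansion is a sum over all ways of placing $n$ copies of $V$ and arbitrarily many copies of $\L$ along the time-ordered simplex $T\le s_1\le\cdots\le T'$. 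Equivalently, by the standard ``variation of parameters'' / Duhamel identity iterated $n$ times,
\begin{equation}
\Bigl(\tfrac{d}{d\epsilon}\Bigr)^n\Big|_{\epsilon=0} P\exp\Bigl(\int_T^t \L_\epsilon(s)\,ds\Bigr)
= n!\!\!\int_{T\le s_1\le\cdots\le s_n\le t}\!\! U(T,s_1)\,V(s_1)\,U(s_1,s_2)\,V(s_2)\cdots V(s_n)\,U(s_n,t)\;ds_1\cdots ds_n,
\end{equation}
where $U(s,s') = P\exp\bigl(\int_s^{s'}\L(v)\,dv\bigr)$ is the undeformed propagator. This is the purely operator-algebraic half of the argument and uses only the Dyson expansion already proved.

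The second half is to identify the right-hand side above, applied to $(y_1,y_2)$, with the conditional expectation $E_T\bigl[I_t^n\,\delta(y_t-y_2)\mid y_T=y_1\bigr]$. Here I would unwind the definition of $I_t$ in~(\ref{eq_defi}): $I_t$ is a time-integral of the non-anticipatory integrand $\phi(y_s;s) + \chi(y_{s-0},y_s;s)\,\dot\psi$, so $I_t^n = n!\int_{T\le s_1\le\cdots\le s_n\le t}\prod_j\bigl(\phi(y_{s_j};s_j)+\chi\,\dot\psi\bigr)\,ds_1\cdots ds_n$ after symmetrizing over orderings — exactly the factor $n!$ that appears above. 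Taking the expectation against the Markov measure $\mu$ conditioned on $y_T=y_1$ and weighted by $\delta(y_t-y_2)$, and inserting the Markov factorization $\Phi = \prod U$ of the $n$-point function, each free-evolution stretch between successive $s_j$ contributes a factor $U(s_{j-1},s_j)(\cdot,\cdot)$, and at each marked time $s_j$ the insertion $\phi(y_{s_j};s_j)\delta + \L(\cdot,\cdot;s_j)\chi(\psi-\psi)$ appears — which is precisely the operator $V(s_j)$. (The $\chi\,\dot\psi$ term produces $\L(y_{s_j-0},y_{s_j})\chi(\psi(y_{s_j})-\psi(y_{s_j-0}))$ upon taking the right-derivative against the generator, matching the off-diagonal part of $V$; the $\delta_{y_1,y_2}\phi$ term is the diagonal part.) Summing the geometric-style Dyson series in the free stretches reproduces the full propagators $U$, giving exactly the operator expression of the first half.

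The step I expect to be the main obstacle is the careful handling of the $\chi(y_{s-0},y_s;s)\,\lim_{t\downarrow0}\tfrac{\psi(y_s;s)-\psi(y_{s-t};s-t)}{t}$ term: showing rigorously that its contribution to $E_T[I_t^n\delta(y_t-y_2)\mid y_T=y_1]$ is, up to $O(h)$, the matrix element $\L(y_{s-0},y_s;s)\,\chi(y_{s-0},y_s;s)\,(\psi(y_s;s)-\psi(y_{s-0};s))$ inserted at the jump time — i.e. that the ``instantaneous drift of $\psi$ evaluated across a jump'' turns into exactly the off-diagonal generator weight times the $\psi$-increment. This is morally the same computation carried out in the Markov Bridges and Abelian Bridges theorems via the $(\Delta I)\to0$ lifting, and I would simply invoke that machinery: set $\epsilon = -ip$, apply the Abelian Bridges theorem (or its Markov Bridges special case) to get the characteristic function $E_T[e^{-ipI_t}\delta(y_t-y_2)\mid y_T=y_1] = P\exp\bigl(\int_T^t(\L(s) - ip\phi(s) + \kappa_{m,p}(s) + O(h))\bigr)(y_1,y_2)$, observe that $\L - ip\phi + \kappa_{m,p}$ agrees to first order in the deformation parameter with $\L_\epsilon = \L + \epsilon V$ (indeed $\partial_p\kappa_{m,p}|_{p=0}$ reproduces the $\chi\dot\psi$ piece of $V$ and $-ip\phi$ the diagonal piece), and then differentiate the identity $E_T[e^{-ipI_t}\delta(y_t-y_2)] = (\text{propagator})(y_1,y_2)$ in $p$ at $p=0$: $\partial_p^n$ of the left side is $(-i)^n E_T[I_t^n\delta(\cdots)]$, and $\partial_p^n$ of the right side, since the exponent is affine in $p$ modulo $O(h)$ and higher $\kappa$-corrections vanish at $p=0$, is $(-i)^n$ times the stated $n$-fold nested integral. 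Matching powers of $(-i)^n$ gives the claimed formula. Thus the proof reduces entirely to the already-established bridge theorems plus an $n$-fold differentiation in the Fourier parameter.
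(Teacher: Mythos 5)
Your first half is exactly the paper's argument: the paper expands $P\exp\big(\int_T^t\L_\epsilon(s)\,ds\big)$ via Neper's formula, collects powers of $\epsilon$, reads off the iterated Duhamel integrals with the $n!$ coming from time ordering, and then identifies these time-ordered integrals with the conditional moments in one line. The genuine gap is in your second half, where you justify that identification by setting $\epsilon=-ip$ in the Abelian Bridges theorem and differentiating $n$ times in $p$. The Fourier-transformed lifted generator there is $\L-ip\phi+\kappa_{m,p}$ with $\kappa_{m,p}(y_1,y_2;t)=\L(y_1,y_2;t)\big(e^{-ip\chi(y_1,y_2;t)(\psi(y_2;t)-\psi(y_1;t))}-1\big)$, and this is \emph{not} affine in $p$ modulo $O(h)$: the first $p$-derivative at $p=0$ does reproduce the off-diagonal part of $V$, but the second derivative at $p=0$ equals $-\L(y_1,y_2)\chi^2(\psi(y_2)-\psi(y_1))^2$, which for the nearest-neighbour (diffusive) part of the generator is of order one --- it is precisely the quadratic-variation term $\sim\sigma^2\chi^2(\psi')^2$ that Section~\ref{sec_feynman} is careful to retain in $\tilde\L_p$ --- and for a finite-activity jump part it is of the order of the jump intensity, again not $O(h)$. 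Consequently, for $n\ge2$ the $n$-th $p$-derivative of the bridge characteristic function is not $(-i)^n$ times the $n$-th $\epsilon$-derivative of $P\exp\big(\int\L_\epsilon\big)$: the two differ by ``diagonal'' insertions in which the same jump of $y$ is counted with multiplicity $\ge 2$. (Quick check: for a Poisson counter with $\chi=1$, $\psi(y)=y$, $\phi=0$, the affine deformation gives $(\lambda t)^2$ for the second moment, while $E[N_t^2]=\lambda t+(\lambda t)^2$.) So the step you yourself flagged as the main obstacle is exactly where the argument fails, and deferring it to the bridge machinery does not close it.

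What does go through cleanly --- and is what the paper's applications of this theorem actually use (realized variance, occupation times, and the bivariate examples all deform by multiplication operators) --- is the diagonal case $V=\phi\,\delta_{y_1y_2}$: then $I_t=\int_T^t\phi(y_s;s)\,ds$, coinciding times carry no mass, your symmetrization produces exactly the $n!$, and the Markov factorization of the $n$-point function identifies $E_T\big[I_t^n\,\delta(y_t-y_2)\mid y_T=y_1\big]$ with $n!\int_{T\le s_1\le\cdots\le s_n\le t}U(T,s_1)\phi(s_1)U(s_1,s_2)\cdots\phi(s_n)U(s_n,t)\,(y_1,y_2)$, which is the identification the paper asserts. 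To capture the $\chi\,\Delta\psi$ part of $I_t$ one must deform the off-diagonal entries with the full exponential weight $\L(y_1,y_2)e^{\epsilon\chi(\psi(y_2)-\psi(y_1))}$ (equivalently, differentiate the exact lifted propagator in $p$), not with the linearized $V$; with the linear deformation the stated identity can only hold up to those same-jump corrections, so your proposed reduction should either be restricted to the diagonal case or rebuilt around the exponential deformation.
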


\begin{proof} Consider Neper's formula for the propagator
\begin{equation}
P \exp\bigg(\int_T^t \L_\epsilon(s) ds \bigg) =
\lim_{N\to\infty} P \prod_{i=1}^N \left( 1 + {t - T\over N} (\L(t_i) +
\epsilon \phi(t_i)) \right)^N
\end{equation}
where $t_i = T + {i t \over N}$.  By collecting similar powers in
$\epsilon$, one finds Dyson's formula
\begin{align}
P \exp\bigg(\int_T^t \L_\epsilon(s) ds \bigg)& = \exp((t-T)\L + \\
& \epsilon \int_T^{t} ds_1
\bigg( e^{(s_1-T)\L} V(s_1) e^{(t-s_1)\L}\bigg) +\\
& \sum_{n=2}^\infty \epsilon^n \int_T^{t} ds_1 ...
\int_{s_{n-1}}^{t} ds_n  \bigg( e^{(s_1-T)\L} V(s_1) e^{(s_2-s_1)\L}
.... V(s_{n}) e^{(t-s_{n})\L}\bigg).
\end{align}
The time-ordered integrals above are proportional to conditional moments, i.e.
\begin{equation}
P\exp\bigg(\int_T^t \L_\epsilon(s) ds \bigg)(y_1, y_2) =
\sum_{n=0}^\infty {\epsilon^n\over n!} E_T\big[ I_t^n
\delta\big(y_{t} - y_2\big) \lvert y_T = y_1\big].
\end{equation}
Here, the factorials originate from the time ordering.
\end{proof}

A technique which is numerically stable in many situations of
practical relevance is to numerically differentiate with respect to
$\epsilon$ the deformed propagators $P \exp\bigg(\int_T^t \L_\epsilon(s) ds \bigg)(y_1, y_2)$
and evaluate at $\epsilon=0$. This technique can be used to obtain
the first moments of $I_t$ on any given bridge for the underlying
Markov process. In most applications, we find that two moments
suffice to extrapolate the probability distribution function to
sufficient accuracy. To do so, it is convenient to choose from among
the probability distribution functions which are analytically
tractable. For instance, starting from the first two moments only,
one can use either the log-normal or the chi-square distribution.

The standard chi-square distribution is given by
\[ f(x) = \frac{1}{2 \Gamma \left( \frac{a}{2} \right) }
\left( \frac{x}{2} \right)^{a/2 - 1} e^{-x/2} \]
where $a$ is the number of degrees of freedom. The first and second (raw)
 moments of this distribution are
\[ E[x] = a  \, , \,\,\,\,\,\,\,\,\,\,\,\,\, E[x^2] = a(a+2) \]
To match the pre-assigned first and second moments $m_1$, $m_2$,
respectively, one can pass to the new variable
\[ \xi = \frac{m_1}{a} x \]
and chose
\[ a = \frac{2m_1^2}{m_2 - m_1^2} \]

Let $\xi$ be a log-normally distributed random variable with
probability distribution function
\[ f(x;\mu,\sigma) = \frac{1}{x \sigma \sqrt{2 \pi}}
e^{-(\ln x - \mu)^2 / 2 \sigma^2} \] The first two moments are given
by
\[ E[x] = e^{\mu + \sigma^2/2} \,\,\,\,\,\,\,\,\,\,\,
\textrm{and} \,\,\,\,\,\,\,\,\,\,\, E[x^2] = e^{2 \mu + 2 \sigma^2}
\] The parameters
$\mu$ and $\sigma$ can be reconstructed from two pre-assigned first
and second moments, $m_1$ and $m_2$ respectively, by setting
\[ \mu = \log \left( \frac{m_1^2}{\sqrt{m_2}} \right)
\,\,\,\,\,\,\,\,\,\, \textrm{and} \,\,\,\,\,\,\,\,\,\,\, \sigma^2 =
\log \left( \frac{m_2}{m_1^2} \right). \]

The log-normal and the chi-square distributions allow one to match
the first two moments. More general distributions available in
closed form allow one to match higher moments also. The Pearson Type
III distribution for instance has a probability distribution
function given by
\[ f(x) = \frac{1}{b \Gamma(p)} \left( \frac{x - a}{b}
\right)^{p-1} e^{-(x- a)/b} \] and extends over the half line $[a,
+\infty)$. The special case of this distribution when $a = 0$, $b =
2$ and $p$ is half of an integer, gives the Chi-Squared
distribution. In general, the moments are given by
\[ E[x] = a + b p \]
\[ E[x^2] = (a + b p)^2 + b^2 p \]
\[ E[x^3] = (a + b p)^3 + 3 b^2 p (a + b p)
+ 2 b^3 p \] and matching these with our pre-assigned moments
$m_1,m_2$ and $m_3$ and computing the values of $a,b$ and $p$ in
terms of these moments we get
\[ a = m_1 - \frac{2(m_2 - m_1^2)^2}{m_3 + 2m_1^3 - 3m_1m_2} \]
\[ b = \frac{m_3 + 2m_1^3 - 3m_1m_2}{2(m_2 - m_1^2)} \]
\[ p =  \frac{4(m_2 - m_1^2)^3}{(m_3 + 2m_1^3 - 3m_1m_2)^2} \]

\begin{example} {\bf (Volatility contracts)} \rm As an example, consider variance and
volatility swaps. Variance swaps of maturity $T'$ and time of
issuance $T$ have a payoff given by
\[ \mathrm{min} \left( \int_{T}^{T'} v(y_{s-0}, y_{s})  ds,  f \cdot SR^2 \right) - SR^2
\] where $SR$ is the swap rate and $f$ is a factor, a typical value
being $f = 6.2$. Here, $v(y_1, y_2, t)$ is the instantaneous
variance defined as follows:
\begin{equation}
v(y_1, y_2) = {\mathcal{L}} (y_1,y_2;t) \log^2 \left(
\frac{S(y_2)}{S(y_1)} \right)
\end{equation}
if $y_1$ is such that $x(y_1) \neq 0$. Otherwise, if $x(y_1) = 0$
then $v(y_1, t) = \infty$. The variance swap is said to be at
equilibrium if its price is $0$. The payoff of a volatility swap is
\[ \mathrm{min} \left( \sqrt{\int_{T}^{T'} v(y_s,s) ds},SR \right) -
SR \] Here, $SR$ is the volatility swap rate.

To price these contracts it suffices to evaluate the distribution of
realized variance on a bridge, i.e. of the functional
\begin{equation}
RV(y_2) = \delta \left( y_{T'} - y_2 \right) \int_{T}^{T'} v(y_s,s)
ds
\end{equation}
By approximating the distribution of $RV(y)$ with the chi-squared
distribution, and using the CDF
 \[ F(x;a) = \frac{\gamma \left(\frac{a}{2},\frac{x}{2} \right)}
{\Gamma \left(\frac{a}{2} \right)} \]
where $\Gamma(z)$ and $\gamma(z,a)$ are the gamma and incomplete
gamma functions respectively, i.e:
\[ \Gamma(z) = \int_{0}^{\infty} s^{z-1} e^{-s} ds \,,\,\,\,\,\,
\gamma(z,a) =  \int_{0}^{a} s^{z-1} e^{-s} ds \] we find
\[ E_t \left[ \mathrm{min} (RV,RV_{\mathrm{max}})
\, \delta(y_{T'} - y_2)  \right] =
\frac{m_1}{a}  \left[ K(1 - F(K;a)) + a F(K;a +2) \right] \]
and
\[ E_t \left[ \mathrm{min} (\sqrt{RV},\sqrt{RV_{\mathrm{max}}})
\, \delta(y_{T'} - y_2)  \right] =
\sqrt{\frac{m_1}{a}} \left[ \sqrt{K} (1- F(K;a))
+ \frac{\sqrt{2} \gamma \left(\frac{a+1}{2},\frac{K}{2} \right)}
{\Gamma \left(\frac{a}{2} \right)} \right] \]
where
\[ a = a(y_1,y_2) = \frac{2m_1 (y_1,y_2)^2}{m_2 (y_1,y_2) - m_1(y_1,y_2)^2}
\,,\,\,\,\, K = K(y_1,y_2) = \frac{a(y_1,y_2)}{m_1(y_1,y_2)}
RV_{\mathrm{max}} \] Since the dependency on the swap rate in both
cases is non-linear, an exact calculation requires using a root
finder.

Using instead the log-normal distribution to extrapolate, we find
\begin{eqnarray}
&\hskip-8cm E_t [ \mathrm{min} (RV,RV_{\mathrm{max}}) \,
\delta(y_{T'} -
y_2)] \notag \\
&= e^{\mu + \sigma^2/2} {\mathcal{N}} \left(
\frac{\log(RV_{\mathrm{max}}) - \mu - \sigma^2/2} {\sigma} \right) +
RV_{\mathrm{max}} \, {\mathcal{N}} \left(
\frac{\log(RV_{\mathrm{max}}) - \mu}{\sigma} \right). \notag \\
\end{eqnarray}
For volatility swaps instead, we find
\begin{eqnarray}
&\hskip-8cm  E_t \left[ \mathrm{min}
(\sqrt{RV},\sqrt{RV_{\mathrm{max}}}) \, \delta(y_{T'} - y_2) \right]
\notag \\
&= e^{\frac{1}{8} (4 \mu + \sigma^2)} {\mathcal{N}} \left(
\frac{\log(RV_{\mathrm{max}}) - \mu - \sigma^2/2} {\sigma} \right) +
\sqrt{RV_{\mathrm{max}}} \, {\mathcal{N}} \left(
\frac{\log(RV_{\mathrm{max}}) - \mu} {\sigma} \right) \notag \\
\end{eqnarray}
where $\mu$ and $\sigma$ are specified above in terms of $m_1$ and
$m_2$.

Finally, in case the Pearson Type 3 distribution is used, we can
express these expectations in terms of the chi-square cumulative
distribution function $F(x;a)$ as follows:
\[ E_t [ \mathrm{min} (RV,RV_{\mathrm{max}})
\, \delta(y_{T'} - y_2)] = (a+bp - RV_{\mathrm{max}})
F_{\mathrm{Chi}} \left(2p, 2\frac{RV_{\mathrm{max}}-a}{b} \right) \]
\begin{equation}
- b \left( \frac{RV_{\mathrm{max}}-a}{b} \right)^p
\frac{e^{-\frac{RV_{\mathrm{max}}-a}{b}}}{\Gamma(p)} + (2
RV_{\mathrm{max}} - a - bp).
\end{equation}
\end{example}

\section{Bivariate Moment Expansions on Bridges}\label{sec:moment1}

This section is based on work in collaboration with Adel Osseiran,
see \cite{AOsseiran}.

Consider a time interval $[T, T']$ and a Markov generator $\L(y_1,
y_2; t)$. We are interested in the joint distribution on any given
bridge for the underlying process of two stochastic integrals
\begin{equation}
I_{1, t} = I(y_\cdot, t) \equiv \int_T^{t} \bigg( \phi_1(y_{s}; s) +
\chi_1(y_{s-0}, y_{s}; s) \lim_{t\downarrow0} {\psi_1(y_{s}; s) -
\psi_1(y_{s-t}; s-t) \over t } \bigg) ds
\end{equation}
and
\begin{equation}
I_{2, t} = I(y_\cdot, t) \equiv \int_T^{t} \bigg( \phi_2(y_{s}; s) +
\chi_2(y_{s-0}, y_{s}; s) \lim_{t\downarrow0} {\psi_2(y_{s}; s) -
\psi_2(y_{s-t}; s-t) \over t } \bigg) ds.
\end{equation}
To handle this problem using the moment method, we introduce the
following two-parameter family of deformed Markov operators
parameterized by $\epsilon, \epsilon'\in\RRR$
\begin{equation}
\L_{\epsilon_1, \epsilon_2} (y_1, y_2; t) = \L(y_1, y_2; t) +
\epsilon_1 V_1(y_1, y_2; t) + \epsilon_2 V_2(y_1, y_2; t)
\end{equation}
where
\begin{equation}
V_1(y_1, y_2; t) = \phi_1(y_1; t) \delta_{y_1, y_2} + \L(y_1, y_2;
t) \chi_1(y_1, y_2; t)(\psi_1(y_2; t)-\psi_1(y_1; t))
\end{equation}
and
\begin{equation}
V_2(y_1, y_2; t) = \phi_2(y_1; t) \delta_{y_1, y_2} + \L(y_1, y_2;
t) \chi_2(y_1, y_2; t)(\psi_2(y_2; t)-\psi_2(y_1; t))
\end{equation}
\begin{theorem} {\bf (Dyson Formula.)}
We have that
\begin{equation}
\left({\partial^{n_1+n_2} \over
\partial \epsilon_1^{n_1} \partial \epsilon_2^{n_2} }\right)\bigg\lvert_{\epsilon_1=0, \epsilon_2 = 0}
P\exp\bigg(\int_T^{t} L_{\epsilon_1, \epsilon_2} (s) ds \bigg)(y_1,
y_2) = E_T\big[ I_{1, t}^{n_1} I_{2, t}^{n_2} \delta(y_{t} - y_2)
\lvert y_{t} = y_1
 \big].
\end{equation}
\end{theorem}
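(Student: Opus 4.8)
The plan is to follow the proof of the univariate Dyson formula verbatim, carrying two deformation parameters instead of one. First I would write Neper's formula for the doubly deformed propagator,
\begin{equation}
P\exp\bigg(\int_T^t \L_{\epsilon_1,\epsilon_2}(s)\,ds\bigg) = \lim_{N\to\infty} P\prod_{i=1}^N \bigg(1 + {t-T\over N}\big(\L(t_i) + \epsilon_1 V_1(t_i) + \epsilon_2 V_2(t_i)\big)\bigg),
\end{equation}
with $t_i = T + i(t-T)/N$, and expand the ordered product as a power series in $\epsilon_1$ and $\epsilon_2$. Collecting the coefficient of $\epsilon_1^{n_1}\epsilon_2^{n_2}$ and passing to the limit $N\to\infty$ yields a finite sum, indexed by the $\binom{n_1+n_2}{n_1}$ interleaving patterns of $n_1$ insertions of $V_1$ and $n_2$ insertions of $V_2$, of nested time-ordered integrals in which propagators $e^{(s'-s)\L}$ alternate with the operators $V_1(s)$ and $V_2(s)$, exactly as in the univariate Dyson expansion but now with two species of insertion.

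The second step is to identify these time-ordered integrals with conditional moments. By the same discretization argument used for the Abelian bridges and the double liftings above, an insertion of $V_1$ at time $s$ records an increment of the functional $I_{1,t}$ along the path, and an insertion of $V_2$ records an increment of $I_{2,t}$. Summing over the interleaving patterns reassembles the product of the two additive functionals: since $I_{j,t}^{\,k}$ equals $k!$ times the integral of its increments over the ordered simplex, the double simplex $\{T<s_1<\cdots<s_{n_1}<t\}\times\{T<u_1<\cdots<u_{n_2}<t\}$ decomposes into exactly the $\binom{n_1+n_2}{n_1}$ interleaved sectors appearing in the Dyson expansion. One therefore obtains
\begin{equation}
P\exp\bigg(\int_T^t \L_{\epsilon_1,\epsilon_2}(s)\,ds\bigg)(y_1,y_2) = \sum_{n_1,n_2=0}^\infty {\epsilon_1^{n_1}\,\epsilon_2^{n_2}\over n_1!\,n_2!}\; E_T\big[I_{1,t}^{\,n_1}\,I_{2,t}^{\,n_2}\,\delta(y_t - y_2)\;\lvert\;y_T = y_1\big],
\end{equation}
and differentiating $n_1$ times in $\epsilon_1$ and $n_2$ times in $\epsilon_2$ at $\epsilon_1=\epsilon_2=0$ removes the factorials and gives the stated identity.

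I expect the main obstacle to be precisely the combinatorial bookkeeping in the second step: one must check that the ordering of the $V_1$'s among themselves is symmetrized by the time integration (producing the $n_1!$), that the same holds for the $V_2$'s (producing the $n_2!$), and that the mixed orderings of $V_1$'s relative to $V_2$'s are counted once and only once --- which is guaranteed by the fact that $I_{1,t}$ and $I_{2,t}$ live on disjoint lifting coordinates $(\Delta I_1)m_t$ and $(\Delta I_2)n_t$ in the double-lifting construction, so that no cross-cancellation occurs. The remaining analytic points are routine: the Dyson series converges on compact sets of $(\epsilon_1,\epsilon_2)$ because $\L, V_1, V_2$ are bounded lattice operators and the time horizon is finite, and differentiation commutes with the $N\to\infty$ limit because each partial product is a polynomial in $(\epsilon_1,\epsilon_2)$.
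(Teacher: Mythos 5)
Your proposal is correct and is precisely the "simple extension of the univariate case" that the paper's proof leaves to the reader: Neper's formula, collection of the $\epsilon_1^{n_1}\epsilon_2^{n_2}$ coefficient as a sum over the $\binom{n_1+n_2}{n_1}$ interleaved time-ordered insertions of $V_1$ and $V_2$, and identification of that sum with ${1\over n_1!\,n_2!}E_T\big[I_{1,t}^{n_1} I_{2,t}^{n_2}\delta(y_t-y_2)\,\lvert\, y_T=y_1\big]$ via the decomposition of the product of ordered simplices. The combinatorial bookkeeping you flag is handled correctly (including reading the conditioning as $y_T=y_1$), so no gap remains.
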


\begin{proof} The proof is a simple extension of the proof in the univariate
case and will be left to the reader.
\end{proof}

Consider a bi-variate log-normally distributed variable
\begin{displaymath} \left(
\begin{array}{c}
Y_1 \\
Y_2 \\
\end{array} \right)
= \left( \begin{array}{c}
\log(X_1) \\
\log(X_2) \\
\end{array} \right)
\sim N \left( \left( \begin{array}{c}
\mu_1 \\
\mu_2 \\
\end{array} \right) ,
\left( \begin{array}{c}
\sigma_1 \\
\sigma_2 \\
\end{array} \right) \right)
\end{displaymath}
whereby $X_1$ and $X_2$ are correlated normally distributed
variables of joint distribution
\begin{equation}
f(x_1,x_2) = \frac{1}{2 \pi \sigma_1 \sigma_2 \sqrt{1- \rho^2} x_1
x_2} \cdot
\end{equation}
\[  \exp \left\{ -\frac{1}{2(1- \rho^2)} \left[
\left( \frac{\log x_1 - \mu_1}{\sigma_1} \right)^2 - 2 \rho
\frac{(\log x_1 - \mu_1)(\log x_2 - \mu_2)}{\sigma_1 \sigma_2} +
\left( \frac{\log x_2 - \mu_2}{\sigma_2} \right)^2 \right] \right\},
\]
where $\rho$ is a correlation parameter. Both $X_1$ and $X_2$ are
log-normally distributed with
\[ E[X_i] = e^{\mu_i + \sigma_i^2/2} \,,\,\,\,\,\,\,\,
\mathrm{and} \,\,\,\,\,\, E[X_i^2] = e^{2 \mu_i + 2 \sigma_i^2}
\,,\,\,\,\, i = 1,2. \] Matching these with the pre-assigned
moments, and solving for $\mu_i$ and $\sigma_i$ we find
\[ \mu_i = \log \left(
E \left[ I_{t}^{(i)} \right]^2 \Big/ \, \sqrt{E
\left[\left(I_{t}^{(i)} \right)^2 \right]} \right) \,\,\,\,\,\,\,
\textrm{and} \,\,\,\,\,\,\,\,\, \sigma_i^2 = \log \left( E \left[
\left( I_{t}^{(i)}\right)^2 \right] \Big/ \, E \left[ \left(
I_{t}^{(i)} \right) \right]^2 \right) \] Moreover, the mixed term is
\[ E \left[X_1 X_2 \right]
= E \left[ e^{Y_1 + Y_2} \right] = e^{\mu_1 + \mu_2 + \frac{1}{2}
(\sigma_1^2 + \sigma_2^2 + 2 \rho \sigma_1 \sigma_2)} = E[X_1]E[X_2]
\, e^{\rho \sigma_1 \sigma_2} \] which gives $\rho$ (in terms of the
pre-assigned moments):
\begin{equation}
\rho = \frac{1}{\sigma_1 \sigma_2} \log \frac{E \left[ I_{t}^{(1)}
I_{t}^{(2)} \right]} {E \left[ I_{t}^{(1)} \right] E
\left[I_{t}^{(2)} \right]}
\end{equation}

\begin{example} {\bf (Conditional Variance Swaps)} \rm
The payoff of a conditional variance swap is given by the ratio
\begin{equation}
\frac{\int_{T}^{T'} v(y_s,s) \,\mathbf{1}(L < S(y_s) < H) ds}{
\int_{T}^{T'} \mathbf{1}(L < S(y_s) < H) ds} - SR^2
\end{equation}
To apply the moment method to this case, the first thing to note is
that essentially we are modelling the two integrals appearing in the
payoff at the same time. We are going to need a Bi-variate
distribution to do this. Firstly let's write:
\[ I_{t}^{(1)} = \int_{T}^{T'} v(y_s,s) \,\mathbf{1}(L < S(y_s) < H) ds \]
\[ I_{t}^{(2)} = \int_{T}^{T'} \mathbf{1}(L < S(y_s) < H) ds \]
and in order to compute the expectation
\begin{equation}
\label{eq:exp1} E \left[ \frac{I_{t}^{(1)}}{I_{t}^{(2)}} \right]
\end{equation}
we'll need the following expectations:
\[ E \left[ I_{t}^{(1)} \right] \,,\,\,\,\,
E \left[ I_{t}^{(2)} \right] \,,\,\,\,\, E \left[ \left( I_{t}^{(1)}
\right)^2 \right] \,,\,\,\,\, E \left[ \left( I_{t}^{(2)} \right)^2
\right] \,,\,\,\,\, \mathrm{and} \,\,\,\,\,\, E \left[ I_{t}^{(1)}
I_{t}^{(2)} \right] \] To tackle this problem consider the operator
\begin{equation}
{\mathcal{L}}_{\epsilon_1,\epsilon_2} (y_1,y_2) = {\mathcal{L}}
(y_1,y_2) + \epsilon_1 \phi(y_1) \delta_{y_1 y_2} + \epsilon_2
\,\psi(y_1) \delta_{y_1 y_2}
\end{equation}
where
\begin{equation}
\phi(y_1, t) = \sum_{y_2} {\mathcal{L}} (y_1,y_2;t) \log^2 \left(
\frac{S(y_2)}{S(y_1)} \right) \mathbf{1}(L < S(y_1) < H)
\end{equation}
and
\begin{equation}
\psi(y_1) = \mathbf{1}(L < S(y_1) < H).
\end{equation}
We have that
\[ \left. \frac{\partial}{\partial \epsilon_1}
\right\vert_{\epsilon_1 = 0} e^{((t'-t)
{\mathcal{L}}_{\epsilon_1,\epsilon_2})} (y_1,y_2) = E \left[ \left.
I_{t}^{(1)} \right\vert y_t = y_1, y_{t'} = y_2  \right] \] and
\[ \left. \frac{\partial^2}{\partial \epsilon_1^2}
\right\vert_{\epsilon_1 = 0} e^{((t'-t)
{\mathcal{L}}_{\epsilon_1,\epsilon_2})} (y_1,y_2) = E \left[ \left.
\left( I_{t}^{(1)} \right)^2 \right\vert y_t = y_1, y_t' = y_2
\right] \] similarly (but with respect to $\epsilon_2$) for
\[ E \left[ \left. I_{t}^{(2)} \right\vert
y_t = y_1, y_{t'} = y_2 \right] \,\,\,\,\,\,\, \mathrm{and}
\,\,\,\,\,\,\, E \left[ \left. \left( I_{t}^{(2)} \right)^2
\right\vert y_t = y_1, y_{t'} = y_2 \right] \] The joint expectation
will involve the mixed derivative:
\begin{equation}
E \left[ I_{t}^{(1)} I_{t}^{(2)} \right] = \left.
\frac{\partial^2}{\partial \epsilon_1 \partial \epsilon_2}
\right\vert_{\epsilon_1,\epsilon_2 = 0} e^{((t'-t)
{\mathcal{L}}_{\epsilon_1,\epsilon_2})} (y_1,y_2)
\end{equation}
and once computed, we make use of these expectations to match a
bivariate distribution. For simplicity of notation we leave out the
conditional part of these expectations, noting that all the moments
we obtain are conditional to the initial and final points.

To evaluate the expectation $E \left[ I_{t}^{(1)} \Big/ \,
I_{t}^{(2)} \right]$, let us notice that
\[ E \left[ \frac{X_1}{X_2} \right]
= E \left[ e^{Y_1 - Y_2} \right] = E \left[ e^{Y_1 + Y^{'}_2}
\right]
\]
where $Y^{'}_2 = -Y_2$ is also be normally distributed $\sim
N(-\mu_2,\sigma_2^2)$. Hence
\[ E \left[ \frac{X_1}{X_2} \right]
= e^{\mu_1 + \frac{1}{2} \sigma_1^2 } \, e^{- \mu_2 + \frac{1}{2}
\sigma_2^2 } \, e^{-\rho \sigma_1 \sigma_2} \] and the expectation
(\ref{eq:exp1}) is given by
\begin{equation}
E \left[ \frac{I_{t}^{(1)} }{ I_{t}^{(2)} } \right] = E \left[
I_{t}^{(1)} \right] \cdot \frac{E \left[ \left( I_{t}^{(2)}
\right)^2 \right]} {E \left[ I_{t}^{(2)} \right]^3} \cdot \frac{E
\left[ I_{t}^{(1)} \right] E \left[I_{t}^{(2)} \right]} {E \left[
I_{t}^{(1)} I_{t}^{(2)} \right]} = \frac{E \left[ I_{t}^{(1)}
\right]^2 E \left[ \left( I_{t}^{(2)} \right)^2 \right]} {E \left[
I_{t}^{(2)} \right]^2 E \left[ I_{t}^{(1)} I_{t}^{(2)} \right] }
\end{equation}
moreover
\[ E \left[ \left( \frac{X_1}{X_2} \right)^2 \right]
= e^{2\mu_1 + 2 \sigma_1^2 } \, e^{- 2\mu_2 + 2 \sigma_2^2 } \,
e^{-4 \rho \sigma_1 \sigma_2} = E \left[ \frac{X_1}{X_2} \right]^4
e^{-2 \mu_1} e^{2 \mu_2} \] so
\begin{equation}
E \left[ \left( \frac{I_{t}^{(1)} }{ I_{t}^{(2)} } \right)^2 \right]
= \frac{E \left[ I_{t}^{(1)} \right]^4 E \left[ \left( I_{t}^{(2)}
\right)^2 \right]^3 E \left[ \left(I_{t}^{(1)}\right)^2 \right] } {E
\left[ I_{t}^{(2)} \right]^4 E \left[ I_{t}^{(1)} I_{t}^{(2)}
\right]^4 }
\end{equation}
To compute a payoff we will need the expectation $E\left[
\mathrm{min} \left( \frac{X_1}{X_2}, CV_{\mathrm{Max}}  \right)
\right]$ for the payoff of the conditional variance swap.
\begin{equation}
E\left[ \mathrm{min} \left( \frac{X_1}{X_2}, CV_{\mathrm{Max}}
\right) \right] = \int_{0}^{\infty}  \int_{0}^{X_2
CV_{\mathrm{Max}}} \left( \frac{X_1}{X_2} - CV_{\mathrm{Max}}
\right) f(X_1,X_2) dX_1 dX_2 + CV_{\mathrm{Max}}
\end{equation}
This double integral will need to be evaluated numerically. As the
first one has finite bounds and the second has an infinite upper
bound it would make sense to use two types of Gaussian quadratures:
a Gauss-Legendre quadrature on the inner integral and a
Gauss-Laguerre quadrature on the outer one.

\end{example}

\section{Block Factorizations}\label{dcond}

Although most path dependent processes emerging in applications to
Finance are Abelian, some aren't. If the Abelian property fails, the
propagator cannot be block-diagonalized by the methods discussed
above and also moment methods generally fail. In several relevant
situations one can still achieve a numerically viable framework by
using block-factorizations instead of diagonalizations.

Let $U_{i}(y_1, y_2)$ be a family of Markov propagator indexed by $i
= 0 , 1, ... n-1$ defined over the time intervals $[t_i, t_{i+1}]$.
Consider a propagator lifting of the form
\begin{equation}
\bar U_{i}(y_1, k_1; y_2, k_2) = U_{i}(y_1, y_2) Q_i(y_1, y_2)_{k_1
k_2}. \label{equibf}
\end{equation}
where $k_1, k_2 = 0 ... K$.

\begin{definition} {\bf (Block-Factorization.)}
We say that the propagators in (\ref{equibf})
admit a block-factorization if they can be represented in the form:
\begin{equation}
\bar U_{i} = \Pi_i \cdot \big(U_i \otimes {\mathbb I}\big)
\label{equibf2}
\end{equation} Here $\Pi_i, i=0,...n-1$ is a family
of permutation matrices, i.e. matrices with the property that for
each pair $(y_1, k_1)$ we have that
\begin{equation}
\Pi_i(y_1, k_1; y_2, k_2) = 0
\end{equation}
for all pairs $(y_2,k_2)$ except for one single pair $(Y(y_1, k_1),
K(y_1, k_1))$ for which we have instead
\begin{equation}
\Pi_i(y_1, k_1; Y(y_1, k_1), K(y_1, k_1)) = 1.
\end{equation}
Furthermore, the tensor product notation in equation (\ref{equibf2})
stands for the operator with the following matrix elements:
\begin{equation}
\big(U_i \otimes {\mathbb I}\big)(y_1, k_1; y_2, k_2) = U_{i}(y_1,
y_2) \delta_{k_1 k_2}.
\end{equation}
\end{definition}

If a block-factorization exists, then an efficient backward
induction algorithm can be setup. Namely, if $v$ is a vector, then
we can value the following path ordered products
\begin{equation}
v_i = P \prod_{j=i}^{n-1} \bar U_{j} v_n = P \prod_{j=i}^n  \Pi_j
\cdot \big(U_j \otimes {\mathbb I}\big) v_n
\end{equation}
iteratively in $i$ from $i = n-1$ down to $i=0$. At the first step
one applies the operator $\big(U_{n-1} \otimes {\mathbb I}\big)$ to
$v_{n}$, followed by the permutation $\Pi_{n-1}$ to obtain $v_{n-1}$
and then iterate. Due to the tensor product structure of the first
operator, it is convenient to apply by regarding the vector $v_i$ as
a matrix of elements $v_i(y, k)$. This representation makes it
possible to leverage on numerical efficiencies and to re-interpret a
high dimensional BLAS level 2 matrix-vector multiplication as a
lower dimensional BLAS level 3 matrix-matrix multiplication.
Applying a permutation operator is then quite straightforward from
the numerical viewpoint and is an operation whose complexity scales
linearly with respect to the dimension of the vector $v$.

\begin{example} {\bf (Snowballs)} \rm Here we follow
\cite{AIRmodel2007}.  Consider the case of a valuing a snowballs for
which the structured coupon at time $T_i = T + (\Delta T) i$ has the
following form:
\begin{equation}
C_{T_i} = f C_{T_{i-1}} + \Phi_i(y_{T_{i-1}})
\end{equation}
where the factor $f$ is a fixed parameter and $\Phi_i(y_{T_{i-1}})$
is a given function. Since the coupon amount at a given time affects
the coupon amou/nt in the next period, the process is not Abelian,
in fact it is path-dependent. However, block-factorizations are
still possible.

Consider discretizing the coupon dimension in intervals $\Delta C$,
so that a generic coupon can be approximated as follows
\begin{equation}
C_{T_i} = (\Delta C) k_{T_i}
\end{equation}
where $ k_{T_i} = 0, 1, .... N-1$ is a discrete time, integer value
process. A strategy to implement the backward induction scheme is to
organize the payoff function $v(y, k)$ at maturity in a matrix with
$N$ columns, each one indexed by the state variable $y$ and
representing the price conditional to the discretized value of the
previous coupon paid. One can then iterate backward by applying the
pricing kernel to this matrix of conditional pricing functions.
After each step in the iteration, one needs to reshuffle the pricing
functions in such a way that the conditioning relation on each
column is satisfied. More precisely, we set
\begin{equation}
v_{i-1}(y_1, k) = \sum_{y_2} U_{i}(y_1, y_2) v_{i}(y_2, [ (f (\Delta
C) \cdot k_1 + \Phi_i(y_1))/ (\Delta C)]) = \bigg[\Pi_{i} \cdot
\big(U_i \otimes {\mathbb I}\big) v_i\bigg](y_1, k)
\end{equation}
where
\begin{equation}
\Pi_{i}(y_1, k_1; y_2, k_2) = \delta(y_2 - y_1) \delta(k_2 - K(y_1,
k_1))
\end{equation}
and
\begin{equation}
K(y_1, k_1) = [ (f (\Delta C) \cdot k_1 + \Phi_i(y_1))/ (\Delta C)].
\end{equation}
Notice that this backward induction scheme can accommodate
callability provisions.
\end{example}

\begin{example} {\bf (Soft Calls)} \rm Soft calls provide another interesting example
of a non-Abelian process. In this case, a call decision depends on
the fraction of time the process spends above a given barrier level
during a fixed time period prior to the decision itself. By
restricting times at which one makes observations to the discrete
sequence of time points $t_i = T + i \delta t, i = 0,...M$, the
pricing function takes the form
\begin{equation}
v_i(y, \overarrow \sigma)
\end{equation}
where
\begin{equation}
\overarrow \sigma = (\sigma_1, ..., \sigma_N) \in \{0,1\}^N .
\end{equation}
Here the variable $\sigma_j$ equals 1 if $y_{t_{i-N+j}}$ satisfies
the barrier condition and 0 otherwise. We model this dependency by
means of a generic function notation
\begin{equation}
\sigma_j = \Sigma_i(y_{t_{i-N+j}}).
\end{equation}
The sequences $\overarrow \sigma$ can be arranged in an ordered
sequence of $2^N$ integers so that the data structure $v_i(y,
\overarrow \sigma)$ appears as a matrix with $2^N$ columns. A
backward induction scheme involves evaluating
\begin{equation}
v_{i-1}(y_1, \overarrow \sigma) = \sum_{y_2} U_{i}(y_1, y_2)
v_{i}(y_2, \Phi_i(\overarrow \sigma, y_1))
\end{equation}
where
\begin{equation}
\Phi_i(\overarrow \sigma, y_1) = \{\sigma_2, ... \sigma_{N-1},
\Sigma(t_i, t_{i-1}, y_1) \}
\end{equation}
This form is block-factorized as
\begin{equation}
v_{i-1}(y_1, \overarrow \sigma) = \bigg[\Pi_{i} \cdot \big(U_i
\otimes {\mathbb I}\big) v_i\bigg](y_1, k)
\end{equation}
where $\Pi_{i}$ is the permutation operator of matrix
\begin{equation}
\Pi_{i}(y_1, \overarrow \sigma_1; y_2, \overarrow \sigma_2) =
\delta(y_2 - y_1) \delta\big((\overarrow\sigma_2)_N - \Sigma_i(
y_1)\big) \prod_{j=1}^{N-1} \delta\big((\overarrow\sigma_2)_j -
(\overarrow\sigma_1)_{j+1}\big).
\end{equation}
\end{example}

More general types of useful block-factorizations can be imagined,
although they might be numerically less efficient. An example is set
forth by the following definition:

\begin{definition} {\bf (State Dependent Block-Factorization.)}
We say that the propagators in (\ref{equibf})
admit a state dependent block-factorization if they can be
represented in the form:
\begin{equation}
\bar U_{i} = \Pi_i \cdot \bigoplus_{k=1}^K U_{i,k} \label{equibf3}
\end{equation}
Here $\Pi_i, i=0,...n-1$ is a family of permutation matrices as
above. The direct sum notation in equation (\ref{equibf3}) stands
for the operator with the following matrix elements:
\begin{equation}
\big(\bigoplus_{k_0 = 1}^K U_{i,k_0} \big)(y_1, k_1; y_2, k_2) =
U_{i, k_1}(y_1, y_2) \delta_{k_1 k_2}.
\end{equation}
\end{definition}

A backward induction in this case is still a lower dimensional
problem but since the lower dimensional propagator is state
dependent this reduces to a sequence of matrix-vector
multiplications as opposed to a single matrix-matrix multiplication.
The scheme is thus numerically less efficient. Still it would be
useful in cases for instance such as GARCH type models and
extensions, see \cite{Engle1982}.

\section{Dynamic Conditioning}\label{dcond}

This section is based on work in collaboration with Alicia Vidler,
see \cite{AVidler}.

Modeling correlations between several processes has sometimes been
considered a problem affected by the so-called curse of
dimensionality which causes the numerical complexity to explode
exponentially, see \cite{Bellman1957}. This motivates the recourse
to Monte Carlo methods while calibrating using closed form
solutions.

Here we deviate substantially from this framework and aim at
building models viable from the engineering deployment viewpoint and
specified semi-parametrically or even non-parametrically without any
restriction imposed by analytical tractability. The method of fast
exponentiation allows one to calibrate single name models without
recurring to analytic solvability thanks to providing smooth
sensitivities. Although Monte Carlo schemes could in principle be
employed in a model constructed non-parametrically and calibrated
with operator methods, in many circumstances important for
applications this can be avoided by using the technique of dynamic
conditioning discussed in this section. This technique can be
regarded as a dynamic copula whereby the single factor marginal
distributions are preserved and the numerical complexity grows
linearly with the number of factors, similarly to what happens with
Monte Carlo methods. But, unlike Motecarlo methods, dynamic
conditioning is numerically noiseless as there is no sampling, no
variance reduction scheme is needed and even features such as
callability are numerically treatable.

Dynamic conditioning is based on constructing a hierarchy of
conditioning relations. At the base of the hierarchy we have
continuous time lattice models for single factor marginal
distributions. Next, one introduces a binomial process for each risk
factor to condition the corresponding Markov chain. Next one finds a
hierarchy of binomial processes to condition the former conditioning
lattices. See figure (\ref{figdyncond}).

To describe the technique of dynamic conditioning in detail,
consider a particular risk factor described by a lattice process
whose filtered probability space is engendered by a Markov
propagator $U(y_1, t_1; y_2, t_2)$. For each such single factor, we
introduce a conditioning binomial process $h_{t}\in\ZZZ$, which is
constant over the time intervals $[T_i, T_{i+1})$ where $T_i = T + i
\Delta T$, $i = 0, 1, 2, ... N $ and $\Delta T = (T'-T)/N$. At
initial time we set $h_{T} = 0$ while for all $i>0$ we have that
$h_{T_i} - h_{T_{i-1}} = \pm 1$. The elementary propagator for the
process $h_t$ across neighboring time points $T_i$ is
\begin{equation}
V(h_i, T_i; h_{i+1}, T_{i+1}) = q_1(h_i, i) \delta_{h_i+1, h_{i+1}}
+ q_{-1}(h_i, i) \delta_{h_i-1, h_{i+1}}
\end{equation}
where $q_1(h_i, i), q_{-1}(h_i, i)>0$ and $q_1(h_i, i)+q_{-1}(h_i,
i) =1$. On a general time interval $[T_i,T_j]$, we have that
\begin{equation}
V(h_i, T_i; h_j, T_j)  = \sum_{h_t: h_{T_i} = h_i, h_{T_j} = h_j}
\prod_{k=i}^{j-1} V(h_{T_k}, T_k; h_{T_{k+1}}, T_{k+1}).
\end{equation}

\begin{figure}
    \includegraphics[width = 16cm]{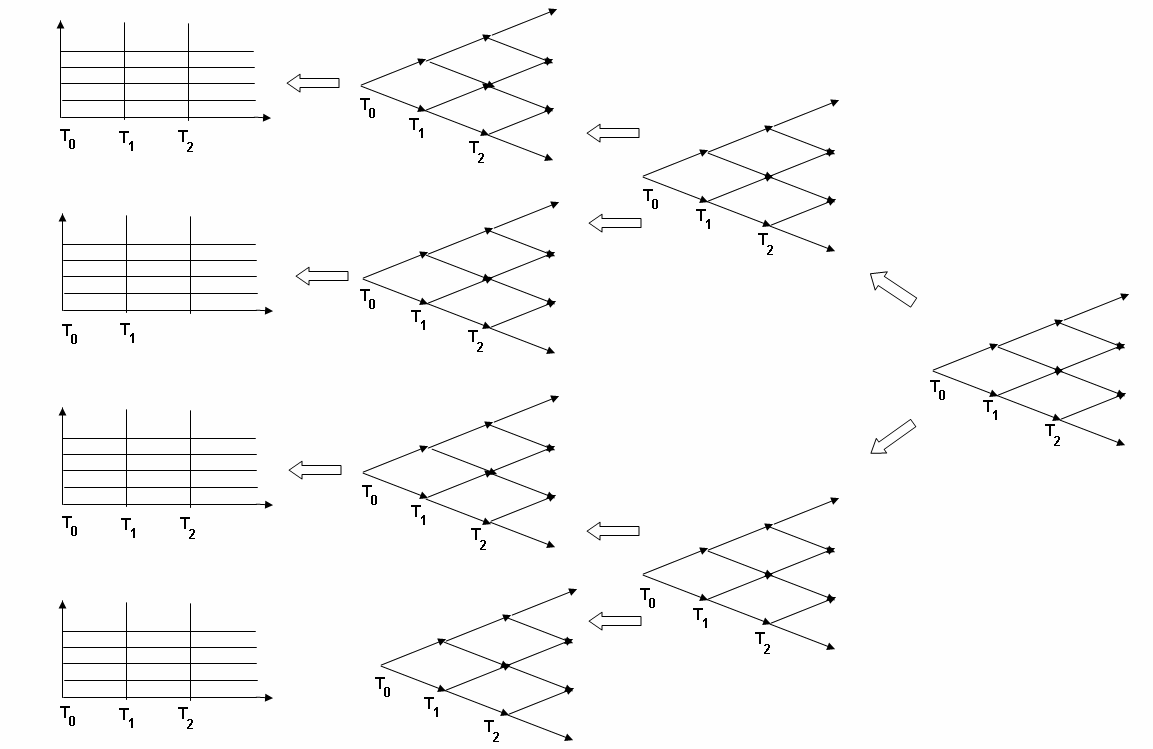}
    \caption{Dynamic conditioning scheme: a binomial process conditions each risk factor and
    is in turn conditioned by another binomial process. The latter is also conditioned by a hierarchy of
    binomial processes.}
    \label{figdyncond}
\end{figure}

Next we define a lifted conditional propagator $\bar U(y_i, h_i,
T_i; y_j, h_j, T_j)$ in such a way to preserve unconditional
transition probabilities from the starting time at $T$, i.e. so that
\begin{equation}
\sum_{ h_j } \bar U(y_0, 0, T; y_j, h_j, T_j) = U(y_0, T; y_j, T_j).
\label{eq_condu}
\end{equation}
To do so, one strategy is to define two propagators for each node
$(h, T_i)$, namely to choose a pair of operators
 $U_{1}(y_i, h_i, T_i; y_{i+1}, h_i+1, T_{i+1})$
and $U_{-1}(y_i, h_i, T_i; y_{i+1}, h_i-1, T_{i+1})$ so that
\begin{align*}
U(y_i, T_i; y_{i+1}, T_{i+1}) &= q_1(h, i) U_{1}(y_i, h, T_i;
y_{i+1}, h+1,
T_{i+1}) + \\
&\hskip2cm q_{-1}(h, i) U_{-1}(y_i, h, T_i; y_{i+1}, h-1, T_{i+1}).
\end{align*}
The operator $\bar U(y_0, 0, T; y_j, h_j, T_j)$ satisfying
(\ref{eq_condu}) is defined as the following sum:
\begin{align*}
\bar U(y_0, h_0, T_0; y_j, h_j, T_j) &= \sum_{h_t: h_T = 0, h_{T_j}
= h_j}
\sum_{y_1,..y_{j-1}} \prod_{k=1..j} q_{h_{T_k}-h_{T_{k-1}}}(h_{T_{k-1}}) \times \\
&\hskip2cm U_{h_{T_k}-h_{T_{k-1}}} (y_{k-1}, h_{T_{k-1}}, T_{k-1};
y_k, h_{T_{k}}, T_{k}).
\end{align*}

Since the operators $U_{1}$ and $U_{-1}$ do not commute with each
other, if not in trivial situations, each path $h_t$ in the
summation on the right hand side of this equation represents a
different operator. In many situations one can however avoid the
numerical complexities of a non-recombining tree by finding modified
versions of such operators so that the kernels
\begin{align*}
U_{h_\cdot} (y_0, y_{k}) =  \sum_{y_1,..y_{j-1}} \prod_{k=1..j}
q_{h_{T_k}-h_{T_{k-1}}}(h_{T_{k-1}}) \bar U(y_{k-1}, h_{T_{k-1}},
T_{k-1}; y_k, h_{T_{k}}, T_{k})
\end{align*}
are all equal to each other, for all paths $h_t: h_T = 0, h_{T_j} =
h_j$ and at least one single fixed starting point $y_0 = \bar y_0$.
This will be referred to as {\it conditional recombination
property}.

The reasons why we focus on the initial point $y_0 = \bar y_0$ only
are manifold. Firstly, in applications one needs to condition
marginal distributions only to spot values, as the price of options
in the hypothetical case asset prices were different is not known.
Secondly, if we insisted on the same property being valid for all
initial starting points we would end up with a seriously ill posed
problem of difficult solution. Because of these reasons, we settle
for the more modest objective of conditional recombination.

Namely, on each node $(h_i, T_i)$ with $i>0$ we define
\begin{equation}
\bar U(y_{i-1}, h_{i-1}, T_{i-1}; y_{i}, h_{i}, T_{i}) =
U_{h_{i}-h_{i-1}} (y_{i-1}, h_{i-1}, T_{i-1}; y_{i}, h_{i}, T_{i})
\label{eq_margu}
\end{equation}
in case $h_i = \pm i$. Otherwise, we determine this operator so that
\begin{align*}
\bar U&(\bar y_0, h_0, T_0; y_{i}, h_{i}, T_{i}) = \\
&\hskip2cm \sum_{y_{i-1}\in\Lambda} \bar U(\bar y_0, h_0, T_0;
y_{i-1}, h_{i-1}, T_{i-1}) \bar U(y_{i-1}, h_{i-1}, T_{i-1}; y_{i},
h_{i}, T_{i})
\end{align*}
for all $y_{i}\in\Lambda$ and a fixed $\bar y_0$. This can be
achieved in more than one way. As a guideline, it is advisable to
deviate the least possible from the definition (\ref{eq_margu}).

As a next step in the construction, consider a second binomial
process $c_t$
which is piecewise constant across neighboring time points $T_i$.
The dynamics of $c_t$ is by construction correlated to that of
$h_t$. More precisely, the propagator for the joint process is
\begin{align*}
W(h_i, & c_i, T_i; h_{i+1}, c_{i+1}, T_{i+1}) = \\
& q_{++}(h_i, c_i, i) \delta_{h_i+1, h_{i+1}}\delta_{c_i+1, c_{i+1}}
+ q_{+-}(h_i, c_i, i) \delta_{h_i+1, h_{i+1}}\delta_{c_i-1,
c_{i+1}}\\
&+ q_{-+}(h_i, c_i, i) \delta_{h_i-1, h_{i+1}}\delta_{c_i+1,
c_{i+1}} + q_{--}(h_i, c_i, i) \delta_{h_i-1, h_{i+1}}\delta_{c_i-1,
c_{i+1}}
\end{align*}
where $q_{\pm\pm}(h_i, c_i, i) \ge 0$ and
\begin{equation}
q_{++}(h_i, c_i, i)+q_{+-}(h_i, c_i, i)+q_{-+}(h_i, c_i,
i)+q_{--}(h_i, c_i, i) =1.
\end{equation}
Due to the conditional recombination property, if $y_0 =\bar y_0$,
then the conditional propagators resum with a simple formula
\begin{align*}
& \sum_{h_t: h_T = 0, h_{T_j} = h_j} \sum_{y_1,..y_{j-1}}
\prod_{k=1..j} W(h_i, c_i, T_i; h_{i+1}, c_{i+1}, T_{i+1})
 \times \\
&\hskip1cm q_{h_{T_k}-h_{T_{k-1}}}(h_{T_{k-1}}) \bar
U_{h_{T_k}-h_{T_{k-1}}} (y_{k-1}, h_{T_{k-1}}, T_{k-1}; y_k,
h_{T_{k}}, T_{k})\\
& \hskip1cm = \sum_{h_j} W(0, 0, T_0; h_{j}, c_{j}, T_{j}) \bar
U(y_0, 0, T_0; y_j, h_j, T_j)\\
& \hskip1cm \equiv \tilde U(y_0, 0, T_0; y_j, c_j, T_j).
\end{align*}

As a next step, the construction above is repeated for a
number of different risk factors associated to $N$ lattice processes
$y_t^{(\alpha)}$, where $\alpha = 1,..N$ all correlated to the
process $c_t$, possibly in different ways. Then, conditioned to
starting all processes at fixed lattice points $y_0^{(\alpha)} =
\bar y_0^{(\alpha)}$ and conditioned to $c_{T_j} = c_j$, the
multi-factor propagator factorizes into the product of conditional
single factor propagators
\begin{equation}
\prod_{\alpha = 1...N} \tilde U(y_0^{(\alpha)}, 0, T_0; y_j^{(\alpha)},
c_j, T_j)
\end{equation}
This is the key formula which we use to correlate
processes while ensuring that numerical complexity increases only
linearly with the number of factors $N$.

The construction can obviously be iterated and we can have a whole
hierarchy of binomial processes conditioning each other to model
multi-factor correlations.

\section{Conclusion}

We reviewed a new framework for Mathematical Finance and the theory
of stochastic processes based on operator methods. The framework
grew over the years through applied research and solving specific
concrete problems in derivative pricing. As wrong ideas were weeded
out, a coherent framework emerged and is now summarized in this
review paper. From the numerical viewpoint, the methods rely on the
ability to multiply efficiently large matrices as the main
computational engine. Technologies for matrix manipulation are
currently being developed at great pace. The emerging multi-core GPU
chipsets will soon provide affordable teraflop performance for
algorithms based on matrix-matrix manipulations such as ours. To
both set the theory of stochastic processes on firm theoretical
grounds and justify the empirically observed convergence rates, we
derive pointwise kernel convergence estimates of a new type which
explain the observed smoothing properties of the method. We
introduce the notion of Abelian process to generalize the concept of
stochastic integral and extend the classic
Cameron-Feynman-Girsanov-Ito-Kac-Martin theorem in multiple
directions. This theorem is here reinterpreted as a
block-diagonalization scheme for large matrices corresponding to
Abelian processes. We outline solution methods for Abelian path
dependent options, a class we identified and which comprises the
great majority of the path-dependent options currently traded.
Important non-Abelian processes are also considered and discussed by
means of a weaker but still effective method of
block-factorizations.  Furthermore, we also discuss a method for
dynamic conditioning which applies to correlation products such as
baskets and hybrid derivatives.

\bibliographystyle{giwi}
\bibliography{abelian}

\end{document}